\pgfplotsset{compat=newest}
\theoremstyle{plain}
  \newtheorem{theorem}{Theorem}[section]
  \newtheorem{lemma}[theorem]{Lemma}
  \newtheorem{corollary}[theorem]{Corollary}
\theoremstyle{definition}
  \newtheorem{example}[theorem]{Example}
  \newtheorem{assumption}[theorem]{Assumption}
\theoremstyle{remark}
  \newtheorem{remark}[theorem]{Remark}
\newcommand{\set}[1]{\left\{#1\right\}} % set
\newcommand{\abs}[1]{\left\vert#1\right\vert} % absolute value
\newcommand{\ind}[1]{\ensuremath{{\mathds 1}_{{#1}}}} % indicator
\newcommand{\sind}[1]{\ensuremath{\mathds{1}_{\set{#1}}}} % indicator with set braces
\DeclareMathOperator{\cov}{Cov}
\DeclareMathOperator{\var}{Var}
\newcommand{\E}{\mathbb{E}}
\newcommand{\C}{\mathbb{C}}
\newcommand{\Z}{\mathbb{Z}}
\renewcommand{\P}{\mathbb{P}}
\renewcommand\Re{\operatorname{Re}}
\renewcommand\Im{\operatorname{Im}}
\newcommand{\eps}{\varepsilon}
\newcommand{\da}{\downarrow}
\newcommand{\ua}{\uparrow}
\begin{document}

\title[Gauss--Lucas theorem refinement for random polynomials]{An asymptotic refinement of the Gauss--Lucas Theorem for random polynomials with i.i.d.\ roots}

\author{Sean O'Rourke}
\address{Department of Mathematics\\ University of Colorado\\ Campus Box 395\\ Boulder, CO 80309-0395\\USA}
\email{sean.d.orourke@colorado.edu}
\thanks{S. O'Rourke has been partially supported in part by NSF CAREER grant DMS-2143142.}

\author{Noah Williams}
\address{Department of Mathematical Sciences\\ Appalachian State University\\ASU Box 32092\\Boone, NC 28608\\USA}
\email{williamsnn@appstate.edu}

%%%%%%%%%%%%%%%%%%%%%%%%%%%%%%%%%%
%%         Abstract             %%
%%%%%%%%%%%%%%%%%%%%%%%%%%%%%%%%%%

\begin{abstract}
If $p:\mathbb{C} \to \mathbb{C}$ is a non-constant polynomial, the Gauss--Lucas theorem asserts that its critical points are contained in the convex hull of its roots.  We consider the case when $p$ is a random polynomial of degree $n$ with roots chosen independently from a radially symmetric, compactly supported probability measure $\mu$ in the complex plane.  We show that the largest (in magnitude) critical points are closely paired with the largest roots of $p$.  This allows us to compute the asymptotic fluctuations of the largest critical points as the degree $n$ tends to infinity.  We show that the limiting distribution of the fluctuations is described by either a Gaussian distribution or a heavy-tailed stable distribution, depending on the behavior of $\mu$ near the edge of its support.  As a corollary, we obtain an asymptotic refinement to the Gauss--Lucas theorem for random polynomials.
\end{abstract}

\subjclass[2020]{Primary 60F05; Secondary 30C15}

\maketitle
\tableofcontents
%%%%%%%%%%%%%%%%%%%%%%%%%%%%%%%%%%
%%       Introduction           %%
%%%%%%%%%%%%%%%%%%%%%%%%%%%%%%%%%%
\section{Introduction}
Let $p: \mathbb{C} \to \mathbb{C}$ be a polynomial in a single complex variable.  The \emph{critical points} of $p$ are the roots of its derivative $p'$.  The Gauss--Lucas theorem states that if $p$ has degree $n > 1$, then the $n-1$ critical points of $p$ lie inside the convex hull formed from the $n$ roots of $p$.  This result has been refined and extended in various ways; see, for instance, \cite{MR1506859,MR19157,MR21148,MR2063118,MR1452801,MR239061,MR2398412,MR2159699,MR14508,MR1473453,MR2000158,MR4130852,1706.05410,MR2082007,MR3556367,SS,MR3992156,MR3367639} and references therein.

In this note, we focus on a class of random polynomials with independent and identically distributed (i.i.d.) roots in the complex plane.  Specifically, we consider polynomials $p_n: \mathbb{C} \to \mathbb{C}$ of the form 
\begin{equation} \label{def:pn}
	p_n(z) := \prod_{i=1}^n (z - X_i), 
\end{equation}
where $X_1, \ldots, X_n$ are i.i.d. draws from a probability measure $\mu$ in the complex plane.  Many results are known for the roots of the derivatives of this model; see, for example, \cite{PR,K,KS,H3,OW2,OW,TRR,BLR,S,MR4669281,MR4242313,MR3318313,MR3698743,MR4711583,MR4762159,MR4741259,2312.14883,2404.12472,2307.11935} and references therein.  
For example, it is known that the empirical measure constructed from the critical points of $p$ converges in probability to $\mu$ as $n$ tends to infinity.  This was first established by Pemantle and Rivin \cite{PR}, under some technical assumptions, and the general case was proved later by Kabluchko \cite{K}.  

More recently, it was shown in \cite{KS,OW2} that there is a close pairing between the roots and critical points.  For example, if $W_1$ represents the critical point closest to $X_1$, the results in \cite{KS,OW2} describe the fluctuations for the distance between $W_1$ and $X_1$ as $n$ tends to infinity.  

In this paper, instead of considering an arbitrary root $X_i$, we consider the order statistics $X^{\ua}_{(i)}$, $|X^{\ua}_{(1)}| \leq \cdots \leq |X^{\ua}_{(n)}|$, and we study the behavior of the critical points closest to the largest in magnitude roots.  Since the convex hull can often be determined by only a few of the largest points, one motivation for studying the largest critical points is to better understand the behavior of the convex hull formed from the critical points.  As a corollary of our main results, for instance, we obtain an asymptotic refinement of the Gauss--Lucas theorem, which allows us to compare the diameter of the convex hull formed from the critical points with the diameter of the convex hull formed from the roots.  
We provide a detailed discussion of our results and their connections to the works cited above in Section \ref{sec:results}.

%-------------------------------%%
%-      Outline        -%
%-------------------------------%%

\subsection{Outline} 
The paper is organized as follows.  We present our main results and their connections to the previous works cited above in Section \ref{sec:results}. The proofs are presented in Sections \ref{sec:proofsPos} and \ref{sec:proofsNeg}.  The appendix contains a number of auxiliary results.

%-------------------------------%%
%-         Notation             -%
%-------------------------------%%

\subsection{Notation} 
Throughout the paper, we use asymptotic notation (e.g., $O, o, \ll$) under the assumption that $n \to \infty$.  We use $a_n=O(b_n)$, $b_n=\Omega(a_n)$, $a_n \ll b_n$, or $b_n \gg a_n$ if there is a constant $C >0$, independent of $n$, such that $n>C$ implies the estimate $|a_n| \leq C |b_n|$. If $C$ depends on a parameter, e.g., $C = C_k$, we indicate this with subscripts, e.g., $a_n = O_k(b_n)$. The notation $a_n = \Theta(b_n)$ means $a_n \ll b_n \ll a_n$, and we write $a_n = o(b_n)$ or $b_n = \omega(a_n)$ if $|a_n| \leq c_n\abs{b_n}$ for some sequence $c_n$ that goes to zero as $n \to \infty$.

As we work in the complex setting, we use $\sqrt{-1}$ for the imaginary unit to reserve $i$ as an index. Unless otherwise specified, the function $\arg:\C\setminus\set{0} \to(-\pi, \pi]$ returns the principal value of the argument of a nonzero complex number. We denote the Lebesgue measure on $\C$ with $\lambda$. Many of our results concern order statistics of the complex-valued random sample $X_1, \ldots, X_n$, which we precisely define by ordering $\mathbb{C}$ in ``spiral'' fashion as follows. Set $0 \preceq z$ for any $z \in \mathbb{C}$, and for nonzero $z,w\in \mathbb{C}$, define $z \preceq w$ when either of the following hold
\begin{itemize}
	\item $\abs{z} < \abs{w}$;
	\item $\abs{z} = \abs{w}$, and $\arg{z} \leq \arg{w}$.
\end{itemize} We use $X^\ua_{(k)}$ to mean the $k$th smallest element of the list $X_1, \ldots, X_n$ ordered with respect to $\preceq$: i.e.\ $X^\ua_{(1)} \preceq X^\ua_{(2)} \preceq \cdots \preceq X^\ua_{(n)}$. Similarly, we define $X^\da_{(k)}$ to be the $k$th largest element from among $X_1, \ldots X_n$ so that $X^\da_{(n)} \preceq \cdots \preceq X^\da_{(2)} \preceq X^\da_{(1)}$. (For completeness, in the case where several $X_i$ coincide, list them in order of increasing index before determining which is the $k$th largest or smallest.)

We use the following probability and set-theoretic notation. We say an event $E_n$ (which depends on $n$) holds with \textit{high probability} if $\lim_{n\to \infty} \P(E_n) = 1$. For an event $E$, the random variable $\ind{E}$ is the indicator function of $E$, and for a square integrable random variable $\xi$, $\var(\xi) := \E\abs{\xi-\E[\xi]}^2$ is its variance.  For those $z\in \C$ where the following integral exists, we define the Cauchy--Stieltjes transform of a probability measure $\mu$ on $\C$ to be
\begin{equation} \label{def:st}
m_\mu(z) := \int_\C\frac{1}{z-x}\,d\mu(x).
\end{equation}
(By Fubini's theorem, it follows that $m_\mu$ is defined and finite for $\lambda$-almost every $z \in \mathbb{C}$.) For a set $S$, $\#S$ and $\abs{S}$ denote the cardinality of $S$, and $S^c$ is the complement of $S$.  We denote the discrete interval of length $n$ with $[n]:= \set{1, 2, \ldots, n}$, and we define the open ball of radius $r > 0$ centered at $x \in \mathbb{C}$ via $B(x,r):=\set{z\in \C: \abs{z-x} < r}.$

%-------------------------------%%
%-     Acknowledgements         -%
%-------------------------------%%

\subsection*{Acknowledgements}
S.\ O'Rourke thanks Andrew Campbell and Stefan Steinerberger for useful references and discussions. The authors also thank the anonymous referees for their helpful and detailed feedback.

%%%%%%%%%%%%%%%%%%%%%%%%%%%%%%%%%%
%%       Main Results           %%
%%%%%%%%%%%%%%%%%%%%%%%%%%%%%%%%%%

\section{Main results} \label{sec:results}

Let $\mu$ be a probability measure on $\mathbb{C}$.  We will consider the critical points of the polynomial $p_n$, defined in \eqref{def:pn}, where $X_1, X_2, \ldots, X_n$ are i.i.d. random variables with distribution $\mu$.  We make the following assumptions concerning $\mu$.  

\begin{assumption}\label{ass:ComplexMu}
	Suppose $\mu$ is a radially symmetric distribution supported on the unit disk centered at the origin in the complex plane that satisfies the following properties:
	\begin{enumerate}[(\thetheorem.i)]
		\item \label{ass:density} there exists $\eps \in (0, 1]$ so that in the annulus 
		\begin{equation} \label{def:Aeps}
		\mathbb{A}_\eps := \set{z \in \C: 1-\eps \leq \abs{z} \leq 1}
		\end{equation}
		$\mu$ has a density $f_\mu(z)$ with respect to the Lebesgue measure on $\C$;
		
		\item \label{ass:radial} there are constants $c_\mu, C_\mu >0$, and $\alpha > -1$  so that the radial density \[f_R(r):= 2\pi r f_\mu(re^{\sqrt{-1}\cdot\theta}),\ 1-\eps \leq r \leq 1\] satisfies
		\[
		c_\mu \leq \frac{f_R(r)}{(1-r)^\alpha} \leq C_\mu.
		\]
	\end{enumerate}
\end{assumption}

\begin{remark}
When \ref{ass:density} and \ref{ass:radial} hold for a radially symmetric distribution $\mu$, we use $F_R(r):=\mu\left(\set{z \in \mathbb{C}: \abs{z} \leq r}\right)$ to denote the radial cumulative distribution function (c.d.f.) of $\mu$. In this case, for $1-\eps \leq r \leq 1$, we have
\[
\frac{c_\mu}{\alpha+1}(1-r)^{\alpha+1} \leq \int_r^1f_R(t)\,dt \leq \frac{C_\mu}{\alpha+1}(1-r)^{\alpha+1},
\]
so 
\begin{equation}\label{eqn:RadCdfBds}
	1- \frac{C_\mu}{\alpha + 1}(1-r)^{\alpha + 1} \leq F_R(r) \leq 1-\frac{c_\mu}{\alpha+1}(1-r)^{\alpha + 1}.
\end{equation} 
Via Lemma \ref{lem:StielCalc}, we also have
\begin{equation}\label{eqn:RadStiel}
m_\mu(z) = \frac{F_R(\abs{z})}{z},\ \text{for $z \in \mathbb{C}\setminus\set{0}$},
\end{equation}
where $m_\mu$ is defined in \eqref{def:st}. 
\end{remark}

\begin{example}
 We note that for any $\alpha > -1$, if $\mu$ is a radially symmetric distribution with radial density $f_R(r) = (\alpha +1)(1-r)^\alpha$, $r\in [0,1)$, then $\mu$ satisfies Assumption \ref{ass:ComplexMu}. 
\end{example}

A few remarks concerning Assumption \ref{ass:ComplexMu} are in order.  First, we have assumed $\mu$ is supported on the unit disk centered at the origin.  One can easily consider cases where $\mu$ is supported on any other disk by simply scaling and shifting.  Second, the radial symmetry assumption is likely not required.  We include this assumption since it simplifies\footnote{Consider e.g.\ the proofs of Lemmas \ref{lem:fewRts}, \ref{lem:GnSmall}, \ref{lem:heavyTail}, \ref{lem:moments} and of Corollary \ref{cor:CLT}, where we integrate using polar coordinates, and the statements and proofs of Lemmas \ref{lem:sepRts} and \ref{lem:sepRtsNeg} that allow us to more easily establish the radial separation of the largest $X^\da_{(i)}$ that we use to achieve results like Equations \eqref{eqn:maxPair}, \eqref{eqn:maxPairSharper}, \eqref{eqn:maxPairNeg}, and \eqref{eqn:maxPairSharperNeg}.} our (already technical) proofs. For example, Lemma \ref{lem:StielCalc} in Appendix \ref{sec:appendix} allows us to easily compute the Cauchy--Stieltjes transform $m_{\mu}$ for radially symmetric measures $\mu$, and we use this explicit formula, \eqref{eqn:RadStiel}, in our proofs of Lemmas \ref{lem:manyPair} and \ref{lem:manyPairConcNeg} and of Corollary \ref{cor:CSnice}. The radial symmetry also gives us independence of the angular parts of the (dependent) order statistics $X^\da_{(i)}$, a fact we rely on when we prove the fluctuations results, Theorems \ref{thm:maxPair} and \ref{thm:fluctNeg} (see Lemmas \ref{lem:CLT} and \ref{lem:LLNCLTNeg} and their proofs). Finally, we remark that the exponent $\alpha$ determines the behavior of the largest (in magnitude) roots of $p_n$.  For positive values of $\alpha$, the largest roots are farther from the edge of the disk than when $\alpha$ is zero or negative.  This dependence on $\alpha$ can be seen in Theorem \ref{thm:gauss} below, which summarizes our main results. We note that the condition $-0.095<\alpha < 0$, which also appears in Theorems \ref{thm:manyPairNeg} and \ref{thm:fluctNeg}, is an artifact of our proof.  It can likely be improved slightly by optimizing some of the parameters in Table \ref{table:params} and some of our proofs (see e.g.\ Lemma \ref{lem:badEventSmallNeg}, which requires the strictest bound on $\alpha < 0$).  We conjecture that our main results should hold as long as $-1 < \alpha < 0$, however, proving such a thing will likely require new methods.  

\begin{theorem}[User-Friendly Main Result]\label{thm:gauss}
	Suppose $X_1, X_2, \ldots$ are i.i.d.\ draws from a distribution $\mu$ that satisfies Assumption \ref{ass:ComplexMu} with $\alpha > -0.095$. With probability $1-o(1)$, all critical points of the polynomial $p_n(z) := \prod_{j=1}^n(z-X_j)$ lie within a disk centered at the origin of radius at most
	\begin{equation}\label{eqn:asymRefine}
		\abs{X^\da_{(1)}}(1-n^{-1}) + o(n^{-1}) < 1-n^{-1},
	\end{equation}
	and if we define 
	\[
	\mathfrak{a}_n :=\begin{cases}
		n^{\frac{3}{2}} & \text{if $\alpha > 0$}\\
		\frac{n^{\frac{3}{2}}}{\log{n}}& \text{if $\alpha = 0$}\\[6pt]
		n^{\frac{3+2\alpha}{2 + \alpha}} & \text{if $-0.095 < \alpha < 0$},
	\end{cases}
	\] then, provided $\lim_{r \to 1^-}\frac{f_R(r)}{(1-r)^\alpha}$ exists when $\alpha \leq 0$, we have
	\begin{equation}\label{eqn:asymRefineConvergence}
		\frac{\mathfrak{a}_n}{e^{\sqrt{-1}\arg(X^\da_{(1)})}}\cdot\left(W^\da_{(1)} - X^\da_{(1)}(1-n^{-1})\right) \to \begin{cases}  N_1 &\text{if $\alpha \geq 0$}\\  \mathcal{H}_{2+\alpha}&\text{if $-0.095 < \alpha < 0$},
		\end{cases}
	\end{equation}
	in distribution as $n\to \infty$, where $W^\da_{(1)}$ and $X^\da_{(1)}$ denote the largest critical point and root of $p_n$ in magnitude, $ N_1$ has a complex Gaussian distribution with mean zero and covariance structure given by \eqref{eqn:CovStructurePos} below, and $\mathcal{H}_{2+\alpha}$ is the complex-valued $(2+\alpha)$-stable random variable described in Theorem \ref{thm:fluctNeg}. 
\end{theorem}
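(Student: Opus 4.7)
The plan is to analyze the critical point equation $p_n'(w)/p_n(w) = \sum_{i=1}^n (w-X_i)^{-1} = 0$ by localizing near the largest root. Using \eqref{eqn:RadCdfBds}, one sees that with high probability $X^\da_{(1)}$ is separated from every other $X_i$ on the scale of the typical edge gap $n^{-1/(\alpha+1)}$, so a Rouch\'e-type argument forces exactly one critical point $W^\da_{(1)}$ into a small disk around $X^\da_{(1)}$ and lets us identify it as the largest critical point in magnitude (every other critical point must sit significantly deeper inside the support by a union-bound pairing argument applied to the remaining roots).

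Granting this pairing, the critical point equation rearranges to
\[
\frac{1}{W^\da_{(1)} - X^\da_{(1)}} = -\sum_{i=2}^n \frac{1}{W^\da_{(1)} - X^\da_{(i)}} \approx -(n-1)\, \frac{F_R(\abs{W^\da_{(1)}})}{W^\da_{(1)}}
\]
by concentration (law of large numbers) together with \eqref{eqn:RadStiel}. Since $F_R(\abs{W^\da_{(1)}}) \to 1$ via \eqref{eqn:RadCdfBds}, inversion yields $W^\da_{(1)} - X^\da_{(1)} \approx -X^\da_{(1)}/n$, giving \eqref{eqn:asymRefine} to leading order. The uniform bound on \emph{all} critical points then follows by combining this with the Gauss--Lucas theorem and showing that no critical point can sit closer to the unit circle than the pairing threshold, again using the edge separation of the roots.

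To obtain the fluctuation \eqref{eqn:asymRefineConvergence}, I would write
\[
W^\da_{(1)} = X^\da_{(1)}(1 - n^{-1}) + \mathfrak{a}_n^{-1}\, \zeta_n \, e^{\sqrt{-1}\arg(X^\da_{(1)})}
\]
and substitute into the critical point equation, Taylor-expanding around the leading-order solution. The leading Taylor coefficient is deterministic up to negligible errors, so $\zeta_n$ is driven by the centered sum
\[
\sum_{i=2}^n \left( \frac{1}{W^\da_{(1)} - X^\da_{(i)}} - \E\bigl[(W^\da_{(1)} - X_i)^{-1} \mid X^\da_{(1)}\bigr] \right).
\]
A direct tail computation using the density from Assumption \ref{ass:ComplexMu} shows $(W^\da_{(1)} - X_i)^{-1}$ is square-integrable for $\alpha > 0$, has variance with a logarithmic edge at $\alpha = 0$, and is heavy-tailed of index $2+\alpha$ for $-1 < \alpha < 0$. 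The classical CLT in the first two cases and the stable CLT in the third produce the complex Gaussian and $(2+\alpha)$-stable limits, with $\mathfrak{a}_n$ chosen to match the appropriate scale.

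The main obstacle is the heavy-tailed regime $-0.095 < \alpha < 0$: the same extreme roots that determine $X^\da_{(1)}$ also contribute non-trivially to the fluctuation sum, so one must carefully disentangle the conditioning on $X^\da_{(1)}$ from the stable-tail contributions of the remaining summands. The precise threshold $\alpha > -0.095$ appears to emerge from requiring both the pairing error and the discrepancy $1-F_R(\abs{W^\da_{(1)}})$ to be $o(\mathfrak{a}_n^{-1})$; once $\alpha$ drops below this value the deterministic edge-correction overwhelms the stable fluctuation, and this linearization strategy would have to be replaced by a more delicate joint analysis.
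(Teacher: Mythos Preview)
Your sketch follows the same overall strategy as the paper, which proves Theorem~\ref{thm:gauss} simply by deferring to Theorems~\ref{thm:maxPair}, \ref{thm:manyPairNeg}, and~\ref{thm:fluctNeg}; those in turn are established via a deterministic pairing criterion (Theorem~\ref{thm:determ}), uniform control of the empirical Cauchy--Stieltjes sum $\overline{M}_\mu(z)$ on an $\eps$-net, and a CLT/stable-CLT reduction---exactly the architecture you outline.

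Two points in your sketch are off, however. First, the step ``$\zeta_n$ is driven by the centered sum $\sum_{i\geq 2}\bigl((W^\da_{(1)}-X^\da_{(i)})^{-1}-\E[\,\cdot\mid X^\da_{(1)}]\bigr)$'' hides a real obstruction: conditionally on $X^\da_{(1)}$ the remaining order statistics are neither independent of one another nor of $X^\da_{(1)}$, so no CLT applies directly. The paper resolves this not by conditioning but by exploiting radial symmetry: it replaces $X^\da_{(1)}$ by the unit vector $e^{\sqrt{-1}\arg(X^\da_{(1)})}$, which is genuinely independent of $(|X^\da_{(1)}|,X^\da_{(2)},\dots,X^\da_{(n)})$, and hence equal in distribution to a fresh uniform $U_1$ on the circle, independent of all the $X_j$. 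This trick, together with showing that the substitution introduces a negligible error, is the actual ``disentangling'' and is not something a straightforward conditioning argument delivers. Second, your diagnosis of the threshold $-0.095$ is incorrect: it does not come from comparing the pairing error or $1-F_R$ against $\mathfrak{a}_n^{-1}$, but from a purely technical constraint in bounding $\P(H_n^*)$ in Lemma~\ref{lem:badEventSmallNeg}. There the net-plus-Markov argument produces an exponent $-\tfrac{\alpha^2+21\alpha+2}{4(\alpha+1)}$ on $n$, which is negative precisely for $\alpha$ above the larger root of $\alpha^2+21\alpha+2=0$, i.e.\ $\alpha>-0.095$. The paper explicitly flags this as a proof artifact and conjectures the result for all $-1<\alpha<0$.
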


This theorem immediately follows from Theorems \ref{thm:maxPair}, \ref{thm:manyPairNeg}, and \ref{thm:fluctNeg} below. Equation \eqref{eqn:asymRefine} could be thought of as an asymptotic refinement of the Gauss--Lucas theorem.
Indeed, the Gauss--Lucas theorem asserts that the critical points of a polynomial are always contained in the convex hull of its roots, so in particular, the critical points of $p_n(z)$ lie in  a disk of radius $\abs{X^\da_{(1)}}$, centered at the origin.
Theorem \ref{thm:gauss} shows that, with high probability, the critical points are contained in a disk of radius $\abs{X^\da_{(1)}}(1-n^{-1}) + o(n^{-1})$, centered at the origin. 
In other words, for random polynomials, when compared to the roots, the critical points move inwards by a factor of $n^{-1}$.  
In fact, we conjecture that each derivative should move the roots inward by a factor of $n^{-1}$, so that the zeros of the $k$-th derivative will be (roughly) a distance of $k/n$ from the original roots of the polynomial. 

Equation \eqref{eqn:asymRefineConvergence} details the asymptotic fluctuations of the largest critical point about its predicted location from \eqref{eqn:asymRefine}. Note the very different behavior in the fluctuations based on whether $\alpha \geq 0$ or $\alpha < 0$. Our main results below give a more detailed picture and several generalizations.

\begin{theorem}[Pairing when $\alpha \geq 0$]\label{thm:manyPair}
Suppose $X_1, X_2, \ldots$ are i.i.d.\ draws from a distribution $\mu$ that satisfies Assumption \ref{ass:ComplexMu} with $\alpha \geq 0$, fix $\delta \in \left(\frac{1}{4\alpha +3}, \frac{1}{\alpha +1}\right)$, and suppose $c_n =\omega(1)$ is a positive sequence satisfying $\log(c_n) = o(\log{n})$. Then there is a constant $C> 0$ so that with probability $1-O(c_n^{-1})$, the following are true statements concerning the polynomial  $p_n(z) := \prod_{j=1}^n(z-X_j)$ and its roots and critical points that lie in the annulus
\[
\mathcal{A}_n := \set{z\in \C : 1-\frac{1}{n^\delta} \leq \abs{z} \leq 1}:
\]
\begin{enumerate}[(\thetheorem.i)]	
	\item \label{item:enoughCpts} There are at least $\frac{n^{1-\delta(\alpha+1)}}{c_n\log{n}}$ critical points of $p_n$ in $\mathcal{A}_n$.
	\item \label{item:rtsToCp} Within distance $n^{-(5+\delta)/6}$ of each root $X_i \in \mathcal{A}_n$, $i \in [n]$, there is precisely one critical point $W_i^{(n)}$ of $p_n$, and these critical points satisfy
	\begin{equation}\label{eqn:rtsToCptsBd}
	\max_{i: X_i \in \mathcal{A}_n}\abs{W_i^{(n)} - X_i + \frac{1}{n} \frac{1}{\frac{1}{n-1}\sum_{j\neq i}\frac{1}{X_i -X_j}}} < \frac{1}{n^{(3-\delta)/2}}
	\end{equation}
	and
	\begin{equation}\label{eqn:rtsToCptsBd2}
		\max_{i: X_i \in \mathcal{A}_n}\abs{W_i^{(n)} - X_i\left(1-n^{-1}\right)}< \frac{C}{nc_n}.
	\end{equation}
	\item \label{item:iota} There is an injection $\iota$ from the set of critical points of $p_n$ that lie in $\mathcal{A}_n$ to the set of indices $[n]$ of the roots of $p_n$, so that each critical point $W \in \mathcal{A}_n$ corresponds to precisely one root $X_{\iota(W)} \in \mathcal{A}_n$ according to the relationship
	\begin{equation}\label{eqn:cptsToRtsBd}
	 	W = X_{\iota(W)}\left(1-n^{-1} + o(n^{-1})\right).
	\end{equation}
	More specifically, we have
	\begin{equation}\label{eqn:cptsToRtsUniformBd}
	\max_{{\rm c.p.}\; W \in \mathcal{A}_n}\abs{W - X_{\iota(W)}\left(1-n^{-1}\right)}< \frac{C}{nc_n},
	\end{equation}
	so the asymptotic notation in \eqref{eqn:cptsToRtsBd} is uniform over all critical points $W \in \mathcal{A}_n$.
\end{enumerate}
\end{theorem}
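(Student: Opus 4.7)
My plan is to work with the logarithmic derivative
\[
g_n(z) := \frac{p_n'(z)}{p_n(z)} = \sum_{j=1}^n \frac{1}{z - X_j},
\]
whose zeros (off the set of roots) are exactly the critical points of $p_n$. Fix an index $i$ and split $g_n(z) = (z-X_i)^{-1} + h_i(z)$ with $h_i(z) := \sum_{j\neq i}(z-X_j)^{-1}$; writing $S_i := h_i(X_i) = \sum_{j\neq i}(X_i-X_j)^{-1}$, the equation $g_n(z)=0$ is equivalent to $(X_i-z)h_i(z)=1$, whose linearization at $X_i$ yields the candidate critical point $X_i - 1/S_i$. By the law of large numbers $S_i$ should concentrate around $(n-1) m_\mu(X_i) = (n-1) F_R(\abs{X_i})/X_i$ via \eqref{eqn:RadStiel}, and since $F_R(\abs{X_i}) = 1 + o(1)$ for $X_i \in \mathcal{A}_n$ by \eqref{eqn:RadCdfBds}, the predicted location simplifies to $X_i(1 - 1/n) + o(1/n)$, which is exactly \eqref{eqn:rtsToCptsBd2}.

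\textbf{Steps 1 (concentration) and 2 (Rouch\'e).} First I would establish, with probability $1 - O(c_n^{-1})$, that the following events hold jointly for every $i$ with $X_i \in \mathcal{A}_n$: $S_i = (n-1) m_\mu(X_i) + o(n^{(1-\delta)/2})$; the sums $\sum_{j\neq i}\abs{X_i - X_j}^{-k}$ for $k=2,3$ obey the expected polynomial bounds; and $\min_{j\neq i}\abs{X_i-X_j} \geq n^{-\beta}$ for some fixed $\beta$. These follow from standard Bernstein/Hoeffding-type concentration and a minimum-gap calculation for i.i.d.\ samples from a density comparable to $(1-r)^\alpha$, and the lower restriction $\delta > 1/(4\alpha+3)$ is what keeps the union bound over the $O(n)$ indices feasible. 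I would then apply Rouch\'e to $\phi_i(z) := (X_i-z)h_i(z) - 1$ on the circle $\{\abs{z-X_i+1/S_i} = r_n\}$ with $r_n$ just under $n^{-(5+\delta)/6}$, splitting $\phi_i = [(X_i-z)S_i - 1] + (X_i-z)(h_i(z) - S_i)$ and using the Taylor identity $h_i(z) - S_i = -(z-X_i)\sum_{j\neq i}(X_i-X_j)^{-1}(z-X_j)^{-1}$ together with the bounds above to dominate the second term by the first on the contour. This yields existence and uniqueness of a critical point $W_i^{(n)}$ in each disk, the refined location bound \eqref{eqn:rtsToCptsBd}, and after substituting the concentration estimate for $S_i$ and absorbing lower-order terms, the simpler \eqref{eqn:rtsToCptsBd2}.

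\textbf{Step 3 (counting and injection).} For \ref{item:enoughCpts}, $\P(X_j \in \mathcal{A}_n) = 1 - F_R(1 - n^{-\delta}) \asymp n^{-\delta(\alpha+1)}$ by \eqref{eqn:RadCdfBds}, so Chernoff-type concentration yields at least $n^{1-\delta(\alpha+1)}/(c_n\log n)$ roots in $\mathcal{A}_n$ with the required probability, each carrying a distinct critical point produced by Step 2. For the injection $\iota$ in \ref{item:iota}, I would rerun Step 2 on a slightly thickened annulus $\widetilde{\mathcal{A}}_n := \{1 - 2n^{-\delta} \leq \abs{z} \leq 1\}$ and invoke the root-to-critical-point pairing of \cite{KS,OW2} (or a mild quantitative variant): with high probability every critical point is within $O(n^{-1+o(1)})$ of some root, so any critical point $W \in \mathcal{A}_n$ is close to some $X_j \in \widetilde{\mathcal{A}}_n$ and by uniqueness must coincide with the one produced by the Rouch\'e step in the disk around $X_j$. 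Setting $\iota(W) := j$ gives an injection, and \eqref{eqn:cptsToRtsUniformBd} is then inherited from \eqref{eqn:rtsToCptsBd2} applied on $\widetilde{\mathcal{A}}_n$.

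\textbf{Main obstacle.} The hardest piece is the uniform concentration in Step 1. One needs polynomial precision for $S_i$ and for the inverse-distance sums simultaneously at all $\sim n^{1-\delta(\alpha+1)}$ edge indices despite the density degenerating like $(1-r)^\alpha$, and the permissible range $\delta \in (1/(4\alpha+3), 1/(\alpha+1))$ arises precisely as the window in which the union bound can be closed against the quadratic error in Step 2. Tracking these exponents cleanly, along with properly handling the boundary case $\alpha = 0$ (which is why the theorem permits arbitrary $c_n = \omega(1)$ rather than a polylog), is where the bulk of the technical work should live.
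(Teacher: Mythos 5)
Your overall architecture matches the paper's: linearize the critical-point equation $\sum_j (z-X_j)^{-1}=0$ around each large root, control the empirical Stieltjes transform and its local Lipschitz behavior, apply an inverse-function-type argument (your Rouch\'e step plays the role of the paper's deterministic Theorem~\ref{thm:determ}), then count and match. However, two steps as written contain genuine gaps. First, the concentration claim $S_i=(n-1)m_\mu(X_i)+o(n^{(1-\delta)/2})$ is false and cannot be obtained from Bernstein/Hoeffding: for $z\in\mathcal{A}_n$ strictly inside the disk the summands $(z-X_j)^{-1}$ have (at least logarithmically) divergent second moment, and the true fluctuations of $S_i$ are of order $\sqrt{n}$ (indeed $\sqrt{n\log n}$ when $\alpha=0$ --- this is the content of Theorem~\ref{thm:maxPair}), which exceeds $n^{(1-\delta)/2}$ for every $\delta>0$. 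Fortunately only $O(n/c_n)$ precision is needed for \eqref{eqn:rtsToCptsBd2}, and the reference point in \eqref{eqn:rtsToCptsBd} is the random $S_i$ itself, so the theorem survives; but your Step 1 must be replaced by a truncated second-moment/Chebyshev bound on a deterministic net together with a Lipschitz interpolation (the paper's events $H_n$, $F_n$, $G_n$ and Lemmas~\ref{lem:nearLip}--\ref{lem:nearCS}), not by sub-Gaussian concentration at the random points $X_i$.

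Second, and more seriously, your Step 3 for the injection $\iota$ rests on the assertion that ``with high probability every critical point is within $O(n^{-1+o(1)})$ of some root,'' attributed to \cite{KS,OW2}. Those results pair a \emph{fixed, prescribed} root with its nearest critical point; they do not give a statement uniform over all critical points in $\mathcal{A}_n$, and no ``mild variant'' does. The needed uniform statement is exactly $|W-X_{i_W}|=\bigl(n\,|\overline{M}_\mu(W)|\bigr)^{-1}\le 2/n$ for \emph{every} critical point $W\in\mathcal{A}_n$, which requires a lower bound on $|\overline{M}_\mu(\cdot)|$ at the data-dependent points $W$; this is precisely what forces the net-plus-Lipschitz construction rather than a per-index union bound. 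Relatedly, in your count for \ref{item:enoughCpts} a root at radius exactly $1-n^{-\delta}$ produces a critical point at radius $\approx(1-n^{-\delta})(1-n^{-1})$, which lies \emph{outside} $\mathcal{A}_n$; you need an order-statistics event guaranteeing $\gtrsim n^{1-\delta(\alpha+1)}/(c_n\log n)$ roots of modulus at least $1-\tfrac{1}{2}n^{-\delta}$ (cf.\ Lemma~\ref{lem:maxCDF}) so that the paired critical points actually land in $\mathcal{A}_n$.
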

We note that for $\alpha \geq 0$, Theorem \ref{thm:manyPair} gives detailed information about pairings between the largest $n^{1-\delta(\alpha+1)}$ roots and critical points of $p_n$ (up to slowly growing factors involving $\log{n}$), where $1/(4\alpha+3) < \delta < 1/(\alpha+1)$. Using the lower bound on $\delta$, in the case $\alpha = 0$, we can control around $n^{2/3}$ root-and-critical-point pairs, and for large $\alpha > 0$, we can control nearly $n^{3/4}$ pairings. While the upper bound on $\delta$ is sharp in the sense that we expect the largest roots to be within a distance $n^{-1/(\alpha+1)}$ of the unit circle (see e.g.\ Lemma \ref{lem:fewRts} for details), the lower bound restriction, that $\delta > 1/(4\alpha+3)$, is likely an artifact of our method of proof in Section \ref{sec:proofsPos} below, and this lower bound could almost certainly be improved. It is challenging to do so, however, because widening the annulus $\mathcal{A}_n$ means, among other things, that one has to establish (simultaneous) regularity of the sums $\frac{1}{n-1}\sum_{j\neq i}\frac{1}{X_i-X_j}$ for an increasingly large number of points $X_i$. We direct the reader to the arguments in Section \ref{sec:proofsPos} for details.

For $\alpha \geq 0$, the next theorem concerns the joint behavior of the largest (in magnitude) $\ell_n = o(\sqrt[16]{\log{n}})$ critical points of $p_n$, including the joint fluctuations of the largest $L$ critical points, where $L$ is any fixed positive integer. Notice that the behavior of these critical points is slightly different in the regimes $\alpha = 0$ and $\alpha > 0$ because the variables $\frac{1}{z-X_j}$, $\abs{z} = 1$, have up to (but not including) $2 + \alpha$ moments. Consequently, the $\alpha = 0$ case requires a heavy-tailed central limit theorem involving the extra scaling factor $a_n = \sqrt{\log{n}}$ that appears in Theorem \ref{thm:maxPair} below. 
\begin{theorem}[Joint fluctuations of largest critical points when $\alpha \geq 0$]\label{thm:maxPair}
	Suppose $X_1, X_2, \ldots$ are i.i.d.\ draws from a distribution $\mu$ that satisfies Assumption \ref{ass:ComplexMu} with $\alpha\geq 0$, let $\ell_n =\omega(1)$ be a sequence of positive integers with $\ell_n = o(\sqrt[16]{\log{n}})$, and in order of decreasing magnitude, denote the largest $\ell_n$ critical points of $p_n := \prod_{j=1}^n(z-X_j)$ by $W^\da_{(1)}, \ldots, W^\da_{(\ell_n)}$.  Then, there is a constant $C> 0$ so that with probability $1-O(\ell_n^{-1})$,
	\begin{equation}\label{eqn:maxPair}
		\max_{1\leq i \leq \ell_n}\abs{W^\da_{(i)} - X^\da_{(i)}(1-n^{-1})} < \frac{C}{n\ell_n^4},
	\end{equation}
	and 
	\begin{equation}\label{eqn:maxPairSharper}
		\max_{1\leq i \leq \ell_n}\abs{W^\da_{(i)} - X^\da_{(i)} + \frac{1}{n} \frac{1}{\frac{1}{n-1}\sum_{\substack{j=1\\ j\neq i}}^n\frac{1}{X^\da_{(i)} -X^\da_{(j)}}}} < C\ell_n^8\cdot n^{-\frac{3}{2}-\frac{\alpha^2}{2(1+\alpha)^2}},
		%\begin{cases}
		%	\frac{C\ell_n^8}{n^{3/2}} & \text{if $\alpha = 0$},\\
		%	\frac{C\ell_n^8}{n^{3/2}}\cdot n^{-\frac{\alpha^2}{2(1+\alpha)^2}} &\text{if $\alpha > 0$},
		%\end{cases}
	\end{equation}
	and for each $i \in [\ell_n]$, $W^\da_{(i)}$ is the unique critical point of $p_n$ that is within a distance $n^{-11/12}$ of $X^\da_{(i)}$. In addition, provided $\lim_{\abs{z} \to 1^-} f_\mu(z) = f_\mu(1)$ in the case $\alpha =0$, if $L$ is any fixed positive integer and
	\[
	a_n := \begin{cases}
		\sqrt{\log{n}} & \text{if $\alpha = 0$,}\\1 &\text{if $\alpha > 0$},
	\end{cases}
	\]
	we have
	\begin{equation}\label{eqn:maxConv}
		\left(\frac{n^{3/2}}{a_ne^{\sqrt{-1}\arg(X^\da_{(i)})}}\left(W^\da_{(i)} - X^\da_{(i)}(1-n^{-1})\right)\right)_{i=1}^L \longrightarrow( N_1, \ldots,  N_L)
	\end{equation}
	in distribution as $n\to \infty$, where each coordinate $ N_i$, $1 \leq i \leq L$ has a complex Gaussian marginal distribution with mean zero and covariance structure given by
	\begin{equation}\label{eqn:CovStructurePos}
		\begin{aligned}
		\var\left(\Re( N_i)\right) &=
			\begin{cases}
				 \frac{\pi f_\mu(1)}{4} & \text{if $\alpha =0$,}\\
				\var\left(\Re\left(\frac{X_1}{1-X_1}\right)\right) & \text{if $\alpha > 0$};
			\end{cases}\\
		\var\left(\Im( N_i)\right) &= 
			\begin{cases}
				\frac{\pi f_\mu(1)}{4} & \text{if $\alpha =0$,}\\
				\var\left(\Im\left(\frac{X_1}{1-X_1}\right)\right) & \text{if $\alpha > 0$;}
			\end{cases}\\
		\cov\left(\Re( N_i),\Im( N_i)\right) &= 0. 
		\end{aligned}
	\end{equation}
	When $\alpha = 0$, the vector $( N_1, \ldots,  N_L)$ has i.i.d.\ coordinates.  When $\alpha > 0$, the joint distribution of $( N_1, \ldots,  N_L)$ can be described by a compound Gaussian distribution; see Lemma \ref{lem:CLT} for details.   
\end{theorem}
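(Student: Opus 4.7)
The plan is to leverage Theorem \ref{thm:manyPair} to upgrade the general pairing estimates to the sharper bounds that hold for the extremal critical points, and then to extract Gaussian fluctuations by applying a central limit theorem to the Cauchy--Stieltjes-type sum $S_i := \frac{1}{n-1}\sum_{j \neq i}\frac{1}{X^\da_{(i)} - X^\da_{(j)}}$. The starting point is the identity $W^\da_{(i)} - X^\da_{(i)} = -1/(nS_i) + O(n^{-(3-\delta)/2})$ inherited from \eqref{eqn:rtsToCptsBd}; combined with the radial-symmetry formula \eqref{eqn:RadStiel} giving $1/m_\mu(X^\da_{(i)}) = X^\da_{(i)}/F_R(\abs{X^\da_{(i)}}) = X^\da_{(i)}(1 + O(n^{-1}))$, this translates $n^{-1/2}$-scale fluctuations of $S_i$ about $m_\mu(X^\da_{(i)})$ into $n^{-3/2}$-scale fluctuations of $W^\da_{(i)}$ about $X^\da_{(i)}(1-n^{-1})$.

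\textbf{Extremal pairing.} First I would apply Theorem \ref{thm:manyPair} with $c_n = \ell_n$ and a small admissible $\delta$, and note (via \eqref{eqn:RadCdfBds} and a standard extreme-value calculation) that with probability $1-o(1)$ the largest $\ell_n$ roots lie in $\mathcal{A}_n$, at radial depth of order $n^{-1/(\alpha+1)}$. The injection $\iota$ in part \ref{item:iota} produces, for each $X^\da_{(i)}$ with $i \leq \ell_n$, a critical point $W_i$ satisfying \eqref{eqn:cptsToRtsUniformBd}. Since consecutive magnitude gaps $\abs{\abs{X^\da_{(i)}} - \abs{X^\da_{(i+1)}}}$ are of order $n^{-1/(\alpha+1)}$, which greatly exceeds the perturbation $C/(n\ell_n)$ when $\alpha \geq 0$, this pairing preserves the magnitude ordering on the top $\ell_n$ elements, so $W_i = W^\da_{(i)}$. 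Uniqueness of $W^\da_{(i)}$ within $B(X^\da_{(i)}, n^{-11/12})$ follows from Rouch\'e's theorem applied to $p_n'(z)/p_n(z)$ on a circle of radius $\asymp n^{-11/12}$ around $X^\da_{(i)}$: the polar term $1/(z - X^\da_{(i)})$ dominates the remaining $\sum_{k \neq i}1/(z - X_k)$ on this circle, since the next-closest $X_k$ is at distance $\gtrsim n^{-1/(\alpha+1)} \gg n^{-11/12}$ when $\alpha \geq 0$.

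\textbf{Sharp bounds.} Estimate \eqref{eqn:maxPair}, with rate $1/(n\ell_n^4)$ on an event of probability $1 - O(\ell_n^{-1})$, is strictly sharper than what Theorem \ref{thm:manyPair} provides directly, and requires refining the concentration estimate on $S_i - m_\mu(X^\da_{(i)})$ in the extremal regime. For $\alpha > 0$ the summand $1/(X^\da_{(i)} - X_1)$ has finite second moment, and a Chebyshev bound (with truncation to handle the few $X_k$ atypically close to $X^\da_{(i)}$) gives $\abs{S_i - m_\mu(X^\da_{(i)})} = O(\ell_n^{c}\, n^{-1/2 - \alpha^2/(2(1+\alpha)^2)})$ uniformly in $i \leq \ell_n$, with probability $1 - O(\ell_n^{-1})$; the fractional exponent emerges by balancing the truncation level against the tail contribution. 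For $\alpha = 0$ the same scheme produces the analogous estimate up to logarithmic factors. Substituting into the first-order expansion \eqref{eqn:rtsToCptsBd} yields \eqref{eqn:maxPairSharper}, and using $1/m_\mu(X^\da_{(i)}) = X^\da_{(i)}(1 + O(n^{-1}))$ converts this into \eqref{eqn:maxPair}.

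\textbf{Joint Gaussian limit.} To prove \eqref{eqn:maxConv} I fix $L$ and observe that the expansion above gives
\[
T_i := \frac{n^{3/2}}{a_n e^{\sqrt{-1}\arg(X^\da_{(i)})}}\bigl(W^\da_{(i)} - X^\da_{(i)}(1-n^{-1})\bigr) = \frac{\sqrt{n}}{a_n}\,e^{\sqrt{-1}\arg(X^\da_{(i)})}\bigl(S_i - m_\mu(X^\da_{(i)})\bigr) + o_\P(1).
\]
Conditional on the top $L$ roots, the remaining $n-L$ values $\{X^\da_{(L+1)}, \ldots, X^\da_{(n)}\}$ are i.i.d.\ from $\mu$ restricted to $\{\abs{z} \leq \abs{X^\da_{(L)}}\}$, which is asymptotically $\mu$ itself. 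A multivariate central limit theorem on these variables---classical in the $\alpha > 0$ case (finite variance) and heavy-tailed with $a_n = \sqrt{\log n}$ normalization in the $\alpha = 0$ case (logarithmically divergent variance, tamed by truncation at level $\sim 1/\sqrt{\log n}$)---gives joint conditional Gaussian convergence of $(T_1, \ldots, T_L)$ given the angles $(\theta_1, \ldots, \theta_L) := (\arg X^\da_{(1)}, \ldots, \arg X^\da_{(L)})$. The conditional marginal variance reduces, by rotational symmetry and \eqref{eqn:RadStiel}, to $\var(X_1/(1-X_1))$ for $\alpha > 0$; for $\alpha = 0$ the identity $\int_0^{2\pi}\abs{e^{\sqrt{-1}\theta} - \rho e^{\sqrt{-1}\phi}}^{-2}d\phi = 2\pi/(1-\rho^2)$ combined with the density limit $f_R(\rho)/(2\pi) \to f_\mu(1)$ as $\rho \to 1^-$ extracts the constant $\pi f_\mu(1)/4$.

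\textbf{Main obstacle.} The subtlest point is the independence of the limiting Gaussians across $i$. Conditional on the angles, the cross-covariance $\cov(T_i, \overline{T_j})$ does \emph{not} vanish---a residue computation gives
\[
\cov(T_i, \overline{T_j}) \to e^{\sqrt{-1}(\theta_i - \theta_j)}\left[\int_0^1\frac{f_R(\rho)}{1 - \rho^2 e^{\sqrt{-1}(\theta_j-\theta_i)}}\,d\rho - 1\right],
\]
which is a nonzero function of $\beta := \theta_j - \theta_i$. Independence of the unconditional limit emerges because the angles become asymptotically uniform and mutually independent on $[0, 2\pi)$, and the displayed cross-covariance---viewed as a function of $\beta$---integrates to zero against the uniform measure; this is verified by a contour integration in $e^{\sqrt{-1}\beta}$, whose integrand has no pole inside the unit disk once the constant term is subtracted. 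A matching computation for all higher joint moments (or equivalently, the joint characteristic function) confirms that the unconditional limit factors into i.i.d.\ complex Gaussians with marginal covariance \eqref{eqn:CovStructurePos}. In the $\alpha = 0$ regime, the heavy-tailed CLT must additionally be executed with explicit control of the truncation-induced mean correction, and this is where the density limit hypothesis $\lim_{\abs{z}\to 1^-}f_\mu(z) = f_\mu(1)$ enters.
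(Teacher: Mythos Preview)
Your proposal diverges from the paper's argument in two places, and in both there is a real gap.

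\textbf{The exponent in \eqref{eqn:maxPairSharper}.} You attribute the bound $C\ell_n^8 n^{-3/2-\alpha^2/(2(1+\alpha)^2)}$ to a concentration estimate on $S_i - m_\mu(X^\da_{(i)})$, but that is not its source. The quantity on the left of \eqref{eqn:maxPairSharper} is the \emph{second-order} error in the Newton-type approximation $W \approx X - 1/(nS)$; by Theorem~\ref{thm:determ} it equals $C(k_{\rm Lip}+1)/n^2$, where $k_{\rm Lip}$ is the Lipschitz constant of $z\mapsto\frac{1}{n-1}\sum_{j\neq i}\frac{1}{z-X^\da_{(j)}}$ on a ball of radius $O(1/n)$ about $X^\da_{(i)}$. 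Control of $S_i$ at the single point $X^\da_{(i)}$ cannot sharpen this; substituting a Chebyshev bound on $|S_i-m_\mu|$ into \eqref{eqn:rtsToCptsBd} leaves the error there at $n^{-(3-\delta)/2} > n^{-3/2}$, strictly worse than what is claimed. The paper instead runs a two-scale annulus argument: with $\eps_n^* \sim (\ell_n^2/n)^{1/(\alpha+1)}$ and $\delta_n^* \sim \ell_n^{-3}n^{-1/(2(\alpha+1))}$, the extremal roots are $\delta_n^*$-isolated inside the narrow annulus $\mathcal{A}_n^*$, and the special cases \ref{it:spCase1}, \ref{it:spCase2} of Lemma~\ref{lem:nearLip} yield $k_{\rm Lip}\ll\ell_n^8 n^{(1+2\alpha)/(2(1+\alpha)^2)}$, which after division by $n^2$ gives the stated exponent. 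Relatedly, plugging $c_n=\ell_n$ into Theorem~\ref{thm:manyPair} only gives $C/(n\ell_n)$ in \eqref{eqn:rtsToCptsBd2}; the paper takes $c_n=\ell_n^4$ to reach \eqref{eqn:maxPair}.

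\textbf{Independence in the Gaussian limit.} Your resolution of the cross-covariance problem does not work. You correctly see that, conditional on the angles, the limiting Gaussian has an off-diagonal entry $\Sigma_{12}(\beta)=\E_R[R^2/(e^{\sqrt{-1}\beta}-R^2)]$ that integrates to zero over uniform $\beta$. But that only makes the \emph{second-moment} matrix of the unconditional limit diagonal; it does not make the limit Gaussian or a product measure. The unconditional limit is the covariance mixture $\int N(0,\Sigma(\beta))\,d\beta$, and a fourth-moment check already fails: by Wick's formula $\E[|M_1|^2|M_2|^2\mid\beta]=\Sigma_{11}\Sigma_{22}+|\Sigma_{12}(\beta)|^2$, so after averaging $\E[|M_1|^2|M_2|^2]=\Sigma_{11}\Sigma_{22}+\E_\beta|\Sigma_{12}(\beta)|^2>\Sigma_{11}\Sigma_{22}$ since $\Sigma_{12}(\beta)\not\equiv 0$. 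Your ``matching computation for all higher joint moments'' would therefore \emph{disprove} factorization rather than confirm it. The paper avoids the condition-then-average route altogether: by radial symmetry the vector $(e^{\sqrt{-1}\arg X^\da_{(1)}},\ldots,e^{\sqrt{-1}\arg X^\da_{(L)}})$ is jointly independent of $(X^\da_{(L+1)},\ldots,X^\da_{(n)})$ and uniform, so it may be replaced in distribution by fresh i.i.d.\ circle variables $U_1,\ldots,U_L$ independent of all the $X_j$, after which the Cram\'er--Wold CLT is applied directly to $\frac{1}{a_n\sqrt{n}}\sum_j\sum_k t_k X_j/(U_k-X_j)$ (for $\alpha=0$ via Corollary~\ref{cor:CLT}, where the conditional limiting variance $\sum_k\pi|t_k|^2 f_\mu(1)/4$ is visibly independent of the $U_k$'s). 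If you want to pursue your conditional approach for $\alpha>0$, you would need to show that the conditional covariance $\Sigma(\beta)$ is actually constant in $\beta$, not merely that its off-diagonal entries average to zero.
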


Even though there are differences between the $\alpha = 0$ and $\alpha > 0$ cases, the fluctuations obtained in Theorem \ref{thm:maxPair} for the largest critical point are always Gaussian.  This contrasts with the behavior of the largest root, whose asymptotic fluctuations are described by a max-stable distribution.  
We now present our results for the $\alpha < 0$ case, where we see significant differences compared to the $\alpha \geq 0$ cases.

\begin{theorem}[Pairing when $\alpha < 0$]\label{thm:manyPairNeg}
	Suppose $X_1, X_2, \ldots$ are i.i.d.\ draws from a distribution $\mu$ that satisfies Assumption \ref{ass:ComplexMu} with $-0.095<\alpha < 0$, let $\ell_n =\omega(1)$ be a sequence of positive integers with $\ell_n < \sqrt{\log{n}}$, and fix $\delta \in (0, -\alpha)$. Then, there is a constant $C> 0$ so that with probability $1-O(\ell_n^{-1})$, the following are true statements concerning the polynomial $p_n := \prod_{j=1}^n(z-X_j)$ and its roots and critical points that lie in the annulus
	\[
	\mathcal{A}_n^- := \set{z\in \mathbb{C} : 1-\frac{\ell_n}{n} \leq \abs{z} \leq 1}:
	\]	
	\begin{enumerate}[(\thetheorem.i)]	
		\item \label{item:enoughCptsNeg} There are at least $n^\delta$ critical points of $p_n$ in $\mathcal{A}_n^-$.
		\item \label{item:rtsToCpNeg} Within distance $3/n$ of each root $X_i \in \mathcal{A}_n^-$, $i \in [n]$, there is precisely one critical point $W_i^{(n)}$ of $p_n$, and these critical points satisfy
		\begin{equation}\label{eqn:rtsToCptsBdNeg}
			\max_{i: X_i \in \mathcal{A}_n^-}\abs{W_i^{(n)} - X_i + \frac{1}{n} \frac{1}{\frac{1}{n-1}\sum_{j\neq i}\frac{1}{X_i -X_j}}} <C\ell_nn^{-\frac{5\alpha +3}{2(\alpha+1)}}
		\end{equation}
		and
		\begin{equation}\label{eqn:rtsToCptsBd2Neg}
			\max_{i: X_i \in \mathcal{A}_n^-}\abs{W_i^{(n)} - X_i\left(1-n^{-1}\right)}< C\left(\frac{1}{\ell_n^4n}
			\right)^{1/(\alpha +1)}.
		\end{equation}
		\item \label{item:iotaNeg} There is an injection $\iota$ from the set of critical points of $p_n$ that lie in $\mathcal{A}_n^-$ to the set of indices $[n]$ of the roots of $p_n$, so that each critical point $W \in \mathcal{A}_n^-$ corresponds to precisely one root $X_{\iota(W)} \in \mathcal{A}_n^-$ according to the relationship
		\begin{equation}\label{eqn:cptsToRtsBdNeg}
			W = X_{\iota(W)}\left(1-n^{-1} + o(n^{-1})\right).
		\end{equation}
		More specifically, we have
		\begin{equation}\label{eqn:cptsToRtsUniformBdNeg}
			\max_{{\rm c.p.}\; W \in \mathcal{A}_n^-}\abs{W - X_{\iota(W)}\left(1-n^{-1}\right)}< C\left(\frac{1}{\ell_n^4n}
			\right)^{1/(\alpha +1)},
		\end{equation}
		so the asymptotic notation in \eqref{eqn:cptsToRtsBdNeg} is uniform over critical points $W \in \mathcal{A}_n^-$.
		
		\item \label{item:maxPairNeg} The largest $\ell_n$ roots of $p_n$ satisfy
		\begin{equation}\label{eqn:maxPairRtsInNeg}
			1-\left(\frac{\ell_n^2}{n}\right)^{\frac{1}{\alpha +1}} \leq \abs{X^{\da}_{(i)}} \leq 1 - \left(\frac{1}{n\ell_n}\right)^{\frac{1}{\alpha +1}},\ 1 \leq i \leq \ell_n,
		\end{equation} and, denoting the largest $\ell_n$ critical points of $p_n$ in order of decreasing magnitude by $W^\da_{(1)}, \ldots, W^\da_{(\ell_n)}$, we have
		\begin{equation}\label{eqn:maxPairNeg}
			\max_{1\leq i \leq \ell_n}\abs{W^\da_{(i)} - X^\da_{(i)}(1-n^{-1})} < C\left(\frac{1}{\ell_n^4n}
			\right)^{1/(\alpha +1)};
		\end{equation} 
		\begin{equation}\label{eqn:maxPairSharperNeg}
			\max_{1\leq i \leq \ell_n}\abs{W^\da_{(i)} - X^\da_{(i)} + \frac{1}{n} \frac{1}{\frac{1}{n-1}\sum_{\substack{j=1\\ j\neq i}}^n\frac{1}{X^\da_{(i)} -X^\da_{(j)}}}} < C\ell_nn^{-\frac{5\alpha +3}{2(\alpha+1)}};
		\end{equation}
		and for each $i \in \set{1, \ldots, \ell_n}$, $W^\da_{(i)}$ is the unique critical point of $p_n$ that is within a distance $3/n$ of $X^\da_{(i)}$.		
	\end{enumerate}
\end{theorem}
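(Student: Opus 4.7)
The strategy centers on the logarithmic derivative $h_n(z) := p_n'(z)/p_n(z) = \sum_{j=1}^n (z - X_j)^{-1}$, whose zeros are the critical points of $p_n$. Writing $h_n(W) = 0$ near a root $X_i$ as $(W - X_i)^{-1} = -\sum_{j \neq i}(W - X_j)^{-1}$ and approximating the right-hand side at $W = X_i$ gives the ansatz $W - X_i \approx -1/((n-1)S_i)$, where
\[
S_i := \frac{1}{n-1}\sum_{j \neq i}\frac{1}{X_i - X_j} \approx m_\mu(X_i) = \frac{F_R(\abs{X_i})}{X_i} \approx \frac{1}{X_i}
\]
on $\mathcal{A}_n^-$ by \eqref{eqn:RadStiel} and \eqref{eqn:RadCdfBds}. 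The plan is to justify this ansatz through a Rouch\'e argument on the disks $B(X_i, 3/n)$ and then bootstrap to the four parts of the theorem.

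The key technical step is uniform concentration of $S_i$ and of the remainder field $g_i(z) := \sum_{j \neq i}(z - X_j)^{-1}$ on $\partial B(X_i, 3/n)$, for indices $i$ with $X_i \in \mathcal{A}_n^-$. The summands $(X_i - X_j)^{-1}$ have only $2 + \alpha < 2$ finite moments when $\alpha < 0$, with tails behaving like $f_\mu(X_i)t^{-2}$ in the bulk and like $t^{-(2+\alpha)}$ inside the boundary layer. I would truncate the summands at level $t_n \asymp n^{1/(2+\alpha)}$, apply a Fuk--Nagaev-type inequality to the truncated sum, and absorb the untruncated terms via a pairwise union bound. Combined with a first-order Taylor expansion of $g_i$ on $B(X_i, 3/n)$, this yields $\abs{g_i(z) - (n-1) m_\mu(X_i)} = o(n)$ uniformly on $\partial B(X_i, 3/n)$ on an event of probability $1 - O(\ell_n^{-1})$, with the quantitative rate $\abs{S_i - m_\mu(X_i)} = O(\ell_n n^{-(1+\alpha)/(2+\alpha)})$ that drives the error bound $C\ell_n n^{-(5\alpha+3)/(2(\alpha+1))}$ in \eqref{eqn:rtsToCptsBdNeg}.

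On this good event, Rouch\'e on $\partial B(X_i, 3/n)$ compares $h_n(z)$ to the explicit meromorphic function $(z - X_i)^{-1} + (n-1)m_\mu(X_i)$, whose unique zero inside lies at $X_i - 1/((n-1)m_\mu(X_i))$; the comparison goes through because $\abs{(z - X_i)^{-1}}$ and $\abs{(n-1)m_\mu(X_i)}$ are both $\Theta(n)$ on the boundary, with magnitudes differing by $\Omega(n)$. This pins down the unique critical point $W_i^{(n)} \in B(X_i, 3/n)$, and inverting the exact equation $-(W_i^{(n)} - X_i)^{-1} = g_i(W_i^{(n)})$ by further Taylor expansion (and converting $1/((n-1)S_i)$ to $1/(nS_i)$ at the cost of an $O(1/n^2)$ remainder) yields both \eqref{eqn:rtsToCptsBdNeg} and \eqref{eqn:rtsToCptsBd2Neg}. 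For \ref{item:iotaNeg}, the disks $B(X_i, 3/n)$ are pairwise disjoint on the good event by the boundary-layer tail bound applied to pairs of roots, and on $\mathcal{A}_n^- \setminus \bigcup_i B(X_i, 3/n)$ the sum $h_n(z)$ is dominated by $(n-1)m_\mu(z)$, whose magnitude stays $\Theta(n)$, so $h_n$ cannot vanish there. Part \ref{item:enoughCptsNeg} follows from a Chernoff bound on $\#\{i : X_i \in \mathcal{A}_n^-\}$, whose mean is $n(1 - F_R(1 - \ell_n/n)) \asymp \ell_n^{\alpha+1} n^{-\alpha} \gg n^\delta$ for $\delta < -\alpha$.

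Part \ref{item:maxPairNeg} reduces, via part \ref{item:rtsToCpNeg} applied to the indices $1 \leq i \leq \ell_n$, to extreme-value control of the order statistics $\abs{X^\da_{(i)}}$; \eqref{eqn:RadCdfBds} combined with Chernoff on the binomials $\#\{j : \abs{X_j} > r\}$ at two well-chosen radii produces \eqref{eqn:maxPairRtsInNeg} and places $X^\da_{(1)}, \ldots, X^\da_{(\ell_n)}$ inside $\mathcal{A}_n^-$ with probability $1 - O(\ell_n^{-1})$, using the hypothesis $\ell_n < \sqrt{\log n}$. The main obstacle will be the heavy-tailed concentration in the $\alpha < 0$ regime: the summands $(X_i - X_j)^{-1}$ lack a finite second moment, so classical Bernstein-type inequalities fail, and the Fuk--Nagaev truncation scale $t_n \asymp n^{1/(2+\alpha)}$ must be balanced against the Rouch\'e radius $3/n$ and the target error in \eqref{eqn:rtsToCptsBdNeg}. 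I expect the numerical threshold $\alpha > -0.095$ appearing in the user-friendly Theorem \ref{thm:gauss} to arise from this balancing.
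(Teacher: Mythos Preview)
Your overall architecture is essentially the paper's: the Rouch\'e-plus-Taylor step is exactly what the deterministic Theorem~\ref{thm:determ} packages, and your treatment of parts \ref{item:enoughCptsNeg} and \ref{item:iotaNeg} matches the paper's Stages~I--II in Lemma~\ref{lem:manyPairConcNeg}. The genuine methodological difference is in the concentration step: the paper controls $\overline{M}_\mu(z)$ \emph{uniformly} by Markov on a deterministic net $\mathcal{N}_n^*$ of $\mathcal{A}_n^*$ combined with a Lipschitz bound (Lemmas~\ref{lem:nearLipNeg}--\ref{lem:nearCSNeg}), whereas you propose Fuk--Nagaev with truncation at $n^{1/(2+\alpha)}$. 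Your route could plausibly work, and might even yield a different range of admissible $\alpha$; in the paper the threshold $-0.095$ comes specifically from requiring the exponent $-\frac{\alpha^2+21\alpha+2}{4(\alpha+1)}$ in the net-size~$\times$~variance bound for $\P(H_n^*)$ to be negative, not from any Fuk--Nagaev balancing.

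There are two genuine gaps. First, you misidentify the source of the bound \eqref{eqn:rtsToCptsBdNeg}. Inverting $-(W_i-X_i)^{-1}=g_i(W_i)$ gives
\[
W_i-X_i+\frac{1}{nS_i}=\frac{g_i(W_i)-g_i(X_i)}{g_i(W_i)\cdot(n-1)S_i}+O(n^{-2}),
\]
so the error is governed by the Lipschitz constant of $z\mapsto\frac{1}{n}g_i(z)$ on $B(X_i,3/n)$ (equivalently by $|g_i'|$), \emph{not} by $|S_i-m_\mu(X_i)|$. In the paper this Lipschitz constant is $k_{\rm Lip}=2\ell_n n^{(1-\alpha)/(2(\alpha+1))}$ (Lemma~\ref{lem:nearLipNeg}), which produces $k_{\rm Lip}/n^2=\ell_n n^{-(5\alpha+3)/(2(\alpha+1))}$. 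Your claimed rate $|S_i-m_\mu(X_i)|=O(\ell_n n^{-(1+\alpha)/(2+\alpha)})$ is relevant to \eqref{eqn:rtsToCptsBd2Neg} (via $|1/(nS_i)-X_i/n|$) but does not by itself yield \eqref{eqn:rtsToCptsBdNeg}; you need separate control of $\sum_{j\neq i}|z-X_j|^{-2}$, which the paper obtains through the events $E_n$, $F_n$, $G_n$ and a careful annular decomposition.

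Second, your argument for part~\ref{item:maxPairNeg} is incomplete. Knowing that each $X^\da_{(i)}$ has a unique nearby critical point does not yet show that this critical point is $W^\da_{(i)}$; one must rule out that the pairing permutes the order. The paper handles this (Stages~III--IV) by introducing the radial-separation event $F_n^\parallel$, on whose complement the $|X^\da_{(i)}|$ are separated by $(n\ell_n^3)^{-1/(\alpha+1)}$, which dominates the pairing error $(\ell_n^4 n)^{-1/(\alpha+1)}$ and forces $|W^{(n)}_{(i)}|<|W^{(n)}_{(j)}|\Rightarrow|X^\da_{(i)}|<|X^\da_{(j)}|$. You need an analogous step.
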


\begin{theorem}[Fluctuations of largest critical point when $\alpha < 0$]\label{thm:fluctNeg}
	Suppose $X_1, X_2, \ldots$ are i.i.d.\ draws from a distribution $\mu$ that satisfies Assumption \ref{ass:ComplexMu} with $-0.095<\alpha < 0$ and associated radial density $f_R(r)$, and denote the largest in magnitude critical point of $p_n := \prod_{j=1}^n(z-X_j)$ by $W^\da_{(1)}$. Then, provided $\lim_{r \to 1^-}\frac{f_R(r)}{(1-r)^\alpha}$ exists, we have
	\begin{equation}\label{eqn:maxConvNeg}
		\frac{n^{\frac{3+2\alpha}{2 + \alpha}}}{e^{\sqrt{-1}\arg(X^\da_{(1)})}}\left(W^\da_{(1)} - X^\da_{(1)}(1-n^{-1})\right) \longrightarrow \mathcal{H}_{2+\alpha}
	\end{equation}
	in distribution as $n\to \infty$, where $\mathcal{H}_{2+\alpha}$ is a complex-valued random variable, which, by identifying $\mathbb{C}$ with $\mathbb{R}^2$, is a multivariate $(2+\alpha)$-stable random vector with spectral measure described by the right-hand side of \eqref{eqn:tailAngle}. 
\end{theorem}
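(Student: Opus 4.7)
The plan is to use Theorem~\ref{thm:manyPairNeg} to reduce the claim to a domain-of-attraction statement for a conditionally i.i.d.\ heavy-tailed sum. Writing $T_n := \frac{1}{n-1}\sum_{j=2}^n \frac{1}{X^\da_{(1)} - X^\da_{(j)}}$, the bound \eqref{eqn:maxPairSharperNeg} (applied with $i = 1$ and $\ell_n$ any slowly growing sequence of integers) gives
\[
W^\da_{(1)} - X^\da_{(1)}\bigl(1 - n^{-1}\bigr) = \frac{X^\da_{(1)}}{n} - \frac{1}{n T_n} + E_n = \frac{X^\da_{(1)}}{n T_n}\Bigl(T_n - \tfrac{1}{X^\da_{(1)}}\Bigr) + E_n.
\]
Conditional on $X^\da_{(1)}$, the remaining sample is i.i.d.\ from $\mu$ restricted to $B(0, |X^\da_{(1)}|)$; by the radial-symmetry identity \eqref{eqn:RadStiel}, $\E[T_n \mid X^\da_{(1)}] = 1/X^\da_{(1)}$ \emph{exactly}. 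Thus $\xi_n := T_n - 1/X^\da_{(1)} = \frac{1}{n-1}\sum_{j=2}^n Z_j$ with $Z_j := \frac{1}{X^\da_{(1)} - X^\da_{(j)}} - \frac{1}{X^\da_{(1)}}$ is a centered conditionally i.i.d.\ sum. Since $|X^\da_{(1)}| \to 1$ and a weak law of large numbers forces $T_n \to 1/X^\da_{(1)}$ in probability, the prefactor $X^\da_{(1)}/(n T_n) = (X^\da_{(1)})^2/n \cdot (1 + o_\P(1))$, so matters reduce to the asymptotics of $\xi_n$.

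For the tail of each $Z_j$, condition on $X^\da_{(1)} = x$ with $1 - |x|$ typically of order $n^{-1/(\alpha+1)}$. The stable scale is $r := 1/t \sim n^{-1/(2+\alpha)}$, much larger than $1 - |x|$ whenever $\alpha > -1$, so $\mu(B(x, r))$ is driven by the boundary behavior of $\mu$. Integrating the density $f_\mu \asymp (1-|z|)^\alpha$ over the half-ball $B(x, r) \cap \{|z| \leq 1\}$ in polar coordinates centered at $x$ yields
\[
\mu(B(x, r)) \sim C\, r^{2+\alpha} \qquad \text{as } r \to 0,
\]
and, after rotating $x$ to the positive real axis, the same calculation produces an angular density proportional to $(\cos\psi)^\alpha$ on $(-\pi/2, \pi/2)$ for $\arg(Z_j)$ conditional on $|Z_j|$ being large. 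This identifies the spectral measure appearing in \eqref{eqn:tailAngle}. Since $\beta := 2 + \alpha \in (1, 2)$, the classical domain-of-attraction theorem for i.i.d.\ vectors in $\R^2 \cong \C$ gives $(n-1)^{-1/\beta}\sum_{j=2}^n Z_j \to \tilde{\mathcal{H}}$ conditional on $X^\da_{(1)}$; equivalently, $n^{(1+\alpha)/(2+\alpha)} \xi_n \to \tilde{\mathcal{H}}$. The identity $Z_j = e^{-\sqrt{-1}\arg(x)} \tilde{Z}_j$ (with $\tilde{Z}_j$ the summand built from the sample rotated so $x$ is real) then converts the $\arg(X^\da_{(1)})$-dependent conditional limit into the rotation-invariant $\mathcal{H}_{2+\alpha}$, which accounts for the $e^{\sqrt{-1}\arg(X^\da_{(1)})}$ factor in \eqref{eqn:maxConvNeg}.

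The main obstacle is showing that $E_n = o(n^{-(3+2\alpha)/(2+\alpha)})$ in probability. The bound \eqref{eqn:maxPairSharperNeg} gives only $|E_n| = O(\ell_n n^{-(5\alpha+3)/(2(\alpha+1))})$, and one checks that
\[
\frac{5\alpha + 3}{2(\alpha+1)} - \frac{3 + 2\alpha}{2+\alpha} = \frac{\alpha(\alpha+3)}{2(\alpha+1)(2+\alpha)} < 0 \qquad \text{for } \alpha \in (-1, 0),
\]
so this bound alone is a priori too crude. I would refine it by expanding the critical-point equation $\sum_j 1/(W - X_j) = 0$ one further order around $X^\da_{(1)}$: writing $W^\da_{(1)} = X^\da_{(1)} + \Delta$, the leading correction beyond $\Delta \approx -1/((n-1)T_n)$ is $T_2^{(n)}/((n-1)T_n)^3$, where $T_2^{(n)} := \sum_{j\ne 1} (X^\da_{(1)} - X^\da_{(j)})^{-2}$. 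Each summand of $T_2^{(n)}$ has tail index $(2+\alpha)/2 < 1$, so heavy-tailed sum estimates give $|T_2^{(n)}| = O_\P(n^{2/(2+\alpha)})$, making the correction $O_\P(n^{-(4+3\alpha)/(2+\alpha)})$, which is $o(n^{-(3+2\alpha)/(2+\alpha)})$ for every $\alpha > -1$. The threshold $\alpha > -0.095$ presumably emerges from reconciling this Taylor refinement with bounds on higher-order remainders and the delicate interaction with the stable CLT scaling. The remaining steps are explicitly identifying the spectral measure via the boundary-density calculation and transferring conditional convergence given $X^\da_{(1)}$ to the unconditional statement.
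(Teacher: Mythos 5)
You have correctly identified the skeleton of the argument — reduce via the pairing bound to a sum $\sum_j \frac{X^\da_{(j)}}{X^\da_{(1)}-X^\da_{(j)}}$, identify the tail index $2+\alpha$ and the $(\cos\theta)^\alpha$ spectral measure from the boundary behavior of $f_\mu$, and, most importantly, you correctly computed that \eqref{eqn:maxPairSharperNeg} alone gives an error that is too large by a factor $n^{-\alpha(\alpha+3)/(2(\alpha+1)(2+\alpha))}$. But the resolution of that obstacle is exactly where your proposal has a genuine gap. Your proposed fix — a second-order expansion of $\sum_j (W-X_j)^{-1}=0$ with correction $T_2^{(n)}/((n-1)T_n)^3$ — is only sketched: you do not control the remainder beyond second order (which involves sums $\sum_j (X^\da_{(1)}-X^\da_{(j)})^{-k}$, $k\ge 3$, with even heavier tails and the same dependence issues), and you explicitly defer this by saying the threshold ``presumably emerges from reconciling this Taylor refinement with bounds on higher-order remainders.'' That reconciliation is the proof. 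The paper closes the gap differently and without any higher-order expansion: it re-runs the first-order deterministic pairing (Theorem \ref{thm:determ}) with a much sharper Lipschitz constant for $z\mapsto\overline{M}_\mu(z)$ near $X^\da_{(1)}$, obtained from special case \ref{it:spCase2} of Lemma \ref{lem:nearLip} with a new choice of $\eps_n,\delta_n^*$ (Lemmas \ref{lem:nearLipMaxNeg} and \ref{lem:maxPairSharpestNeg}); this yields an error $O(c_n n^{(\alpha^2-3)/(\alpha+2)})$, which is $o(n^{-(3+2\alpha)/(2+\alpha)})$ for all $\alpha\in(-2,0)$. The restriction $\alpha>-0.095$ does not come from this refinement at all, but from bounding $\P(H_n^*)$ in the proof of Theorem \ref{thm:manyPairNeg} (see Lemma \ref{lem:badEventSmallNeg}), which is an input here.

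A second, smaller issue is the limit theorem itself. Conditioning on $X^\da_{(1)}=x$ makes the remaining points i.i.d.\ from $\mu$ restricted to $\{|z|\le|x|\}$ with $|x|=|x_n|\to 1$, so the summands form a triangular array whose law depends on $n$; the ``classical domain-of-attraction theorem for i.i.d.\ vectors'' does not apply verbatim, and you would need uniform-in-$n$ regular variation of the conditional tails plus a quenched-to-annealed step. The paper avoids this entirely: it uses the Lipschitz bound to replace $X^\da_{(1)}$ by $e^{\sqrt{-1}\arg(X^\da_{(1)})}$, then uses radial symmetry to replace that unit vector by $1$ in distribution, reducing to the genuinely i.i.d.\ unconditional sum $n^{-1/(2+\alpha)}\sum_j \frac{X_j}{1-X_j}$, to which Corollary \ref{cor:CLTNeg} (built on the tail asymptotics \eqref{eqn:tailSharp} and \eqref{eqn:tailAngle} of Lemma \ref{lem:heavyTail}) applies directly. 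Your tail and spectral-measure calculations are the right ones, and your observation that $\E[T_n\mid X^\da_{(1)}]=1/X^\da_{(1)}$ exactly is correct and tidy, but as written the proposal does not constitute a proof of the theorem.
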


We make a number of comments concerning these results.  In contrast to Theorem \ref{thm:maxPair}, the asymptotic fluctuations in the case when $\alpha < 0$ are described by heavy-tailed stable distributions, and we refer the reader to \cite{MR1280932,MR150795,MR270403,MR1652283,MR4230105,MR62975} and references therein for more details concerning stable distributions.  One way to view the behavior of these two cases is the following: the asymptotic fluctuations of the difference $W^\da_{(1)} - X^\da_{(1)}$ are always described by stable distributions (either Gaussian or heavy-tailed), while the behavior of the largest root is always described by a max-stable distribution.  This might be surprising at first, but our proof shows how we can express the difference $W^\da_{(1)} - X^\da_{(1)}$ as a sum of (nearly-)independent random variables, up to some negligible errors.  
In fact, Theorems \ref{thm:maxPair} and \ref{thm:manyPairNeg} show how the difference $W^\da_{(1)} - X^\da_{(1)}$ roughly looks like $1/\xi_n$, where $\xi_n$ behaves like a sum of (nearly-)independent random variables.  This intuition can also explain the scaling factor in Theorem \ref{thm:fluctNeg}.  In this case, we can write the exponent as
\[ \frac{3 + 2\alpha}{2 + \alpha} = 2 - \frac{1}{2 + \alpha}. \]
If $\xi_n$ is a random variable with mean on the order of $n$ and fluctuations on the order of $n^{1/(2 + \alpha)}$, as is standard for heavy-tailed distributions (which is what arises when $\alpha < 0$), then $1/\xi_n$ has fluctuations on the order of $\E[\xi_n]^2 / n^{1/(2 + \alpha)}$, which is precisely the scaling exponent we find.  The scaling in Theorem \ref{thm:maxPair} can be similarly explained since, when $\alpha > 0$, $\xi_n$ has mean on the order of $n$, but fluctuations on the order of $n^{1/2}$ as the fluctuations follow from the classical central limit theorem.  The $\alpha = 0$ case is similar, but an additional $\sqrt{\log n}$ factor arises.  

As already discussed above, there are many results concerning the critical points of random polynomials with independent roots.  Many of the early results focus on the global behavior of the critical points; see, for example, \cite{K,PR,S,O} and some of the other references cited above.  In recent years, the pairing between critical points and roots has been studied more extensively \cite{H1,H2,H3,OW,OW2,KS}.  
The main results in this paper are somewhat similar as they also describe the pairing between critical points and roots.  For example, the results of \cite{KS,OW2} describe the fluctuations between a fixed (or conditioned on) root and the nearest critical points.  In \cite{OW2}, this pairing is quantified to show that the Wasserstein distance between the empirical distributions of roots and critical points is $O(n^{-1})$, up to logarithmic corrections.  
While these results focus on the pairing phenomenon, similar to the pairing described in our main results above, they only allow for the study of a ``typical'' or ``bulk'' root and its nearest critical point; we cannot deduce behavior about the fluctuations of the extreme critical points from them.  

In contrast, our main results instead focus on the pairing between the largest roots and the nearby critical points.  We are not aware of other results which deal with the pairing between the extreme roots and critical points for polynomials with independent roots in the same way.  

Our method to proving the main results above is based loosely on the approach from \cite{OW2} (see also \cite{KS}).
However, the methods developed in \cite{OW2} are designed for studying a fixed root and the nearest critical point, not the largest roots and critical points.  
As such, our approach requires significant technical changes and several new obstacles must be overcome.  
The starting point for our proof is a deterministic result from \cite{OW2} (see Theorem \ref{thm:determ} below).
This result shows that, under some technical conditions, the roots and critical points (in certain regions of the unit disk) pair very closely.  
Unfortunately, showing that the technical conditions are satisfied for the largest ordered roots and critical points requires overcoming significant new technical hurdles which were not present in \cite{OW2}.  
In addition, since the ordered roots are no longer independent, even showing that the sums of such values converge to limiting distributions requires a more delicate analysis.  
Due to the number of technical obstacles that must be overcome, we have not tried to optimize our approach to the widest range of values for $\alpha$, nor have we attempted to generalize our results beyond the rotationally symmetric case. 

Lastly, our main results show that there is different behavior between the fluctuations of the critical points and the fluctuations of the roots; similar differences between the second-order behavior of critical points and roots has been observed in other models before.  For example, the results in \cite{feldheim2024hyperuniformitynonhyperuniformityzerosgaussian} for certain models of Gaussian analytic functions show that the critical points exhibit non-hyperuniformity, while the roots display hyperuniform behavior.  

The rest of the paper is devoted to the proofs of our main results.  Theorems \ref{thm:manyPair} and \ref{thm:maxPair} are proven in Section \ref{sec:proofsPos}, and  Theorems \ref{thm:manyPairNeg} and \ref{thm:fluctNeg} are established in Section \ref{sec:proofsNeg}.

%%%%%%%%%%%%%%%%%%%%%%%%%%%%%%%%%%
%%   Proofs when alpha \geq 0   %%
%%%%%%%%%%%%%%%%%%%%%%%%%%%%%%%%%%

\section{Proof of results when $\alpha \geq 0$} \label{sec:proofsPos}

%--------------------------------%
%     Heuristic Overview         %
%--------------------------------%

\subsection{Heuristic overview}\label{sec:heuristicPos} In this subsection, we attempt to give the reader some big-picture insight into the estimates that feature prominently in our proofs of Theorems \ref{thm:manyPair} and \ref{thm:maxPair} governing the case when $\alpha \geq 0$. Subsection \ref{sec:detailsPos} below contains a more detailed introduction to our plan of attack that requires some technical notation we wish to avoid at present.

In the case where $\alpha \geq 0$, one can use Theorem \ref{thm:manyPair} to determine, with high probability, the precise locations of nearly $n^{\frac{3\alpha + 2}{4\alpha + 3}}$ of the largest critical points of $p_n(z) = \prod_{j=1}^n(z-X_j)$ in relation to the largest roots of $p_n$. It seems natural (to the authors at least) to present these results by describing the behavior of roots and critical points of $p_n$ that lie within an annulus $\mathcal{A}_n \subset \mathbb{A}_\eps$  at the edge of the unit disk. However, to include as many as $n^{\frac{3\alpha + 2}{4\alpha + 3}}$ roots of $p_n$, such an annulus must be wide enough that the one-to-one correspondence $\iota$ mentioned in \ref{item:iota} is not a bijection. Indeed, with high probability, there are more roots than critical points in $\mathcal{A}_n$ because those roots of smallest magnitude (i.e.\ the ones nearest the ``inner'' edge of $\mathcal{A}_n$) are paired with critical points that have strictly smaller magnitude lying just across the boundary of $\mathcal{A}_n$ (see for example the root and critical point pair straddling the boundary of $\mathcal{A}_n$ at the very top of Figure \ref{fig:paramPic}). For this reason, we have chosen to present the main conclusions of Theorem \ref{thm:manyPair} above in two parts: \ref{item:iota} that pairs critical points to roots and \ref{item:rtsToCp} that pairs roots to critical points. Accordingly, our proofs rely on two different deterministic statements,  namely Equation \eqref{eqn:GLPair} and Theorem \ref{thm:determ}, that we introduce now.

To establish that large critical points pair to large roots, we will begin with the well-known fact that if $W$ is a critical point of $p_n$, then either $W$ is a root of $p_n$ having multiplicity greater than one, or $0= \sum_{j=1}^n\frac{1}{W-X_j}$.\footnote{Indeed, ${p'(W)}/{p(W)} = 0$ implies $p'(W)=0$. See e.g.\ \S1.2 of \cite[pp.\ 6--7]{M} for more details.} When $X_1, X_2, \ldots$ are i.i.d.\ draws from a distribution satisfying Assumption \ref{ass:ComplexMu}, the $X_i$ lying in $\mathbb{A}_\eps$ are almost surely distinct (such a distribution restricted to $\mathbb{A}_\eps$ is absolutely continuous with respect to the Lebesgue measure on $\C$), so we have
\begin{equation}\label{eqn:GLPair}
W - X_i = - \frac{1}{\sum_{j\neq i}\frac{1}{W-X_j}}\ \text{for $W \in \mathbb{A}_\eps$ with $p'_n(W) = 0$, $i \in [n]$}.
\end{equation}
We will control the right side of \eqref{eqn:GLPair} for critical points $W$ that are near the unit circle and their closest neighbors $X_i$, $1\leq i \leq n$. The dependence between the critical points and roots of $p_n(z)$ poses obvious challenges to analyzing the sums $\sum_{j\neq i} \frac{1}{W-X_j}$, but a law of large numbers heuristic suggests approximating these with $n\cdot m_\mu(W)$, which is $O(n)$ with high probability since the Cauchy--Stieltjes transform $m_\mu(z)$ is bounded and Lipschitz continuous on $\mathbb{A}_\eps$ (see Corollary \ref{cor:CSnice}). After justifying the validity of this intuition, we will have established that with high probability, $W-X_i = O(1/n)$. (Note: our analysis will give the slightly sharper asymptotics \eqref{eqn:cptsToRtsBd} and \eqref{eqn:cptsToRtsUniformBd}.)

In order to show that $p_n$ has a critical point near each of its largest roots, we will appeal to the following theorem for deterministic polynomials from \cite{OW2}. Roughly speaking, this result says that any root $\xi$ of a complex, degree $n+1$ polynomial that is sufficiently isolated from the other roots $\zeta_1, \ldots, \zeta_n$ must pair to a nearby critical point $w_\xi^{(n)}$ of the polynomial.
\begin{theorem}[Theorem 3.1 in \cite{OW2}]\label{thm:determ}
	Suppose $\xi \in \C$, $\vec{\zeta} = (\zeta_1, \ldots, \zeta_n)$ is a vector of complex numbers, and $C_1$, $C_2$, $k_{\rm Lip}$ are values for which the following three conditions hold:
	\begin{enumerate}[(\thetheorem.i)]
		\item \label{item:det1}$C_1 \leq \abs{\frac{1}{n}\sum_{j=1}^n\frac{1}{\xi-\zeta_j}}\leq C_2$;
		\item \label{item:det2} The function $z\mapsto \frac{1}{n}\sum_{j=1}^n\frac{1}{z-\zeta_j}$ is Lipschitz continuous with constant $k_{\rm Lip}$ on the set $\set{z\in\C:\abs{z-\xi}\leq \frac{2}{C_1n}}$;
		\item \label{item:det3} $\displaystyle\min_{1\leq j\leq n}\abs{\xi-\zeta_j} > \frac{3}{C_1n}$.
	\end{enumerate}
	Then, if $C > 0$ and $n \in \mathbb{N}$ satisfy
	\[
	C > \frac{8(1+2C_2^2)}{C_1^3}\ \text{and}\ n> 4C_2\max\set{\frac{1}{C_1},\ C(k_{\rm Lip} + 1)},
	\]
	the polynomial $q_n(z):=(z-\xi)\prod_{j=1}^n(z-\zeta_j)$ has exactly one critical point, $w_\xi^{(n)}$, that is within a distance of $\frac{3}{2C_1n}$ of $\xi$, and
	\[
	\abs{w_\xi^{(n)} - \xi + \frac{1}{n+1}\frac{1}{\frac{1}{n}\sum_{j=1}^n\frac{1}{\xi-\zeta_j}}} < \frac{C(k_{\rm Lip} + 1)}{n^2}.
	\]
\end{theorem}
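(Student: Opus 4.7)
My plan is to reformulate the critical-point equation near $\xi$ as a fixed-point problem on a small disk around $\xi$ and apply a contraction-mapping argument. Observe that any critical point $w$ of $q_n(z) = (z-\xi)\prod_{j=1}^n(z-\zeta_j)$ that is not itself a root of $q_n$ satisfies $q_n'(w)/q_n(w) = 0$, which expands to $\frac{1}{w-\xi} + n g(w) = 0$ for $g(z) := \frac{1}{n}\sum_{j=1}^n \frac{1}{z-\zeta_j}$. This rearranges to $w = T(w) := \xi - \frac{1}{n g(w)}$. I will work on the closed disk $D := \overline{B(\xi, \tfrac{3}{2 C_1 n})}$, which contains none of the $\zeta_j$ by hypothesis \ref{item:det3}, so that $g$ is analytic on $D$.

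Using \ref{item:det1} together with the Lipschitz hypothesis \ref{item:det2}, for every $w \in D$ one has $|g(w)| \geq C_1 - k_{\rm Lip} \cdot \tfrac{3}{2 C_1 n} \geq \tfrac{2}{3} C_1$, provided $n$ clears a threshold of order $k_{\rm Lip}/C_1^2$ (which the stated lower bound on $n$ ensures, since $C_1 \leq C_2$). Consequently $|T(w) - \xi| = \frac{1}{n|g(w)|} \leq \frac{3}{2 C_1 n}$, so $T(D) \subseteq D$, and for $w_1, w_2 \in D$
$$|T(w_1) - T(w_2)| = \frac{|g(w_2)-g(w_1)|}{n\,|g(w_1)\,g(w_2)|} \leq \frac{4 k_{\rm Lip}}{n C_1^2}\,|w_1 - w_2|,$$
which is a strict contraction for $n$ exceeding the same threshold. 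Banach's theorem then yields a unique fixed point $w^* \in D$. This $w^*$ is automatically a critical point of $q_n$: it cannot equal $\xi$ (else $1/(n g(\xi)) = 0$, contradicting \ref{item:det1}) nor any $\zeta_j$ (none lie in $D$), so $q_n(w^*) \neq 0$ and $T(w^*) = w^*$ rewrites as $q_n'(w^*) = 0$. Any other critical point in $D$ similarly must solve $T(w) = w$, since $\xi$ is a simple root of $q_n$ (by \ref{item:det3}) and no $\zeta_j$ lies in $D$, so multiple roots in $D$ are impossible; hence $w = w^*$.

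For the error estimate, I will substitute $w^* - \xi = -\frac{1}{n g(w^*)}$ and decompose
$$w^* - \xi + \frac{1}{(n+1)g(\xi)} = \frac{g(w^*)-g(\xi)}{n\, g(w^*)\, g(\xi)} - \frac{1}{n(n+1)\, g(\xi)}.$$
The first term is bounded by $\frac{k_{\rm Lip}\cdot 3/(2 C_1 n)}{n\cdot (2 C_1/3)^2} = O(k_{\rm Lip}/(n^2 C_1^3))$ using \ref{item:det2} and the lower bound on $|g|$ just established, while the second is $O(1/(n^2 C_1))$ by \ref{item:det1}. Summing the two contributions and using $C_1 \leq C_2$ yields a bound of the form $\tilde{C}(k_{\rm Lip}+1)/n^2$, and a direct check confirms that the explicit threshold $C > 8(1+2C_2^2)/C_1^3$ is precisely calibrated so that the resulting constant fits the stated bound $C(k_{\rm Lip}+1)/n^2$. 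The main delicacy throughout will be keeping careful track of constants, as the hypotheses on $C$ and $n$ in the statement appear to be tuned just tightly enough to make both the contraction argument and the final error bound close up simultaneously.
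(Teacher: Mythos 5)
This statement is quoted verbatim from \cite{OW2} (Theorem 3.1 there) and is not proved in the present paper, so there is no in-paper argument to compare against; what you have written is a self-contained proof of the imported result. Your contraction-mapping argument is correct. The key checks all go through: the disk $D=\overline{B(\xi,\tfrac{3}{2C_1 n})}$ sits inside the Lipschitz region $\{|z-\xi|\le \tfrac{2}{C_1 n}\}$ of \ref{item:det2} and excludes every $\zeta_j$ by \ref{item:det3}; the lower bound $|g(w)|\ge C_1-k_{\rm Lip}\tfrac{3}{2C_1n}\ge\tfrac23 C_1$ needs $n\ge \tfrac{9k_{\rm Lip}}{2C_1^2}$, and the hypothesis $n>4C_2C(k_{\rm Lip}+1)$ with $C>\tfrac{8(1+2C_2^2)}{C_1^3}$ and $C_1\le C_2$ gives $n>\tfrac{32}{C_1^2}k_{\rm Lip}$, which suffices both for this and for the contraction factor $\tfrac{9k_{\rm Lip}}{4nC_1^2}<1$; and since $\xi$ is a simple root of $q_n$ and no $\zeta_j$ lies in $D$, every critical point in $D$ is a fixed point of $T$, so Banach gives exactly one. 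The final estimate also closes: your two terms are bounded by $\tfrac{27k_{\rm Lip}}{8n^2C_1^3}$ and $\tfrac{1}{n^2C_1}$, and since $8(1+2C_2^2)\ge \max\{27/8,\,C_1^2\}$ (using $C_1\le C_2$), their sum is below $\tfrac{8(1+2C_2^2)}{C_1^3}\cdot\tfrac{k_{\rm Lip}+1}{n^2}<\tfrac{C(k_{\rm Lip}+1)}{n^2}$. The one place you wave your hands --- ``a direct check confirms that the explicit threshold \ldots is precisely calibrated'' --- should be written out as above, but it is not a gap. For context, arguments of this type in the literature are often run instead through Rouch\'e's theorem or the argument principle applied to $\tfrac{q_n'}{q_n}(z)=\tfrac{1}{z-\xi}+ng(z)$; the fixed-point route you chose delivers existence, uniqueness, and the quantitative location in one pass, at the cost of having to track the constants explicitly, which you have done correctly.
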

We intend to apply Theorem \ref{thm:determ} several times at once, with each of the largest roots of $p_n$ taking turns playing the role of $\xi$. In each instance, we will verify condition \ref{item:det1} by approximating sums of the form $\frac{1}{n}\sum_{j\neq i}\frac{1}{X_i-X_j}$ with $m_\mu(X_i)$ for $X_i$ near the unit circle (this estimate seems reasonable in view of the law of large numbers). Condition \ref{item:det3} will follow from Assumption \ref{ass:ComplexMu}, which guarantees that the largest roots are isolated from the rest of the $X_i$ with high probability. We defer our discussion of condition \ref{item:det2} to Subsection \ref{sec:detailsPos} because of the technical details involved.
 
We conclude this subsection with some notes about the proof of Theorem \ref{thm:maxPair}. This result concerns the largest roots of $p_n$ where, with high probability, the radial separation between these $X_i$ is asymptotically wider than the margin of error we predict for the locations of the largest critical points of $p_n$ in \eqref{eqn:maxPair} and \eqref{eqn:maxPairSharper}. Consequently, the one-to-one correspondence $\iota$ from \ref{item:iota} gives an order-preserving bijection between the largest $o(\sqrt[16]{\log{n}})$ roots and critical points of $p_n$  (i.e., the critical point of largest magnitude pairs with the root of largest magnitude, the critical point of second largest magnitude pairs with the root of second largest magnitude, and so on). For a fixed natural number $L$, we can then use \eqref{eqn:maxPairSharper} to obtain the joint fluctuations of the largest $L$ critical points about their predicted locations by analyzing the sums $\frac{1}{n-1}\sum_{j\neq i}\frac{1}{X^{\da}_{(i)}-X^\da_{(j)}}$ for $ 1 \leq i \leq L$. The dependence between $X^\da_{(1)},\ldots, X^\da_{(n)}$ inhibits the direct application of a central limit theorem, but, since the largest $X^\da_{(i)}$ are so close to the unit circle, we can replace each of these with the values $e^{\sqrt{-1}\arg(X^\da_{(i)})}$ on the unit circle that have matching arguments. The radial symmetry of the distribution $\mu$ guarantees that for $1 \leq i \leq L$, these unit vectors are i.i.d.\ and jointly independent from the $\sigma$-algebra generated by the rest of the roots. Thus, modulo some technical details, as $n \to \infty$, the sums $\frac{1}{n-1}\sum_{j\neq i}\frac{1}{X^\da_{(i)}-X^\da_{(j)}}$ behave in distribution like the sums $\frac{1}{n}\sum_{j=1}^n\frac{1}{U_i - X_j}$, where $U_1, \ldots, U_L$ are i.i.d.\ draws from the unit circle that are independent from $X_1, X_2, \ldots$. In the case $\alpha > 0$, we conclude our analysis with standard central limit theorem arguments. When $\alpha = 0$, we appeal to Corollary \ref{cor:CLT}, a version of the central limit theorem for heavy-tailed random variables.

In the next subsection, we introduce some technical notation and a more detailed outline of our proofs of Theorems \ref{thm:manyPair} and \ref{thm:maxPair}. 
 
%--------------------------------%
%    Detailed Plan of Attack     %
%--------------------------------%
 
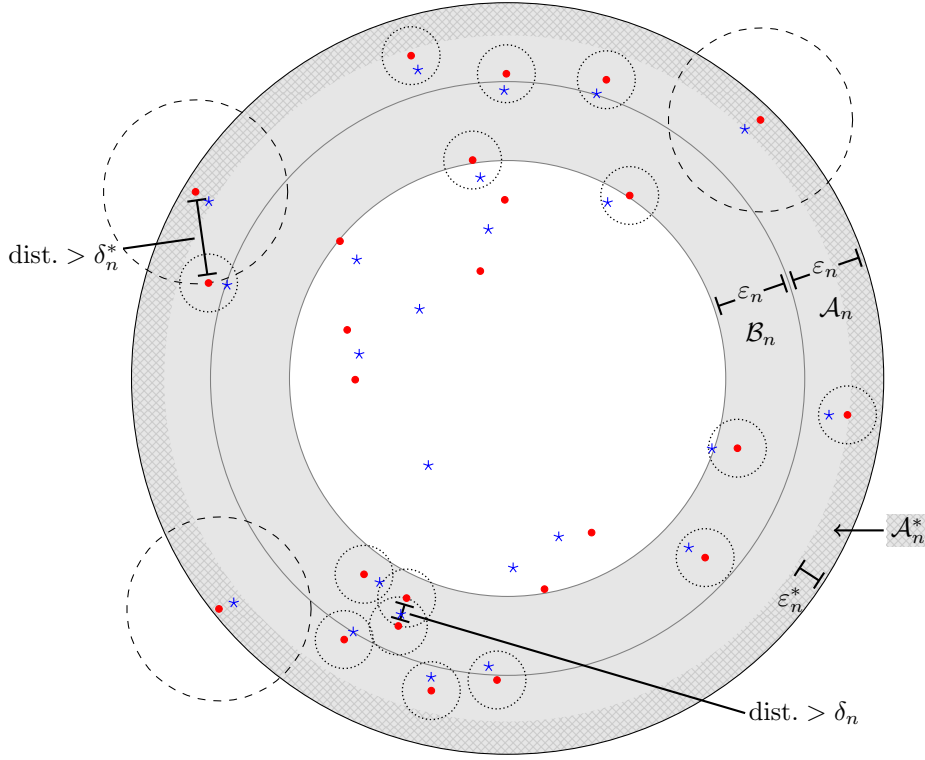
\begin{figure}
 	\begin{center}
 		
 		%--- eps and delta parameters ---%
 		
 		\pgfmathsetmacro{\myEps}{.21}
 		\pgfmathsetmacro{\myEpsStar}{.087}
 		\pgfmathsetmacro{\medRad}{.077}
 		\pgfmathsetmacro{\maxRad}{.245}
 		
 		%--- File names for some rts and cpts lists---%
 		
 		\pgfplotstableread{datasetRts.txt}\rtsList  % reads dataset (includes header)
 		\pgfplotstableread{datasetCpts.txt}\cptsList
 		\pgfplotstableread{datasetMedRts.txt}\medRtsList %rts that are in A_n, B_n excluding A_n^*
 		\pgfplotstableread{datasetMaxRts.txt}\maxRtsList % rts in A_n^*
 		\pgfplotstablegetrowsof{\rtsList} % gets num rows of \rtsList (excludes header)
 		\pgfmathsetmacro{\numRts}{\pgfplotsretval} % retrieves prev value and stores it
 		\pgfplotstablegetrowsof{\medRtsList} 
 		\pgfmathsetmacro{\numMed}{\pgfplotsretval-1} % num rts in An & Bn minus one (for loop indexing below)
 		\pgfplotstablegetrowsof{\maxRtsList} 
 		\pgfmathsetmacro{\numMax}{\pgfplotsretval-1} % num rts in An^* minus one	

 		\begin{tikzpicture} 			
 			\begin{axis}[axis lines = none, x=5cm, y=5cm,xmin=-1.8, xmax = 1.8]
 				\fill[color=gray!20] (axis cs: 0,0) circle[radius =1]; % unit circle
 				\draw[pattern = crosshatch, pattern color = gray!40] (axis cs: 0,0) circle[radius =1];
 				\fill[color = gray!20] (axis cs: 0,0) circle[radius = {1-\myEpsStar}]; %A_n^* inner bdry
 				\draw[color = gray] (axis cs: 0,0) circle[radius = {1-\myEps}]; % A_n outer bdry
 				\draw[color = gray, fill = white] (axis cs: 0,0) circle[radius = {1-(2*\myEps)}]; %A_n inner bdry
 				
 				%--- plot roots and cpts ---- %
 				\addplot[only marks, color = red, mark size = 1.25pt] table[x=Re, y=Im] from \rtsList;
 				\addplot[only marks, color = blue, mark size = 2pt, mark = star] table[x=Re, y=Im] from \cptsList;
 				
 				% circles around rts and cpts in An and Bn
 				\foreach \i in {0,1,...,\numMed}{
 					\pgfplotstablegetelem{\i}{[index]0}\of\medRtsList
 					\pgfmathsetmacro{\rtx}{\pgfplotsretval} % x-coord
 					\pgfplotstablegetelem{\i}{[index]1}\of\medRtsList
 					\pgfmathsetmacro{\rty}{\pgfplotsretval} % y-coord
 					\addplot[domain = 0:2*pi,samples = 50, densely dotted, line width = 0.5pt]({\rtx+\medRad*cos(deg(x))},{\rty+\medRad*sin(deg(x))});
 				}
 				
 				% circles around rts and cpts in An star
 				\foreach \i in {0,1,...,\numMax}{
 					\pgfplotstablegetelem{\i}{[index]0}\of\maxRtsList
 					\pgfmathsetmacro{\rtx}{\pgfplotsretval} % x-coord
 					\pgfplotstablegetelem{\i}{[index]1}\of\maxRtsList
 					\pgfmathsetmacro{\rty}{\pgfplotsretval} % y-coord
 					\addplot[domain = 0:2*pi,samples = 50, dashed]({\rtx+\maxRad*cos(deg(x))},{\rty+\maxRad*sin(deg(x))});
 				}

 				%---- angle for eps_n labels ---%
 				\pgfmathsetmacro{\epsAngle}{19}

 				%---- A_n and B_n labels ---------%
 				\draw ({cos(\epsAngle-9)*(1-1.5*\myEps)}, {sin(\epsAngle-9)*(1-1.5*\myEps)}) node {$\mathcal{B}_n$};
 				\draw ({cos(\epsAngle-7)*(1-0.5*\myEps)}, {sin(\epsAngle-7)*(1-0.5*\myEps)}) node  {$\mathcal{A}_n$};

 				%---- eps_n labels---------%
 				\draw[|-, thick] ({cos(\epsAngle)*(1.01-2*\myEps)}, {sin(\epsAngle)*(1.01-2*\myEps)}) -- ({cos(\epsAngle)*(.955-1.5*\myEps)}, {sin(\epsAngle)*(.955-1.5*\myEps)});
 				\draw[-|, thick] ({cos(\epsAngle)*(1.035-1.5*\myEps)}, {sin(\epsAngle)*(1.035-1.5*\myEps)}) -- ({cos(\epsAngle)*(0.99-\myEps)}, {sin(\epsAngle)*(0.99-\myEps)});
 				\draw ({cos(\epsAngle)*(1-1.5*\myEps)}, {sin(\epsAngle)*(1-1.5*\myEps)}) node{$\eps_n$};
 				
 				\draw[|-, thick] ({cos(\epsAngle)*(1.01-\myEps)}, {sin(\epsAngle)*(1.01-\myEps)}) -- ({cos(\epsAngle)*(.955-0.5*\myEps)}, {sin(\epsAngle)*(.955-0.5*\myEps)});
 				\draw[-|, thick] ({cos(\epsAngle)*(1.035-.5*\myEps)}, {sin(\epsAngle)*(1.035-.5*\myEps)})-- ({cos(\epsAngle)*0.99}, {sin(\epsAngle)*0.99});			
 				\draw ({cos(\epsAngle)*(1-0.5*\myEps)}, {sin(\epsAngle)*(1-0.5*\myEps)}) node {$\eps_n$};

 				%---- angle for eps_n^* label ------ %
 				\pgfmathsetmacro{\epsStarAngle}{-38}
 				
 				%------ A_n^* label -----%
 				\draw[<-, thick] ({cos(\epsStarAngle+13)*(1-0.5*\myEpsStar)}, {sin(\epsStarAngle+13)*(1-0.5*\myEpsStar)}) -- (1,{sin(\epsStarAngle+13)*(1-0.5*\myEpsStar)});
 				\draw ({cos(\epsStarAngle+13)*(1-0.5*\myEpsStar)+.14},{sin(\epsStarAngle+13)*(1-0.5*\myEpsStar)}) node[right, inner sep = 1.5pt, fill =gray!20] {\phantom{$\mathcal{A}_n^*$}} node[right, inner sep = 1.5pt, pattern color =gray!40, pattern =crosshatch] {\phantom{$\mathcal{A}_n^*$}} node[right, inner sep = 1.5pt] {$\mathcal{A}_n^*$};

 				%---- eps_n^* label-------%			
 				\draw[|-|, thick] ({cos(\epsStarAngle+5)*(1.01-\myEpsStar)}, {sin(\epsStarAngle+5)*(1.01-\myEpsStar)}) -- ({cos(\epsStarAngle+5)*0.99}, {sin(\epsStarAngle+5)*0.99});		
 				\draw ({cos(\epsStarAngle)*(1-0.5*\myEpsStar)}, {sin(\epsStarAngle)*(1-0.5*\myEpsStar)}) node {$\eps_n^*$};

 				%---- delta_n^* label-----%
 				% Two nearest max rts: (-0.8297, 0.4967) and (-0.7952, 0.2541)
 				\pgfmathsetmacro{\cosAngStar}{1/sqrt(1 + ((0.4967 - 0.2541)/(-0.8297 + 0.7952))*((0.4967 - 0.2541)/(-0.8297 + 0.7952)))}
 				\pgfmathsetmacro{\sinAngStar}{(0.4967 - 0.2541)/(-0.8297 + 0.7952)/sqrt(1 + ((0.4967 - 0.2541)/(-0.8297 + 0.7952))*((0.4967 - 0.2541)/(-0.8297 + 0.7952)))}
 				
 				\draw[|-|, thick] ({-0.8297 + 0.02*\cosAngStar}, {0.4967 + 0.02*\sinAngStar}) -- ({-0.8297 + (\maxRad-0.02)*\cosAngStar}, {0.4967 + (\maxRad-0.02)*\sinAngStar});
 				
 				\draw[thick] ({-0.8297 + (\maxRad/2)*\cosAngStar + .02*\sinAngStar}, {0.4967 + (\maxRad/2)*\sinAngStar - .02*\cosAngStar}) -- ({-0.8297 + (\maxRad/2)*\cosAngStar + 0.37*\sinAngStar}, {0.4967 + (\maxRad/2)*\sinAngStar - 0.37*\cosAngStar}) node [fill=white, inner sep = 1pt] {$ \text{dist.} > \delta^*_n$};
 				
 				%---- delta_n label-----%
 				% Two nearest med rts: (-0.2906, -0.6586) and (-0.2687, -0.5844)
 				\pgfmathsetmacro{\cosAng}{1/sqrt(1 + ((-0.6586 + 0.5844)/(-0.2906 + 0.2687))*((-0.6586 + 0.5844)/(-0.2906 + 0.2687)))}
 				\pgfmathsetmacro{\sinAng}{(-0.6586 + 0.5844)/(-0.2906 + 0.2687)/sqrt(1 + ((-0.6586 + 0.5844)/(-0.2906 + 0.2687))*((-0.6586 + 0.5844)/(-0.2906 + 0.2687)))}
 				
 				\draw[|-|, thick] ({-0.2906 + 0.015*\cosAng}, {-0.6586 + 0.015*\sinAng}) -- ({-0.2906 + (\medRad-.015)*\cosAng}, {-0.6586 + (\medRad-.015)*\sinAng});
 				
 				\draw[thick] ({(-0.2906 + - 0.2687)/2 + .02*\sinAng},{(-0.6586 + -0.5844)/2 -.02*\cosAng}) -- ({(-0.2906 + - 0.2687)/2 + .95*\sinAng},{(-0.6586 + -0.5844)/2 - .95*\cosAng }) node [fill=white, inner sep = 1pt, right] {$\text{dist.} > \delta_n$};
 			\end{axis}			
 		\end{tikzpicture}
 		\caption{This graphic depicts parameters relevant to our proofs of Theorems \ref{thm:manyPair} and \ref{thm:maxPair} in the context of simulated data. The red dots and blue stars mark the respective locations of the roots and critical points of a random degree-$25$ polynomial whose i.i.d.\ roots were chosen uniformly at random from the unit disk. The large, dashed circles have radius greater than $\delta_n^*$, and the smaller, dotted circles have radius greater than $\delta_n$. See also Table \ref{table:params}. Note that this simulation result was chosen to illustrate behavior that may not occur with high probability until $n$ is much larger than $25$. \label{fig:paramPic}}
 	\end{center}
\end{figure}

\subsection{Detailed overview to motivate definitions of key parameters and events}\label{sec:detailsPos}
Much of our heuristic intuition in the previous subsection is predicated on precise control of sums that have the form $\sum_{j}\frac{1}{z - X_j}$ for different quantities $z\in \mathbb{C}$ near the unit circle, some of which depend on the $X_j$. In order to contend with all of these sums at once, we analyze behavior that is uniform in $z$ of a discrete analog to the Cauchy--Stieltjes transform, namely \begin{equation}\label{eqn:discreteCS} 
\overline{M}_\mu(z) := \frac{1}{n}\sum_{j\neq i_z}\frac{1}{z-X_j},
\end{equation}
where for each $z\in \mathbb{C}$, the random index \begin{equation}\label{eqn:indexMap}
	i_z := \min\set{i\in [n]: \abs{z-X_i} = \min_{j\in[n]}\abs{z-X_j}} 
\end{equation}
specifies a root from among $X_1, \ldots, X_n$ that is closest to $z$. We will show that on the complement of some ``bad'' events having asymptotically negligible probability as $n \to \infty$, $\overline{M}_\mu(z)$  is Lipschitz continuous near the unit circle, with an implied constant that grows with $n$ in a way we can manage (this is the content of Lemma \ref{lem:nearLip}). Furthermore, we will use Markov's inequality and the union bound to establish that with high probability, $\overline{M}_\mu(z)$ is near $m_\mu(z)$ on a deterministic (growing in $n$) net of points near the unit circle (see Lemma \ref{lem:netProb}). These two results will have the following important consequences that, up to some technical bookkeeping, imply the conclusions of Theorem \ref{thm:manyPair}.
\begin{itemize}
	\item On the complement of the ``bad'' events, we will be able to approximate $\overline{M}_\mu(z)$ with $m_\mu(z)$ for \textit{any} $z$ near the unit circle due to the Lipschitz continuity of $\overline{M}_\mu(z)$, the near Lipschitz continuity of $m_\mu$ (see Corollary \ref{cor:CSnice}), and proximity of these two functions on the deterministic net. This is the content of Lemma (\ref{lem:nearCS})
	
	\item We will be able to verify that off of the ``bad'' events, conditions \ref{item:det1} and \ref{item:det2} of Theorem \ref{thm:determ} hold when we consider, simultaneously, each of the largest $X_1,\ldots, X_n$ in the role of $\xi$.	
	
\end{itemize}
Observe that $z \mapsto \overline{M}_\mu(z)$ behaves more erratically in regions where several $X_j$ are clumped together because in these spots, the denominators of the terms $(z-X_j)^{-1}$ have the potential to be quite large. For this reason, we track the Lipschitz constant associated with $\overline{M}_\mu(z)$ in terms of parameters $\delta_n, \delta_n^* >0$ (defined precisely below, see also Figure \ref{fig:paramPic} and Table \ref{table:params}) related to the minimum spacing between the largest in magnitude $X_j$, $1 \leq j \leq n$. We also consider how the Lipschitz constant depends on $\eps_n, \eps_n^* >0$ (defined precisely below, see also Figure \ref{fig:paramPic} and Table \ref{table:params}), parameters that control the width of the annuli near the edge of the unit disk on which we're analyzing $z\mapsto \overline{M}_\mu(z)$. Here, we are motivated by the fact that with high probability, there are only $O(n\eps_n^{\alpha +1})$ many roots $X_j$, $1 \leq j \leq n$ in an annulus of width $\eps_n > 0$ near the edge of the unit disk (see Lemma \ref{lem:fewRts}), so for increasingly small $\eps_n$, the roots that do exist in this region are increasingly isolated. When proving the fluctuations result Theorem \ref{thm:maxPair}, we will analyze the separation of $X_j$ in two regimes, those within $\eps_n$ of the unit circle and those within a smaller distance $\eps^*_n$ of the unit circle, in order to get a sharper estimate on the Lipschitz constant associated with $z \mapsto \overline{M}_\mu(z)$.

In the next subsection, we give precise definitions of the parameters $\delta_n$, $\delta_n^*$, $\eps_n$, and $\eps^*$, and the ``bad'' events, off of which the largest in magnitude $X_j$, $1\leq j \leq n$ are sufficiently isolated to guarantee that $z \mapsto \overline{M}_\mu(z)$ is Lipschitz continuous with a constant that does not grow too fast with $n$. In Subsection \ref{sec:CSBehaved}, we will find bounds on the Lipschitz constant associated with $z\mapsto\overline{M}_\mu(z)$ and establish that off of the ``bad'' events, $\overline{M}_\mu(z)$ is close to $m_\mu(z)$ on $\mathcal{A}_n$. Then, we will prove Theorem \ref{thm:manyPair} in Subsections \ref{sec:manyPairConcHold}, \ref{sec:badEventsSmall}, and \ref{sec:manyPairPf} by showing its conclusions hold off of the ``bad'' events, finding bounds on the probabilities of the ``bad'' events, and showing that for optimized choices of the parameters $\eps_n$, $\delta_n$, the probabilities of the ``bad'' events tend to zero as $n \to \infty$. We will conclude our justification of the $\alpha \geq 0$ results by proving Theorem \ref{thm:maxPair} in Subsection \ref{sec:maxPairPf}.

%%%%%%%%%%%%%%%%%%%%%%%%%%%%%%%%%%%%%%%%%%%
%  Definitions of Key Params and Events   %
%%%%%%%%%%%%%%%%%%%%%%%%%%%%%%%%%%%%%%%%%%%

\begin{table}
	\caption{This table gives explicit definitions, in terms of $\alpha$ and $n$, for the parameters we use in our proofs. Heuristically speaking, $\eps_n, \eps_n^*$ control the widths of the annuli where we expect to see the largest roots (we expect the largest root $X^{\da}_{(1)}$ to lie within a distance $\eps_n^*$ of the unit circle), and $\delta_n, \delta_n^*$, respectively, control the separation distance between the roots within these annuli (see also Figure \ref{fig:paramPic}). The sequences $l_n$ and $\ell_n$ control the number of roots for which we guarantee a unique critical point pair (in the case of Theorem \ref{thm:manyPairNeg}, we can guarantee pairing for more than $\ell_n$ roots but for $\ell_n$-many, we have the sharper asymptotics \eqref{eqn:maxPairNeg} and \eqref{eqn:maxPairSharperNeg}). Finally,  the slowly growing sequence $c_n$ is a tuning parameter that allows us to correct for powers of $\log{n}$ in our asymptotic analysis.\label{table:params}}
	
	\renewcommand{\arraystretch}{1.65}
	\begin{tabular}{|c|c|c|c|c|c|c|}
		\cline{4-7}
		\multicolumn{3}{c|}{} & \multicolumn{4}{|c|}{\textbf{Values by Subsection}}\\ \cline{4-7}
		\multicolumn{3}{c|}{} & Subsection \mbox{\ref{sec:manyPairConcHold}\strut} & Subsection \mbox{\ref{sec:maxPairPf}\strut} & Subsection \mbox{\ref{sec:paramsNeg}\strut}& Subsection \mbox{\ref{sec:fluctNegProof}\strut} \\[-6pt]
		\multicolumn{3}{c|}{} & (Thm.\ \mbox{\ref{thm:manyPair}\strut} Pf.) & (Thm.\ \mbox{\ref{thm:maxPair}\strut} Pf.) & (Thm.\ \mbox{\ref{thm:manyPairNeg}\strut} Pf.) & (Thm.\ \mbox{\ref{thm:fluctNeg}\strut} Pf.) \\ \cline{4-7}
		\multicolumn{3}{c|}{} &\multicolumn{2}{|c|}{$\alpha \geq 0$} & \multicolumn{2}{|c|}{$-0.095 < \alpha < 0$} \\ \hline
		\multirow[c]{8}{*}{\rotatebox[origin=c]{90}{\textbf{Parameters}\hspace*{.65cm}}}&\multirow[c]{3}{*}{\rotatebox[origin=c]{90}{$\omega(1)$\hspace{2mm}}} & $l_n$   & $\frac{n^{1-\delta(\alpha+1)}}{c_n\log{n}}$ & - & - & - \\
		&& $\ell_n$ & - & $o(\sqrt[16]{\log{n}})$ & $\leq \sqrt{\log{n}}$ & -\\
		&& $c_n$   & $e^{o(\log{n})}$ & $\ell_n^4$ & - & $<\sqrt{\log{n}}$ \\\cline{2-7}&&&&&&\\[-16pt]
		&\multirow[c]{4}{*}{\rotatebox[origin=c]{90}{$o(1)$\hspace*{.5cm}}}& $\eps_n$    & $\frac{1}{n^\delta}$ & $\frac{1}{\ell_n^4}\!\left(\!\frac{1}{n^{\frac{1+2\alpha}{2+2\alpha}}}\!\right)^{\!\frac{1}{\alpha+1}}$ & $\left(\frac{1}{n}\right)^{\frac{1}{2(\alpha+1)}}$ & $\left(\frac{1}{n}\right)^{\frac{(\alpha+1)^2}{(1-\alpha)(\alpha+2)}}$\\[6pt]
		&& $\delta_n$  & $\frac{(c_n^7\log{n})^{3/4}}{n^{3/4+\delta/4}}$ & $\left(\!\frac{1}{n^{\frac{3+4\alpha}{4+4\alpha}}}\!\right)^{\!\frac{1}{\alpha+1}}$ & $\log{n}\left(\frac{1}{n}\right)^{\frac{\alpha+3}{4(\alpha+1)}}$ & $\frac{c_n}{n}$\\[6pt]
		&& $\delta_n^*$&-& $ \frac{1}{\ell_n^3}\left(\frac{1}{\sqrt{n}}\right)^{\frac{1}{\alpha+1}}$ &-& $\left(\frac{1}{c_n^3n}\right)^{\frac{1}{\alpha+2}}$\\[3pt]
		&& $\eps_n^*$  &-& $\left(\frac{\ell_n^2}{n}\right)^{\frac{1}{\alpha+1}}$ & $\left(\frac{\ell_n^2}{n}\right)^{\frac{1}{\alpha+1}}$ & $\left(\frac{c_n^2}{n}\right)^{\frac{1}{\alpha+1}}$\\[3pt]\hline
		\multicolumn{3}{c|}{}& \!$\delta \in \big(\frac{1}{4\alpha+3},\frac{1}{\alpha+1}\big)$\!  & - & - & -\\\cline{4-7}
	\end{tabular}
\end{table}

\subsection{Important parameters and events}\label{sec:param}

Following the conventions in \cite{OW,OW2}, we identify a collection of ``bad'' events on whose complement our main results hold, and we show that these ``bad'' events have negligible probability as $n$ grows to infinity. For the reasons mentioned in the previous subsection, we allow the ``bad'' events to depend on the parameters $\eps_n, \eps_n^*, \delta_n, \delta_n^* \in (0, \eps)$ and $c_n, \ell_n \in \mathbb{Z}^+$ that we optimize in Subsections \ref{sec:manyPairConcHold}, \ref{sec:badEventsSmall}, \ref{sec:manyPairPf}, and \ref{sec:maxPairPf} below. The lengths $\eps_n$, $\eps_n^*$ are the widths of the annuli on which we seek to describe extremal root and critical point pairing, and $\delta_n$, $\delta_n^*$ are lower bounds on the distances between the roots of $p_n$ in these annuli and the rest of the $X_j$, $1\leq j \leq n$ (see Table \ref{table:params} and, for a graphical depiction, see Figure \ref{fig:paramPic}).  The positive sequences $c_n$ and $\ell_n$, defined among the hypotheses of our main theorems above, are slowly growing integer sequences that we use to sharpen a number of asymptotic inequalities. We have chosen to allow some of the parameters and definitions in this section to permit $\alpha < 0$. This will allow us to recycle some of the $\alpha \geq 0$ arguments when we prove the $\alpha < 0$ results in Section \ref{sec:proofsNeg} below.

Define the annuli $\mathcal{A}_n$ and $\mathcal{A}^*_n$, where we will analyze root and critical point pairs mentioned in Theorems \ref{thm:manyPair} and \ref{thm:maxPair}, respectively:
\begin{align*}
\mathcal{A}_n &:= \set{z\in \C: 1 - \eps_n \leq \abs{z} \leq 1},\\
\mathcal{A}^*_n &:=\set{z\in \C: 1 - \eps^*_n \leq \abs{z} \leq 1};
\intertext{and the annulus $\mathcal{B}_n$, in which we will keep track of the separation of $X_j$ for our estimation of the Lipschitz constant associated with $z \mapsto \overline{M}_\mu(z)$:}
\mathcal{B}_n &:= \set{z\in \C: 1 - 2\eps_n \leq \abs{z} < 1-\eps_n}.
\end{align*}
See also Figure \ref{fig:paramPic} and Table \ref{table:params}.

As foreshadowed in Subsection \ref{sec:detailsPos}, we will show that off of some ``bad'' events defined below, $\overline{M}_\mu(z)$ is close to $m_\mu(z)$ on a net of deterministic points $\mathcal{N}_n$ of $\mathcal{A}_n$ that grows with $n$. To this end, use e.g.\ Lemma \ref{lem:epsNet} to find nets $\mathcal{N}_n$ of $\mathcal{A}_n$, depending on $n$, which satisfy\footnote{Note: We will choose $c_n > 1$ and $\delta_n < \eps_n$, so that $\eps_n^{1/3}\delta_n^{2/3}c_n^{-2}$ is less than $\eps_n = 1-(1-\eps_n)$ and the hypotheses of Lemma \ref{lem:epsNet} are met.}
\begin{equation}
	\max_{z\in \mathcal{A}_n}\left(\min_{w\in \mathcal{N}_n}\abs{z-w}\right) \leq \frac{\eps_n^{{1}/{3}}\delta_n^{{2}/{3}}}{c_n^2}\quad \text{and}\quad \abs{\mathcal{N}_n} \ll \frac{c_n^4}{\eps_n^{{2}/{3}}\delta_n^{{4}/{3}}}\cdot 1 \cdot \eps_n = \frac{c_n^4\eps_n^{{1}/{3}}}{\delta_n^{{4}/{3}}}.
	\label{eqn:netParams}
\end{equation}

We now specify the ``bad'' events off of which $\overline{M}_\mu(z)$ is close to $m_\mu(z)$ and $z\mapsto \overline{M}_\mu(z)$ has a manageable Lipschitz constant:
{\allowdisplaybreaks
\begin{align*}
	E_n &:= \set{\#\set{i \in [n]: X_i \in \mathcal{A}_n\cup \mathcal{B}_n} > 3C_\mu n(2\eps_n)^{\alpha+1}}\\ 
		&\phantom{:={}}\textit{``There are too many $X_i$ in $\mathcal{A}_n \cup \mathcal{B}_n$''}\\[1ex]		
	F_n &:= \set{\exists\ i,j\in [n],\ i\neq j: X_i \in \mathcal{A}_n \cup \mathcal{B}_n,\ \abs{X_i-X_j} \leq \delta_n}\\
		&\phantom{:={}}\textit{``The $X_i$ in $\mathcal{A}_n \cup \mathcal{B}_n$ aren't sufficiently isolated.''}\\[1ex]
	F^*_n &:= \set{\exists\ i,j\in [n],\ i\neq j: X_i \in \mathcal{A}^*_n,\ \abs{X_{i}-X_{j}} \leq \delta^*_n}\\
		&\phantom{:={}}\cup \begin{cases} \emptyset & \text{if $\alpha > 0$,}\\
			\begin{aligned}
			\Big\{&\exists\ i\in [n]: X_i \in \mathcal{A}^*_n,\\
			&\#\set{j\in [n],\ j \neq i: \abs{X_i-X_j} < \eps_n} > c_n^3n\eps_n^{\alpha+2} \Big\}
			\end{aligned}  &\text{if $\alpha \leq 0$.}\\
		\end{cases}\\
		&\phantom{:={}}\textit{``The largest roots (those in $\mathcal{A}_n^*$) aren't sufficiently isolated.''}\\[1ex]
	G_n &:= \set{\sum_{i=1}^n\frac{1}{(1-\frac{3}{2}\eps_n -\abs{X_i})^2}\cdot \sind{\abs{X_i}< 1 -2\eps_n}\geq nc_n\cdot \max\set{\eps_n^{\alpha-1}, \eps_n^{-1}}}\\
		&\phantom{:={}}\begin{minipage}{.85\linewidth}\textit{``The  $X_i \notin \mathcal{A}_n \cup \mathcal{B}_n$ are too big (too close to the $X_j \in \mathcal{A}_n$) on average.''}\end{minipage}\\[1ex]
	H_n &:= \set{\max_{z\in \mathcal{N}_n}\abs{\frac{1}{n}\sum_{i=1}^n\frac{1}{z-X_i}\sind{\abs{z-X_i}> \frac{\delta_n}{2}}-m_\mu(z)}\geq \frac{1}{c_n}}\\
		&\phantom{:={}}\begin{minipage}{.85\linewidth}\textit{``$\overline{M}_\mu(z)$ does not uniformly concentrate around $m_\mu(z)$ at the rate $1/c_n$ for points in the net $\mathcal{N}_n$.''}\end{minipage}
\end{align*}
}

In the next subsection, we show that for a wide range of values of the parameters $\eps_n$, $\eps_n^*$, $\delta_n$, $\delta_n^*$, on the complement of $E_n \cup F_n \cup F_n^*\cup G_n \cup H_n$, the discrete approximation $\overline{M}_\mu(z)$ to the Cauchy--Stieltjes transform $m_\mu(z)$ is Lipschitz continuous and close to $m_\mu(z)$ on the annulus $\mathcal{A}_n$.

%---------------------------------%%
%- Stuff holds on ``good'' events -%
%---------------------------------%%

\subsection{On the complement of the ``bad'' events, $\overline{M}_\mu(z)$ is Lipschitz continuous and well approximated by $m_\mu(z)$ near the unit circle.}\label{sec:CSBehaved}
	As discussed in Subsection \ref{sec:detailsPos}, we are motivated by \eqref{eqn:GLPair} and conditions \ref{item:det1}, \ref{item:det2} of Theorem \ref{thm:determ} to show that $\overline{M}_\mu(z)$ is  Lipschitz continuous on $\mathcal{A}_n$ and may be approximated (uniformly for $z \in \mathcal{A}_n$) by $m_\mu(z)$. Lemma \ref{lem:nearLip} establishes upper bounds in terms of $\eps_n$, $\delta_n$, $\delta_n^*$ for the Lipschitz constants associated with $\overline{M}_\mu(z)$ for $z \in \mathcal{A}_n$ and $z \in \mathcal{A}_n^*$. The proof leverages the heuristic illustrated in Figure \ref{fig:paramPic}, namely that $X_i$ that lie increasingly near the edge of the unit disk are more likely to be isolated from other $X_i$ (by virtue of the small area of the annuli we are considering and the fact that $\mu$ has a density on $\mathbb{A}_\eps$). We achieve the sharpest bounds (for use in the proof of Theorem \ref{thm:maxPair} in Section \ref{sec:maxPairPf}) by considering $X_i$ in several regimes ($\abs{X_i} < 1-2\eps_n$, $X_i \in \mathcal{A}_n \cup \mathcal{B}_n$, and $X_i \in \mathcal{A}^*_n$). 
	
	Lemma \ref{lem:nearCS} gives an upper bound, in terms of $\eps_n$, $\delta_n$, $c_n$ and uniform over $z \in \mathcal{A}_n$, for the approximation error when estimating $\overline{M}_\mu(z)$ with $m_\mu(z)$. In the proof, we interpolate between $\overline{M}_\mu(z)$ and $m_\mu(z)$ using $\overline{M}_\mu(\xi_z)$ and $m_\mu(\xi_z)$, where $\xi_z$ is a point in the net $\mathcal{N}_n$ near to $z$. On the complement of $H_n$, $\overline{M}_\mu(\xi_z)$ and $m_\mu(\xi_z)$ are close to each other, and the Lipschitz continuity of $\overline{M}_\mu(z)$ (see Lemma \ref{lem:nearLip}) and of $m_\mu(z)$ (see Corollary \ref{cor:CSnice}) justify approximating $\overline{M}_\mu(z)$ and $m_\mu(z)$ with $\overline{M}_\mu(\xi_z)$ and $m_\mu(\xi_z)$, respectively.
	
	\begin{lemma}\label{lem:nearLip}
		Suppose $c_n > 0$, $\frac{1}{n} < \delta_n  \leq \frac{\eps_n}{2} < \frac{1}{4}$, and $n \geq 3$.	On the complement of $E_n\cup F_n\cup G_n$, for all $z,w\in \mathcal{A}_n\cup B(0,1)^c$, the differences $\abs{\overline{M}_\mu(z) - \overline{M}_\mu(w)}$ are finite, and 
		\begin{equation}
			\abs{\overline{M}_\mu(z) - \overline{M}_\mu(w)} < \abs{z-w}\left(c_n\cdot\max\set{\eps_n^{\alpha-1}, \eps_n^{-1}} + \frac{172\log{n}}{n\delta_n^2} \right) + \frac{8}{n\delta_n}.
			\label{eqn:nearLip}
		\end{equation}	
		The following special cases yield sharper bounds:
		\begin{enumerate}[(\thetheorem.i)]
			\item On the complement of $E_n\cup F_n \cup G_n$, when $z,w \in B(X_j, \frac{\delta_n}{2})$ for some $X_j \in \mathcal{A}_n$, $j\in [n]$, then \eqref{eqn:nearLip} holds and the last term, $\frac{8}{n\delta_n}$, can be replaced with $0$. (Note: In this case, it's possible to have $z,w \in B(X_j, \frac{\delta_n}{2}) \cap \mathcal{B}_n$.) \label{it:spCase1}
			\item Suppose $\alpha \leq 0$ and $\frac{1}{n}<\delta_n \leq \delta^*_n < 1$. On the complement of $E_n\cup F_n \cup F_n^* \cup G_n$, when $z, w\in \mathcal{A}_n \cup B(0,1)^c$ are in the open ball $B(X_j, \frac{\delta_n}{2})$ for some $X_j \in \mathcal{A}_n^*$, $j \in [n]$, then we have
			\begin{equation}
			\abs{\overline{M}_\mu(z) - \overline{M}_\mu(w)} < \abs{z-w}\left(\left(c_n+24C_\mu\right)\eps_n^{\alpha-1} + \frac{4c_n^3\eps_n^{\alpha + 2}}{\left(\delta^*_n\right)^2}\right).
			\label{eqn:spCase2}
			\end{equation}
			\label{it:spCase2}
		\end{enumerate}
	\end{lemma}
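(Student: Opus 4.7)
The plan is to decompose $\overline{M}_\mu(z) - \overline{M}_\mu(w)$ into an interior sum together with at most two boundary contributions, and to control each piece using the separation and count information encoded in $F_n$, $E_n$, and $G_n$ (and $F_n^*$ for special case~\ref{it:spCase2}). Using the identity $\tfrac{1}{z-X_j} - \tfrac{1}{w-X_j} = \tfrac{w-z}{(z-X_j)(w-X_j)}$ and separating the contributions of $j = i_z$ and $j = i_w$, one has
\[
\overline{M}_\mu(z) - \overline{M}_\mu(w) = \frac{w-z}{n}\sum_{\substack{j=1\\ j\neq i_z, i_w}}^n\frac{1}{(z - X_j)(w - X_j)} + \sind{i_z \neq i_w}\left[\frac{1}{n(z - X_{i_w})} - \frac{1}{n(w - X_{i_z})}\right],
\]
which already extracts the factor $\abs{z-w}$ from the main sum.

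To handle the boundary terms, I would show that when $i_z \neq i_w$ one has $\abs{w - X_{i_z}} > \delta_n/2$: if $X_{i_z} \in \mathcal{A}_n \cup \mathcal{B}_n$, then $F_n^c$ gives $\abs{X_{i_z} - X_{i_w}} > \delta_n$, and combining this with $\abs{w - X_{i_w}} \leq \abs{w - X_{i_z}}$ (from the definition of $i_w$) and the triangle inequality yields $\delta_n < 2\abs{w - X_{i_z}}$; otherwise $\abs{X_{i_z}} < 1 - 2\eps_n$ while $\abs{w} \geq 1 - \eps_n$, producing $\abs{w - X_{i_z}} \geq \eps_n \geq 2\delta_n$. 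A symmetric argument controls $\abs{z - X_{i_w}}$, so the two boundary terms together contribute at most $\tfrac{8}{n\delta_n}$.

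For the interior sum I would apply AM--GM via $\tfrac{1}{\abs{z-X_j}\abs{w-X_j}} \leq \tfrac{1}{2}\bigl(\abs{z-X_j}^{-2} + \abs{w-X_j}^{-2}\bigr)$ and split the remaining indices according to whether $X_j \in \mathcal{A}_n \cup \mathcal{B}_n$ or $\abs{X_j} < 1 - 2\eps_n$. The ``far'' indices are immediate: since $\abs{z}, \abs{w} \geq 1 - \eps_n$, the triangle inequality yields $\abs{z - X_j}, \abs{w-X_j} > 1 - \tfrac{3}{2}\eps_n - \abs{X_j}$, matching the denominators in the definition of $G_n$, so the bound defining $G_n^c$ produces the $c_n\max\set{\eps_n^{\alpha-1}, \eps_n^{-1}}$ factor in \eqref{eqn:nearLip}.

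The main technical obstacle is the ``near'' indices $X_j \in \mathcal{A}_n \cup \mathcal{B}_n$ with $j \neq i_z, i_w$, which I would handle by a packing argument. On $F_n^c$ these $X_j$ are $\delta_n$-separated, so the open balls $B(X_j, \delta_n/2)$ are pairwise disjoint, and an area comparison shows that at most $O((r/\delta_n)^2)$ of them can lie within distance $r$ of any fixed point. Ordering the relevant $X_j$ by distance from $z$ as $Y_1, Y_2, \ldots$ yields $\abs{z - Y_k} \gtrsim \sqrt{k}\,\delta_n$, whence $\sum_{k \leq N} \abs{z - Y_k}^{-2} \lesssim \log N/\delta_n^2$; the event $E_n^c$ bounds $N \leq 3C_\mu n(2\eps_n)^{\alpha+1}$, so $\log N = O(\log n)$, producing the $\tfrac{172\log n}{n\delta_n^2}$ term after carefully tracking constants. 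For special case~\ref{it:spCase1}, I would use the same $\delta_n$-separation together with $\abs{z} \geq 1 - \tfrac{3}{2}\eps_n$ to force $i_z = i_w = j$ whenever $z, w \in B(X_j, \delta_n/2)$ with $X_j \in \mathcal{A}_n$, which annihilates the boundary terms. For special case~\ref{it:spCase2}, I would rerun the packing step in two regimes: the $X_k \in \mathcal{A}_n^*$ obey the sharper $\delta_n^*$-separation from $F_n^{*c}$, while the at most $c_n^3 n \eps_n^{\alpha + 2}$ roots $X_k \in (\mathcal{A}_n \cup \mathcal{B}_n) \setminus \mathcal{A}_n^*$ lying within $\eps_n$ of the base root (controlled by the second clause of $F_n^{*c}$) each contribute at most $\eps_n^{-2}$ to the squared-distance sum, yielding the sharper bound \eqref{eqn:spCase2}.
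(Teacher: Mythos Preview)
Your overall strategy matches the paper's almost exactly: decompose $\overline{M}_\mu(z)-\overline{M}_\mu(w)$, peel off the two boundary terms, and bound the interior sum by splitting into ``far'' indices (controlled by $G_n^c$) and ``near'' indices (controlled by a $\delta_n$-packing argument using $F_n^c$ and $E_n^c$). Your direct decomposition is in fact slightly cleaner than the paper's route through the indicator-truncated sums, and your boundary argument is correct; your AM--GM plays the same role as the paper's Cauchy--Schwarz. Special case~\ref{it:spCase1} is also right: $i_z=i_w=j$ kills the boundary term.

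However, your plan for special case~\ref{it:spCase2} has the dichotomy backwards and would not produce \eqref{eqn:spCase2}. The relevant split is by \emph{distance from the base root $X_j$}, not by membership in $\mathcal{A}_n^*$. On $(F_n^*)^c$, since $X_j\in\mathcal{A}_n^*$, \emph{every} other root $X_i$ satisfies $|X_i-X_j|>\delta_n^*$, and at most $c_n^3 n\eps_n^{\alpha+2}$ of them lie within $\eps_n$ of $X_j$. Hence for $z,w\in B(X_j,\delta_n/2)$ one has $\min\{|z-X_i|,|w-X_i|\}\geq |X_i-X_j|-\delta_n/2$, which is $\geq\delta_n^*/2$ for the (at most $c_n^3 n\eps_n^{\alpha+2}$) near roots and $\geq\eps_n/2$ for the rest. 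So the near roots contribute $\frac{1}{n}\cdot c_n^3 n\eps_n^{\alpha+2}\cdot\frac{4}{(\delta_n^*)^2}$, giving the second term in \eqref{eqn:spCase2}, while the (at most $3C_\mu n(2\eps_n)^{\alpha+1}$ on $E_n^c$) far roots in $\mathcal{A}_n\cup\mathcal{B}_n$ contribute $\frac{1}{n}\cdot 3C_\mu n(2\eps_n)^{\alpha+1}\cdot\frac{4}{\eps_n^2}\leq 24C_\mu\eps_n^{\alpha-1}$. Your version assigns $\eps_n^{-2}$ to the near roots, but those can be as close as $\delta_n^*$ to $z$, so that bound is false; and ``rerunning the packing step'' for $X_k\in\mathcal{A}_n^*$ is unnecessary and would introduce an unwanted $\log n$ factor.
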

	\begin{proof}
		We work on the complement of the event $E_n\cup F_n\cup G_n$, and our strategy will be to break the sums comprising $\overline{M}_\mu(z) - \overline{M}_\mu(w)$ into pieces based on the proximity of each $X_i$ to $z$ and $w$.	First, notice that on the complement of $F_n$, all $X_i \in \mathcal{A}_n \cup \mathcal{B}_n$ are $\delta_n$-separated, so either $z$ with $\abs{z} > 1-\frac{3}{2}\eps_n$ is farther than $\delta_n/2$ from every $X_i$, $i \in [n]$, or there is precisely one $X_i$, namely $X_{i_z}$, for which $\abs{X_{i_z} - z} \leq \delta_n/2$. (Recall [see \eqref{eqn:indexMap}] that for $z\in \mathbb{C}$, the random index $i_z$ specifies a particular $X_{i_z}$ that is closest to $z$.) It follows that for $z$ with $\abs{z} > 1-\frac{3}{2}\eps_n$,
		\begin{equation}\label{eqn:onlyOneClose}
			\abs{\frac{1}{n}\sum_{j\neq i_z}\frac{1}{z-X_j}-\frac{1}{n}\sum_{i=1}^n\frac{1}{z-X_i}\sind{\abs{z-X_i}> \frac{\delta_n}{2}}} = \abs{\frac{1}{n}\cdot\frac{\sind{\abs{z-X_{i_z}}>\frac{\delta_n}{2}}}{z-X_{i_z}}}.
		\end{equation}
		Identical reasoning shows that \eqref{eqn:onlyOneClose} holds with $w$, $\abs{w} > 1-\frac{3}{2}\eps_n$ in place of $z$, so via the triangle inequality, for $z,w$ satisfying $\min\set{\abs{z},\abs{w}} > 1-\frac{3}{2}\eps_n$,
		\begin{align*}
			&\abs{\overline{M}_\mu(z) - \overline{M}_\mu(w)}\\
			&\quad\leq \abs{\frac{1}{n}\sum_{i=1}^n\frac{\sind{\abs{z-X_i}> \frac{\delta_n}{2}}}{z-X_i} - \frac{1}{n}\sum_{i=1}^n\frac{\sind{\abs{w-X_i}> \frac{\delta_n}{2}}}{w-X_i}} + \begin{cases}0&\text{cases \ref{it:spCase1}, \ref{it:spCase2}},\\
			 \frac{4}{n\delta_n}& \text{else}.\end{cases}
		\end{align*}
		
		We now consider the contributions of \[\frac{1}{z-X_i}\sind{\abs{z-X_i}> \frac{\delta_n}{2}} - \frac{1}{w-X_i}\sind{\abs{w-X_i}> \frac{\delta_n}{2}},\ i\in[n],\] based on four regimes that correspond to the quantities $S_n^{\rm I}$, $S_n^{\rm II}$, $S_n^{\rm III}$, and $S_n^{\rm IV}$ defined precisely in the argument below. These regimes have the following intuitive significance:
		\begin{itemize}
			\item \textbf{Regime I: $X_i \notin \mathcal{A}_n\cup \mathcal{B}_n$.} There is at least an $\eps_n$ distance between each of these $X_i$ and both of $z$ and $w$. This is uniform over $z,w$, and we can use moment methods to control this contribution by showing that ``bad'' event $G_n$ is negligible in the limit. (See Lemma \ref{lem:GnSmall} for details.)
			
			\item \textbf{Regime II: $X_i \in \mathcal{A}_n \cup \mathcal{B}_n$, but $\delta_n/2$-far from both $z$ and $w$.}  These $X_i$ could be quite close to $z$ and/or $w$, and we use that the $X_i \in \mathcal{A}_n \cup \mathcal{B}_n$ are $\delta_n$-separated and a geometric ``ball-packing'' argument to overestimate how many $X_i$ can be at distances $\delta_n/2, \delta_n, 2\delta_n, \ldots$, from $z$ and $w$. Since this argument only relies on the fact that off of $F_n$, the $X_i \in \mathcal{A}_n \cup \mathcal{B}_n$ are $\delta_n$-separated (and that $1/n < \delta_n \leq \eps_n/2$), it gives a bound uniform over $z$ and $w$. (For details on how to control the probability of the ``bad'' event $F_n$, see Lemmas \ref{lem:sepRts} and \ref{lem:sepRtsNeg}.) We note that in the proof of special case \ref{it:spCase2}, we use a slightly different method to bound these $X_i$ by also working on the complement of $F_n^*$, which gives us a bit more control of how the $X_i \in \mathcal{A}_n^*$ are separated.
			
			\item \textbf{Regimes III and IV: $X_i$ are within $\delta_n/2$ of precisely one of $w$ or $z$ and at least $d_n/2$-far from the other.} These $X_i$ yield virtually trivial contributions because, on the complement of $F_n$, the $\delta_n$-separation of the $X_i\in \mathcal{A}_n\cup \mathcal{B}_n$ forces at most one such $X_i$ for each of $w$ and $z$. For special cases \ref{it:spCase1} and \ref{it:spCase2}, these regimes are not present.
		\end{itemize}
		By assumption, $\delta_n \leq \eps_n/2$, so for $z,w$ satisfying $\min\set{\abs{z},\abs{w}} > 1-\frac{3}{2}\eps_n$ and any $X_i \notin \mathcal{A}_n \cup \mathcal{B}_n$,
		\[
		\min\set{\abs{z-X_i}, \abs{w-X_i}} > 
		\frac{\eps_n}{2} > \frac{\delta_n}{2},\] and after several more applications of the triangle inequality,
		\begin{equation}
		\abs{\overline{M}_\mu(z) - \overline{M}_\mu(w)} \leq \abs{z-w}\left(\abs{S^{\rm I}_n} + \abs{S^{\rm II}_n}\right) + \abs{S^{\rm III}_n} + \abs{S^{\rm IV}_n} + \begin{cases}
			0 &\text{\ref{it:spCase1}, \ref{it:spCase2}},\\\frac{4}{n\delta_n}& \text{else},\end{cases}
		\label{eqn:DnSplit}
		\end{equation}
		where
		\begin{align*}
			S^{\rm I}_n &:= \frac{1}{n}\sum_{i=1}^n\frac{1}{(z-X_i)(w-X_i)}\cdot\sind{X_j \notin \mathcal{A}_n \cup \mathcal{B}_n},\\
			S^{\rm II}_n &:= \frac{1}{n}\sum_{i=1}^n\frac{\sind{\abs{z-X_i}>\frac{\delta_n}{2}}\cdot\sind{\abs{w-X_i}>\frac{\delta_n}{2}}}{(z-X_i)(w-X_i)}\cdot\sind{X_i \in \mathcal{A}_n \cup \mathcal{B}_n},\\
			S^{\rm III}_n &:= \frac{1}{n}\sum_{i=1}^n\frac{\sind{\abs{z-X_i}>\frac{\delta_n}{2}}\cdot\sind{\abs{w-X_i}\leq\frac{\delta_n}{2}}}{(z-X_i)}\cdot\sind{X_i \in \mathcal{A}_n \cup \mathcal{B}_n},\\
			S^{\rm IV}_n &:= \frac{1}{n}\sum_{i=1}^n\frac{\sind{\abs{w-X_i}>\frac{\delta_n}{2}}\cdot\sind{\abs{z-X_i}\leq\frac{\delta_n}{2}}}{(w-X_i)}\cdot\sind{X_i \in \mathcal{A}_n \cup \mathcal{B}_n}.
		\end{align*}
		On the complement of $G_n$, $S^{\rm I}_n$ is bounded by $c_n\cdot\max\set{\eps_n^{\alpha-1}, \eps_n^{-1}}$, and on the complement of $F_n$, each of the sums $S^{\rm III}_n, S^{\rm IV}_n$ has at most one non-zero term bounded by $\frac{2}{\delta_n}$. These last two sums are $0$ in special cases \ref{it:spCase1} and \ref{it:spCase2}. Consequently, on the complement of $G_n \cup F_n$ and for $z,w$ satisfying $\min\set{\abs{z},\abs{w}} > 1-\frac{3}{2}\eps_n$, Inequality \eqref{eqn:DnSplit} implies
		\begin{equation}
		\abs{\overline{M}_\mu(z) - \overline{M}_\mu(w)} \leq \abs{z-w}\left(c_n\max\set{\eps_n^{\alpha-1}, \eps_n^{-1}} + \abs{S^{\rm II}_n}\right) + \begin{cases}0&\text{\ref{it:spCase1}, \ref{it:spCase2}},\\\frac{8}{n\delta_n}& \text{else}.\end{cases}
		\label{eqn:DnSimple}
		\end{equation}
	
		We conclude the proof by finding two different upper bounds for $\abs{S_n^{\rm II}}$, and the first of these will imply special case \ref{it:spCase2}. On the complement of $F_n^*$,  whenever $z,w \in \mathcal{A}_n \cup B(0,1)^c$ are within a distance $\frac{\delta_n}{2}$ of some $X_j \in \mathcal{A}_n^*$, we have the following for $i \neq j$:
		\begin{equation}\label{eqn:specialTrivialBd}
			\min\set{\abs{z-X_i}, \abs{w-X_i}} \geq \abs{X_i - X_j} - \frac{\delta_n}{2} \geq \begin{cases}
			\frac{\eps_n}{2} &\text{if $\abs{X_i - X_j} \geq \eps_n$,}\\
			\frac{\delta_n^*}{2} & \text{if $\abs{X_i - X_j} < \eps_n$.}
		\end{cases}
		\end{equation}
		When $\alpha \leq 0$, on the complement of $E_n \cup F_n^*$, the sum $S_n^{\rm II}$ has at most $3C_\mu n (2\eps_n)^{\alpha +1 }$ non-zero terms and at most $c_n^3n\eps_n^{\alpha+2}$ of these correspond to $X_i$ within a distance $\eps_n$ of $X_j$ (recall that $F_n^*$ is defined differently for $\alpha \leq 0$ than for $\alpha > 0$). Thus, applying \eqref{eqn:specialTrivialBd} to the summands of $S_n^{\rm II}$ establishes that when $\alpha \leq 0$, on the complement of $E_n \cup F_n^*$, any $z,w \in \mathcal{A}_n \cup B(0,1)^c$ within a distance $\frac{\delta_n}{2}$ of some $X_j \in \mathcal{A}_n^*$ satisfy
		\begin{align*}
		\abs{S_n^{\rm II}} &\leq 3C_\mu n (2\eps_n)^{\alpha +1 }\cdot \frac{1}{n}\cdot \frac{4}{\left(\eps_n\right)^2} + c_n^3n\eps_n^{\alpha+2}\cdot \frac{1}{n}\cdot \frac{4}{\left(\delta^*_n\right)^2}\\
		&= 48C_\mu(2\eps_n)^{\alpha - 1} + \frac{4c_n^3\eps_n^{\alpha + 2}}{\left(\delta^*_n\right)^2},
		%\label{eqn:DnS2}
		\end{align*}
		and Inequality \eqref{eqn:spCase2} follows from \eqref{eqn:DnSimple}.
		
		To prove the main result and special case \ref{it:spCase1}, it remains to show that on the complement of $E_n \cup F_n \cup G_n$,
		\begin{equation}
			\abs{S^{\rm II}_n} < \frac{172\log{n}}{n\delta_n^2},\ \text{for $z,w$ with  $\min\set{\abs{z},\abs{w}} > 1-\frac{3}{2}\eps_n$,}
			\label{eqn:outerDisksLip}
		\end{equation}
		which we will accomplish by using the fact that on $F_n^c$, the $X_i \in \mathcal{A}_n\cup \mathcal{B}_n$ are $\delta_n$-separated. To that end, fix $z \in \mathbb{C}$, and compare the areas of the disjoint disks
		\[
		\set{\xi \in \C: \abs{\xi-X_i} \leq \frac{\delta_n}{2}},\ 1\leq i \leq n,\ X_i \in \mathcal{A}_n\cup \mathcal{B}_n
		\]
		and the annuli
		\[
		\set{\xi \in \C: (j-1)\delta_n \leq \abs{\xi-z} < (j+2)\delta_n},\ j\in \Z^+,\ 0 \leq j \leq \left\lceil 2/\delta_n+1\right\rceil.
		\] 
		(Note that these sets are disks when $j=0,1$.) The area comparison shows that at most $16$ of the $X_i\in \mathcal{A}_n \cup \mathcal{B}_n$ satisfy $\delta_n/2 < \abs{X_i -z} \leq \delta_n$, and for each $j$, $1 \leq j \leq \left\lceil 2/\delta_n+1\right\rceil$, there can be at most $4(6j+3)$ of the $X_i\in \mathcal{A}_n\cup \mathcal{B}_n$ satisfying
		\[
		j\delta_n\leq\abs{X_i-z}\leq (j+1)\delta_n.
		\]
		It follows that on the complement of $F_n$, for $n \geq 3$, $\delta_n > 1/n$, and $z\in \mathbb{C}$, 
		\begin{align*}
			{\frac{1}{n}\sum_{i=1}^n\frac{\sind{\abs{z-X_i}>\frac{\delta_n}{2}}}{\abs{z-X_i}^2}\cdot\sind{X_i \in \mathcal{A}_n \cup \mathcal{B}_n}} &\leq\frac{1}{n}\cdot 16\cdot\frac{4}{\delta_n^2}+\frac{1}{n}\sum_{j=1}^{\left\lceil {2}/{\delta_n}+1\right\rceil}4(6j+3)\frac{1}{j^2\delta_n^2}\\
			&\leq  \frac{1}{n\delta_n^2}\left(64 + 36\sum_{j=1}^{3n}\frac{1}{j}\right)\\
			&< \frac{1}{n\delta_n^2}\left(64 + 36\left(1+ \int_1^{3n}\frac{1}{x}\,dx\right)\right)\\
			&< \frac{172}{n\delta_n^2}\log(n).
		\end{align*}
		(We used that $n \geq 3$ and $\delta_n > 1/n$ to overestimate $\left\lceil {2}/{\delta_n}+1\right\rceil$ with $3n$.) Notice that this bound is uniform over all $z \in \mathbb{C}$ and also holds with $w\in \mathbb{C}$ in place of $z$. Hence, the discrete Cauchy--Schwarz inequality implies $\abs{S^{\rm II}_n} \leq \frac{172\log(n)}{n\delta_n^2}$, and \eqref{eqn:outerDisksLip} then follows from \eqref{eqn:DnSimple}.
	\end{proof}

	Lemma \ref{lem:nearCS} below will allow us to use the Cauchy--Stieltjes transform $m_\mu(z)$ to uniformly approximate $z \mapsto \overline{M}_\mu(z) = \frac{1}{n}\sum_{j\neq i_z}\frac{1}{z-X_j}$ on $\mathcal{A}_n$. (Recall [see \eqref{eqn:indexMap}] that for $z\in \mathbb{C}$, the random index $i_z$ specifies a particular $X_{i_z}$ that is closest to $z$.) Notice that for a fixed $z\in \C$, if we include the ``missing'' summand corresponding to $i_z$, the discrete expression above is equal in expectation to $m_\mu(z)$, so the proposed approximation seems intuitive. As mentioned at the beginning of the current subsection, our proof of Lemma \ref{lem:nearCS} relies on the bounds we just established for the Lipschitz constant associated with $z \mapsto \overline{M}_\mu(z)$ in addition to the Lipschitz continuity of $m_\mu(z)$ on $\mathbb{A}_\eps$. This last fact is the content of Corollary \ref{cor:CSnice}, which we include with proof in the appendix.

\begin{lemma}\label{lem:nearCS}
	Suppose $\eps_n$, $\delta_n$, and $c_n$ are positive sequences satisfying $2\delta_n < \eps_n < \frac{\eps}{2}$ and $1< c_n< n\delta_n$. For large $n$, on the complement of $E_n \cup F_n \cup G_n \cup H_n$,
	\begin{equation}
		\sup_{z\in \mathcal{A}_n}\abs{\overline{M}_\mu(z)-m_\mu(z)} <  \frac{14}{c_n} + \frac{172\log(n)\eps_n^{1/3}}{nc_n^2\delta_n^{4/3}}.
	\label{eqn:nearCS}
	\end{equation}
\end{lemma}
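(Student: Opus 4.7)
The plan is to use the deterministic net $\mathcal{N}_n$ from \eqref{eqn:netParams} as a scaffold that bridges $\overline{M}_\mu$ and $m_\mu$. For each $z \in \mathcal{A}_n$, pick a net point $\xi_z \in \mathcal{N}_n$ with $|z-\xi_z| \le \eps_n^{1/3}\delta_n^{2/3}c_n^{-2}$ and split via the triangle inequality:
\[
\bigl|\overline{M}_\mu(z) - m_\mu(z)\bigr| \le \bigl|\overline{M}_\mu(z) - \overline{M}_\mu(\xi_z)\bigr| + \bigl|\overline{M}_\mu(\xi_z) - m_\mu(\xi_z)\bigr| + \bigl|m_\mu(\xi_z) - m_\mu(z)\bigr|.
\]
The three estimates below will be uniform in $z \in \mathcal{A}_n$, which gives the desired supremum bound.

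The two outer pieces are Lipschitz estimates that are already available. For the first piece, Lemma \ref{lem:nearLip} is valid on $(E_n \cup F_n \cup G_n)^c$, and since $\alpha \ge 0$ the max inside that bound simplifies to $\eps_n^{-1}$. Plugging the net spacing into \eqref{eqn:nearLip} and using $\delta_n < \eps_n/2$ and $c_n < n\delta_n$, the three contributions collapse as $(\delta_n/\eps_n)^{2/3}/c_n < 1/c_n$, the claimed logarithmic term $\frac{172\log(n)\eps_n^{1/3}}{nc_n^2\delta_n^{4/3}}$, and $8/(n\delta_n) < 8/c_n$, totalling at most roughly $9/c_n$ plus the logarithmic term. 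For the third piece, Corollary \ref{cor:CSnice} gives a fixed Lipschitz constant for $m_\mu$ on $\mathbb{A}_\eps \supset \mathcal{A}_n$, so this contribution is proportional to $|z - \xi_z| \le \eps_n^{1/3}\delta_n^{2/3}c_n^{-2} < c_n^{-2} < c_n^{-1}$, i.e.\ negligible for large $n$.

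The main subtlety is the middle piece, since the event $H_n^c$ directly controls the \emph{truncated} sum $\tfrac{1}{n}\sum_i \frac{\sind{|\xi_z - X_i| > \delta_n/2}}{\xi_z - X_i}$ rather than $\overline{M}_\mu(\xi_z) = \tfrac{1}{n}\sum_{j \ne i_{\xi_z}} \tfrac{1}{\xi_z - X_j}$. To reconcile the two expressions, I will use that on $F_n^c$ the $X_i$ lying in $\mathcal{A}_n \cup \mathcal{B}_n$ are $\delta_n$-separated, so at most one $X_i$, namely $X_{i_{\xi_z}}$, can lie within $\delta_n/2$ of $\xi_z \in \mathcal{A}_n$. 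If $|\xi_z - X_{i_{\xi_z}}| \le \delta_n/2$ the truncated sum equals $\overline{M}_\mu(\xi_z)$ exactly, and otherwise they differ only by the single term $1/(n(\xi_z - X_{i_{\xi_z}}))$, of magnitude at most $2/(n\delta_n) < 2/c_n$. Combining this with the bound $1/c_n$ coming from $H_n^c$ gives $|\overline{M}_\mu(\xi_z) - m_\mu(\xi_z)| < 3/c_n$. Summing the three contributions and absorbing the fixed Lipschitz constant of $m_\mu$ into the overall numerator produces the stated bound $14/c_n + \frac{172\log(n)\eps_n^{1/3}}{nc_n^2\delta_n^{4/3}}$ for all sufficiently large $n$.
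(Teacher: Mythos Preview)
Your proposal is correct and follows essentially the same route as the paper: pick a nearby net point $\xi_z$, split $|\overline{M}_\mu(z)-m_\mu(z)|$ into three pieces by the triangle inequality, handle the outer two via Lemma~\ref{lem:nearLip} and Corollary~\ref{cor:CSnice}, and bridge the middle piece by observing (exactly as in \eqref{eqn:onlyOneClose}) that on $F_n^c$ the truncated sum and $\overline{M}_\mu(\xi_z)$ differ by at most one term of size $\le 2/(n\delta_n)$. Your arithmetic and the final tally $14/c_n + \tfrac{172\log(n)\eps_n^{1/3}}{nc_n^2\delta_n^{4/3}}$ match the paper's line by line; your explicit remark that $\alpha\ge 0$ is what collapses $\max\{\eps_n^{\alpha-1},\eps_n^{-1}\}$ to $\eps_n^{-1}$ is a helpful clarification the paper leaves implicit.
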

\begin{proof}
	Suppose the complement of $E_n \cup F_n \cup G_n \cup H_n$ occurs, and fix $z\in \mathcal{A}_n$. The hypotheses  $2\delta_n < \eps_n$ and $c_n > 1$ guarantee that the net $\mathcal{N}_n$ satisfies \eqref{eqn:netParams}, and since we assumed $\eps_n < \eps/2$, we know that $\mathcal{N}_n \subset \mathcal{A}_n \subset \mathbb{A}_{{\eps}/{2}} = \set{\xi \in \C: 1-\frac{\eps}{2} \leq \abs{\xi} \leq 1}$. Consequently, on the complement of $H_n$, there is a $\xi_z \in \mathcal{N}_n \subset \mathcal{A}_n$ for which 
	\[
	\abs{z-\xi_z} \leq \frac{\eps_n^{1/3}\delta_n^{2/3}}{c_n^2}\quad \text{and}\quad \abs{\frac{1}{n}\sum_{i=1}^n\frac{1}{\xi_z-X_i}\sind{\abs{\xi_z-X_i}> \frac{\delta_n}{2}}-m_\mu(\xi_z)}< \frac{1}{c_n}.
	\]
	Via \eqref{eqn:onlyOneClose} with $\xi_z$ in place of $z$, the second of these inequalities implies
	\[
	\abs{\frac{1}{n}\sum_{j\neq i_{\xi_z}}\frac{1}{\xi_z-X_j}-m_\mu(\xi_z)}< \frac{1}{c_n} + \frac{2}{n\delta_n} < \frac{3}{c_n},
	\]
	and so by the triangle inequality, \eqref{eqn:nearLip} from Lemma \ref{lem:nearLip}, and Corollary \ref{cor:CSnice}, we have
	\begin{align*}
		&\abs{\overline{M}_\mu(z)-m_\mu(z)}\\
		&\qquad\leq \abs{\frac{1}{n}\sum_{j\neq i_z}\frac{1}{z-X_j}-\frac{1}{n}\sum_{j\neq i_{\xi_z}}\frac{1}{\xi_z-X_j}} + \frac{3}{c_n} + \abs{m_\mu(\xi_z)-m_\mu(z)} \\
		&\qquad\leq\abs{z-\xi_z}\left(\frac{c_n}{\eps_n} + \frac{172\log{n}}{n\delta_n^2}\right) + \frac{8}{n\delta_n} + \frac{3}{c_n} + \kappa_\mu\abs{z-\xi_z},%\log\left(\abs{z-\xi_z}^{-1}\right),
	\end{align*}
	where $\kappa_\mu >0$ is a constant. %Now, $-x\log(x) < \sqrt{x}$ for $x > 0$, so
	In view of the fact that $\abs{z-\xi_z} \leq \frac{\eps_n^{1/3}\delta_n^{2/3}}{c_n^2}$, we have 
	\[
	\abs{\overline{M}_\mu(z)-m_\mu(z)} < \left(\frac{\delta_n}{\eps_n}\right)^{2/3}\cdot\frac{1}{c_n} + \frac{172\log(n)\eps_n^{1/3}}{nc_n^2\delta_n^{4/3}} + \frac{11}{c_n} + \frac{\kappa_\mu\eps_n^{1/3}\delta_n^{2/3}}{c_n^2}.
	\]
	Inequality \eqref{eqn:nearCS} follows for large $n$ since $\delta_n <\eps_n < 1$ and the bound is uniform over $z \in \mathcal{A}_n$. (Note: The term $11/c_n$  is an upper bound for $8/(n\delta_n) + 3/c_n$ due to the assumption $c_n < n\delta_n$.)
\end{proof}

In the next subsection, we will use Lemmas \ref{lem:nearLip} and \ref{lem:nearCS} that we just proved to show that the conclusions of Theorem \ref{thm:manyPair} hold on the complement of the ``bad'' events defined in Subsection \ref{sec:param}. We will then complete the proof of Theorem \ref{thm:manyPair} in Subsection \ref{sec:manyPairPf} by showing that the probabilities of the ``bad'' events tend to $0$ as $n\to \infty$. The proof of Theorem \ref{thm:maxPair} will follow in Subsection \ref{sec:maxPairPf}.

%---------------------------------%%
%-   Conclusions of thm:ManyPair  -%
%-   Hold off of Bad Events       -%
%---------------------------------%%

\subsection{The conclusions of Theorem \ref{thm:manyPair} hold off of the ``bad'' events}\label{sec:manyPairConcHold}

In this subsection, we optimize the sequences $\eps_n$ and $\delta_n$ in terms of $n$ and $c_n$ to prove that the conclusions of Theorem \ref{thm:manyPair} hold on the complement of 
\[
E_n \cup F_n \cup G_n \cup H_n \cup \set{\abs{X^{\da}_{\left(\frac{n^{1-\delta(\alpha+1)}}{c_n\log{n}}\right)}} \leq 1-\frac{\eps_n}{2}}.
\] (Note that the last ``bad'' event in the union is needed to establish conclusion \ref{item:enoughCpts}.) Then, after establishing some upper bounds on the probabilities of these ``bad'' events in Subsection \ref{sec:badEventsSmall}, we will finish the proof of Theorem \ref{thm:manyPair} in Subsection \ref{sec:manyPairPf} by showing that these probabilities are $O(c_n^{-1})$.

\begin{lemma}\label{lem:manyPair}
	As in the hypothesis of Theorem \ref{thm:manyPair}, suppose $X_1, X_2, \ldots$ are i.i.d.\ draws from a distribution $\mu$ that satisfies Assumption \ref{ass:ComplexMu} with $\alpha \geq 0$, fix $\delta \in \left(\frac{1}{4\alpha + 3}, \frac{1}{\alpha +1}\right)$, and let $c_n = \omega(1)$ be a positive sequence such that $\log(c_n) = o(\log{n})$. If we define
	\[
	\eps_n := \frac{1}{n^\delta},\ \delta_n:=\frac{\left(c_n^7\log{n}\right)^{3/4}}{n^{3/4 + \delta/4}},\ \text{and}\ l_n := \frac{n^{1-\delta(\alpha+1)}}{c_n\log{n}}
	\] 
	then there is a positive constant $C$ such that for large $n$, on the complement of $E_n \cup F_n \cup G_n \cup H_n \cup \{|X^{\da}_{(l_n)}| \leq 1-\eps_n/2\}$, the conclusions of Theorem \ref{thm:manyPair} hold.
\end{lemma}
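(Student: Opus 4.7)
The plan is to verify each of the three conclusions on the complement of the listed bad events, leveraging Lemma \ref{lem:nearCS} for uniform control of $\overline{M}_\mu(z)$ near the unit circle, the Gauss--Lucas identity \eqref{eqn:GLPair} to convert critical points to roots, and Theorem \ref{thm:determ} to convert roots to critical points. First I would check that the prescribed $\eps_n$ and $\delta_n$ satisfy the hypotheses $2\delta_n < \eps_n < \eps/2$ and $1 < c_n < n\delta_n$ of Lemma \ref{lem:nearCS} for large $n$, using $\delta \in (1/(4\alpha+3), 1/(\alpha+1))$ and $\log c_n = o(\log n)$. Substituting the parameters into \eqref{eqn:nearCS}, the first term becomes $14/c_n$ and the second collapses (after a short calculation using $n\delta_n^{4/3} = (c_n^7\log n)\, n^{-\delta/3}$) to $O(1/c_n^9)$, so the overall bound is $O(1/c_n)$. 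Combining this with the identity $m_\mu(z) = F_R(|z|)/z = 1/z + O(\eps_n^{\alpha+1})$ on $\mathcal{A}_n$ from \eqref{eqn:RadStiel}--\eqref{eqn:RadCdfBds}, and noting $\eps_n^{\alpha+1} = n^{-\delta(\alpha+1)} = o(1/c_n)$, would yield the master estimate
\[
\sup_{z \in \mathcal{A}_n}\left|\overline{M}_\mu(z) - 1/z\right| = O(1/c_n).
\]

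To establish \ref{item:iota}, I would fix a critical point $W \in \mathcal{A}_n$ and let $i_W$ be the nearest-root index from \eqref{eqn:indexMap}; on $F_n^c$, $X_{i_W}$ is the unique root within $\delta_n/2$ of $W$. Identity \eqref{eqn:GLPair} gives $W - X_{i_W} = -1/(n\overline{M}_\mu(W))$, and the master estimate yields $n\overline{M}_\mu(W) = n/W + O(n/c_n)$, whose reciprocal is $W/n + O(1/(nc_n))$. Rearranging produces $W(1+1/n) = X_{i_W} + O(1/(nc_n))$, equivalently $W = X_{i_W}(1-1/n) + O(1/(nc_n))$, which is \eqref{eqn:cptsToRtsUniformBd} with $\iota(W) := i_W$; injectivity will follow once uniqueness in \ref{item:rtsToCp} is established.

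For \ref{item:rtsToCp}, I would apply Theorem \ref{thm:determ} with $\xi = X_i$ for each $X_i \in \mathcal{A}_n$ and $\vec\zeta = (X_j)_{j\neq i}$. Condition \ref{item:det1} holds with $C_1, C_2 = \Theta(1)$ because the averaged sum $\frac{1}{n-1}\sum_{j\neq i}(X_i-X_j)^{-1}$ is within $O(1/c_n)$ of $m_\mu(X_i) \approx 1/X_i$ (by the master estimate and \eqref{eqn:onlyOneClose}) and $|X_i|$ is bounded away from $0$ on $\mathcal{A}_n$; condition \ref{item:det3} is the $\delta_n$-separation on $F_n^c$. For condition \ref{item:det2}, I would invoke Lemma \ref{lem:nearLip}\ref{it:spCase1} on $B(X_i, \delta_n/2)$, which produces a Lipschitz constant $k_{\rm Lip} = O(c_n\eps_n^{-1} + \log(n)/(n\delta_n^2))$; inserting the prescribed $\eps_n,\delta_n$ shows this is $O(n^{(1+\delta)/2}/(c_n^{21/2}\sqrt{\log n}))$. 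Theorem \ref{thm:determ} then yields a unique critical point $W_i^{(n)}$ within $3/(2C_1 n) \leq n^{-(5+\delta)/6}$ of $X_i$ with error $C(k_{\rm Lip}+1)/n^2 = o(n^{-(3-\delta)/2})$, which is \eqref{eqn:rtsToCptsBd}. Estimate \eqref{eqn:rtsToCptsBd2} follows by expanding the reciprocal of the sum via the master estimate and absorbing the lower-order errors of sizes $O(\eps_n^{\alpha+1}/n)$ and $O(n^{-(3-\delta)/2})$ into $O(1/(nc_n))$ using $\log c_n = o(\log n)$.

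For \ref{item:enoughCpts}, the complement of $\{|X^{\da}_{(l_n)}| \leq 1-\eps_n/2\}$ forces the largest $l_n$ roots into $\mathcal{A}_n$, each receiving a unique nearby critical point by \ref{item:rtsToCp}. A direct calculation using $\delta < 1$ gives $\delta_n > 2n^{-(5+\delta)/6}$ for large $n$, so these $l_n$ critical points are distinct; moreover each satisfies $|W| \geq |X_i| - n^{-(5+\delta)/6} > 1-\eps_n/2 - n^{-(5+\delta)/6} > 1-\eps_n$ for large $n$, placing it in $\mathcal{A}_n$ (and $|W|\leq 1$ by Gauss--Lucas). The hard part will not be any single conclusion but the coordinated bookkeeping of error terms: the specific form $\delta_n = (c_n^7\log n)^{3/4}/n^{(3+\delta)/4}$ is precisely the balance that makes Lemma \ref{lem:nearCS}'s net-approximation error and Lemma \ref{lem:nearLip}'s Lipschitz term both contribute at order $1/c_n$ after substitution, while the constraint $\delta > 1/(4\alpha+3)$ is the threshold needed to simultaneously keep $\delta_n < \eps_n/2$ and control the Lipschitz-driven error $k_{\rm Lip}/n^2$ below $n^{-(3-\delta)/2}$.
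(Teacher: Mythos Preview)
Your approach matches the paper's almost exactly: the same master estimate via Lemma \ref{lem:nearCS}, the same Gauss--Lucas identity \eqref{eqn:GLPair} for \ref{item:iota}, the same application of Theorem \ref{thm:determ} for \ref{item:rtsToCp}, and the same counting for \ref{item:enoughCpts}. Your parameter bookkeeping is correct.

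There is one genuine gap. In \ref{item:rtsToCp} the conclusion asserts that there is \emph{precisely one} critical point within distance $n^{-(5+\delta)/6}$ of each $X_i\in\mathcal{A}_n$. With your choice $C_1=\Theta(1)$, Theorem \ref{thm:determ} only guarantees uniqueness inside the much smaller ball of radius $3/(2C_1(n-1))=\Theta(1/n)$; the inequality $3/(2C_1 n)\leq n^{-(5+\delta)/6}$ gives existence in the larger ball but says nothing about a possible second critical point in the annulus $B(X_i,n^{-(5+\delta)/6})\setminus B(X_i,3/(2C_1(n-1)))$. Your argument for \ref{item:iota} does not close this, because it applies only to critical points $W\in\mathcal{A}_n$, whereas a hypothetical second critical point near $X_i$ with $|X_i|$ close to $1-\eps_n$ could lie just inside $\mathcal{B}_n$.

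The paper resolves this by applying Theorem \ref{thm:determ} \emph{twice} for each $X_i\in\mathcal{A}_n$: first with the small value $C_1=\tfrac{3n}{2(n-1)}n^{-(1-\delta)/6}=o(1)$, so that $3/(2C_1(n-1))=n^{-(5+\delta)/6}$ and uniqueness is obtained directly in the larger ball (one checks $C_1^{-3}k_{\rm Lip}=o(n)$ so the hypotheses still hold), and then a second time with $C_1=\tfrac12$ to extract the sharp error bound $o(n^{-(3-\delta)/2})$. You should either adopt this two-pass trick, or else supply a separate argument (using Lemma \ref{lem:nearLip}\ref{it:spCase1} on $B(X_i,\delta_n/2)$ together with \eqref{eqn:GLPair}) that any critical point in $B(X_i,n^{-(5+\delta)/6})$ must in fact lie in $B(X_i,O(1/n))$.
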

\begin{proof}
	Define $c_n$, $\eps_n$, $\delta_n$, and $l_n$ as in the hypothesis (note that $c_n > 1$,  $1/n < \delta_n < \frac{\eps_n}{2} < \frac{1}{2}$ and $\eps_n < \eps$ for large $n$), and assume the complement of $E_n \cup F_n \cup G_n \cup H_n\cup \{|X^{\da}_{(l_n)}| \leq 1-\eps_n/2\}$ occurs. 
	
	First, we argue that conclusion \ref{item:enoughCpts} of Theorem \ref{thm:manyPair} follows from the triangle inequality and the yet-to-be proved conclusion \ref{item:rtsToCp}.  To see that this is true, observe that on the complement of $\{|X^{\da}_{(l_n)}| \leq 1-\eps_n/2\}$, $p_n$ has at least $l_n$ roots $X_i$ satisfying $\abs{X_i} > 1-\frac{1}{2n^\delta}$, so the associated critical points $W_i^{(n)}$ promised by conclusion \ref{item:rtsToCp} lie in $\mathcal{A}_n$. Indeed, if $D$ is any positive constant, for large $n$, we have
	\[
	\frac{1}{2n^\delta} > \frac{2}{n^{1/(\alpha + 1)}} \geq \frac{2}{n} > \frac{1}{n} + \frac{D}{nc_n},
	\]
	so by \eqref{eqn:rtsToCptsBd2} and the triangle inequality, we have that for large $n$, on the complement of $E_n \cup F_n \cup G_n \cup H_n \cup \{|X^{\da}_{(l_n)}| \leq 1-\eps_n/2\}$, there are at least $l_n$ critical points in $\mathcal{A}_n = \set{z \in \mathbb{C}: 1-\frac{1}{n^\delta}\leq \abs{z} \leq 1}$ as is desired.

	 To establish conclusion \ref{item:rtsToCp} of Theorem \ref{thm:manyPair}, we begin by applying Theorem \ref{thm:determ} twice for each $X_i \in \mathcal{A}_n$. The first application will show that $p_n$ has precisely one critical point within a distance $n^{-(5+\delta)/6}$ of each $X_i \in \mathcal{A}_n$, and the second application will establish sharper control of the locations of these critical points. To start, for each $X_i \in \mathcal{A}_n$, $i\in [n]$, set 
	\[
	\xi_i := X_i,\ \vec{\zeta}_i := (X_1, \ldots, X_{i-1}, X_{i+1},\ldots, X_{n}),
	\]
	and apply Theorem \ref{thm:determ} with $n-1$ in place of $n$ and
	\[
	C_1 := \frac{3n}{2(n-1)n^{(1-\delta)/6}} = o(1),\ C_2 := 2,\ k_{\rm Lip} := \frac{c_n}{\eps_n}+\frac{172\log{n}}{n\delta_n^2} = o(n^{(1+\delta)/2}).
	\]
	We argue that each condition required in Theorem \ref{thm:determ} is satisfied for these choices, which are uniform in $i$. For large $n$, condition \ref{item:det1} holds because\footnote{Recall the radial symmetry of $\mu$, and see Assumption \ref{ass:ComplexMu} and in particular, \eqref{eqn:RadCdfBds}, \eqref{eqn:RadStiel}.}
	\begin{equation}
		1-\frac{C_\mu}{\alpha +1}\eps_n^{\alpha + 1} \leq \frac{F_R(1-\eps_n)}{1}\leq \inf_{z\in \mathcal{A}_n}\abs{m_\mu(z)} \leq \sup_{z\in \mathcal{A}_n}\abs{m_\mu(z)} \leq \frac{F_R(1)}{1-\eps_n}
		\label{eqn:mmubd}
	\end{equation}
	and via Lemma \ref{lem:nearCS}
	\begin{equation}
		\sup_{z\in \mathcal{A}_n}\abs{\frac{1}{n}\sum_{j\neq i_z}\frac{1}{z-X_j}-m_\mu(z)} < \frac{186}{c_n},
		\label{eqn:llnbd}
	\end{equation}
	so we can obtain $C_1 \leq \abs{\frac{1}{n-1}\sum_{j\neq i}\frac{1}{X_i - X_j}} \leq C_2$ by applying the triangle inequality and replacing $z$ with $X_i$ in \eqref{eqn:mmubd}, \eqref{eqn:llnbd}. Condition \ref{item:det2} follows from special case \ref{it:spCase1} of Lemma \ref{lem:nearLip} since $\frac{2}{C_1n} = o\left(\frac{\delta_n}{2}\right)$, and condition \ref{item:det3} always holds on the complement of  $F_n$. Provided that $n$ is also large enough to guarantee 
	\[
	n-1 > 8\max\set{\frac{2n^{(1-\delta)/6}}{3},\ 8(1+8)\cdot \frac{2^3}{3^3}n^{(1-\delta)/2}\cdot \left(n^{(1+\delta)/2}+ 1\right)},
	\]
	Theorem \ref{thm:determ} guarantees that each $\xi_i = X_i \in \mathcal{A}_n$ has exactly one critical point $W_i^{(n)}$ within a distance of $\frac{3}{2C_1(n-1)} = n^{-(5+\delta)/6}$. We now apply Theorem \ref{thm:determ} a second time for each $X_i \in \mathcal{A}_n$, $i \in [n]$, but (using \eqref{eqn:mmubd} and \eqref{eqn:llnbd}) we re-define $C_1:= \frac{1}{2}$
	to obtain the sharper bound
	\[
	\abs{W_{i}^{(n)} - X_i + \frac{1}{n}\frac{1}{\frac{1}{n-1}\sum_{j\neq i}\frac{1}{X_i-X_j}}} < \frac{\widetilde{C}\cdot (k_{\rm Lip}+1)}{n^2} = o\left(\frac{1}{n^{(3-\delta)/2}}\right),
	\]
	where $\widetilde{C} > 0$ is an absolute constant. The desired inequality \eqref{eqn:rtsToCptsBd} follows. To see that inequality \eqref{eqn:rtsToCptsBd2} is also true, observe that for large $n$, on the complement of $E_n\cup F_n \cup G_n \cup H_n$, we have that whenever $X_i \in \mathcal{A}_n$,
	\begin{equation}
		\begin{aligned}
			\abs{\frac{1}{\frac{1}{n-1}\sum_{j\neq i} \frac{1}{X_i - X_j}}-X_i} &= \frac{\abs{X_i}}{\abs{\frac{1}{n-1}\sum_{j\neq i} \frac{1}{X_i - X_j}}}\cdot \abs{\frac{1}{n-1}\sum_{j\neq i} \frac{1}{X_i - X_j}-\frac{1}{X_i}}\\
			&<2 \left(\abs{m_\mu(X_i) - \frac{1}{X_i}} + \frac{\abs{\sum_{j\neq i} \frac{1}{X_i - X_j}}}{n(n-1)} + \frac{186}{c_n}\right)\\
			&< 2\left(\frac{1-F_R(\abs{X_i})}{\abs{X_i}} + \frac{187}{c_n}\right)\\
			&\ll \eps_n^{\alpha + 1} + \frac{1}{c_n} \ll \frac{1}{c_n},
		\end{aligned}
		\label{eqn:CSnearX}
	\end{equation}
	where we have used \eqref{eqn:mmubd}, \eqref{eqn:llnbd}, and Assumption \ref{ass:ComplexMu} (see in particular \eqref{eqn:RadCdfBds}, \eqref{eqn:RadStiel}), and the implied constants are independent of $i$. We have established conclusion \ref{item:rtsToCp} of Theorem \ref{thm:manyPair} for large $n$ on the complement of $E_n \cup F_n \cup G_n \cup H_n$.

	We now work toward conclusion \ref{item:iota} of Theorem \ref{thm:manyPair}. Define the map \[\iota: \set{W\in \mathcal{A}_n: p_n'(W) = 0} \to [n]\] to be the restriction of the map $i_z: \C \to [n]$ from  \eqref{eqn:indexMap} to the set of critical points of $p_n$ that lie in $\mathcal{A}_n$. In particular, for each critical point $W \in \mathcal{A}_n$ of $p_n$, define 
	\[
	\iota(W) := \min\set{i \in [n] : \abs{W-X_i} = \min_{j\in[n]}\abs{W-X_j}},
	\]
	which specifies the index of a root $X_i$, $i\in [n]$, which is as close as possible to $W$. We seek to show that for large $n$, on the complement of $E_n \cup F_n \cup G_n \cup H_n$, $\iota$ is an injection and that each pair $(W, X_{\iota(W)})$, $W\in \mathcal{A}_n$, satisfies \eqref{eqn:cptsToRtsBd} and \eqref{eqn:cptsToRtsUniformBd}. Via  \eqref{eqn:GLPair}, we have almost surely
	\begin{equation}\label{eqn:GLIotaPair}
		W = X_{\iota(W)} - \frac{1}{n}\cdot\frac{1}{\frac{1}{n}\sum_{j\neq \iota(W)}\frac{1}{W-X_j}}\ \text{for each $W$ s.t. $p_n'(W) = 0$},
	\end{equation}
	and on the complement of the ``bad'' event $E_n \cup F_n \cup G_n \cup H_n$, \eqref{eqn:llnbd} implies 
	\begin{equation}\label{eqn:WNearM}
		\max_{W\in \mathcal{A}_n\: :\: p'(W) = 0}\abs{\frac{1}{n}\sum_{j\neq \iota(W)}\frac{1}{W-X_j}-m_\mu(W)} < \frac{186}{c_n}.
	\end{equation}
	In view of the triangle inequality and \eqref{eqn:mmubd}, \eqref{eqn:GLIotaPair}, and \eqref{eqn:WNearM}, for large $n$, on the complement of $E_n \cup F_n \cup G_n \cup H_n$, 
	\begin{equation}
		\max_{W\in \mathcal{A}_n\: :\: p'(W) = 0}\abs{W-X_{\iota(W)}} < \frac{2}{n}.
	\end{equation}
	It follows that for large $n$, on the complement of $E_n \cup F_n \cup G_n \cup H_n$, each critical point $W \in \mathcal{A}_n$ satisfies
	\[
	\abs{W - X_{\iota(W)}\left(1-\frac{1}{n}\right)} \leq \abs{W-X_{\iota(W)} + \frac{W}{n}} + \frac{2}{n^2},
	\]
	and by a string of inequalities nearly identical to \eqref{eqn:CSnearX}, we can replace $W$ with $\left(\frac{1}{n}\sum_{j \neq \iota(W)}\frac{1}{W-X_j}\right)^{-1}$ to obtain 
	\[
	\abs{W - X_{\iota(W)}\left(1-\frac{1}{n}\right)} < \abs{W -X_{\iota(W)} + \frac{1}{n}\cdot\frac{1}{\frac{1}{n}\sum_{j\neq \iota(W)}\frac{1}{W-X_j}}} + \frac{C'}{nc_n} + \frac{2}{n^2},
	\]
	where $C'>0$ is an absolute constant. The first quantity on the right is almost surely zero by \eqref{eqn:GLIotaPair}, so we have proved \eqref{eqn:cptsToRtsUniformBd} for a suitable choice of $C>0$ and large $n$ on the complement of $E_n \cup F_n \cup G_n \cup H_n$. To see that for large $n$, on the complement of $E_n \cup F_n \cup G_n \cup H_n$,  $\iota(W)$ is an injection, note by \eqref{eqn:cptsToRtsUniformBd} that $\abs{W} < \abs{X_{\iota(W)}}$ for each $W \in \mathcal{A}_n$, so $X_{\iota(W)} \in \mathcal{A}_n$. By the recently established conclusion \ref{item:rtsToCp} of Theorem \ref{thm:manyPair}, such a root $X_{\iota(W)} \in \mathcal{A}_n$ has precisely one critical point $W$ within a distance $n^{-(5+\delta)/6}\gg 1/n$,  so in view of \eqref{eqn:cptsToRtsUniformBd}, it is impossible for two critical points to pair via $\iota$ to the same root. Thus $\iota$ is an injection, and we have established conclusion \ref{item:iota} of Theorem \ref{thm:manyPair} for large $n$ on the complement of $E_n \cup F_n \cup G_n \cup H_n$.
\end{proof}

%-------------------------------%%
%-   The bad events are small   -%
%-------------------------------%%

\subsection{The ``bad'' events are asymptotically unlikely}\label{sec:badEventsSmall}

In this subsection, we prove some upper bounds on the probabilities of the ``bad'' events off of which the conclusions of our main theorems hold. Per our discussion in the detailed overview Subsection \ref{sec:detailsPos}, the lemmas below primarily serve to establish that with high probability, the annuli $\mathcal{A}_n$, $\mathcal{B}_n$ and $\mathcal{A}_n^*$ contain roughly the expected number of $X_i$, $1 \leq i \leq n$, and that these roots are sufficiently separated from the other roots (by distances $\delta_n$ and $\delta_n^*$, respectively). Lemma \ref{lem:GnSmall} shows that the $X_i \notin \mathcal{A}_n \cup \mathcal{B}_n$ are dispersed enough to have a manageable contribution to the Lipschitz constant associated with $z\mapsto \overline{M}_\mu(z)$, and Lemma \ref{lem:netProb} shows that $\overline{M}_\mu(z)$ is close to $m_\mu(z)$ on the net $\mathcal{N}_n$ (see also Subsection \ref{sec:CSBehaved} above). We conclude this subsection with Lemma \ref{lem:maxCDF}, which guarantees that $p_n$ does in fact have extremal roots (and hence critical points) in the annuli of interest at the edge of the unit disk. 

We note that to avoid repeating certain arguments in Section \ref{sec:proofsNeg} below, we have written most of the lemmas in the current subsection so that they also apply to situations when $\alpha < 0$. 

\begin{lemma}[Controlling $\P(E_n)$ and the number of roots within annuli in $\mathbb{A}_\eps$]\label{lem:fewRts}
	Suppose $\alpha > -1$, and let $\gamma_n \in (0, \eps)$ be a positive sequence. Then,
	\begin{equation}
		\frac{c_\mu}{\alpha + 1}\gamma_n^{\alpha +1} \leq \P\left(1-\gamma_n \leq \abs{X_1} \leq 1\right) \leq \frac{C_\mu}{\alpha + 1}\gamma_n^{\alpha+1},\ \text{for $n \in \Z^+$},
		\label{eqn:massOnEdge}
	\end{equation}
	and if $C > \frac{2C_\mu}{\alpha+1}$, 
	\begin{equation}
		\P\left(\#\set{i\in [n]: 1-\gamma_n \leq \abs{X_i} \leq 1} > Cn\gamma_n^{\alpha + 1}\right) \leq e^{-(C-2C_\mu/(\alpha+1))n\gamma_n^{\alpha+1}}.
		\label{eqn:annulusNumBd}
	\end{equation}
\end{lemma}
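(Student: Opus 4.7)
The plan is to handle the two assertions independently. For the first, I would appeal directly to the bounds \eqref{eqn:RadCdfBds} on the radial cumulative distribution function $F_R$, which are valid for $r \in [1-\eps, 1]$ under Assumption \ref{ass:ComplexMu}. Since $\mu$ is radially symmetric and $\gamma_n \in (0, \eps)$, we have $\P(1-\gamma_n \leq |X_1| \leq 1) = 1 - F_R(1-\gamma_n)$, and evaluating \eqref{eqn:RadCdfBds} at $r = 1-\gamma_n$ immediately produces the desired two-sided estimate.

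For the second assertion, the strategy is to apply a Chernoff-type bound to the random count $N_n := \#\{i \in [n] : 1-\gamma_n \leq |X_i| \leq 1\}$, which is a sum of $n$ i.i.d.\ Bernoulli indicators with common success probability $p_n := \P(1-\gamma_n \leq |X_1| \leq 1)$. The upper bound in the first assertion gives $p_n \leq \tfrac{C_\mu}{\alpha+1}\gamma_n^{\alpha+1}$. I would then apply Markov's inequality to $e^{N_n}$, so that
\[
\P\left(N_n > Cn\gamma_n^{\alpha+1}\right) \leq e^{-Cn\gamma_n^{\alpha+1}}\,\E\!\left[e^{N_n}\right] = e^{-Cn\gamma_n^{\alpha+1}}\bigl(1 + (e-1)p_n\bigr)^n,
\]
using independence and $\E[e^{\mathbf{1}_A}] = 1 + (e-1)\P(A)$. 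The standard inequality $(1+x)^n \leq e^{nx}$ together with the upper bound on $p_n$ then bounds the right-hand side by $\exp\bigl((e-1)\tfrac{C_\mu}{\alpha+1}n\gamma_n^{\alpha+1} - Cn\gamma_n^{\alpha+1}\bigr)$. Since $e - 1 < 2$, this is at most $\exp\bigl(-(C - 2C_\mu/(\alpha+1))n\gamma_n^{\alpha+1}\bigr)$, which is exactly \eqref{eqn:annulusNumBd}.

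There is no serious obstacle in this argument; both claims reduce to standard tools. The only point worth attention is the choice of Chernoff exponent: taking $\lambda = 1$ in the moment-generating function bound $\E[e^{\lambda N_n}] \leq e^{np_n(e^\lambda - 1)}$ produces the factor $(e-1)$, which is conveniently strictly less than $2$ and accounts precisely for the constant $2C_\mu/(\alpha+1)$ appearing in the lemma's statement. A slightly larger $\lambda$ would produce a sharper bound at the cost of a less clean constant, so $\lambda = 1$ is the natural choice for matching the stated form.
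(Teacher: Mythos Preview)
Your proposal is correct and follows essentially the same approach as the paper: both derive \eqref{eqn:massOnEdge} from the radial density bounds (the paper integrates $f_R$ directly, while you invoke the equivalent CDF bounds \eqref{eqn:RadCdfBds}), and both prove \eqref{eqn:annulusNumBd} via a Chernoff bound with exponent $\lambda=1$, using the binomial MGF, the inequality $(1+x)^n\le e^{nx}$, and $e-1<2$.
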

\begin{proof}
	Define the random variable
	\[
	N_n := \#\set{i\in [n]: 1-\gamma_n \leq \abs{X_i} \leq 1},
	\]
	which has a binomial distribution with parameters $n$ and
	\begin{align*}
		\rho_n &:= \P\left(1-\gamma_n \leq \abs{X_1} \leq 1\right) = \int_{1-\gamma_n}^1f_R(r)\,dr = \int_{1-\gamma_n}^1(1-r)^\alpha\cdot \frac{f_R(r)}{(1-r)^\alpha}\,dr.
	\end{align*}
	Inequality \eqref{eqn:massOnEdge} follows since $c_\mu \leq \frac{f_R(r)}{(1-r)^\alpha} \leq C_\mu$ for $1-\eps \leq r \leq 1$. Via Markov's inequality and the upper bound in \eqref{eqn:massOnEdge}, we have
	\[
	\P(N_n > Cn\gamma_n^{\alpha+1}) \leq \frac{\E\left[e^{N_n}\right]}{e^{Cn\gamma_n^{\alpha+1}}} = \frac{\left(1-\rho_n + \rho_ne^1\right)^n}{e^{Cn\gamma_n^{\alpha+1}}} < e^{-Cn\gamma_n^{\alpha+1}}\left(1 + \frac{2C_\mu\gamma_n^{\alpha+1}}{\alpha + 1}\right)^n,
	\]
	where we computed the expectation by evaluating the binomial moment-generating function at $t=1$ (i.e., we have employed a Chernoff bound).\footnote{We note that this inequality holds for \textit{all} positive integers $n$ and is not an asymptotic result. In particular $\E[e^{N_n}] = (1-\rho_n +\rho_ne^1)^n = (1+(e-1)\rho_n)^n \leq (1+2\rho_n)^n$, so applying the upper bound from \eqref{eqn:massOnEdge} gives $\E[e^{N_n}]\leq (1+\frac{2C_\mu\gamma_n^{\alpha+1}}{\alpha+1})^n$.} Continuing from above using the log-exponential trick (and $\log(1+x) \leq x$ for $x \geq 0$) yields
	\[
	\P(N_n > Cn\gamma_n^{\alpha+1}) \leq e^{-Cn\gamma_n^{\alpha+1} + \frac{2C_\mu}{\alpha+1} n\gamma_n^{\alpha+1}},
	\]
	which completes the proof. (We note that \eqref{eqn:annulusNumBd} is only useful when $C>\frac{2C_\mu}{\alpha+1}$.)
\end{proof}

\begin{lemma}[The largest roots are separated with high probability, $\alpha \geq 0$ case]\label{lem:sepRts}
	Fix $\alpha \geq 0$, suppose $\gamma_n, d_n \in (0,\eps/2)$ are sequences satisfying $n\gamma_n^{\alpha+1} = \omega(1)$ and $n^2\gamma_n^{2\alpha + 1}d_n^2 = o(1)$, and define
	\[
	A_n := \set{z\in \C: 1-\gamma_n \leq \abs{z} \leq 1}\subset \mathbb{A}_\eps.
	\]
	There is a constant $c>0$ such that for large $n$, 
	\begin{equation}
		\P\left(\exists i,j \in [n], i\neq j: X_i\in A_n, \abs{X_i-X_j}\leq d_n\right)\ll\max\set{n^2\gamma_n^{2\alpha+1}d_n^2, e^{-cn\gamma_n^{\alpha+1}}}.
		\label{eqn:circSep}
	\end{equation}
	We note that the first term in the maximum bounds the separation probability in the case where $A_n$ contains $O(n\gamma_n^{\alpha+1})$ roots, as is expected, and the second term in the maximum bounds the probability that $A_n$ contains more than  $O(n\gamma_n^{\alpha+1})$ roots (see also Lemma \ref{lem:fewRts}).  To control the radial separation between the largest roots of $p_n$, let $\gamma_n, R_n \in (0,\eps/2)$ be sequences satisfying $n\gamma_n^{\alpha +1}=\omega(1)$ and $n^2\gamma_n^{2\alpha+1}R_n = o(1)$. There is a constant $c>0$ such that for large $n$,
	\begin{equation}
		\P\left(\exists i,j \in [n], i\neq j: X_i\in A_n, \abs{\abs{X_i}-\abs{X_j}}\leq R_n\right)\ll\max\set{n^2\gamma_n^{2\alpha+1}R_n, e^{-cn\gamma_n^{\alpha+1}}}.
		\label{eqn:radSep}
	\end{equation}
\end{lemma}

\begin{proof}
	We prove \eqref{eqn:circSep} and omit the proof of \eqref{eqn:radSep} since both results can be justified using nearly identical arguments. To that end, we will show that the event $E^{\rm sep}_n$, that every $X_i$ in $A_n$ is at least $d_n$-far from all other $X_j$, tends to 1 as $n\to \infty$. Set
	\[
	{\rm Sep}_n:= \min\set{\abs{X_i - X_j} : i,j \in [n], i \neq j,\ X_i \in A_n},
	\]
	whenever this minimum is properly defined, and let
	\[
	N_n := \#\set{i\in [n]: X_i \in A_n}.
	\]
	We have
	\[
	E^{\rm sep}_n = \set{N_n=0}\cup \set{N_n\geq 1,\ {\rm Sep}_n > d_n}.
	\]
	For conciseness, define
	\[
	M_n:=\sup_{z\in A_n}\abs{\pi\cdot f_\mu(z)} = O(\gamma_n^\alpha)
	\quad \text{and}\quad \rho_n := \mu(A_n) = \Theta\left(\gamma_n^{\alpha + 1}\right)
	\]
	(see Assumption \ref{ass:ComplexMu} for the first asymptotic and Lemma \ref{lem:fewRts} for the second), and let $D > 0$ be a large constant to be specified later. We begin by conditioning on $N_n\sim{\rm Binomial}(n, \rho_n)$ to obtain
	\begin{equation}
		\P\left(E^{\rm sep}_n\right) \geq \P\left(N_n=0\right) + \sum_{k = 1}^{\lfloor Dn\rho_n\rfloor}\P\left({\rm Sep}_n > d_n,\ N_n = k\right).
		\label{eqn:binomSplit}
	\end{equation}
	The $X_i$ are i.i.d., so for large $n$ and $1 \leq k\leq \lfloor Dn\rho_n\rfloor$, we have 
	\begin{align}
		&\P\left({\rm Sep}_n > d_n,\ N_n = k\right)\notag\\
		&\quad=\binom{n}{k}\cdot\P\left(X_1, \ldots, X_k \in A_n,\ X_{k+1} \ldots, X_n \notin A_n,\ {\rm Sep}_n > d_n\right)\notag\\
		&\quad\geq \binom{n}{k}\cdot\rho_n\cdot\left(\rho_n - 1\cdot M_nd_n^2\right) \cdots \left(\rho_n-(k-1)M_nd_n^2\right)\cdot (1-\rho_n-kM_nd_n^2)^{n-k}\label{eqn:avoidRts}\\
		&\quad\geq \binom{n}{k}\left(\rho_n - Dn\rho_nM_nd_n^2\right)^k (1-\rho_n-Dn\rho_nM_nd_n^2)^{n-k}\notag\\
		&\quad= \binom{n}{k}\left(1 - DnM_nd_n^2\right)^k\left(\frac{1-\rho_n-Dn\rho_nM_nd_n^2}{1-\rho_n}\right)^{n-k}\rho_n^k (1-\rho_n)^{n-k},\notag
	\end{align}
	where the first inequality follows from successively conditioning on the relative locations of $X_1, \ldots, X_n$, so that each one is at least a distance of $d_n$ from the other $X_i$ that are in $A_n$. (Note that $\rho_n > Dn\rho_nM_nd_n^2$ for large $n$ because $n\gamma_n^\alpha d_n^2=o(1)$ by our assumptions on $\gamma_n$ and $d_n$ in the hypothesis.) Substituting this inequality into \eqref{eqn:binomSplit} above yields that for large $n$,
	\begin{align}
		\P(E^{\rm sep}_n)&\geq (1-\rho_n)^n \notag\\
			&\quad+ \sum_{k = 1}^{\lfloor Dn\rho_n\rfloor}\binom{n}{k}\left(1 - DnM_nd_n^2\right)^k\left(1-\frac{Dn\rho_nM_nd_n^2}{1-\rho_n}\right)^{n-k}\rho_n^k (1-\rho_n)^{n-k} \notag\\
		&\geq\left(1 - DnM_nd_n^2\right)^{Dn\rho_n}\left(1-\frac{Dn\rho_nM_nd_n^2}{1-\rho_n}\right)^n\cdot\sum_{k = 0}^{\lfloor Dn\rho_n\rfloor}\binom{n}{k}\rho_n^k(1-\rho_n)^{n-k} \notag\\
		&\geq \exp\left(-4/(1-\rho_n)D^2n^2M_n \rho_nd_n^2\right)\cdot\P(N_{n}\leq \lfloor Dn\rho_n\rfloor)\label{eqn:binTrunc}\\
		&\geq \exp\left(-4/(1-\rho_n)D^2n^2M_n\rho_nd_n^2\right)\cdot\P(N_{n}\leq (D-1)n\rho_n)\label{eqn:npnBig}\\
		&\geq \exp\left(-C_1n^2\gamma_n^{2\alpha + 1}d_n^2\right)\left(1-\exp\left(-c_2n\gamma_n^{\alpha + 1}\right)\right),\label{eqn:expProd}
	\end{align}
	where we have used the  log-exponential trick and
	\begin{equation}
		\log(1-x)\geq \frac{-x}{\sqrt{1-x}},\ \text{for $x \in (0,1)$}
		\label{eqn:lowLogBd}
	\end{equation}
	to obtain \eqref{eqn:binTrunc}, inequality \eqref{eqn:npnBig} follows from the hypothesis $n\gamma_n^{\alpha+1} = \omega(1)$, and we have applied Lemma \ref{lem:fewRts} to achieve \eqref{eqn:expProd} for appropriate constants $C_1, c_2 > 0$. We conclude \eqref{eqn:circSep} by using $e^{-x} \geq 1-x$ to approximate from below the first exponential factor in \eqref{eqn:expProd}. (Note that to prove \eqref{eqn:radSep}, one could follow the same argument we just made after replacing $d_n^2$ with $R_n$ and $M_n$ with $\sup_{z \in A_n}\abs{4\pi\cdot f_\mu(z)}$, so that \eqref{eqn:avoidRts} is a lower bound on the probability that $X_i \in A_n$ are radially separated from all other $X_j$, $j\in [n]$.)
\end{proof}

\begin{lemma}\label{lem:GnSmall} Suppose $\eps_n \in (0,\eps/3)$ and $c_n$ are positive sequences. Then, 
	\[
	\P\left(\sum_{i=1}^n\frac{1}{(1-\frac{3}{2}\eps_n-\abs{X_i})^2}\cdot \sind{\abs{X_i}< 1 -2\eps_n}\geq nc_nL_n\right) \ll \frac{1}{c_n},
	\]
	where
	\begin{equation}
		L_n := \begin{cases}
			\eps_n^{\alpha-1}& \text{if $-1 < \alpha \leq 0$}\\
			\eps_n^{\alpha-1}\log(\eps_n^{-1}) & \text{if $0 < \alpha < 1$}\\
			\log(\eps_n^{-1}) & \text{if $\alpha \geq 1$}.
		\end{cases}
		\label{eqn:Lndef}
	\end{equation}
\end{lemma}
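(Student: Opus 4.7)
The plan is to apply Markov's inequality and reduce the problem to estimating a one-dimensional integral against the radial density $f_R$. Let $S_n$ denote the random sum appearing inside the probability. By linearity of expectation and the i.i.d.\ hypothesis,
\[
\E[S_n] = n\cdot \E\!\left[\frac{\sind{\abs{X_1} < 1-2\eps_n}}{\bigl(1-\tfrac{3}{2}\eps_n - \abs{X_1}\bigr)^2}\right],
\]
so Markov's inequality gives $\P(S_n \geq n c_n L_n) \leq \E[S_n]/(n c_n L_n)$, and it suffices to show that the per-variable expectation is $O(L_n)$ with an implied constant depending only on $\mu$.

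By radial symmetry, this per-variable expectation equals
\[
I_n := \int_0^{1-2\eps_n} \frac{f_R(r)}{\bigl(1-\tfrac{3}{2}\eps_n - r\bigr)^2}\,dr,
\]
which I would split at $r = 1-\eps$. On $[0, 1-\eps]$, the hypothesis $\eps_n < \eps/3$ forces the denominator to be at least $(\eps/2)^2$, and $\int_0^{1-\eps} f_R(r)\,dr \leq 1$, so this portion contributes $O(1)$. On $[1-\eps, 1-2\eps_n]$, Assumption \ref{ass:ComplexMu}\ref{ass:radial} provides the bound $f_R(r) \leq C_\mu(1-r)^\alpha$; substituting $u = 1-r$ transforms this piece into $C_\mu \int_{2\eps_n}^{\eps} u^\alpha (u - \tfrac{3}{2}\eps_n)^{-2}\,du$. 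The elementary inequality $u - \tfrac{3}{2}\eps_n \geq u/4$ (valid whenever $u \geq 2\eps_n$, since then $\tfrac{3}{2}\eps_n \leq \tfrac{3}{4}u$) further bounds this by a constant multiple of $\int_{2\eps_n}^{\eps} u^{\alpha - 2}\,du$.

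The proof concludes with a direct case analysis of this last integral. For $-1 < \alpha < 1$, the antiderivative $u^{\alpha-1}/(\alpha - 1)$ (with $\alpha - 1 < 0$) is dominated by its value at the lower limit $u = 2\eps_n$, giving $O(\eps_n^{\alpha - 1})$. For $\alpha = 1$, the integral equals $\log(\eps/(2\eps_n)) = O(\log \eps_n^{-1})$. For $\alpha > 1$, the integral is bounded by $\eps^{\alpha - 1}/(\alpha - 1) = O(1)$. In each regime, the resulting bound is $O(L_n)$ as defined in \eqref{eqn:Lndef}: for $0 < \alpha < 1$ one uses the crude estimate $\eps_n^{\alpha - 1} \leq \eps_n^{\alpha - 1} \log \eps_n^{-1}$, and for $\alpha > 1$ one uses that $\eps_n < 1/3$ makes $L_n = \log \eps_n^{-1} > \log 3$. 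The $O(1)$ contribution from the inner region $[0, 1-\eps]$ is absorbed in the same way, since $L_n$ is uniformly bounded below by a positive constant in every case.

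I do not anticipate any real obstacle; the argument is a clean combination of Markov's inequality with an explicit integral estimate. The only slightly delicate points are (i) the denominator bound $u - \tfrac{3}{2}\eps_n \geq u/4$ which lets the integrand collapse to a pure power of $u$, and (ii) checking that $L_n$ is bounded below so that the lower-order $O(1)$ term from the inner region causes no trouble; both are immediate consequences of the hypothesis $\eps_n < \eps/3$.
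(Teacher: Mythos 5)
Your proof is correct and follows essentially the same route as the paper's: Markov's inequality, a split of the expectation at $\abs{X_1} = 1-\eps$, and a case analysis of the resulting power integral over the outer annulus. The only difference is in the integral estimate: where the paper substitutes $u = 1-\tfrac{3}{2}\eps_n - r$ and bounds the numerator $(u+\tfrac{3}{2}\eps_n)^\alpha$ case by case (which is how the $\log(\eps_n^{-1})$ factor enters for $0<\alpha<1$), you substitute $u = 1-r$ and absorb the shift into the denominator via $u - \tfrac{3}{2}\eps_n \geq u/4$, which actually yields the slightly sharper bound $O(\eps_n^{\alpha-1})$ in that regime before you weaken it to $L_n$.
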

\begin{proof}
	For any sequence $L_n > 0$, Markov's inequality implies
	\begin{equation}
		\begin{aligned}
			&\P\left(\sum_{i=1}^n\frac{1}{(1-\frac{3}{2}\eps_n-\abs{X_i})^2}\cdot \sind{\abs{X_i}< 1 -2\eps_n}\geq nc_nL_n\right)\\
			&\qquad\leq \frac{1}{nc_nL_n}\cdot n\cdot\left(\frac{1}{(\eps-\frac{3}{2}\eps_n)^2} + \E\left[\frac{1}{(1-\frac{3}{2}\eps_n-\abs{X_1})^2}\cdot \sind{1-\eps < \abs{X_1}< 1-2\eps_n}\right]\right)\\
			&\qquad\leq  \frac{1}{c_nL_n}\left(\frac{1}{(\eps-\frac{3}{2}\eps_n)^2} +\int_{1-\eps}^{1-2\eps_n}\frac{1}{(1-\frac{3}{2}\eps_n-r)^2}\cdot\frac{f_R(r)}{2\pi r}\,dr\right),
		\end{aligned}
		\label{eqn:Gbd}
	\end{equation}
	where we have split expectation into two parts based on the size of $\abs{X_1}$ and used that $\mu$ has a radial density $f_R(r)$ on $\mathbb{A}_\eps = \set{z\in \C: 1-\eps \leq \abs{z} \leq 1}$. To find an upper bound on the integral, first over-approximate $\frac{f_R(r)}{r}$ with $(1-\eps)^{-1}C_\mu(1-r)^\alpha$, and then employ the substitution $u = 1-\frac{3}{2}\eps_n-r$ to obtain
	\begin{align*}
		\int_{1-\eps}^{1-2\eps_n}\frac{1}{(1-\frac{3}{2}\eps_n-r)^2}\cdot\frac{f_R(r)}{2\pi r}\,dr &\leq \frac{C_\mu}{2\pi(1-\eps)}\int_{1-\eps}^{1-2\eps_n}\frac{(1-r)^\alpha}{(1-\frac{3}{2}\eps_n-r)^2}\,dr\\
		&=\frac{C_\mu}{2\pi(1-\eps)}\int_{\frac{\eps_n}{2}}^{\eps-\frac{3}{2}\eps_n}\frac{(u+\frac{3}{2}\eps_n)^\alpha}{u^2}\,du.
	\end{align*}
	We now consider several cases based on the value of $\alpha$.

\textbf{Case I: $-1 < \alpha \leq 0$. } In this case, $(u+\frac{3}{2}\eps_n)^\alpha \leq u^\alpha$ (we have $u, \eps_n > 0$), so 
	\[
	\int_{\frac{\eps_n}{2}}^{\eps-\frac{3}{2}\eps_n}\frac{(u+\frac{3}{2}\eps_n)^\alpha}{u^2}\,du \leq \int_{\frac{\eps_n}{2}}^{\eps-\frac{3}{2}\eps_n}\frac{u^\alpha}{u^2}\,du = \frac{\left(\eps-\frac{3}{2}\eps_n\right)^{\alpha-1} - \left(\frac{\eps_n}{2}\right)^{\alpha-1}}{\alpha-1} < 4\eps_n^{\alpha-1},
	\]
which combines with \eqref{eqn:Gbd} and the definition $L_n:=\eps_n^{\alpha-1}$ for $-1 < \alpha \leq 0$ to give
\begin{align*}
&\P\left(\sum_{i=1}^n\frac{1}{(1-\frac{3}{2}\eps_n-\abs{X_i})^2}\cdot \sind{\abs{X_i}< 1 -2\eps_n}\geq nc_nL_n\right)\\
&\qquad\leq \frac{1}{c_nL_n}\left(\frac{1}{(\eps-\frac{1}{2}\eps)^2} + 4\eps_n^{\alpha-1}\right) = \frac{1}{c_n\eps_n^{\alpha-1}}\left(\frac{4}{\eps^2} + 4\eps_n^{\alpha-1}\right) \ll \frac{1}{c_n}.
\end{align*}

\textbf{Case II: $\alpha \geq 1$.} In this case, we can replace the numerator of the integrand with $u + \frac{3}{2}\eps_n$ (note $u + \frac{3}{2}\eps_n \leq \eps \leq 1$) to obtain
	\begin{align*}
	\int_{\frac{\eps_n}{2}}^{\eps-\frac{3}{2}\eps_n}\frac{(u+\frac{3}{2}\eps_n)^\alpha}{u^2}\,du &\leq	\int_{\frac{\eps_n}{2}}^{\eps-\frac{3}{2}\eps_n}\frac{u+\frac{3}{2}\eps_n}{u^2}\,du = \log\abs{\frac{\eps-\frac{3}{2}\eps_n}{\frac{\eps_n}{2}}} + 3 - \frac{\frac{3}{2}\eps_n}{\eps-\frac{3}{2}\eps_n}\\ &< 4+\log(\eps_n^{-1}),
	\end{align*}
which combines with \eqref{eqn:Gbd} and the definition $L_n:=\log(\eps_n^{-1})$ for $\alpha \geq 1$ to give
\begin{align*}
&\P\left(\sum_{i=1}^n\frac{1}{(1-\frac{3}{2}\eps_n-\abs{X_i})^2}\cdot \sind{\abs{X_i}< 1 -2\eps_n}\geq nc_nL_n\right)\\
&\leq \frac{1}{c_nL_n}\left(\frac{1}{(\eps-\frac{1}{2}\eps)^2} + 4 + \log(\eps_n^{-1})\right) = \frac{1}{c_n\log(\eps_n^{-1})}\left(\frac{4+ 4\eps^2}{\eps^2} + \log(\eps_n^{-1})\right) \ll \frac{1}{c_n}.
\end{align*}

\textbf{Case III: $0 < \alpha < 1$.} In this final case, we can use a linear approximation (at $u=-\frac{\eps_n}{2}$) to the numerator of the integrand to achieve
	\begin{align*}
		\int_{\frac{\eps_n}{2}}^{\eps-\frac{3}{2}\eps_n}\frac{(u+\frac{3}{2}\eps_n)^\alpha}{u^2}\,du &\leq \int_{\frac{\eps_n}{2}}^{\eps-\frac{3}{2}\eps_n}\frac{\eps_n^\alpha + \alpha\eps_n^{\alpha-1}(u + \frac{\eps_n}{2})}{u^2}\,du \\
		&\leq \left(1+\frac{\alpha}{2}\right)\int_{\frac{\eps_n}{2}}^{\eps-\frac{3}{2}\eps_n}\left(\eps_n^\alpha u^{-2} + \alpha\eps_n^{\alpha-1}u^{-1}\right)du\\
		&= \left(1+\frac{\alpha}{2}\right)\left(2\eps_n^{\alpha-1} - \frac{\eps_n^\alpha}{\eps-\frac{3}{2}\eps_n} + \alpha\eps_n^{\alpha-1}\log\abs{\frac{\eps-\frac{3}{2}\eps_n}{\frac{1}{2}\eps_n}}\right)\\
		&< \frac{3}{2}\eps_n^{\alpha-1}(3+\log(\eps_n^{-1})),
	\end{align*}
	which combines with \eqref{eqn:Gbd} and the definition $L_n := \eps_n^{\alpha-1}\log(\eps_n^{-1})$ to establish
\begin{align*}
	\P\left(\sum_{i=1}^n\frac{\sind{\abs{X_i}< 1 -2\eps_n}}{(1-\frac{3}{2}\eps_n-\abs{X_i})^2}\geq nc_nL_n\right)&\leq \frac{1}{c_nL_n}\left(\frac{1}{(\eps-\frac{1}{2}\eps)^2} +\frac{3}{2}\eps_n^{\alpha-1}(3+\log(\eps_n^{-1}))\right)\\
	&\leq \frac{1}{c_n\eps_n^{\alpha-1}\log(\eps_n^{-1})}\left(\frac{4}{\eps^2} + 6\eps_n^{\alpha-1}\log(\eps_n^{-1})\right)\\
	&\ll \frac{1}{c_n},
\end{align*}
where we used $\eps_n< \eps/3 \leq 1/3$ (so $\log(\eps_n^{-1}) > 1$) in the simplification. The proof of Lemma  \ref{lem:GnSmall} is complete.
\end{proof}

\begin{lemma}\label{lem:netProb}
	Fix $\alpha > -1$, suppose $\mathcal{N}_n \subset \mathbb{A}_{{\eps}/{2}} := \set{z \in \C: 1-\frac{\eps}{2} \leq \abs{z} \leq 1}$, and let $\delta_n$ and $c_n$ be positive sequences satisfying $\delta_n^{\min\set{1+\alpha, 1}} = o\left(\frac{1}{c_n}\right) = o(1)$. We have
	\begin{equation}
	\P\left(\max_{z\in \mathcal{N}_n}\abs{\frac{1}{n}\sum_{i=1}^n\frac{\sind{\abs{z-X_i}> \frac{\delta_n}{2}}}{z-X_i}-m_\mu(z)}\geq \frac{1}{c_n}\right) \ll \abs{\mathcal{N}_n}\cdot \frac{c_n^2\log\left(\frac{1}{\delta_n}\right)\delta_n^{\min\set{\alpha, 0}}}{n}.	
	\label{eqn:netConv}
	\end{equation}
\end{lemma}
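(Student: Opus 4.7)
My plan is to reduce the supremum over $\mathcal{N}_n$ to a pointwise estimate via the union bound and then apply Chebyshev's inequality to the i.i.d.\ complex-valued sum $\frac{1}{n}\sum_{i=1}^n Y_i(z)$, where
\[
Y_i(z) := \frac{\sind{\abs{z - X_i} > \delta_n/2}}{z - X_i}.
\]
The triangle inequality separates the error into a deterministic bias $\abs{\E Y_1(z) - m_\mu(z)}$ and a stochastic fluctuation $\left|\frac{1}{n}\sum_{i=1}^n Y_i(z) - \E Y_1(z)\right|$. I would show that the bias is dominated by $\frac{1}{2c_n}$ (hence absorbable) and that the fluctuation exceeds $\frac{1}{2c_n}$ with probability at most the right-hand side of the target estimate divided by $\abs{\mathcal{N}_n}$.

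For the bias, observe that $\E Y_1(z) - m_\mu(z) = -\E\left[\sind{\abs{z-X_1} \leq \delta_n/2}/(z-X_1)\right]$, so
\[
\abs{\E Y_1(z) - m_\mu(z)} \leq \int_{B(z,\delta_n/2)\cap B(0,1)} \frac{f_\mu(x)}{\abs{z-x}}\,dx.
\]
A dyadic decomposition of $B(z,\delta_n/2)\cap B(0,1)$ into shells $\{2^{-k-1}\delta_n/2 < \abs{z-x} \leq 2^{-k}\delta_n/2\}$, combined with the local mass bound $\mu(B(z, r)) = O(r^{\min(\alpha + 2,\,2)})$ for $z \in \mathbb{A}_{\eps/2}$ (proved by direct integration of $(1-\abs{x})^\alpha$ via polar coordinates, the worst case being $z$ on the unit circle), yields $\abs{\E Y_1(z) - m_\mu(z)} = O(\delta_n^{\min(\alpha+1,\,1)})$. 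By the hypothesis $\delta_n^{\min(1+\alpha,\,1)} = o(1/c_n)$, this bias is below $\frac{1}{2c_n}$ uniformly in $z \in \mathcal{N}_n$ for all sufficiently large $n$.

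For the fluctuation, Chebyshev's inequality gives
\[
\P\left(\left|\tfrac{1}{n}\sum_{i=1}^n Y_i(z) - \E Y_1(z)\right| \geq \tfrac{1}{2c_n}\right) \leq \frac{4c_n^2\,\var(Y_1(z))}{n},
\]
where $\var(Y_1(z)) \leq \E\abs{Y_1(z)}^2 = \int_{\abs{z-x} > \delta_n/2} \abs{z-x}^{-2}\,d\mu(x)$. A second dyadic decomposition, this time over outer shells $\{2^{-k-1} < \abs{z-x} \leq 2^{-k}\}$ with $2^{-k} \geq \delta_n/2$, together with the same volume estimate, reduces the integral to a geometric-type sum $\sum_{k=0}^{O(\log(1/\delta_n))} 2^{-k\min(\alpha,0)}$. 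For $\alpha > -1$, this is $O(\log(1/\delta_n))$ in the regimes $\alpha \geq 0$ (convergent or harmonic sum) and $O(\delta_n^\alpha)$ in the regime $-1 < \alpha < 0$ (dominated by its last term), so in all three cases is bounded by a constant multiple of $\log(1/\delta_n)\,\delta_n^{\min(\alpha,0)}$. A union bound over $\mathcal{N}_n$ then produces the claimed estimate.

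The main technical obstacle is justifying the dyadic volume bound $\mu(B(z, r)) = O(r^{\min(\alpha+2,\,2)})$ uniformly for $z \in \mathbb{A}_{\eps/2}$ when $-1 < \alpha < 0$: the density $f_\mu$ blows up near $\abs{x} = 1$, so the argument must carefully parameterize $B(z, r) \cap B(0, 1)$ (for example, by a change of variables that isolates the coordinate $u = 1 - \abs{x}$) in order to extract the correct factor $r^{\alpha+1}$ from the radial integral $\int_0^r u^\alpha\,du$ without letting the integrable singularity at $u = 0$ contaminate the bound. Once the correct volume estimate is in hand uniformly in $z$, bookkeeping across the three regimes of $\alpha$ matches the exponents $\min(\alpha+1,\,1)$ and $\min(\alpha,\,0)$ appearing in the statement.
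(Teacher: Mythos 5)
Your proposal is correct, and its probabilistic skeleton (union bound over $\mathcal{N}_n$, Chebyshev/Markov applied to the centered i.i.d.\ sum, and a separate deterministic bias estimate absorbed by the hypothesis $\delta_n^{\min\set{1+\alpha,1}} = o(1/c_n)$) is exactly the paper's. Where you diverge is in how the two key integrals are estimated. For the second moment $\E\abs{Y_1(z)}^2$ the paper does not decompose into dyadic shells; it invokes a general absolute-moment bound (Lemma \ref{lem:moments}, giving $\sup_z \E\abs{z-X_1}^{-\mathfrak{p}} \ll (2+\min\set{\alpha,0}-\mathfrak{p})^{-1}$) with the slightly slick choice $\mathfrak{p} = 2+\min\set{\alpha,0} - 1/\log(1/\delta_n)$, paying a factor $(2/\delta_n)^{1/\log(1/\delta_n)} = O(1)$ for the truncation and extracting the $\log(1/\delta_n)$ from the blow-up of the moment constant as $\mathfrak{p}$ approaches the critical exponent. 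Your dyadic decomposition with the local mass bound $\mu(B(z,r)) = O(r^{\min(\alpha+2,2)})$ reaches the same estimate $\log(1/\delta_n)\,\delta_n^{\min\set{\alpha,0}}$ by elementary means; the mass bound you flag as the technical obstacle is genuinely needed but is exactly the radial-times-angular computation the paper already carries out in Lemma \ref{lem:heavyTail} (Figure \ref{fig:avoid1}), so it is available. For the bias, the paper integrates directly in polar coordinates when $\alpha \geq 0$ and integrates the tail bound \eqref{eqn:tailUpper} when $\alpha < 0$, whereas you run a second dyadic sum; again equivalent. One small point to keep in mind with your fluctuation estimate: for shells with $2^{-k}$ larger than $\eps/2$ the ball $B(z,2^{-k})$ leaves the annulus where $\mu$ has a density, but those finitely many shells each contribute $O(\eps^{-2})$ trivially, so nothing breaks. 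The paper's route buys a reusable moment lemma; yours is more self-contained and makes the dependence on the geometry of $\mu$ near the unit circle explicit.
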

\begin{proof}
	This follows from Markov's inequality and the union bound (over all $z \in \mathcal{N}_n$). We begin by fixing $z \in \mathcal{N}_n \subset \mathbb{A}_{{\eps}/{2}}$ and finding an asymptotic upper bound for the variance of $\frac{1}{n}\sum_{i=1}^n\frac{1}{z-X_i}\sind{\abs{z-X_i}> \frac{\delta_n}{2}}$. To that end, define $\widetilde{m}_n(z)$ to be the expectation of $\frac{1}{z-X_i}\sind{\abs{z-X_i}> \frac{\delta_n}{2}}$ and observe that for large $n$,
	\begin{equation}
	\begin{aligned}
		&\E\left[\abs{\frac{1}{n}\sum_{i=1}^n\left(\frac{1}{z-X_i}\sind{\abs{z-X_i}> \frac{\delta_n}{2}}-\widetilde{m}_n(z)\right)}^2\right]\\
		&\qquad=\frac{1}{n^2}\E\left[\sum_{i=1}^n\left(\frac{\sind{\abs{z-X_i}> \frac{\delta_n}{2}}}{z-X_i}-\widetilde{m}_n(z)\right)\cdot\sum_{j=1}^n\left(\overline{\frac{\sind{\abs{z-X_j}> \frac{\delta_n}{2}}}{z-X_j}-\widetilde{m}_n(z)}\right)\right]\\
		&\qquad=\frac{1}{n^2}\sum_{i=1}^n\E\left[\abs{\frac{1}{z-X_i}\sind{\abs{z-X_i}> \frac{\delta_n}{2}}-\widetilde{m}_n(z)}^2\right]\\
		&\qquad\leq\frac{1}{n}\cdot \E\left[\abs{\frac{1}{z-X_1}\sind{\abs{z-X_1}> \frac{\delta_n}{2}}}^2\right],
	\end{aligned}
	\label{eqn:netCtrl}
\end{equation}
	where we used the fact that $X_i$ are i.i.d.\ to simplify the expectation of the product of sums, and in the last line, we used that for a complex random variable $Y$, $\E\left[\abs{Y-\E[Y]}^2\right] = \E\abs{Y^2} - \abs{\E[Y]}^2$. 
	
	We now invoke the absolute moment bounds from Lemma \ref{lem:moments}. In the case where $\alpha \geq 0$, apply Lemma \ref{lem:moments} with $\mathfrak{p} = 2-1/\log(1/\delta_n)$ to obtain
	\[
		\E\left[\abs{\frac{1}{z-X_1}\sind{\abs{z-X_1}> \frac{\delta_n}{2}}}^2\right] \leq  \left(\frac{2}{\delta_n}\right)^{\log(1/\delta_n)}\cdot\E{\abs{z-X_1}^{-\mathfrak{p}}} \ll e\cdot \log(1/\delta_n),
	\]
	and if $\alpha \in (-1, 0)$, apply Lemma \ref{lem:moments} with $\mathfrak{p} = 2 + \alpha -1/\log(1/\delta_n)$ to obtain
	\[
	\E\left[\abs{\frac{1}{z-X_1}\sind{\abs{z-X_1}> \frac{\delta_n}{2}}}^2\right] \leq  \left(\frac{2}{\delta_n}\right)^{\log(1/\delta_n)-\alpha}\cdot\E{\abs{z-X_1}^{-\mathfrak{p}}} \ll \delta_n^{\alpha} \cdot \log(1/\delta_n).
	\]
	Combining these with \eqref{eqn:netCtrl} yields 
	\[
	\E\abs{\frac{1}{n}\sum_{i=1}^n\left(\frac{1}{z-X_i}\sind{\abs{z-X_i}> \frac{\delta_n}{2}}-\widetilde{m}_n(z)\right)}^2 \ll \frac{\delta_n^{\min\set{\alpha, 0}}\cdot \log(1/\delta_n)}{n},
	\]
	where the implied constant is independent of $z \in \mathcal{N}_n$.	Inequality \eqref{eqn:netConv} follows after applying Markov's inequality and a union bound over $z\in \mathcal{N}_n$ provided that $\max_{z\in \mathcal{N}_n}\abs{\widetilde{m}_n(z)-m_\mu(z)} = o\left(\frac{1}{c_n}\right)$.
	
	To establish this last fact, we consider two cases based on the sign of $\alpha$. If $\alpha \geq 0$ and $n$ is large enough to guarantee that $\frac{\delta_n}{2} < \frac{\eps}{2}$ we have the following bound which is uniform over all $z\in \mathcal{N}_n \subset \mathbb{A}_{{\eps}/{2}}$:
	\[
	\abs{\widetilde{m}_n(z)-m_\mu(z)} = \abs{\E\left[\frac{\sind{\abs{z-X_1} \leq \frac{\delta_n}{2}}}{z-X_1}\cdot \ind{X_1 \in\mathbb{A}_\eps}\right]} \ll \int_0^{\frac{\delta_n}{2}}\frac{1}{r}\cdot r\,dr \ll \delta_n = o\left(\frac{1}{c_n}\right).
	\]
	On the other hand, when $-1 < \alpha < 0$, $f_\mu(x)$ is no longer bounded, so we will use the tail bounds from Lemma \ref{lem:heavyTail}. In particular,
	\[
	\abs{\widetilde{m}_n(z)-m_\mu(z)} = \abs{\E\left[\frac{\sind{\abs{z-X_1} \leq \frac{\delta_n}{2}}}{z-X_1}\cdot \ind{X_1 \in\mathbb{A}_\eps}\right]} = \int_{2/\delta_n}^\infty\P\left(\abs{\frac{1}{z-X_1}} \geq t\right)\,dt,
	\]
	so Inequality \eqref{eqn:tailUpper} from Lemma \ref{lem:heavyTail} guarantees
	\[
	\abs{\widetilde{m}_n(z)-m_\mu(z)} \ll \int_{2/\delta_n}^\infty t^{-(2+\alpha)}\,dt \ll \delta_n^{1+\alpha} = o\left(\frac{1}{c_n}\right),
	\]
	where the implied constant is independent of $z$. (Note that $\delta_n^{1+\alpha} = o\left(\frac{1}{c_n}\right)$ by the assumptions in the hypothesis.) The proof is complete.
\end{proof}

\begin{lemma}[The largest roots of $p_n$ are near the unit circle with high probability]\label{lem:maxCDF}
	Fix $\alpha > -1$, suppose $\gamma_n \in (0,\eps)$ is a sequence satisfying $\omega(1/n) = \gamma_n^{\alpha+1} = o(1)$, and suppose $l_n = o(n)$ is a sequence of positive integers for which $l_n\cdot\log(n\gamma_n^{\alpha+1}) = o(n\gamma_n^{\alpha+1})$. Then, there is a positive constant $c = c(\alpha, c_\mu)>0$ so that
	\[
	\P\left(\abs{X^{\ua}_{(n-l_n+1)}} \leq 1-\gamma_n\right) \ll e^{-c n\gamma_n^{\alpha+1}}.
	\]
\end{lemma}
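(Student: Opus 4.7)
The plan is to translate the event $\{|X^{\ua}_{(n-l_n+1)}| \leq 1-\gamma_n\}$ into a lower-tail event for a binomial count and then apply a Chernoff bound. Specifically, set
\[
N_n := \#\set{i \in [n] : \abs{X_i} > 1-\gamma_n},
\]
so that $\{|X^{\ua}_{(n-l_n+1)}| \leq 1-\gamma_n\}$ is exactly $\{N_n \leq l_n - 1\}$, since the $(n-l_n+1)$-th smallest in magnitude lies in $\overline{B(0,1-\gamma_n)}$ iff strictly fewer than $l_n$ of the $X_i$ exceed $1-\gamma_n$ in modulus. Because $X_1,\ldots,X_n$ are i.i.d., $N_n \sim \mathrm{Binomial}(n, \rho_n)$ with $\rho_n := \P(|X_1| > 1-\gamma_n)$, and Lemma \ref{lem:fewRts} provides the two-sided estimate $\rho_n = \Theta(\gamma_n^{\alpha+1})$; in particular there is a constant $c_1 > 0$ with $n\rho_n \geq c_1 n \gamma_n^{\alpha+1}$. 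The hypothesis $l_n \log(n\gamma_n^{\alpha+1}) = o(n\gamma_n^{\alpha+1})$ implies in particular $l_n = o(n\rho_n)$, so we are genuinely estimating a deep lower-tail deviation.

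Next I would run the standard exponential-moment (Chernoff) argument. For any $t > 0$,
\[
\P(N_n \leq l_n - 1) \leq e^{t(l_n - 1)} \E\bigl[e^{-tN_n}\bigr] = e^{t l_n}(1 - \rho_n(1 - e^{-t}))^n \leq \exp\bigl(t l_n - n\rho_n(1 - e^{-t})\bigr),
\]
using $\log(1-x) \leq -x$. The natural choice is $t := \log(n\rho_n / l_n)$, so that $e^{-t} = l_n/(n\rho_n)$, giving
\[
\P(N_n \leq l_n - 1) \leq \exp\bigl(l_n \log(n\rho_n/l_n) - n\rho_n + l_n\bigr).
\]

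Finally, I would use the hypotheses to show the exponent is at most $-c n \gamma_n^{\alpha+1}$ for some $c > 0$. Since $\rho_n = \Theta(\gamma_n^{\alpha+1})$, the hypothesis $l_n \log(n\gamma_n^{\alpha+1}) = o(n\gamma_n^{\alpha+1})$ is equivalent to $l_n \log(n\rho_n) = o(n\rho_n)$, and $l_n \log(n\rho_n/l_n) \leq l_n \log(n\rho_n) = o(n\rho_n)$; likewise $l_n = o(n\rho_n)$. Thus the exponent equals $-n\rho_n (1 + o(1)) \leq -\tfrac{c_1}{2} n \gamma_n^{\alpha+1}$ for all sufficiently large $n$, which yields the desired bound with $c := c_1/2$.

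The argument is essentially a Chernoff lower-tail bound, so the only real subtlety is bookkeeping: verifying that the chosen $t$ is positive (which follows from $l_n < n\rho_n$ for large $n$), and checking that the hypothesis on $l_n$ is tight enough to absorb the $l_n \log(n\rho_n / l_n)$ term into the dominant $-n\rho_n$. Neither step is substantive; the hypothesis is written precisely to make this work, so the main "obstacle" is really just confirming the logarithmic factor in the hypothesis is the correct strength and not under-assumed.
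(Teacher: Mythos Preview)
Your proof is correct and takes essentially the same approach as the paper: both recognize the event as a lower-tail deviation for the binomial count $N_n$ of roots with modulus exceeding $1-\gamma_n$, and both exploit $\rho_n = \Theta(\gamma_n^{\alpha+1})$ together with the hypothesis $l_n\log(n\gamma_n^{\alpha+1}) = o(n\gamma_n^{\alpha+1})$ to obtain the exponential bound. The only cosmetic difference is that you use the Chernoff moment-generating-function inequality with the optimizing choice $t=\log(n\rho_n/l_n)$, whereas the paper bounds the exact binomial tail sum $\sum_{i=n-l_n+1}^n \binom{n}{i}F_R(1-\gamma_n)^i(1-F_R(1-\gamma_n))^{n-i}$ term-by-term via $\binom{n}{i}\leq n^{n-i}$ and then applies the log-exponential trick; the resulting exponents match.
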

\begin{proof}
	By Assumption \ref{ass:ComplexMu}, Inequality \eqref{eqn:RadCdfBds} implies that for $1 -\eps \leq r \leq 1$, 
	\begin{equation}
		F_R(r) \leq 1-\frac{c_\mu}{\alpha+1}(1-r)^{\alpha + 1}\quad \text{and}\quad 1-F_R(r) \leq \frac{C_\mu}{\alpha + 1}(1-r)^{\alpha + 1}.
		\label{eqn:RadCdfBds2}
	\end{equation}
	Now, by standard results for order statistics, see e.g.\ page 9 of \cite{DNag},
	\begin{equation*}
		\P\left(\abs{X^{\ua}_{(n-l_n+1)}} \leq r\right) =\sum_{i={n-l_n+1}}^n\binom{n}{i}\left(F_R(r)\right)^i\left(1-F_R(r)\right)^{n-i},
	\end{equation*}
	so after applying \eqref{eqn:RadCdfBds2}, overestimating the binomial coefficient with $n^{n-i}$, and then substituting $1-\gamma_n$ for $r$, we obtain
	\begin{align*}
		\P\left(\abs{X^{\ua}_{(n-l_n+1)}} \leq r\right)&\leq\sum_{i={n-\ell_n+1}}^nn^{n-i} \left(1-\frac{c_\mu(1-r)^{\alpha+1}}{\alpha+1}\right)^i \left(\frac{C_\mu(1-r)^{\alpha+1}}{\alpha+1}\right)^{n-i}\\
		&\leq\sum_{i={n-l_n+1}}^n\left(\frac{nC_\mu\gamma_n^{\alpha+1}}{\alpha +1}\right)^{n-i}\left(1-\frac{c_\mu\gamma_n^{\alpha+1}}{\alpha+1}\right)^i \\ 
		&\leq l_n\left(C_\mu n\gamma^{\alpha + 1}\right)^{l_n-1} \left(1-\frac{c_\mu\gamma_n^{\alpha+1}}{\alpha+1}\right)^{n-l_n+1},	
	\end{align*}
	where the last two inequalities hold for large $n$. Via the log-exponential trick, it follows that for large $n$,
	\begin{align*}
		&\P\left(\abs{X^{\ua}_{(n-l_n+1)}} \leq 1-\gamma_n\right)\\
		&\qquad\leq \exp\left(\log(l_n) + (l_n-1)\log(C_\mu n \gamma_n^{\alpha+1}) + (n-l_n+1)\log\left[1-\frac{c_\mu\gamma_n^{\alpha+1}}{\alpha+1}\right]\right)\\
		&\qquad\leq \exp\left(l_n\cdot\log(C_\mu n\gamma_n^{\alpha+1}) + (n-l_n +1)\cdot\frac{-c_\mu\gamma_n^{\alpha+1}}{\alpha+1}\right).
	\end{align*}
	By assumption, $n\gamma_n^{\alpha+1} = \omega(1)$ and $l_n\cdot\log(n\gamma_n^{\alpha+1}) = o(n\gamma_n^{\alpha+1})$, so for large $n$,  
	\[
	l_n\cdot\log(C_\mu n\gamma_n^{\alpha+1}) \leq l_n\cdot\log(n\gamma_n^{\alpha+1}\cdot n\gamma_n^{\alpha+1}) = 2l_n\cdot\log(n\gamma_n^{\alpha+1}) = o(n\gamma_n^{\alpha+1}),
	\]
	which gives an asymptotic upper bound on the first term inside the exponential. We also assumed $l_n = o(n)$, so the second term inside the exponential is bounded above by $\frac{n}{2}\cdot \frac{-c_\mu\gamma_n^{\alpha+1}}{\alpha+1}$. Combining these bounds allows us to achieve the desired asymptotic result.
\end{proof}

%%%%%%%%%%%%%%%%%%%%%%%%%%%%%%%%%%%%
%%  Proof of manyPair, alpha > 0  %%
%%%%%%%%%%%%%%%%%%%%%%%%%%%%%%%%%%%%
\subsection{The proof of Theorem \ref{thm:manyPair}}\label{sec:manyPairPf}

In view of Lemma \ref{lem:manyPair}, that we proved in Subsection \ref{sec:manyPairConcHold} above, we conclude the proof of Theorem \ref{thm:manyPair} by showing that the ``bad'' events are negligible in the limit as $n\to \infty$.

\begin{lemma}\label{lem:manyPairProb}
	Suppose $X_1, X_2, \ldots$ are i.i.d.\ draws from a distribution $\mu$ that satisfies Assumption \ref{ass:ComplexMu} with $\alpha \geq 0$, fix $\delta \in \left(\frac{1}{4\alpha + 3}, \frac{1}{\alpha +1}\right)$, and let $c_n=\omega(1)$ be a positive sequence satisfying $\log(c_n) = o(\log(n))$. If we define
	\[
	\eps_n := \frac{1}{n^\delta},\ \delta_n:=\frac{\left(c_n^7\log{n}\right)^{3/4}}{n^{3/4 + \delta/4}},\ \text{and}\ l_n := \frac{n^{1-\delta(\alpha+1)}}{c_n\log{n}}
	\]
	then $\P(E_n \cup F_n \cup G_n \cup H_n \cup \{|X^{\da}_{(l_n)}| \leq 1-\eps_n/2\}) = O(c_n^{-1})$.
\end{lemma}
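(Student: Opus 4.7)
The plan is to bound each of the five events in the union separately, showing each has probability $O(c_n^{-1})$, and to appeal to the auxiliary lemmas (\ref{lem:fewRts}, \ref{lem:sepRts}, \ref{lem:GnSmall}, \ref{lem:netProb}, and \ref{lem:maxCDF}) proved in Subsection \ref{sec:badEventsSmall}. Throughout, I would repeatedly use that $c_n = n^{o(1)}$ because $\log c_n = o(\log n)$, so that any polynomial-in-$c_n$ and polylog-in-$n$ factors are dominated by any positive power of $n$.

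First, I would handle $E_n$. This event requires more than $3C_\mu n(2\eps_n)^{\alpha+1}$ of the $X_i$ to land in the annulus $\mathcal{A}_n \cup \mathcal{B}_n = \{z: 1-2\eps_n \leq |z| \leq 1\}$. Applying Lemma \ref{lem:fewRts} with $\gamma_n = 2\eps_n = 2n^{-\delta}$ and $C = 3C_\mu$ yields a bound of $e^{-c n \eps_n^{\alpha+1}} = e^{-c' n^{1-\delta(\alpha+1)}}$ for some $c' > 0$. Since $\delta < 1/(\alpha+1)$ forces $n^{1-\delta(\alpha+1)} \to \infty$, this is exponentially small, hence $o(c_n^{-1})$. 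The final event $\{|X^{\da}_{(l_n)}| \leq 1-\eps_n/2\}$ is handled similarly via Lemma \ref{lem:maxCDF} applied with $\gamma_n = \eps_n/2$, after verifying $l_n \log(n\gamma_n^{\alpha+1}) \sim n^{1-\delta(\alpha+1)}/c_n = o(n^{1-\delta(\alpha+1)}) = o(n\gamma_n^{\alpha+1})$; this yields an exponentially decaying bound.

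Next, I would bound $\P(F_n)$ using Lemma \ref{lem:sepRts} with $\gamma_n = 2\eps_n = 2n^{-\delta}$ and $d_n = \delta_n$. The hypothesis $n\gamma_n^{\alpha+1} = \omega(1)$ again follows from $\delta < 1/(\alpha+1)$. For the other hypothesis, a direct computation gives
\[
n^2 \gamma_n^{2\alpha+1} \delta_n^2 \asymp (c_n^7 \log n)^{3/2} \cdot n^{\,1/2 - \delta(4\alpha+3)/2}.
\]
This is where the lower bound $\delta > 1/(4\alpha+3)$ becomes crucial: it guarantees the exponent of $n$ is strictly negative, and since the polylogarithmic and $n^{o(1)}$ prefactors are negligible, the entire expression is $o(c_n^{-1})$. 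I expect this to be the most delicate step, as it is the only place where the lower bound on $\delta$ is used (and in fact dictates its form). Combined with the exponentially small bound on $e^{-cn\gamma_n^{\alpha+1}}$ coming from the maximum in Lemma \ref{lem:sepRts}, we obtain $\P(F_n) = o(c_n^{-1})$.

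Finally, I would bound $\P(G_n)$ and $\P(H_n)$. For $G_n$, since $\alpha \geq 0$, the quantity $L_n$ from Lemma \ref{lem:GnSmall} satisfies $L_n \leq \max\{\eps_n^{\alpha-1}, \eps_n^{-1}\}$ (with equality when $\alpha = 0$ and an even sharper comparison when $\alpha > 0$, since $\eps_n^\alpha \log(\eps_n^{-1}) = o(1)$). Hence Lemma \ref{lem:GnSmall} directly yields $\P(G_n) \ll 1/c_n$. For $H_n$, I would apply Lemma \ref{lem:netProb}. Note $\delta_n^{\min\{\alpha,0\}} = 1$ when $\alpha \geq 0$, and the net bound gives $|\mathcal{N}_n| \ll c_n^4 \eps_n^{1/3}/\delta_n^{4/3}$. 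A clean computation using $\delta_n^{4/3} = (c_n^7 \log n)/n^{1+\delta/3}$ and $\eps_n = n^{-\delta}$ shows
\[
\frac{\eps_n^{1/3}}{\delta_n^{4/3}} = \frac{n}{c_n^7 \log n}, \qquad \text{so} \qquad |\mathcal{N}_n| \ll \frac{n}{c_n^3 \log n},
\]
and therefore
\[
\P(H_n) \ll \frac{|\mathcal{N}_n|\, c_n^2 \log(1/\delta_n)}{n} \ll \frac{1}{c_n \log n} \cdot \log(1/\delta_n) \ll \frac{1}{c_n},
\]
since $\log(1/\delta_n) = O(\log n)$. This is exactly where the somewhat unusual definition of $\delta_n$ pays off: the exponents are balanced so that $\eps_n^{1/3}/\delta_n^{4/3}$ contributes a factor of $n$ that cancels with the $n$ in the denominator of the net estimate, leaving only the desired $1/c_n$. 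A union bound over the five events then completes the proof.
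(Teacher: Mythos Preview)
Your proposal is correct and follows essentially the same approach as the paper's proof: both proceed by the union bound and apply Lemmas~\ref{lem:fewRts}, \ref{lem:sepRts}, \ref{lem:GnSmall}, \ref{lem:netProb}, and \ref{lem:maxCDF} with the same parameter choices, and the key computations (notably $n^2\gamma_n^{2\alpha+1}\delta_n^2 \asymp (c_n^7\log n)^{3/2} n^{1/2-\delta(2\alpha+3/2)}$ for $F_n$ and $|\mathcal{N}_n| \ll n/(c_n^3\log n)$ for $H_n$) match the paper exactly. Your exposition is slightly more detailed in places (e.g., the verification that $L_n \leq \max\{\eps_n^{\alpha-1},\eps_n^{-1}\}$ for $G_n$ and the explicit handling of the $\log(1/\delta_n)$ factor for $H_n$), but the argument is the same.
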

\begin{proof}
In view of the union bound, it suffices to show that each of the events $E_n$, $F_n$, $G_n$, $H_n$, $\{|X^{\da}_{(l_n)}| \leq 1-\eps_n/2\}$ has probability $O(c_n^{-1})$, which we accomplish as follows using Lemma \ref{lem:fewRts} for $E_n$, Lemma \ref{lem:sepRts} for $F_n$,  Lemma \ref{lem:GnSmall} for $G_n$, Lemma \ref{lem:netProb} for $H_n$, and Lemma \ref{lem:maxCDF} for $\{|X^{\da}_{(l_n)}| \leq 1-\eps_n/2\}$. 

To bound the probability of $E_n$, apply Lemma \ref{lem:fewRts} with $\gamma_n:= 2\eps_n$ and $C=3C_\mu > 2C_\mu \geq \frac{2C_\mu}{\alpha +1}$, to obtain
\[
\P(E_n) \leq \exp\left(-C_\mu\cdot n \cdot (2\eps_n)^{\alpha +1} \right) = \exp\left(-C_\mu\cdot 2^{\alpha +1} \cdot n^{1-\delta(\alpha+1)}\right) =O\left(c_n^{-1}\right)
\]
since $\delta < \frac{1}{\alpha +1}$ and $\log(c_n) = o(\log{n})$. To see that $\P(F_n)$ is negligible in the limit, observe that via Lemma \ref{lem:sepRts} with $\gamma_n := 2\eps_n$ and $d_n := \delta_n$, there exists $c>0$ such that
\[
\P(F_n) \ll \max\set{(c_n^7\log{n})^{3/2}\cdot n^{\frac{1}{2}-\delta(2\alpha +\frac{3}{2})}, \exp\left(-c\cdot n^{1-\delta(\alpha +1)}\right)} = O(c_n^{-1})
\]
since $\delta \in \left(\frac{1}{4\alpha + 3}, \frac{1}{\alpha +1}\right)$ and $\log(c_n) = o(\log(n))$. (Note: to bound the second quantity in the max, use e.g.\ $n^{1-\delta(\alpha+1)} >1/c\cdot\log{c_n}$ for large $n$.) It is clear from Lemma \ref{lem:GnSmall} that $\P(G_n) = O(c_n^{-1})$. Lemma \ref{lem:netProb} guarantees that $\P(H_n) = O(c_n^{-1})$ because $\delta_n = o(c_n^{-1})$ and (via \eqref{eqn:netParams}\footnote{Note: $c_n >1 $ and $\delta_n < \eps_n$ for large $n$ so the hypotheses of Lemma \ref{lem:epsNet} are met.})
\[
\abs{\mathcal{N}_n} \ll \frac{c_n^4\eps^{1/3}}{\delta_n^{4/3}} = \frac{n}{c_n^3\cdot \log{n}}.
\]
Finally, we appeal to Lemma \ref{lem:maxCDF} to show that $\P(\{|X^{\da}_{(l_n)}| \leq 1-\eps_n/2\}) = O(c_n^{-1})$. Indeed, $X^{\da}_{(l_n)} = X^{\ua}_{(n-l_n+1)}$, so this result follows from Lemma \ref{lem:maxCDF} with $\gamma_n = \frac{\eps_n}{2} = \frac{1}{2n^\delta}$, and $l_n = \frac{n^{1-\delta(\alpha+1)}}{c_n\log{n}}$. Note that 
\begin{align*}
l_n \cdot \log\left(n\left(\frac{1}{2n^\delta}\right)^{\alpha + 1}\right) &\leq l_n \cdot (1-\delta(\alpha + 1)) \cdot \log(n)\\
&= \frac{1-\delta(\alpha+1)}{c_n}\cdot n^{1-\delta(\alpha + 1)} =o\left(n\left(\frac{1}{2n^\delta}\right)\right)
\end{align*}
as is necessary in the hypothesis.
\end{proof}

Together, Lemmas \ref{lem:manyPair} and \ref{lem:manyPairProb} imply Theorem \ref{thm:manyPair}. In the next subsection, we conclude our justification of our main $\alpha \geq 0$ results with a proof of Theorem \ref{thm:maxPair}.

%%%%%%%%%%%%%%%%%%%%%%%%%%%%%%%%%%%%%%%%
%%  Proof of Fluct Results alpha > 0  %%
%%%%%%%%%%%%%%%%%%%%%%%%%%%%%%%%%%%%%%%%

\subsection{The proof of Theorem \ref{thm:maxPair}}\label{sec:maxPairPf}
Suppose, as in the hypothesis of Theorem \ref{thm:maxPair}, that $X_1, X_2, \ldots$ are i.i.d.\ draws from a distribution $\mu$ satisfying Assumption \ref{ass:ComplexMu} with $\alpha \geq 0$, that $\ell_n = \omega(1)$ is a sequence of positive integers with $\ell_n = o\left(\sqrt[16]{\log{n}}\right)$, and that the largest $\ell_n$ critical points of $p_n(z) = \prod_{j=1}^n(z-X_j)$, in order of decreasing magnitude, are labeled $W^{\da}_{(1)}, W^{\da}_{(2)}, \ldots, W^{\da}_{(\ell_n)}$. Lemmas \ref{lem:fewRts} and \ref{lem:maxCDF} suggest that the largest in magnitude roots of $p_n$ lie within a distance of about $n^{-1/(\alpha+1)}$ of the unit circle, so our plan is to establish inequalities \eqref{eqn:maxPair} and \eqref{eqn:maxPairSharper} for all pairs of roots and critical points in the slightly wider annulus, $\mathcal{A}_n^*$ defined below. We will use Theorem \ref{thm:manyPair} to obtain, with high probability, a one-to-one pairing between critical points and roots of $p_n$ that lie in $\mathcal{A}_n^*$ and argue, by Lemma \ref{lem:sepRts}, that with probability tending to 1, the largest roots are sufficiently radially separated to guarantee that the largest critical point is paired with the largest root, the second largest critical point is paired with the second largest root, and so on. Then, we will use the results from Subsection \ref{sec:CSBehaved} for carefully chosen parameters $\eps_n, \eps_n^*, \delta_n, \delta_n^*, c_n$ depending on $n$ and $\ell_n$ to obtain Lipschitz control of the discrete Cauchy--Stieltjes sums with a smaller Lipschitz constant (depending on $n$) than was found in the proof of Theorem \ref{thm:manyPair}. Via Theorem \ref{thm:determ}, this will allow us to establish inequality \eqref{eqn:maxPairSharper} that we will use in a series of approximations to show that the variables on the left of \eqref{eqn:maxConv} are close to the scaled sums $\frac{1}{\sqrt{n}}\sum_{j=1}^n\frac{1}{U_i-X_j}$, where $U_1, \ldots, U_L$ are i.i.d.\ draws from the unit circle, independent from $X_j$. We will employ the Cram\'{e}r--Wold technique to establish the convergence in distribution indicated in \eqref{eqn:maxConv}. In the case where $\alpha =0$, we will need to use the heavy-tailed central limit Theorem \ref{thm:CLT} which will involve dividing the sums by an extra factor of $\sqrt{\log{n}}$.

Motivated by the plan above, define the sequences
\begin{align*}
	c_n &:= \ell_n^4 & \eps_n &:= \frac{1}{\ell_n^4}\left(\frac{1}{n^{\frac{1+2\alpha}{2+2\alpha}}}\right)^{{1}/{(\alpha +1)}}& \delta_n &:= \left(\frac{1}{n^{\frac{3+4\alpha}{4 + 4\alpha}}}\right)^{1/(\alpha + 1)}\\
	&&\eps^*_n &:= \left(\frac{\ell_n^2}{n}\right)^{1/(\alpha +1)} & \delta^*_n &:=\frac{1}{\ell_n^3}\left(\frac{1}{\sqrt{n}}\right)^{1/(\alpha + 1)},
\end{align*}
and as in Subsection \ref{sec:param}, define the annuli
\begin{align*}
	\mathcal{A}_n &= \set{z\in \C: 1 - \eps_n \leq \abs{z} \leq 1},\\
	\mathcal{B}_n &= \set{z\in \C: 1 - 2\eps_n \leq \abs{z} < 1-\eps_n},\\
	\mathcal{A}^*_n &=\set{z\in \C: 1 - \eps^*_n \leq \abs{z} \leq 1}.
\end{align*}
Continuing in the spirit of Subsection \ref{sec:param}, use e.g.\ Lemma \ref{lem:epsNet} to find a deterministic net $\mathcal{N}_n$ of $\mathcal{A}_n$ that satisfy
\footnote{Note: For large $n$, $c_n > 1$ and $\delta_n < \eps_n$, so  $\eps_n^{1/3}\delta_n^{2/3}c_n^{-2}$ is less than $\eps_n = 1-(1-\eps_n)$ and the hypotheses of Lemma \ref{lem:epsNet} are met.}
\begin{equation*}
	\max_{z\in \mathcal{A}_n}\left(\min_{w\in \mathcal{N}_n}\abs{z-w}\right) \leq \frac{\eps_n^{{1}/{3}}\delta_n^{{2}/{3}}}{c_n^2}\quad \text{and}\quad \abs{\mathcal{N}_n} \ll \frac{c_n^4}{\eps_n^{{2}/{3}}\delta_n^{{4}/{3}}}\cdot 1 \cdot \eps_n = \frac{c_n^4\eps_n^{{1}/{3}}}{\delta_n^{{4}/{3}}},
\end{equation*}
and in terms of these updated parameters, use the same definitions as in Subsection \ref{sec:param} for the events $E_n, F_n, F_n^*, G_n,$ and $H_n$.  We define two additional ``bad'' events
\begin{align*}
	E^*_n &:= \set{X^{\da}_{(\ell_n + 1)}\notin \mathcal{A}_n^*}\\
	&\phantom{:={}}\textit{``Some of the largest $\ell_n+1$ roots aren't in $\mathcal{A}_n^*$.''}\\[1ex]
	F^\parallel_n &:= \set{\exists\ i,j\in [n],\ i\neq j,\ X_i \in \mathcal{A}^*_n: \abs{\abs{X_{i}}-\abs{X_{j}}} \leq \frac{1}{n\ell_n^3}}\\
&\phantom{:={}}\textit{``The largest $\ell_n$ roots aren't radially separated.''}
\end{align*}
off of which we will be able to show that the largest roots and critical points of $p_n$ ``interlace'' in such a way that the largest root is paired with the largest critical point and so on.

Applying Theorem \ref{thm:manyPair} with $c_n = \ell_n^4$ and $\delta := \frac{1}{4\alpha + 2}$ shows that there is a constant $C'> 0$ and an event $\Omega_n$ whose probability is order $O(\ell_n^{-4})$ so that on the complement of $\Omega_n$
\begin{enumerate}[(I)]
	\item \label{item:maxPair1} within a distance $n^{-11/12}$ (note $\delta < {1}/{2}$) of each root 
	\[
	X_i \in \mathcal{A}_n^* \subset \set{z \in \C: 1-n^{-\frac{1}{4\alpha +2}} \leq \abs{z} \leq 1},
	\]
	there is precisely one critical point $W_i^{(n)}$ of $p_n$, and these critical points satisfy
	\[
	\max_{X_i \in \mathcal{A}^*_n}\abs{W_i^{(n)} - X_i(1-n^{-1})} < \frac{C'}{n\ell_n^4};
	\]
	\item \label{item:maxPair2} there is an injection $\iota$ from the set of critical points of $p_n$ that lie in $\mathcal{A}_n^*$ to the set of indices $[n]$ of the roots of $p_n$ so that
	\[
	\max_{{\rm c.p.}\ W \in \mathcal{A}_n^*}\abs{W - X_{\iota(W)}(1-n^{-1})}< \frac{C'}{n\ell_n^4}.
	\]
\end{enumerate}
It follows from \ref{item:maxPair1} that on the complement of $E_n^* \cup F^\parallel_n \cup \Omega_n$, $p_n$ has at least $\ell_n$ critical points in $\mathcal{A}_n^*$ (note that $X^{\da}_{(\ell_n)}$ is at least $1/(n\ell_n^3)$-far from the inner edge of $\mathcal{A}_n^*$ on the complement of $E_n^* \cup F_n^\parallel$), so via \ref{item:maxPair2}, for large $n$, on the complement of $E_n^* \cup F^\parallel_n \cup \Omega_n$, we have
\begin{equation*}
	\abs{W^{\da}_{(i)} - X^{\da}_{(i)}(1-n^{-1})} < \frac{C'}{n\ell_n^4},\ 1\leq i \leq \ell_n.
\end{equation*}
Indeed, if $W_1,W_2 \in \mathcal{A}_n^*$ with $\abs{W_1} < \abs{W_2}$ are two critical points satisfying
\[
\abs{W_i - X_{\iota(W_i)}(1-n^{-1})} < \frac{C'}{n\ell_n^4},\ i=1,2,
\]
then, using this assumption twice yields
\[
\abs{W_i} \leq \abs{X_{\iota(W_i)}}-\frac{1}{n}\abs{X_{\iota(W_i)}} + \frac{C'}{n\ell_n^4} \leq \abs{X_{\iota(W_i)}}-\frac{1}{n-1}\abs{W_i} + \frac{2C'}{(n-1)\ell_n^4} < \abs{X_{\iota(W_i)}},
\]
for $i=1,2$ (the last equality follows for large $n$ by $\abs{W_i} \geq 1-\eps_n^*$), so $X_{\iota(W_1)}, X_{\iota(W_2)} \in \mathcal{A}^*_n$. Hence, on the complement of $F_n^\parallel$, the roots $X_{\iota(W_1)}$, $X_{\iota(W_2)}$ are radially separated by $(n\ell_n^3)^{-1}$. The triangle inequality implies
\begin{equation*}
0 < \abs{W_2} - \abs{W_1} \leq \frac{2C'}{n\ell_n^4} + \frac{n-1}{n}\abs{X_{\iota(W_2)}} - \frac{n-1}{n}\abs{X_{\iota(W_1)}},
\end{equation*}
from which follows
\[
\abs{X_{\iota(W_1)}} < \frac{2C'}{(n-1)\ell_n^4} + \abs{X_{\iota(W_2)}},
\]
and the condition $\abs{\abs{X_{\iota(W_1)}} - \abs{X_{\iota(W_2)}}} > (n\ell_n^3)^{-1}$ forces $\abs{X_{\iota(W_1)}} < \abs{X_{\iota(W_2)}}$. We have proved the following lemma.
\begin{lemma}
	There is a $C' > 0$ so that for large $n$, on the complement of $E_n^* \cup F^\parallel_n \cup \Omega_n$, we have
	\begin{equation*}
		\abs{W^{\da}_{(i)} - X^{\da}_{(i)}(1-n^{-1})} < \frac{C'}{n\ell_n^4},\ 1 \leq i \leq \ell_n.
	\end{equation*}
\end{lemma}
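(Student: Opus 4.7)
The plan is to apply Theorem \ref{thm:manyPair} with $c_n = \ell_n^4$ and $\delta := 1/(4\alpha + 2) \in (1/(4\alpha+3), 1/(\alpha+1))$ and then combine its two conclusions with the extra bad events $E_n^*$ and $F_n^\parallel$. For large $n$ one has $\mathcal{A}_n^* \subset \{z : 1 - n^{-\delta} \leq |z| \leq 1\}$, so Theorem \ref{thm:manyPair} yields, off an event $\Omega_n$ with $\P(\Omega_n) = O(\ell_n^{-4})$, the two statements (I) and (II) already quoted in the excerpt: each root $X_i \in \mathcal{A}_n^*$ admits a unique critical point $W_i^{(n)}$ within distance $n^{-11/12}$ of it satisfying $|W_i^{(n)} - X_i(1-n^{-1})| < C'/(n\ell_n^4)$, and there is an injection $\iota$ from critical points in $\mathcal{A}_n^*$ to root indices with the analogous bound $|W - X_{\iota(W)}(1-n^{-1})| < C'/(n\ell_n^4)$.

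Off $E_n^*$ at least $\ell_n + 1$ roots lie in $\mathcal{A}_n^*$, and off $F_n^\parallel$ their distinct magnitudes are $(n\ell_n^3)^{-1}$-separated. Since the $O(1/(n\ell_n^4))$ pairing error in (I) is asymptotically smaller than the radial cushion $(n\ell_n^3)^{-1}$ between $|X^\da_{(\ell_n)}|$ and the inner boundary of $\mathcal{A}_n^*$, each of the critical points $W_i^{(n)}$ associated by (I) with one of the $\ell_n$ largest roots in fact lies strictly inside $\mathcal{A}_n^*$. Feeding these critical points into the injection $\iota$ from (II) then produces a pairing between the $\ell_n$ largest critical points of $p_n$ and $\ell_n$ distinct roots, each of which necessarily lies in $\mathcal{A}_n^*$.

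The only delicate step is to verify that this pairing is magnitude-preserving, i.e.\ that $\iota(W^\da_{(i)}) = \iota^{-1}(X^\da_{(i)})$. If $W_1, W_2$ are two paired critical points in $\mathcal{A}_n^*$ with $|W_1| < |W_2|$, applying the triangle inequality twice to $|W_j - X_{\iota(W_j)}(1 - n^{-1})| < C'/(n\ell_n^4)$ yields
\[
\tfrac{n-1}{n}\bigl(|X_{\iota(W_1)}| - |X_{\iota(W_2)}|\bigr) \leq \tfrac{2C'}{n\ell_n^4} - (|W_2| - |W_1|) < \tfrac{2C'}{n\ell_n^4}.
\]
Off $F_n^\parallel$ the magnitudes $|X_{\iota(W_1)}|$ and $|X_{\iota(W_2)}|$ differ by more than $(n\ell_n^3)^{-1}$, and since $(n\ell_n^3)^{-1} \gg 2C'/((n-1)\ell_n^4)$ for large $n$, the displayed inequality rules out $|X_{\iota(W_1)}| > |X_{\iota(W_2)}|$ and thus forces $|X_{\iota(W_1)}| < |X_{\iota(W_2)}|$. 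Iterating across the $\ell_n$ paired critical points identifies $\iota(W^\da_{(i)})$ with $X^\da_{(i)}$, whence the desired bound follows directly from (II). The main obstacle is really the bookkeeping: one must confirm that the choice $\delta = 1/(4\alpha+2)$ and $c_n = \ell_n^4$ simultaneously meets the hypotheses of Theorem \ref{thm:manyPair} and produces a pairing error strictly smaller by a factor of $\ell_n$ than the radial cushion enforced by $F_n^\parallel$; once that gap is secured, the remainder is the triangle inequality.
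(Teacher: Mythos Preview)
Your proposal is correct and follows essentially the same route as the paper: apply Theorem~\ref{thm:manyPair} with $c_n=\ell_n^4$ and $\delta=\tfrac{1}{4\alpha+2}$ to obtain (I) and (II), use (I) together with the radial cushion off $E_n^*\cup F_n^\parallel$ to place at least $\ell_n$ critical points inside $\mathcal{A}_n^*$, and then verify order preservation of $\iota$ via the same triangle-inequality comparison of $|W_2|-|W_1|$ against $\tfrac{n-1}{n}(|X_{\iota(W_2)}|-|X_{\iota(W_1)}|)$ and the $(n\ell_n^3)^{-1}$ radial gap. The paper's proof is essentially identical, with only cosmetic differences in how the triangle-inequality line is written.
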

Next, we justify the following result that is a direct consequence of Lemmas \ref{lem:nearLip} and \ref{lem:nearCS}
\begin{lemma}\label{lem:defBadEventEn} For large $n$, on the complement of 
	\[
	\mathcal{E}_n := E_n \cup E_n^* \cup F_n \cup F_n^* \cup F_n^\parallel \cup G_n \cup H_n \cup \Omega_n,
	\]
	the following are true statements for $1 \leq i \leq \ell_n$:
	\begin{enumerate}[(\thetheorem.i)]
		\item \label{item:maxDeterm1} $\displaystyle \frac{1}{2} \leq \abs{\frac{1}{n-1}\sum_{\substack{j=1\\j\neq i}}^n\frac{1}{X^{\da}_{(i)}-X^{\da}_{(j)}}} \leq \frac{1}{(1-\eps)}$
		\item \label{item:maxDeterm2} The function $z \mapsto \frac{1}{n-1}\sum_{\substack{j=1\\j\neq i}}^n\frac{1}{z-X^{\da}_{(j)}}$ is Lipschitz continuous on the open ball $B(X^{\da}_{(i)}, \frac{\delta_n}{4})$, with Lipschitz constant $k_{\rm Lip} = 2\ell_n^8n^{\frac{1+2\alpha}{2(1+\alpha)^2}} = \begin{cases}o(\sqrt{n}) & \alpha > 0\\ o(\sqrt{n\log{n}}) &\alpha = 0\end{cases}$
		\item \label{item:maxDeterm3} $\displaystyle\min_{j\in [n]\setminus \set{i}}\abs{X^{\da}_{(i)}-X^{\da}_{(j)}} > \delta_n^* = \omega(n^{-1})$,
	\end{enumerate}
	so in particular, 
	\[
	\abs{W^{\da}_{(i)} - X^{\da}_{(i)} + \frac{1}{n} \frac{1}{\frac{1}{n-1}\sum_{\substack{j=1\\ j\neq i}}^n\frac{1}{X^{\da}_{(i)}-X^{\da}_{(j)}}}} < C''\ell_n^8\cdot n^{-\frac{3}{2}-\frac{\alpha^2}{2(1+\alpha)^2}} = \begin{cases}o(n^{-3/2}) & \alpha > 0\\ o\left(\frac{\sqrt{\log{n}}}{n^{3/2}}\right) & \alpha = 0, \end{cases}
	\]
	where $C''>0$ is a deterministic constant that is independent of $n$.
\end{lemma}
\begin{proof}
	Define $\mathcal{E}_n$ as in the hypothesis. Note that for large $n$, on the complement of $\mathcal{E}_n$, all $X^{\da}_{(i)}$, $W^{\da}_{(i)}$, $1 \leq i \leq \ell_n$ are contained in $\mathcal{A}^*_n \subset \mathcal{A}_n$ so \ref{item:maxDeterm1} follows from Lemma \ref{lem:nearCS} since $m_\mu(z) = \frac{F_R(\abs{z})}{z}$ is uniformly bounded away from $0$ and above by $1/(1-\eps)$ on $\mathcal{A}_n^*$  and 
	\[
	\frac{14}{c_n} + \frac{172\log(n)\eps_n^{1/3}}{nc_n^2\delta_n^{4/3}} \ll \frac{1}{\ell_n^4} + \frac{\log(n)}{\ell_n^8}n^{-1/6}
	\]
	based on our definitions of $c_n$, $\eps_n$, and $\delta_n$. To see that \ref{item:maxDeterm2} is true, apply special case \ref{it:spCase1} of Lemma \ref{lem:nearLip} for $\alpha > 0$ and special case \ref{it:spCase2} for $\alpha =0$. (Observe in the latter case, $B(X^\da_{(i)}, \delta_n/2) \subset \mathcal{A}_n \cup B(0,1)^c$ since on the complement of $E^*_n$, each $X^\da_{(i)}$, $1 \leq i \leq \ell_n$ is at least a distance $\eps_n - \eps_n^* = \omega(\delta_n)$ away from the inner edge of $\mathcal{A}_n$. Also note that for $\alpha \geq 0$, we have $n^{-1} < \delta_n = o(\eps_n)$, and for $\alpha = 0$, we have $\delta_n =o(\delta_n^*)$, so the hypotheses of \ref{lem:nearLip} are satisfied.) Finally, on the complement of $F_n^*$, the $X^{\da}_{(i)} \in \mathcal{A}_n^*$ are separated by $\delta_n^*$, so \ref{item:maxDeterm3} is also true. We conclude the proof by applying Theorem \ref{thm:determ} $\ell_n$ times, one application with each $X^{\da}_{(i)}$ taking a turn at the role of $\xi$ (we use the same constants $C_1$, $C_2$, $k_{\rm Lip}$ for every $i$), noting that all of the bounds are uniform in $i$, $1 \leq i \leq \ell_n$.
\end{proof}

Choosing $C = \max\set{C',C''}$, we have achieved \eqref{eqn:maxPair} and \eqref{eqn:maxPairSharper} for large $n$ on the complement of $\mathcal{E}_n$, defined in the statement of Lemma \ref{lem:defBadEventEn}. We now show that this ``bad'' event $\mathcal{E}_n$ tends to $0$ in probability as $n \to \infty$.
\begin{lemma}\label{lem:BadEventsSmallFluct}
	$\P(\mathcal{E}_n) = O(\ell_n^{-1})$, where $\mathcal{E}_n = E_n \cup E_n^* \cup F_n \cup F_n^* \cup F_n^\parallel \cup G_n \cup H_n \cup \Omega_n$ is defined as in the statement of Lemma \ref{lem:defBadEventEn}.
\end{lemma}
\begin{proof}
	To bound $\P(E_n)$, apply Lemma \ref{lem:fewRts} with $\gamma_n :=2\eps_n$ and $C:= 3C_\mu > 2C_\mu \geq \frac{2C_\mu}{(\alpha + 1)}$ to obtain
	\[
	\P(E_n) \leq \exp\left(-C_\mu 2^{\alpha+1}\cdot \ell_n^{-\frac{4}{\alpha +1 }} n^{1-\frac{1+2\alpha}{2+2\alpha}}\right) \leq \exp\left(-C_\mu 2^{\alpha+1}\ell_n^{-\frac{4}{\alpha +1 }}\cdot n^{\frac{1}{2+2\alpha}}\right) \ll \frac{1}{\ell_n}.
	\]
	By Lemma \ref{lem:maxCDF} with $l_n := \ell_n$ and $\gamma_n := \eps_n^*$ we have, for a fixed constant $c>0$,
	\[
	\P(E_n^*) \ll e^{-c\ell_n^2} \ll \frac{1}{\ell_n}.
	\]
	Next, we contend with  $F_n^*$, which we break into two pieces\footnote{We note that in this section, $\alpha \geq 0$, so for the case labeled ``$\alpha \leq 0$,'' the reader can consider $\alpha = 0$. The reason we include the situation $\alpha < 0$ here is so we can reuse part of this proof and avoid repeating nearly identical arguments in Section \ref{sec:proofsNeg} below.  The definition of $F_n^*$ is one place where the case $\alpha = 0$ is more similar to $\alpha <0$ than it is to the case $\alpha > 0$. In the $\alpha >0$ case, there is a decaying root density near the edge of the unit disk, so working on the complement of $E_n$ is already sufficient to rule out ``too many'' roots near each $X_j \in \mathcal{A}_n^*$. We need finer control in the cases $\alpha =0$ and $\alpha < 0$, where the root density doesn't decay near the edge of the unit disk.}:
	\begin{align*}
	F_{n,1}^* &:= \begin{cases} \emptyset & \text{if $\alpha > 0$,}\\
		\begin{aligned}
			\Big\{\exists\ i\in [n]:{}& X_i \in \mathcal{A}^*_n,\\
			&\#\set{j\in [n],\ j \neq i: \abs{X_i-X_j} < \eps_n} > c_n^3n\eps_n^{\alpha+2} \Big\}
		\end{aligned}  &\text{if $\alpha \leq 0$;}\\
	\end{cases}\\
	F_{n,2}^* &:= \set{\exists\ i,j\in [n],\ i\neq j: X_i \in \mathcal{A}^*_n,\ \abs{X_{i}-X_{j}} \leq \delta^*_n}.
	\end{align*}
	We will start by controlling the probability of $F_{n,1}^*$ when $\alpha \leq 0$ (there is nothing to show for $\alpha > 0$). In the current section, we need only bound $\P(F^*_{n,1})$ when $\alpha = 0$, but we will write the argument to also cover the case $-1< \alpha < 0$ to avoid repeating ourselves in Section \ref{sec:proofsNeg}. In this case, for large $n$, we have\footnote{Note that for large $n$, $\eps^*_n < \eps_n$, and to obtain the second inequality, we can apply the result depicted in Figure \ref{fig:avoid1} with $\mathfrak{r} := \eps_n$, $z := e^{\sqrt{-1}\arg{z}}$.}
	\begin{align*}
	\sup_{z \in \mathcal{A}^*_n}\P\left(\abs{z-X_1} < \eps_n\right) &\leq \sup_{z \in \mathcal{A}^*_n}\P\left(\abs{e^{\sqrt{-1}\arg{z}}-X_1} < 2\eps_n\right)\\
	&\leq \P\left(1-2\eps_n \leq \abs{X_1} \leq 1 \text{ and } \arg\left(X_1\right) \in (-4\eps_n, 4\eps_n)\right)\\
	&= \P\left(1-2\eps_n \leq \abs{X_1} \leq 1\right)\cdot \frac{4}{\pi}\eps_n,
	\end{align*}
	where we have used that $\mu$ is radially symmetric (so the radial and angular parts of $X_1$ are independent). %, and we have defined $\arg(\cdot)$ to have range $[-\pi, \pi)$.\todo{Remove comment about $\arg$ if adopting range $[-\pi, \pi)$.}
	Using \eqref{eqn:massOnEdge} from Lemma \ref{lem:fewRts} with $\gamma_n = 2\eps_n$ to bound the first factor on the right yields the following inequality for large $n$:
	\[
	\sup_{z\in \mathcal{A}_n^*}\P\left(\abs{z-X_1} < \eps_n\right) \leq \frac{4C_\mu}{\pi(\alpha + 1)}(2\eps_n)^{\alpha+2}.
	\]
	For each $z \in \mathcal{A}_n^*$ define 
	\begin{align*}
		\rho_{z,n} &:= \P\left(\abs{z-X_j} < \eps_n\right) \leq \frac{4C_\mu}{\pi(\alpha + 1)}(2\eps_n)^{\alpha+2}\\
		\intertext{and the random variables}
		N_{z,n} &:= \#\set{j \in [n-1]: \abs{z - X_j} < \eps_n} \sim {\rm Binomial}(n-1, \rho_{z,n}).
	\end{align*}
	An argument nearly identical to the one in the proof of Lemma \ref{lem:fewRts} (i.e., evaluating the Binomial moment-generating function at $t=1$ to achieve a Chernoff bound and applying the log-exponential trick) yields the following bound for large $n$ and any positive sequence $D_n$:
	\begin{equation}\label{eqn:nznbd}
	\sup_{z\in \mathcal{A}_n^*}\P(N_{z,n} > D_n) \leq \exp\left(-D_n + (n-1)\cdot\frac{8C_\mu(2\eps_n)^{\alpha+2}}{\pi(\alpha+1)}\right).
	\end{equation}
	If $D_n$ is any positive sequence, the independence of $X_1, \ldots, X_n$ implies
	\begin{align*}
		&\P\left(X_n \in \mathcal{A}_n^*\text{ and } \#\set{j\in [n-1] : \abs{X_n-X_j} < \eps_n} > D_n\right)\\
		&\hspace*{2mm}=\int_\mathbb{C}\ind{x_n \in \mathcal{A}_n^*}\left(\int_\mathbb{C}\cdots\int_\mathbb{C}\ind{\#\set{j\in [n-1]\: :\: \abs{x_n-x_j} < \eps_n} > D_n}\,d\mu(x_1)\cdots \,d\mu(x_{n-1})\right)\!d\mu(x_n)\\
		&\hspace*{2mm}=\int_\mathbb{C}\ind{x_n \in \mathcal{A}_n^*}\cdot \P(N_{x_n,n} > D_{n})\,d\mu(x_n),
	\end{align*}
	so in view of \eqref{eqn:nznbd} and the fact that $X_1, \ldots, X_n$ are i.i.d., we have the following for large $n$:
	 \begin{align*}
	 &\P\left(X_i \in \mathcal{A}_n^*\text{ and } \#\set{j\in [n],\ j \neq i : \abs{X_n-X_j} < \eps_n} > D_n\right)\\
	 &\qquad \leq \P(X_1 \in \mathcal{A}_n^*) \cdot \exp\left(-D_n + n\cdot\frac{8C_\mu(2\eps_{n})^{\alpha+2}}{\pi(\alpha+1)}\right).
	 \end{align*}
 	Setting $D_n = c_n^3n\eps_n^{\alpha+2}$ and applying the union bound over $i \in [n]$, establishes that for large $n$,
	 \begin{equation}
 		\P(F_{n,1}^*) \leq n\cdot \P(X_1 \in \mathcal{A}_n^*)\cdot \exp\left(-\left(c_n^3-\frac{C_\mu2^{\alpha+5}}{\pi(\alpha + 1)}\right)  n\eps_{n}^{\alpha+2}\right),\
 		 -1 < \alpha \leq 0, \label{eqn:alphaNegFstarbd}
	 \end{equation}
 	which yields the following simpler asymptotic bound in the case $\alpha=0$, after we have applied \eqref{eqn:massOnEdge} from Lemma \ref{lem:fewRts} to bound $\P(X_1 \in \mathcal{A}_n^*)$ and using the definitions $c_n = \ell_n^4$, $\eps_n = \ell_n^{-4}n^{-1/2}$, and $\eps_n^* = \ell_n^2/n$ from the top of the current Subsection \ref{sec:maxPairPf}:
 	\begin{equation}
	\P(F_{n,1}^*) \ll \ell_n^2e^{-\frac{1}{2}c_n^3n\eps_n^{\alpha+2}} = \ell_n^2e^{-\frac{1}{2}\ell_n^{4}} \leq \frac{\ell_n^2}{1+\frac{1}{2}\ell_n^4} \ll \frac{1}{\ell_n},\ \text{for $\alpha = 0$}.\label{eqn:alpha0Fstarbd}
 	\end{equation}
		
	For the remainder of the proof, we return to the case $\alpha \geq 0$. To see that $F_n, F_{n,2}^*$, and $F_n^\parallel$ hold with high probability, we apply Lemma \ref{lem:sepRts} three times. First, we take $\gamma_n:=2\eps_n$ and $d_n := \delta_n$ to obtain (for some absolute constant $c>0$)
	\begin{align*}
	\P(F_n) &\ll \max\set{\frac{1}{\ell_n^{4(2\alpha+1)}}\cdot n^{2 - \frac{(1+2\alpha)^2}{2(1+\alpha)^2}-\frac{3+4\alpha}{2(1+\alpha)^2}}, \exp\left(-\frac{c}{\ell_n^{4(\alpha+1)}}n^{1-\frac{1+2\alpha}{2+2\alpha}}\right)}\\
	&\ll \max\set{\frac{1}{\ell_n^{4(2\alpha+1)}}, \frac{1}{\ell_n}} = \frac{1}{\ell_n};
	\end{align*}
	second, we set $\gamma_n := \eps_n^*$ and $d_n:= \delta_n^*$, which yields (for some fixed $c>0$)
	\[
	\P(F_{n,2}^*) \ll \max\set{\ell_n^{\frac{2(2\alpha+1)}{\alpha+1}-6}n^{2-\frac{2\alpha +1}{\alpha +1}-\frac{1}{\alpha+1}}, e^{-c\ell_n^2}} = \max\set{\ell_n^{-\frac{2\alpha +4}{\alpha +1}}, e^{-c\ell_n^2}}\ll \frac{1}{\ell_n},
	\]
	and third, we set $\gamma_n := \eps_n^*$ and $R_n:=\frac{1}{n\ell_n^3}$ which shows that
	\[
	\P(F_n^\parallel) \ll \frac{1}{\ell_n}
	\]
	by a nearly identical inequality to the one just prior (note $R_n < (\delta_n^*)^2$). It remains to bound $\P(G_n)$ and $\P(H_n)$ since $\P(\Omega_n) = O(\ell_n^{-1})$ by its definition. Lemma \ref{lem:GnSmall} establishes that $\P(G_n) \ll \frac{1}{\ell_n^4}$ and Lemma \ref{lem:netProb} shows that 
	\[
	\P(H_n) \ll \abs{\mathcal{N}_n}\cdot \frac{\ell_n^8\log{n}}{n} \ll \frac{\ell_n^{16}\eps_n^{1/3}}{\delta_n^{4/3}}\cdot \frac{\ell_n^8\log{n}}{n}= \ell_n^{24-4/3}\log{n}\cdot n^{-1/6}\ll \frac{1}{\ell_n}
	\]
	as is desired (note that we have applied Lemma \ref{lem:epsNet} with $\eps:=\frac{\eps_n^{1/3}\delta_n^{2/3}}{c_n^2} < \eps_n$).
\end{proof}

At this point, the only conclusions of Theorem \ref{thm:maxPair} that we need to prove are the joint fluctuations results \eqref{eqn:maxConv}, namely, that 
\[
	\left(\frac{n^{3/2}}{a_ne^{\sqrt{-1}\arg(X^\da_{(i)})}}\left(W^\da_{(i)} - X^\da_{(i)}(1-n^{-1})\right)\right)_{i=1}^L \longrightarrow( N_1, \ldots,  N_L)
\]
in distribution as $n\to \infty$, where $( N_1, \ldots,  N_L)$ is a vector whose coordinates have complex Gaussian marginal distributions with mean zero and covariance structure given by
\begin{align*}
	\var\left(\Re( N_i)\right) &=
	\begin{cases}
		\frac{\pi f_\mu(1)}{4} & \text{if $\alpha =0$,}\\
		\var\left(\Re\left(\frac{X_1}{1-X_1}\right)\right) & \text{if $\alpha > 0$};
	\end{cases}\\
	\var\left(\Im( N_i)\right) &= 
	\begin{cases}
		\frac{\pi f_\mu(1)}{4} & \text{if $\alpha =0$,}\\
		\var\left(\Im\left(\frac{X_1}{1-X_1}\right)\right) & \text{if $\alpha > 0$;}
	\end{cases}\\
	\cov\left(\Re( N_i),\Im( N_i)\right) &= 0,
\end{align*}
and that the coordinates $(N_1, \ldots, N_L)$ are i.i.d.\ when $\alpha = 0$ and have joint structure described by Lemma \ref{lem:CLT} when $\alpha > 0$. We give a heuristic outline of our argument before providing the technical details in Lemmas \ref{lem:topCSTermsSmall} and \ref{lem:CLT} below.

Toward establishing \eqref{eqn:maxConv}, we seek to understand the joint behavior as $n \to \infty$ of the sums $\sum_{\substack{j=1\\ j\neq i}}^n\frac{1}{X^{\da}_{(i)}-X^{\da}_{(j)}}$, $1 \leq i \leq L$, but the dependence between the order statistics $X^{\da}_{(i)}$ and $X^{\da}_{(j)}$, $j \neq i$ prevents our direct application of classical central limit theorems. To circumvent this obstruction, we will argue that there is no change in the joint distributional limit when we remove the summands involving the largest $L$ of the $X^{\da}_{(j)}$ and replace $X^{\da}_{(i)}$, $1 \leq i \leq L$ with the unit vectors $e^{\sqrt{-1}\arg(X^{\da}_{(i)})}$, which are independent from $X^{\da}_{(L+1)}, \ldots, X^{\da}_{(n)}$ by the radial symmetry of the distribution $\mu$. As a consequence, we will have reduced the problem to understanding the joint limiting behavior of the variables $\sum_{j=L+1}^{n}\frac{X^{\da}_{(j)}}{e^{\sqrt{-1}\arg(X^{\da}_{(i)})}-X^{\da}_{(j)}}$, $1\leq i \leq L$ which have the same distribution as $\sum_{j=L+1}^{n}\frac{X^{\da}_{(j)}}{{U_i}-X^{\da}_{(j)}}$, $1\leq i \leq L$, where $U_1, \ldots, U_L$ are i.i.d.\ uniform draws from the unit circle that are independent from $X_1, X_2, \ldots$ (to center the sums we will need to add the factors $X^{\da}_{(j)}$ in the numerators). We will conclude by adding the ``missing'' summands involving the largest roots $X^{\da}_{(1)},\ldots, X^{\da}_{(L)}$ and applying the classical central limit theorem for the case $\alpha > 0$ and the heavy-tailed result, Corollory \ref{cor:CLT}, for the case $\alpha = 0$. (Note: When $\alpha > 0$, the scaling factor is $1/\sqrt{n}$ as in the classical central limit theorem, but when $\alpha =0$, we need the slightly smaller scaling factor $1/\sqrt{n\log{n}}$. See Lemma  \ref{lem:CLT} and Corollary \ref{cor:CLT} for more details.)

Lemma \ref{lem:topCSTermsSmall} shows that there is no harm in adjusting the summands involving the largest $L$ roots according to our plan above (because we are only perturbing the sums by finitely many terms which contribute negligibly in the limit), and Lemma \ref{lem:CLT} establishes \eqref{eqn:maxConv} in view of Slutsky's Theorem and the Cram\'{e}r--Wold technique.

\begin{lemma}\label{lem:topCSTermsSmall}
	Let $L$ be a fixed positive integer, let $\ell_n$ be a positive sequence satisfying $\omega(1)=\ell_n = o(n)$, and suppose $X_1, X_2, \ldots$ are i.i.d.\ draws from a distribution $\mu$ that satisfies Assumption \ref{ass:ComplexMu} for $\alpha > -1$. Then,
	\begin{align*}
		&\P\left(\exists j \in \set{1,\ldots, L}: \frac{1}{\abs{1-X^{\da}_{(j)}}}>\ell_n\right) = O\left(\frac{1}{\ell_n}\right),
		\intertext{and}	
		&\P\left(\exists i,j \in \set{1,\ldots L},\ i\neq j: \frac{1}{\abs{e^{\sqrt{-1}\arg(X^{\da}_{(i)})}-X^{\da}_{(j)}}}>\ell_n\right) = O\left(\frac{1}{\ell_n}\right),
	\end{align*}
	and if $U_1, \ldots, U_L$ are any collection of i.i.d.\ draws from the unit circle in $\C$ that are jointly independent from $X_1, X_2,\ldots$, we have
	\[
	\P\left(\exists i, j \in \set{1, \ldots, L}: \frac{1}{\abs{U_i-X^{\da}_{(j)}}}>\ell_n\right) = O\left(\frac{1}{\ell_n}\right).
	\]
\end{lemma}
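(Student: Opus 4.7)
The plan is to prove all three inequalities from a single template, exploiting radial symmetry. Write $X_j=R_je^{\sqrt{-1}\Theta_j}$, so that $R_1,\dots,R_n$ are i.i.d.\ with c.d.f.\ $F_R$, $\Theta_1,\dots,\Theta_n$ are i.i.d.\ uniform on $(-\pi,\pi]$, and the two families are jointly independent. By Lemma \ref{lem:maxCDF}, with probability $1-o(1)$ the top $L$ radii all exceed $1-\eps$; on this event, since $\mu$ has a density on $\mathbb{A}_\eps$ (Assumption \ref{ass:ComplexMu}\ref{ass:density}), these radii are pairwise distinct almost surely, and so the indices $\pi(1),\ldots,\pi(L)$ for which $X^{\da}_{(j)}=X_{\pi(j)}$ form a measurable function of $(R_1,\ldots,R_n)$ alone. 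Consequently, conditionally on the radii, $(\Theta^{\da}_{(1)},\ldots,\Theta^{\da}_{(L)}):=(\Theta_{\pi(1)},\ldots,\Theta_{\pi(L)})$ is a tuple of $L$ i.i.d.\ uniform random variables on $(-\pi,\pi]$. This conditional uniformity is the only real input; the complementary $o(1)$-probability event will be absorbed into the $O(1/\ell_n)$ error.

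For the first bound, expand $|1-re^{\sqrt{-1}\theta}|^2=(1-r\cos\theta)^2+r^2\sin^2\theta$ to see that the event $|1-X^{\da}_{(j)}|<1/\ell_n$ forces both $|X^{\da}_{(j)}|\geq 1-1/\ell_n$ (so $|X^{\da}_{(j)}|\geq 1/2$ for large $n$) and $|X^{\da}_{(j)}|\cdot|\sin\Theta^{\da}_{(j)}|<1/\ell_n$; together these give $|\Theta^{\da}_{(j)}|<C/\ell_n$ for an absolute constant $C$ and $n$ large. By conditional uniformity, this has (conditional) probability at most $C/(\pi\ell_n)$, so a union bound over $j\in[L]$ yields the first inequality. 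The second bound proceeds analogously: fix $i\neq j$, condition on the radii together with $\Theta^{\da}_{(i)}$, which still leaves $\Theta^{\da}_{(j)}$ uniform on $(-\pi,\pi]$ and independent of this conditioning; rotating by $e^{-\sqrt{-1}\Theta^{\da}_{(i)}}$ reduces the event to the first case with $\Theta^{\da}_{(j)}-\Theta^{\da}_{(i)}\pmod{2\pi}$ in place of $\Theta^{\da}_{(j)}$, and one concludes that this wrapped difference lies in an arc of length $O(1/\ell_n)$, hence has conditional probability $O(1/\ell_n)$. A union bound over the $L(L-1)$ ordered pairs with $i\neq j$ closes claim two. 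For the third bound, condition on the entire sample $X_1,X_2,\ldots$; since $U_i$ is uniform on the unit circle and independent of this $\sigma$-algebra, the event $|U_i-X^{\da}_{(j)}|<1/\ell_n$ constrains $\arg(U_i)$ to an arc of length $O(1/\ell_n)$ by the same expansion as in the first bound, so has conditional probability $O(1/\ell_n)$, and summing over the $L^2$ pairs $(i,j)$ completes the argument.

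I do not foresee a substantial obstacle. The only bookkeeping point worth flagging is verifying that, after conditioning on the radii, $(\Theta^{\da}_{(1)},\ldots,\Theta^{\da}_{(L)})$ really are i.i.d.\ uniform on $(-\pi,\pi]$: this reduces to the observation that on the high-probability event where the top $L$ radii are distinct, the restriction $\pi|_{[L]}$ of the spiral-order permutation is a function of the radii alone, since the argument-based tiebreaking in the definition of $\preceq$ is never activated among the top $L$. The complementary event has probability $o(1)$ and is absorbed into the $O(1/\ell_n)$ bound without further effort.
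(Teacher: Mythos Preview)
Your approach is correct and essentially the same as the paper's: both exploit radial symmetry so that the relevant angle is uniform on $(-\pi,\pi]$, observe that the bad event confines that angle to an arc of length $O(1/\ell_n)$, and finish with a union bound. The paper's packaging differs only cosmetically---it first reduces all three claims to the single estimate $\P(|1-X^\da_{(j)}| < 1/\ell_n) = O(1/\ell_n)$ via the distributional identities $e^{-\sqrt{-1}\arg(X^\da_{(i)})}X^\da_{(j)} \stackrel{d}{=} X^\da_{(j)}$ and $\bar U_i X^\da_{(j)} \stackrel{d}{=} X^\da_{(j)}$, then proves that estimate by invoking Lemma~\ref{lem:maxCDF} with $\gamma_n=(\ell_n/n)^{1/(\alpha+1)}$ together with the geometric bound in Figure~\ref{fig:avoid1}, which is your $\sin$ expansion in slightly different clothing.

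One small correction worth making: you twice write that an ``$o(1)$-probability'' exceptional event is absorbed into the $O(1/\ell_n)$ error, but $o(1)$ does not by itself imply $O(1/\ell_n)$. This is harmless here because Lemma~\ref{lem:maxCDF} actually yields an exponentially small bound $e^{-cn\gamma_n^{\alpha+1}}$, which is $o(1/\ell_n)$ for any choice of $\gamma_n$ with $n\gamma_n^{\alpha+1}\to\infty$ (recall $\ell_n=o(n)$); you should state this explicitly rather than leave it as $o(1)$. Relatedly, in your first bound you deduce $|\Theta^\da_{(j)}|<C/\ell_n$ from $|\sin\Theta^\da_{(j)}|<C'/\ell_n$, but this also needs the $(1-r\cos\theta)^2$ term of your expansion to rule out $\Theta^\da_{(j)}$ near $\pm\pi$---a one-line fix.
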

\begin{proof}
	Since $\mu$ is radially symmetric, for a fixed pair $i,j \in \set{1,\ldots, L}$ with $i \neq j$, the variables $\arg\left(X^{\da}_{(i)}\right)$ and $X^{\da}_{(j)}$ are independent and $e^{\sqrt{-1}\arg(X^{\da}_{(i)})}X^{\da}_{(j)}$ has the same distribution as $X^{\da}_{(j)}$. Consequently, via the union bound over at most $L^2$ parings of the form $(\arg(X^{\da}_{(i)}), X^{\da}_{(j)})$ or $(U_i, X^{\da}_{(j)})$, it suffices to show that for each $j \in \set{1, \ldots, L}$,
	\begin{equation}
		\P\left( \frac{1}{\abs{1-X^{\da}_{(j)}}}>\ell_n\right) = O\left(\frac{1}{\ell_n}\right).
		\label{eqn:maxAvoid1}
	\end{equation}
	Applying Lemma \ref{lem:maxCDF} with $\gamma_n = \left(\frac{\ell_n}{n}\right)^{1/(\alpha +1)}$ and $l_n = L$ (note the difference between the parameters $\ell_n$ and $l_n$) implies that there is a positive constant $c$ so that
	\begin{align*}
		&\P\left( \frac{1}{\abs{1-X^{\da}_{(j)}}}>\ell_n\right)\\
		&\qquad= \P\left( \frac{1}{\abs{1-X^{\da}_{(j)}}}>\ell_n,\ \abs{X^{\da}_{(j)}} > 1- \left(\frac{\ell_n}{n}\right)^{1/(\alpha +1)}\right) + O(e^{-c\cdot \ell_n}).
	\end{align*}
Now, $\gamma_n=o(\ell_n^{-1})$, so for $n$ large enough to guarantee that $\ell_n^{-1} < 1$, the conditions $|1-X^{\da}_{(j)}|^{-1}>\ell_n$ and $|X^{\da}_{(j)}| > 1- \gamma_n$ imply $-2\ell_n^{-1} \leq \arg(X^{\da}_{(j)}) \leq 2\ell_n^{-1}$ via a cubic lower approximation to the sine function (see also Figure \ref{fig:avoid1}).
\begin{figure}
	\begin{center}
		\begin{tikzpicture}[scale=1.8]
			\draw[fill=gray!20] (2,0) circle (0.75);
			\draw[draw=gray] (2*0.7071, 2*-0.7071) arc (-45:45:2);
			\draw[draw=gray] (2*0.87*0.7071, 2*0.87*-0.7071) arc (-45:45:2*0.87);
			\draw (0,0) [fill=black] circle (1pt) node[below left] {$0$} -- (2,0) [fill=black] circle (1pt) node[below right]{$\abs{z}$} -- (2,0);
			\draw[|-|, rotate = -43, draw =gray] (2*0.885,0)--(2*0.985,0);
			\draw (1.25,-1.4) node{\textcolor{darkgray}{$\gamma_n$}};
			\draw[thick] (0,0) --(2.4*1.854/2,2.4*3/8);
			\draw[thick] (0,0) --(1.854*1.854/2, 1.854*3/8) --(2,0)--(0,0);
			%\draw[thick] (1.854*1.854/2, 1.854*3/8)
			\draw[rotate=22.02,thick] (1.854, -0.18)--(1.854 - 0.18, -0.18)--(1.854 - 0.18, 0);
			\draw[fill=black] (1.854/2*1.854, 1.854/2*3/4) circle(1pt);
			\draw[thick] (0.4,0) arc (0:22.02:0.4);
			\draw (0.8,0.12) node[rotate=11.01]{$\theta < \frac{2\mathfrak{r}}{\abs{z}}$};
			%\draw[] (.8, 0.3) node[above, rotate = 21.48] {$1-\gamma_n$};
			\draw[dotted, thick] (1.859, 0.348)--(2.8, 0.6) node[right] {$\mathfrak{r}$};
		\end{tikzpicture}
	\end{center}
	\caption{If $\mathfrak{r}<\abs{z}$, then $\theta < \frac{2\mathfrak{r}}{\abs{z}}$. In particular, setting $z = 1$ and $\mathfrak{r} = \ell_n^{-1}$, we have the following: if $\gamma_n= o(\ell_n^{-1}) < 1$ and $|X^{\da}_{(j)}| \geq 1-\gamma_n$, then $|1-X^{\da}_{(j)}| < \ell_n^{-1}$ implies $\arg(X^{\da}_{(j)}) \in (-2\ell_n^{-1}, 2\ell_n^{-1})$. \label{fig:avoid1}}
\end{figure}
Equation \eqref{eqn:maxAvoid1} follows because $e^{-c\cdot \ell_n} \ll \ell_n^{-1}$ and $\arg(X^{\da}_{(j)})$ is uniformly distributed on $(-\pi, \pi]$ independently of $|X^{\da}_{(j)}|$.
\end{proof}

\begin{lemma}\label{lem:CLT}
	Fix a positive integer $L$, and define\footnote{Note: When $\alpha > 0$, $|1-X_1|^{-1}$ has a finite second moment, so the variance of $\sum_{j=1}^n{|1-X_j|}^{-1}$ is order $O(n)$, but when $\alpha = 0$,  ${|1-X_1|}^{-1}$ has an infinite second moment and  $\sum_{j=1}^n|1-X_j|^{-1}$ is order $O(\sqrt{n\log{n}})$. See also Lemma \ref{lem:moments} and Theorem \ref{thm:CLT}.} $a_n := \sqrt{\log{n}}$ if $\alpha = 0$ and $a_n := 1$ if $\alpha > 0$. Then, for each $i$, $1 \leq i \leq L$,
	\begin{equation}\label{eqn:circApprox}
	\abs{\frac{1}{a_n\sqrt{n}}\sum_{\substack{j=1\\ j\neq i}}^n \frac{X^{\da}_{(j)}}{X^{\da}_{(i)}-X^{\da}_{(j)}}- \frac{1}{a_n\sqrt{n}}\sum_{j=L+1}^{n}\frac{X^{\da}_{(j)}}{e^{\sqrt{-1}\arg(X^{\da}_{(i)})}-X^{\da}_{(j)}}}  \to 0
	\end{equation}
	in distribution as $n\to \infty$. Additionally, for any fixed $t_1, \ldots, t_L \in \C$, the (real-valued) random variable
	\[
	\Re\left(\frac{1}{a_n\sqrt{n}}\sum_{k=1}^L\sum_{j=L+1}^{n}\frac{t_k\cdot X^{\da}_{(j)}}{e^{\sqrt{-1}\arg(X^{\da}_{(k)})}-X^{\da}_{(j)}}\right)
	\]
	converges in distribution as $n\to \infty$ to a random variable $\mathfrak{L}$ with mean zero.  When $\alpha = 0$, $\mathfrak{L}$ is a Gaussian random variable with mean zero and variance 
	\[ \sum_{k=1}^L \frac{\pi |t_k|^2 f_\mu(1)}{4}. \]
	When $\alpha > 0$, $\mathfrak{L}$ has the same distribution as the product $N \sigma(U_1, \ldots, U_L)$, where $N$ and $\sigma(U_1, \ldots, U_L)$ are independent random variables, $N$ is a standard normal random variable, 
	\begin{equation} \label{eq:def:sigma}
		\sigma^2(e^{\sqrt{-1}\theta_1}, \ldots, e^{\sqrt{-1}\theta_L}) = \var\left(\sum_{k=1}^L\Re\left(\frac{t_kX_1}{e^{\sqrt{-1}\theta_k} - X_1}\right)\right) 
	\end{equation}
	for deterministic $\theta_1, \ldots, \theta_L \in [0, 2 \pi)$, and $U_1, \ldots, U_L$ are i.i.d.\ draws of a uniform random variable on the unit circle in the complex plane, independent of all other sources of randomness. In particular, when $L = 1$, by the rotational invariance of $X_1$, $\sigma(U_1)$ is a constant and $\mathfrak{L}$ is a mean zero Gaussian random variable with variance 
	\[ \var \left(\Re\left(\frac{t_1X_1}{1 - X_1}\right)\right). \]
\end{lemma}
\begin{remark}
The distribution of $\mathfrak{L}$ when $\alpha > 0$ is known as a compound Gaussian distribution; we can view the distribution as a mean zero normal distribution with random variance $\sigma^2(U_1, \ldots, U_L)$.  
\end{remark}
\begin{proof}[Proof of Lemma \ref{lem:CLT}]
	Fix $i \in \set{1,\ldots, L}$ and observe that 
	\begin{align*}
		&\sum_{\substack{j=1\\ j\neq i}}^n \frac{X^{\da}_{(j)}}{X^{\da}_{(i)}-X^{\da}_{(j)}}-\sum_{j=L+1}^{n}\frac{X^{\da}_{(j)}}{e^{\sqrt{-1}\arg(X^{\da}_{(i)})}-X^{\da}_{(j)}}\\
		&\quad= \sum_{\substack{j=1\\ j\neq i}}^L \frac{X^{\da}_{(j)}}{e^{\sqrt{-1}\arg(X^{\da}_{(i)})}-X^{\da}_{(j)}} + \sum_{\substack{j=1\\ j\neq i}}^n \frac{X^{\da}_{(j)}(e^{\sqrt{-1}\arg(X^{\da}_{(i)})}-X^{\da}_{(i)})}{(X^{\da}_{(i)}-X^{\da}_{(j)})(e^{\sqrt{-1}\arg(X^{\da}_{(i)})}-X^{\da}_{(j)})}.
	\end{align*}

Via Lemma \ref{lem:topCSTermsSmall}, with probability at least $1-O(\ell_n^{-1})$, each of the $L-1$ terms in the first sum on the right is bounded above by $\ell_n$. Consequently, with probability at least $1-O(\ell_n^{-1})$, 
\[
\abs{\frac{1}{a_n\sqrt{n}}\sum_{\substack{j=1\\ j\neq i}}^n \frac{X^{\da}_{(j)}}{X^{\da}_{(i)}-X^{\da}_{(j)}}- \frac{1}{a_n\sqrt{n}}\sum_{j=L+1}^{n}\frac{X^{\da}_{(j)}}{e^{\sqrt{-1}\arg(X^{\da}_{(i)})}-X^{\da}_{(j)}}} \leq \frac{L\cdot\ell_n}{a_n\sqrt{n}} + \frac{1}{a_n\sqrt{n}}\abs{\sum_{\substack{j=1\\ j\neq i}}^nQ_{i,j,n}},
\]	
where,
\[
Q_{i,j,n} := \frac{X^{\da}_{(j)}(e^{\sqrt{-1}\arg(X^{\da}_{(i)})}-X^{\da}_{(i)})}{(X^{\da}_{(i)}-X^{\da}_{(j)})(e^{\sqrt{-1}\arg(X^{\da}_{(i)})}-X^{\da}_{(j)})},\ \text{for $j \in [n]$, $j \neq i$}.
\]
Since $\frac{L\cdot\ell_n}{a_n\sqrt{n}} = o(1)$, we can establish the convergence in Equation \eqref{eqn:circApprox} by showing that as $n\to \infty$,  $\frac{1}{a_n\sqrt{n}}\abs{\sum_{\substack{j=1\\ j\neq i}}^nQ_{i,j,n}}$ converges to $0$ in distribution. We proceed differently in the cases $\alpha = 0$ and $\alpha > 0$.

\textbf{Case I: $\alpha = 0$.} On the complement of $\mathcal{E}_n = E_n \cup E_n^* \cup F_n \cup F_n^* \cup F_n^\parallel \cup G_n \cup H_n \cup \Omega_n$, as defined in Lemma \ref{lem:defBadEventEn}, we know, $X^{\da}_{(i)} \in \mathcal{A}_n^*$ and $X^{\da}_{(i)}$ is separated by $\delta_n^*$ from all other $X^{\da}_{(j)}$. Lemma \ref{lem:BadEventsSmallFluct} guarantees that $\P(\mathcal{E}_n) = O(\ell_n^{-1})$, and Lemma \ref{lem:topCSTermsSmall} guarantees that with probability at least $1-O(\ell_n^{-1})$, each of $\abs{e^{\sqrt{-1}\arg(X^{\da}_{(i)})}-X^{\da}_{(j)}}^{-1}$, $j < L$, are bounded by $\ell_n$. Thus, with probability at least $1- O(\ell_n^{-1})$,
\begin{align*}
\frac{1}{a_n\sqrt{n}}\abs{\sum_{\substack{j=1\\ j\neq i}}^nQ_{i,j,n}} &\leq \frac{1}{\sqrt{\log{n}}\cdot \sqrt{n}}\sum_{\substack{j=1\\ j\neq i}}^n\frac{1\cdot \eps^*_n}{\delta_n^*\cdot \abs{e^{\sqrt{-1}\arg(X^{\da}_{(i)})}-X^{\da}_{(j)}}}\\
&\leq \frac{\eps_n^*\cdot L\cdot\ell_n}{\delta^*_n\sqrt{n\log{n}}} +  \frac{\eps_n^*\sqrt{n}}{\delta_n^*\sqrt{\log{n}}}S_{(i,n)},
\end{align*}
	where 
	\[
	S_{(i,n)} := \frac{1}{n}\sum_{j=L+1}^{n}\frac{1}{\abs{e^{\sqrt{-1}\arg(X^{\da}_{(i)})}-X^{\da}_{(j)}}}.
	\]
	Now, $X_1, X_2,\ldots$ are i.i.d.\ and radially symmetric, so the random vector $(X^{\da}_{(1)},\ldots, X^{\da}_{(n)})$ is equal in distribution to the random vector
	\[
	\left(X^{\da}_{(1)}, \ldots, X^{\da}_{(L)}, e^{\sqrt{-1}\arg(X^{\da}_{(i)})}X^{\da}_{(L+1)},\ldots, e^{\sqrt{-1}\arg(X^{\da}_{(i)})}X^{\da}_{(n)}\right).
	\]
	Specifically, the angles remain i.i.d.\ uniform and independent of $\set{X^{\da}_{(j)}}_{j > L}$. It follows that $S_{(i,n)}$ is equal in distribution to 
	\[
	S_{(i,n)}' := \frac{1}{n}\sum_{j=L+1}^{n}\frac{1}{\abs{1-X^{\da}_{(j)}}} = \frac{1}{n}\sum_{j=1}^{n}\frac{1}{\abs{1-X_{j}}} - \frac{1}{n}\sum_{j=1}^L\frac{1}{\abs{1-X^{\da}_{(j)}}},
	\]
	which, in view of Lemma \ref{lem:topCSTermsSmall}, converges in probability to a constant by the law of large numbers and Slutsky's theorem. We conclude that \eqref{eqn:circApprox} is true for each $i \in \set{1,\ldots L}$ by another application of Slutsky's theorem since 
	\[
	\frac{\eps_n^*\cdot L\ell_n}{\delta^*_n\cdot \sqrt{n\log{n}}} = \frac{\ell_n^5}{\sqrt{n}}\cdot \frac{L\ell_n }{\sqrt{n\log{n}}} = o(1)\quad \text{and} \quad \frac{\eps_n^*\cdot \sqrt{n}}{\delta_n^*\cdot \sqrt{\log{n}}} \leq \frac{\ell_n^5}{\sqrt{n}}\cdot \frac{\sqrt{n}}{\sqrt{\log{n}}} = o(1).
	\]
	Recall here that via the definitions in Subsection \ref{sec:maxPairPf}, we have $\ell_n = o(\sqrt[16]{\log{n}})$, $\eps^*_n = \ell_n^2/n$ and $\delta_n^* = 1/(\ell_n^3\sqrt{n})$. (For the last two of these we set $\alpha =0$.) 
	
	\textbf{Case II: $\alpha > 0$.} We will use Markov's inequality with a second moment bound to show that in probability (and hence in distribution), as $n \to \infty$,
	\[
	\frac{1}{a_n\sqrt{n}}\abs{\sum_{\substack{j=1\\ j\neq i}}^nQ_{i,j,n}} = \frac{1}{\sqrt{n}}\abs{\sum_{\substack{j=1\\ j\neq i}}^n\frac{X^{\da}_{(j)}(e^{\sqrt{-1}\arg(X^{\da}_{(i)})}-X^{\da}_{(i)})}{(X^{\da}_{(i)}-X^{\da}_{(j)})(e^{\sqrt{-1}\arg(X^{\da}_{(i)})}-X^{\da}_{(j)})}} \to 0.
	\]
	Before we invoke the moment bound, we first remove some large terms from the sum by showing that the $X^\da_{(j)}$ which are the very closest to $X^\da_{(i)}$ contribute negligibly with high probability. In view of Lemmas \ref{lem:BadEventsSmallFluct} and \ref{lem:topCSTermsSmall}, we have with probability at least $1-O(\ell_n^{-1})$ that
	$X^{\da}_{(i)} \in \mathcal{A}_n^*$,  $X^{\da}_{(i)}$ is separated by $\delta_n^*$ from all other $X^{\da}_{(j)}$, and $\abs{e^{\sqrt{-1}\arg(X^{\da}_{(i)})}-X^{\da}_{(j)}}^{-1}$, $j \leq L$, are bounded by $\ell_n$. Consequently, with probability tending to $1$, 
	\begin{equation}\label{eqn:rev:largeL}
	\frac{1}{\sqrt{n}}\abs{\sum_{\substack{j=1\\ j\neq i}}^LQ_{i,j,n}} \leq \frac{1}{\sqrt{n}}\sum_{\substack{j=1\\ j\neq i}}^L\frac{1 \cdot \eps^*_n \cdot \ell_n}{\delta_n^*}\leq \frac{L\cdot \ell_n}{\sqrt{n}} \cdot \frac{\eps^*_n}{\delta_n^*} = \frac{L\cdot\ell_n}{\sqrt{n}}\cdot \ell_n^3 \left(\frac{\ell_n^2\sqrt{n}}{n}\right)^{1/(\alpha + 1)} = o(1),
	\end{equation}
	where we have used the definitions of $\eps_n^*$ and $\delta_n^*$ in Subsection \ref{sec:maxPairPf} (note $\alpha > 0$). In addition, by Equation \eqref{eqn:annulusNumBd} from Lemma \ref{lem:fewRts} with $\gamma_n := 3\eps_n^*$, it follows that with probability at least $1 - O(e^{-\ell_n^2}) = 1-O(\ell_n^{-1})$, there are at most
	\[
	\ell_n \cdot n ( 3\eps_n^*)^{\alpha+1} = 3^{\alpha+1}\ell_n^3
	\]
	roots $X^\da_{(j)}$, $j \in [n]$, satisfying $1-3\eps_n^* \leq \abs{X^\da_{(j)}}\leq 1$. This means, with probability approaching 1, 
	\begin{equation}\label{eqn:rev:largeXj}
	\frac{1}{\sqrt{n}}\abs{\sum_{\substack{j=1\\ j\neq i}}^nQ_{i,j,n}\ind{\{|X^\da_{(j)}|\geq 1- 3 \eps_n^*\}}} \leq \frac{3^{\alpha+1}\ell_n^3}{\sqrt{n}}\cdot \frac{1}{\delta_n^*} = 3^{\alpha+1}\ell_n^6n^{\frac{-\alpha}{2(\alpha+1)}} = o(1),
	\end{equation}
	where for each of the terms with $\ind{\{|X^\da_{(j)}|\geq 1- 3 \eps_n^*\}}$, we've bounded $\abs{X^\da_{(i)} - X^\da_{(j)}}$ below by $\delta_n^*$, and we've used
	\[
	\abs{e^{\sqrt{-1}\arg(X^{\da}_{(i)})}-X^{\da}_{(i)}} \leq \abs{e^{\sqrt{-1}\arg(X^{\da}_{(i)})}-X^{\da}_{(j)}}
	\]
	to bound the factor $\abs{e^{\sqrt{-1}\arg(X^{\da}_{(i)})}-X^{\da}_{(i)}}$ in the numerator of $\abs{Q_{i,j,n}}$ (among $z$ in the unit disk satisfying, $\abs{z} \leq \abs{X^{\da}_{(i)}}$,  $X^{\da}_{(i)}$ is nearest to $e^{\sqrt{-1}\arg(X^{\da}_{(i)})}$). With Equations \eqref{eqn:rev:largeL} and \eqref{eqn:rev:largeXj} in hand, to obtain Equation \eqref{eqn:circApprox} in the $\alpha > 0$ case, it suffices to show  
	\[
	\frac{1}{\sqrt{n}}\abs{\sum_{j=L+1}^nQ_{i,j,n}\ind{\{|X^\da_{(j)}| < 1-3\eps_n^*\}}} \to 0
	\]
	in probability as $n\to \infty$.
	
	In preparation for invoking a second-moment Markov bound, we establish
	\[
	\E[Q_{i,j,n}\cdot\ind{E^*_{i,j}}] = 0, \quad \text{and} \quad \E[Q_{i,j,n}\cdot\ind{E^*_{i,j}}\cdot \overline{Q_{i,k,n}\cdot\ind{E^*_{i,k}}}] =0,
	\]
	for $j, k > i$ with $j \neq k$, where $E_{i,j}^*$ are defined to be the events
	\[
	E_{i,j}^* := \set{\abs{X^\da_{(i)}} \geq 1-\eps_n^*} \cup \set{\abs{X^\da_{(j)}} < 1-3\eps_n^* }.
	\]
	By Fubini's theorem (which can be justified when we bound $\E|Q_{i,j,n}^2\ind{E^*_{i,j}}|$ later on) and by the independence\footnote{The original $X_1, \ldots, X_n$ are i.i.d. with a rotationally symmetric distribution, so the angular parts of the order statistics $X^\da_{(1)}, \ldots, X^\da_{(n)}$ remain i.i.d.\ and independent of the radial parts.} of $e^{\sqrt{-1}\arg(X^{\da}_{(j)})}$ from the $\sigma$-algebra generated by
	\[\left(e^{\sqrt{-1}\arg(X^\da_{(i)})}, \abs{X^\da_{(i)}}, \abs{X^\da_{(j)}}\right),\] we can compute $\E[Q_{i,j,n}\ind{E^*_{i,j}}]$ as an iterated integral where we integrate with respect to the argument of $X^{\da}_{(j)}$ in the innermost integral. In particular,
	\begin{align*}
		&Q_{i,j,n}\ind{E^*_{i,j}}\\
		&\ =\frac{\ind{\{|X^\da_{(i)}| \geq 1-\eps_n^*\} }\ind{\{|X^\da_{(j)}| < 1-3\eps_n^*\} }\abs{X^\da_{(j)}}(e^{\sqrt{-1}\arg(X^{\da}_{(i)})}-X^{\da}_{(i)}) \cdot e^{-\sqrt{-1}\arg(X^{\da}_{(j)})}}{X^\da_{(i)}e^{\sqrt{-1}\arg(X^{\da}_{(i)})}\cdot \left(e^{-\sqrt{-1}\arg(X^{\da}_{(j)})}- \frac{\abs{X^\da_{(j)}}}{X^\da_{(i)}}\right)\left(e^{-\sqrt{-1}\arg(X^{\da}_{(j)})}-\frac{\abs{X^\da_{(j)}}}{e^{\sqrt{-1}\arg(X^{\da}_{(i)})}}\right)},
	\end{align*}
	so, letting $\nu$ denote the probability measure on $\mathbb{R}^3$ associated to the joint distribution of $(\arg(X^\da_{(i)}), |X^\da_{(i)}|, |X^\da_{(j)}|)$, we have
	\begin{align*}
		&\E[Q_{i,j,n}\ind{E_{i,j}^*}]\\
		&\ =\int_{\mathbb{R}^3}g\left(\theta_i, r_i, r_j\right) \frac{1}{2\pi}\int_{-\pi}^{\pi}\frac{e^{-\sqrt{-1}\theta_j}}{(e^{-\sqrt{-1}\theta_j}-\frac{r_j}{r_ie^{\sqrt{-1}\theta_i}})(e^{-\sqrt{-1}\theta_j}-\frac{r_j}{e^{\sqrt{-1}\theta_i}})}\,d\theta_j\,d\nu(\theta_i, r_i, r_j),\\
		&\ = \int_{\mathbb{R}^3}g\left(\theta_i, r_i, r_j\right) \frac{-1}{2\pi\sqrt{-1}}\oint_{\gamma}\frac{1}{(z-\frac{r_j}{r_ie^{\sqrt{-1}\theta_i}})(z-\frac{r_j}{e^{\sqrt{-1}\theta_i}})}\,dz\,d\nu(\theta_i, r_i, r_j),
	\end{align*}
	where we have re-written the integral with respect to $\theta_j$ as a complex line integral over the unit circle parameterized by $\gamma(\theta_j) = e^{-\sqrt{-1}\theta_j}$, and
	\[
		g(\theta_i, r_i, r_j) := \frac{r_j(e^{\sqrt{-1}\theta_i}-r_ie^{\sqrt{-1}\theta_i})}{r_ie^{\sqrt{-1}\theta_i}e^{\sqrt{-1}\theta_i}}\cdot \sind{r_i\geq \eps_n^*}\sind{r_j < 1-3\eps_n^*}.
	\]
	Since $\frac{r_j}{r_ie^{\sqrt{-1}\theta_i}}$ and $\frac{r_j}{e^{\sqrt{-1}\theta_i}}$ are both in the interior of the unit disk (almost surely, $|X^\da_{(j)}| < |X^\da_{(i)}| < 1$), Cauchy's residue theorem shows that the line integral evaluates to 0. Hence $\E[Q_{i,j,n}\ind{E_{i,j}^*}] = 0$ for $j > i$. A nearly identical argument establishes that $\E[Q_{i,j,n}\ind{E_{i,j}^*}\cdot\overline{Q_{i,k,n}\ind{E_{i,k}^*}}] = 0$ for $j\neq k$, $j, k > i$, with the only differences being that in the iterated integral,  $\nu$ is a measure on $\mathbb{R}^5$ instead of on $\mathbb{R}^3$, and in addition to $g(\theta_i, r_i, r_j)$, we factor from the innermost integral all of the factors corresponding to $\overline{Q_{i,k,n}\ind{E_{i,k}^*}}$.
	
	At this point, in the case $\alpha > 0$, we have shown that for a fixed $i \in [L]$,
	\begin{itemize}
		\item $\frac{1}{\sqrt{n}}|\sum_{\substack{j=1\\ j\neq i}}^LQ_{i,j,n}| \to 0$ in probability as $n\to \infty$, 
		\item $\frac{1}{\sqrt{n}}\abs{\sum_{\substack{j=1\\ j\neq i}}^nQ_{i,j,n}\ind{\{|X^\da_{(j)}|\geq 1- 3 \eps_n^*\}}} \to 0$ in probability as $n\to \infty$,
		\item $\E[Q_{i,j,n}\ind{E_{i,j}^*}] = 0$ for $j > i$, and
		\item $\E[Q_{i,j,n}\ind{E_{i,j}^*}\cdot\overline{Q_{i,k,n}\ind{E_{i,k}^*}}] =0$ for $i > j, k$ with $j \neq k$.
	\end{itemize}
	In view of these facts, we establish Equation \eqref{eqn:circApprox} by proving
	\[
	\frac{1}{\sqrt{n}}\abs{\sum_{j=L+1}^nQ_{i,j,n}\ind{\{|X^\da_{(j)}|< 1- 3 \eps_n^*\}}} \to 0
	\]
	in probability as $n\to \infty$ using Markov's inequality and control of the second moments of $Q_{i,j,n}$. To that end, let $\eta > 0$ be arbitrary, and consider
	\begin{equation}\label{eqn:rev:Aha}
	\begin{aligned}
		&\P\left(\frac{1}{\sqrt{n}}\abs{\sum_{j=L+1}^nQ_{i,j,n}\ind{\{|X^\da_{(j)}|< 1- 3 \eps_n^*\}}} > \eta\right)\\
		&\qquad\leq \P\left(\frac{1}{\sqrt{n}}\abs{\sum_{j=L+1}^nQ_{i,j,n}\ind{E_{i,j}^*}} > \eta\right) + \P\left(X^\da_{(i)} \notin \mathcal{A}_n^*\right) \\
		&\qquad\leq\frac{1}{\eta^2 n}\E\left[\abs{\sum_{j=L+1}^nQ_{i,j,n}\ind{E_{i,j}^*}}^2\right] + O(\ell_n^{-1})\\
		&\qquad=\frac{1}{\eta^2 n}\E\left[\sum_{j=L+1}^n\abs{Q_{i,j,n}\ind{E_{i,j}^*}}^2\right] + o(1),
	\end{aligned}
\end{equation}
	where we've used Lemma \ref{lem:BadEventsSmallFluct} to bound $\P(X^\da_{(i)} \notin \mathcal{A}_n^*)$ and that \[\E\left[Q_{i,j,n}\ind{E_{i,j}^*}\cdot\overline{Q_{i,k,n}\ind{E_{i,k}^*}}\right] =0\] to eliminate the covariance terms in the expectation. We will control the expectation of the last sum by approximating $X^\da_{(i)}$ with $e^{\sqrt{-1}\arg(X^\da_{(i)})}$, and adding in the ``missing'' terms so that this sum resembles a sum of i.i.d.\ variables to which we can apply the absolute moment bound from Lemma \ref{lem:moments}. On $E^*_{i,j}$, we have $\abs{X^\da_{(j)}} < 1-3\eps_n^*$, so $\abs{e^{\sqrt{-1}\arg(X^\da_{(i)})}-X^\da_{(j)}} > 3\eps_n^*$, and	by the reverse triangle inequality, we have
	\begin{align*}
	\abs{X^\da_{(i)} - X^\da_{(j)}}^2\ind{E_{i,j}^*} &\geq	\left(\abs{e^{\sqrt{-1}\arg(X^\da_{(i)})} - X^\da_{(j)}} - 	\abs{X^\da_{(i)}-e^{\sqrt{-1}\arg(X^\da_{(i)})}}\right)^2\ind{E_{i,j}^*}\\
	&\geq \left(\abs{e^{\sqrt{-1}\arg(X^\da_{(i)})} - X^\da_{(j)}} - \eps^*_n\right)^2\ind{E_{i,j}^*}\\
	&\geq \left(\abs{e^{\sqrt{-1}\arg(X^\da_{(i)})} - X^\da_{(j)}}^2 - 2\eps_n^*\abs{e^{\sqrt{-1}\arg(X^\da_{(i)})} - X^\da_{(j)}}\right)\ind{E_{i,j}^*}\\
	&\geq \frac{1}{3}\abs{e^{\sqrt{-1}\arg(X^\da_{(i)})} - X^\da_{(j)}}^2,
	\end{align*}
where we've used that $\frac{1}{3}|X^\da_{(j)}-e^{\sqrt{-1}\arg(X^\da_{(i)})}| > \eps_n^*$ on the event $E_{i,j}^*$ to achieve the last bound. Now,
\begin{align*}
		\E\left[\sum_{j=L+1}^n\abs{Q_{i,j,n}\ind{E_{i,j}^*}}^2\right] &\leq 3\cdot\E\left[\sum_{j =L+1}^n\frac{1 \cdot \abs{e^{\sqrt{-1}\arg(X^{\da}_{(i)})}-X^{\da}_{(i)}}^2\cdot \ind{\{|X^\da_{(i)}| \geq 1-\eps_n^*\}} }{\abs{e^{\sqrt{-1}\arg(X^\da_{(i)})} - X^\da_{(j)}}^4}\right]\\
		&\leq 3(\eps^*_n)^{\alpha/2} \cdot \E\left[\sum_{j = 1}^n\frac{1}{\abs{e^{\sqrt{-1}\arg(X^\da_{(i)})} - X_j}^{2+\alpha/2}}\right]\\
		& \leq 3(\eps^*_n)^{\alpha/2} \cdot \frac{2C_{\ref{lem:moments}}}{\alpha} \cdot n,
\end{align*} 
where we've used $|X^\da_{(i)}| \geq 1-\eps_n^*$ to bound $\abs{e^{\sqrt{-1}\arg(X^{\da}_{(i)})}-X^{\da}_{(i)}}^{\alpha/2}$ by $(\eps_n^*)^{\alpha/2}$,
\[
\abs{e^{\sqrt{-1}\arg(X^{\da}_{(i)})}-X^{\da}_{(i)}} \leq \abs{e^{\sqrt{-1}\arg(X^{\da}_{(i)})}-X^{\da}_{(j)}}
\]
to bound the remaining factor $\abs{e^{\sqrt{-1}\arg(X^{\da}_{(i)})}-X^{\da}_{(i)}}^{2-\alpha/2}$ in the numerator\footnote{Among $z$ in the unit disk satisfying, $\abs{z} \leq \abs{X^{\da}_{(i)}}$,  $X^{\da}_{(i)}$ is nearest to $e^{\sqrt{-1}\arg(X^{\da}_{(i)})}$.}, and Equation \eqref{eqn:pMomentCirc} from Lemma \ref{lem:moments} to bound the $2+\alpha/2$-th moment of  $\abs{e^{\sqrt{-1}\arg(X^\da_{(i)})} - X_j}^{-1}$ by the constant $\frac{2C_{\ref{lem:moments}}}{\alpha}$ which depends only on $\mu$ (and hence on $\alpha$ and $\eps$). Combining this second moment bound with Inequality \eqref{eqn:rev:Aha} yields 
\[
\P\left(\frac{1}{\sqrt{n}}\abs{\sum_{j=L+1}^nQ_{i,j,n}\ind{\{|X^\da_{(j)}|< 1- 3 \eps_n^*\}}} > \eta\right) \leq \frac{6C_{\ref{lem:moments}}}{\eta^2\alpha}\cdot (\eps_n^*)^{\alpha/2} + o(1) = o(1),
\]
from which we conclude \eqref{eqn:circApprox} in the case $\alpha > 0$, since $\eps_n^* \to 0$ and $\eta > 0$ was arbitrary.
	
We now prove the second conclusion of Lemma \ref{lem:CLT} that gives the joint fluctuations of the terms \[\frac{1}{a_n\sqrt{n}}\sum_{j=L+1}^{n}\frac{X^{\da}_{(j)}}{e^{\sqrt{-1}\arg(X^{\da}_{(i)})}-X^{\da}_{(j)}}, i \in \set{1, 2, \ldots, L}.\]	
To see that the second part of the conclusion holds, fix $t_1, \ldots, t_L \in \mathbb{C}$ and consider that due to the fact that $X_1, X_2,\ldots$ are i.i.d.\ and rotationally symmetric, the random vector
	\[
	\left(e^{\sqrt{-1}\arg(X^{\da}_{(1)})}, \ldots, e^{\sqrt{-1}\arg(X^{\da}_{(L)})}, X^{\da}_{(L+1)}, \ldots, X^{\da}_{(n)}\right)
	\]
	has the same distribution as the random vector
	\[
	(U_1, \ldots, U_L, X^{\da}_{(L+1)}, \ldots, X^{\da}_{(n)}),
	\]
	where $U_1, \ldots, U_L$ are i.i.d.\ draws from the unit circle that are jointly independent from $X_1, X_2, \ldots$. Consequently, the random variable
	\[
	Y_{n} := \Re\left(\frac{1}{a_n\sqrt{n}}\sum_{k=1}^L\sum_{j=L+1}^{n}\frac{t_k\cdot X^{\da}_{(j)}}{e^{\sqrt{-1}\arg(X^{\da}_{(k)})}-X^{\da}_{(j)}}\right)
	\]
	has the same distribution as
	\[
	\Re\left(\frac{1}{a_n\sqrt{n}}\sum_{k=1}^L\sum_{j=L+1}^{n}\frac{t_k\cdot X^{\da}_{(j)}}{U_k-X^{\da}_{(j)}}\right) = \Re\left(\frac{1}{a_n\sqrt{n}}\sum_{j=1}^{n}\left(\sum_{k=1}^L\frac{t_k\cdot X_{j}}{U_k-X_{j}}\right)\right) + O\left(\frac{L^2\ell_n}{a_n\sqrt{n}}\right),
	\]
	where the asymptotic bound is true with high probability due to Lemma \ref{lem:topCSTermsSmall}. In the case where $\alpha = 0$, applying the heavy-tailed central limit theorem Corollary \ref{cor:CLT} (and Slutsky's theorem) implies $Y_n \to \mathfrak{L}$ in distribution as $n \to \infty$, where $\mathfrak{L}$ is the desired Gaussian random variable. 
	
	When $\alpha > 0$, we apply a slightly more involved argument.  Define 
	\[ \mathfrak S_n(\mathbf u) = \Re\left(\frac{1}{\sqrt{n}}\sum_{j=1}^{n}\left(\sum_{k=1}^L\frac{t_k\cdot X_{j}}{u_k -X_{j}}\right)\right) \]
	for deterministic $\mathbf{u} = (u_1, \ldots u_L) \in \mathbb{T}^L$, where $\mathbb{T}$ is the unit circle in the complex plane.  We view $\mathfrak S_n$ as a random process on the space $\mathbb{T}^L$.  We will show that $\mathfrak{S}_n$ converges in distribution as $n \to \infty$ to a Gaussian process.  We will do so by establishing the convergence of the finite dimensional distributions of $\mathfrak{S}_n$ and then establishing a bound which implies tightness of $\mathfrak{S}_n$ on the space of continuous functions on $\mathbb{T}^L$; these arguments are fairly standard so we will only provide a sketch of the proof. 
	Note that $\mathfrak S_n(\mathbf u)$ has mean zero since
	\begin{equation}\label{eqn:centeredMeanZero}
	\E\left[\frac{t_kX_j}{u_k-X_j}\right] = t_k\E\left[\frac{1}{1-X_j/u_k} - 1\right] = t_k(m_\mu(1) - 1) = 0,
	\end{equation}
	and 
	\[ \frac{1}{\sqrt{n}}\sum_{j=1}^{n}\left(\sum_{k=1}^L\frac{t_k\cdot X_{j}}{u_k-X_{j}}\right) \]
	is a sum of $n$ i.i.d.\ random variables (scaled by $1/\sqrt{n}$).  Therefore, it follows from the classical central limit theorem that the finite dimensional distributions of $\mathfrak S_n$ converge to a mean zero Gaussian random variable whose variance can be expressed in a form similar to the definition of $\sigma^2(u_1, \ldots, u_L)$ in \eqref{eq:def:sigma}.  Moreover, since the random variables $\frac{t_k\cdot X_{j}}{u_k-X_{j}}$ have finite second moment, a computation shows that 
	\[ \var(\mathfrak{S}_n(\mathbf u) - \mathfrak{S}_n(\mathbf u')) \leq C \| \mathbf u - \mathbf u'\|_2^2, \]
	where $\mathbf u, \mathbf u' \in \mathbb{T}^L$, $C > 0$ is a constant (depending on $t_1, \ldots, t_L$), and $\| \cdot \|_2$ is the $L^2$-norm on $\mathbb{T}^L$.  
	By Theorems 7.5 and 12.3 from \cite{MR233396}, it follows that $(\mathfrak{S}_n(\mathbf u))_{\mathbf u \in \mathbb{T}^L}$ converges in distribution to a Gaussian process $(\mathfrak{S}(\mathbf u))_{\mathbf u \in \mathbb{T}^L}$, where, for fixed $\mathbf{u} = (u_1, \ldots, u_L) \in \mathbb{T}^L$, the marginal distribution of $\mathfrak S(\mathbf u)$ is a mean-zero Gaussian distribution with variance $\sigma^2(u_1, \ldots, u_L)$.  It then follows that $\mathfrak{S}_n(U_1, \ldots, U_L)$ converges in distribution to $\mathfrak{S}(U_1, \ldots, U_L) =: \mathfrak{L}$, as desired.  
\end{proof}

In view of the Cram\'{e}r--Wold technique, we conclude the proof of Theorem \ref{thm:maxPair} by combining \eqref{eqn:maxPairSharper} with the conclusion of Lemma \ref{lem:CLT} and applying Slutsky's theorem several times. In particular, we argue that with high probability (indeed, on the complement of $\mathcal{E}_n$ defined in Lemma \ref{lem:defBadEventEn}; see also Lemma \ref{lem:BadEventsSmallFluct}), \eqref{eqn:maxPairSharper} holds, so for each $i \in \set{1, \ldots, L}$,
\begin{equation}
\begin{aligned}
&\frac{n^{3/2}}{a_ne^{\sqrt{-1}\arg(X^{\da}_{(i)})}}\left(W^{\da}_{(i)} - X^{\da}_{(i)}(1-n^{-1})\right)\\
&\qquad = o(1) + \frac{\sqrt{n}}{a_ne^{\sqrt{-1}\arg(X^{\da}_{(i)})}}\left(X^\da_{(i)} - \frac{1}{\frac{1}{n-1}\sum_{\substack{j=1\\j\neq i}}^n\frac{1}{X^{\da}_{(i)}-X^{\da}_{(j)}}}\right)\\
&\qquad = o(1) + \frac{ \sqrt{n}\cdot e^{-\sqrt{-1}\arg(X^{\da}_{(i)})}}{a_n\cdot \frac{1}{n-1}\sum_{\substack{j=1\\j\neq i}}^n\frac{1}{X^{\da}_{(i)}-X^{\da}_{(j)}}}\cdot\frac{1}{n-1}\sum_{\substack{j=1\\j\neq i}}^n\left(\frac{X^\da_{(i)}}{X^{\da}_{(i)}-X^{\da}_{(j)}} -1\right)\\
&\qquad = o(1) + \frac{e^{-\sqrt{-1}\arg(X^{\da}_{(i)})}}{\frac{1}{n-1}\sum_{\substack{j=1\\j\neq i}}^n\frac{1}{X^{\da}_{(i)}-X^{\da}_{(j)}}}\cdot \frac{n}{n-1}\left(\frac{1}{a_n\sqrt{n}}\sum_{\substack{j=1\\ j\neq i}}^n \frac{X^{\da}_{(j)}}{X^{\da}_{(i)}-X^{\da}_{(j)}}\right),
\end{aligned}\label{eqn:fluctEqn}
\end{equation}
where the asymptotic is uniform over all $i \in \set{1,\ldots, L}$. Lemma \ref{lem:CLT} describes the joint limiting behavior of the rightmost factor\footnote{See Lemma \ref{lem:CovStructurePos} in Appendix \ref{sec:appendix} for details on why this implies the covariance structure \eqref{eqn:CovStructurePos}.}, so taking Slutsky's theorem into consideration, the only thing left to do is to show that for each $i \in \set{1,\ldots, L}$, 
\[
e^{\sqrt{-1}\arg(X^{\da}_{(i)})}\cdot\frac{1}{n-1}\sum_{\substack{j=1\\j\neq i}}^n\frac{1}{X^{\da}_{(i)}-X^{\da}_{(j)}} \to 1
\]
in distribution as $n\to \infty$. Indeed, on the complement event $\mathcal{E}_n$, defined as in Lemma \ref{lem:defBadEventEn} (note: $\P(\mathcal{E}_n) = o(1)$ by Lemma \ref{lem:BadEventsSmallFluct}), Lemma \ref{lem:nearCS} shows that
\[
\max_{1 \leq i \leq L}\abs{\frac{1}{n-1}\sum_{\substack{j=1\\j\neq i}}^n\frac{1}{X^{\da}_{(i)}-X^{\da}_{(j)}}-m_\mu(X^{\da}_{(i)})} = o(1),
\]
and by Lemma \ref{lem:StielCalc},
\[
e^{\sqrt{-1}\arg(X^{\da}_{(i)})}\cdot m_\mu(X^{\da}_{(i)}) = \frac{F_R(|X^{\da}_{(i)}|)}{|X^{\da}_{(i)}|}\to 1,
\]
where the convergence is uniform over $i \in \set{1,\ldots, L}$ since for such $i$, $X^{\da}_{(i)} \in \mathcal{A}_n^*$ which is an annulus converging to the unit circle as $n \to \infty$.

%%%%%%%%%%%%%%%%%%%%%%%%%%%%%%%%%%
%%    Proofs when alpha < 0     %%
%%%%%%%%%%%%%%%%%%%%%%%%%%%%%%%%%%

\section{Proof of results when $\alpha < 0$}\label{sec:proofsNeg}

In the case where $\alpha < 0$, we construct our arguments in the same spirit as those in Section \ref{sec:proofsPos} above, but several challenges arise because the radial density $f_R(r)$ is unbounded near the unit circle. First, the largest in magnitude roots of $p_n$ are located within a distance $o(n^{-1})$ of the unit circle, but the largest in magnitude critical points are at a distance from the unit circle that is on the order $\Theta(n^{-1})$. This means that no critical points lie within $\mathcal{A}_n^*$ as it was defined in Section \ref{sec:proofsPos}, so we introduce the annulus $\mathcal{A}_n^-$ of width $\ell_n/n$ in which we will analyze the largest in magnitude critical points. Second, in Section \ref{sec:proofsPos}, we used the radial separation of the largest in magnitude $X_i$ to prove that $X^\da_{(1)}$ pairs with $W^\da_{(1)}$, $X^\da_{(2)}$ pairs with $W^\da_{(2)}$ and so on. When $\alpha < 0$, the radial ``gaps'' between the largest roots are on the order $n^{-1/(1+\alpha)}$ (up to logs) which is smaller than the distance $1/(n\ell_n^3)$ we used in our definition of $F_n^\parallel$ when $\alpha > 0$. To contend with this difference when $\alpha < 0$, we will need tighter control of $\abs{\overline{M}_\mu(z) - m_\mu(z)}$ motivating us to change our definition of the event $H_n$ to the event $H_n^*$ below, off of which $\overline{M}_\mu(z)$ is closer to $m_\mu(z)$ for $z$ in deterministic nets $\mathcal{N}_n^*$ that necessarily have fewer points then the nets $\mathcal{N}_n$ did in Section \ref{sec:proofsPos}. Accordingly, we define the nets $\mathcal{N}_n^*$ below to be nets of $\mathcal{A}_n^*$ rather than $\mathcal{A}_n$ as was done in Section \ref{sec:proofsPos}. Finally, since $m_z(z)$ is not quite Lipschitz continuous when $\alpha < 0$, we define the annulus $\mathcal{A}_n^*$ to have outer radius $1-(n\ell_n)^{-1/(\alpha+1)}$ in order to employ Corollary \ref{cor:CSnice} for $x, y \in \mathcal{A}_n^*$.

As we did in Section \ref{sec:proofsPos}, we will start the proofs by introducing some ``bad'' events and parameters in Subsection \ref{sec:paramsNeg}, and then we will establish control of $\overline{M}_\mu(z)$ off of the ``bad'' events in Subsection \ref{sec:regCSNeg}. We prove Theorem \ref{thm:manyPairNeg} in Subsection \ref{sec:manyPairNegProof} by showing its conclusions hold off of the ``bad'' events whose probabilities tend to zero as $n\to \infty$. We conclude by proving Theorem \ref{thm:manyPair} in Subsection \ref{sec:fluctNegProof}.

\subsection{Important parameters and events for proof of Theorem \ref{thm:manyPairNeg}}\label{sec:paramsNeg}
As in the hypothesis of Theorem \ref{thm:manyPairNeg}, suppose $X_1, X_2, \ldots$ are i.i.d.\ draws from a distribution $\mu$ that satisfies Assumption \ref{ass:ComplexMu} with $-0.095\leq\alpha < 0$, let $\ell_n = \omega(1)$ be a sequence of positive integers with $\ell_n \leq \sqrt{\log{n}}$, and in order of decreasing magnitude, denote the largest $\ell_n$ critical points of $p_n:= \prod_{j=1}^n(z-X_j)$ by $W^{\da}_{(1)}, \ldots, W^{\da}_{(\ell_n)}$. In the style of Subsection \ref{sec:maxPairPf}, define
\[
\eps_n := \left(\frac{1}{n}\right)^{\frac{1}{2(\alpha + 1)}}, \quad \delta_n := \log{n}\cdot\left(\frac{1}{n}\right)^{\frac{\alpha+3}{4(\alpha+1)}}, \quad \text{and}\quad \eps^*_n := \left(\frac{\ell_n^2}{n}\right)^{\frac{1}{\alpha +1}},
\]
and the annuli
\begin{align*}
	\mathcal{A}_n &:= \set{z\in \C: 1 - \eps_n \leq \abs{z} \leq 1},\\
	\mathcal{B}_n &:= \set{z\in \C: 1 - 2\eps_n \leq \abs{z} < 1-\eps_n},\\
	\intertext{and (since $m_\mu(z)$ is not Lipschitz continuous in an annulus that includes the unit circle [see Corollary \ref{cor:CSnice}]) the (asymptotically) narrower annuli}
	\mathcal{A}^-_n &:=\set{z\in \C: 1 - \frac{\ell_n}{n} \leq \abs{z} \leq 1},
	\intertext{where we expect to find the largest critical points of $p_n$, and }
	\mathcal{A}^*_n &:=\set{z\in \C: 1 - \eps^*_n \leq \abs{z} \leq 1-\eps_n^*\ell_n^{-3/(\alpha + 1)}},
\end{align*}
 where we expect to find $X^{\da}_{(1)}, \ldots, X^{\da}_{(\ell_n)}$. (Note that $\mathcal{A}_n^*$ is defined slightly differently here than in the proofs of the $\alpha \geq 0$ results because we will eventually employ Corollary \ref{cor:CSnice} for $x, y \in \mathcal{A}_n^*$, and if $x$ or $y$ is on the unit circle, the bound in \eqref{eqn:CSniceNeg} is meaningless.)

Using strategies similar to those we employed in earlier sections, we will show that off of some ``bad'' events defined below, $\overline{M}_\mu(z)$ is close to $m_\mu(z)$ on a net of deterministic points $\mathcal{N}^*_n$ of $\mathcal{A}^*_n$ that grows with $n$. To this end, use e.g.\ Lemma \ref{lem:epsNet} to find nets $\mathcal{N}^*_n$ of $\mathcal{A}^*_n$, depending on $n$, which satisfy\footnote{Note: Our choice of $\eps_n^* > 0$ implies   $\ell_n^{-6}\cdot n^{-\frac{1-3\alpha}{2(\alpha + 1)}} > \eps_n^*$ for $-0.095 \leq \alpha < 0$, so the hypotheses of Lemma \ref{lem:epsNet} are met. (e.g.\ Apply Lemma \ref{lem:epsNet} with  $r_1 = 1- \eps_n^*$, $r_2 = 1$, $\eps = \ell_n^{-6}\cdot n^{-\frac{1-3\alpha}{2(\alpha + 1)}}$, and use the $18\eps^{-1}r_2$ bound.) }
\begin{equation}
	\max_{z\in \mathcal{A}^*_n}\left(\min_{w\in \mathcal{N}^*_n}\abs{z-w}\right) \leq \frac{1}{\ell_n^6}\left(\frac{1}{n}\right)^{\frac{1-3\alpha}{2(\alpha + 1)}}\quad \text{and}\quad \abs{\mathcal{N}^*_n} \ll \ell_n^6\cdot n^{\frac{1-3\alpha}{2(\alpha + 1)}}.
	\label{eqn:netParamsNeg}
\end{equation}

We now specify the ``bad'' events off of which $\overline{M}_\mu(z)$ is close to $m_\mu(z)$ and $z\mapsto \overline{M}_\mu(z)$ has a manageable Lipschitz constant:\footnote{We note that $E_n$, $F_n$, and $G_n$ are defined exactly as in Subsection \ref{sec:param} (with the caveat that in the definition of $G_n$, the sequence $c_n$ has been replaced by $\ell_n$), $E_n^*$ and $F^\parallel_n$ are defined in the same spirit as their counterparts in Subsection \ref{sec:maxPairPf} (the definition of $\mathcal{A}^*_n$ has been modified as mentioned just above, and $F_n^\parallel$ has been adjusted to account for the unbounded radial density $f_R(r)$), and $H_n$ has been replaced by $H_n^*$ since $\mathcal{N}_n^*$ are nets of $\mathcal{A}_n^*$ instead of $\mathcal{A}_n$.}
{\allowdisplaybreaks
	\begin{align*}
		E_n &:= \set{\#\set{i \in [n]: X_i \in \mathcal{A}_n\cup \mathcal{B}_n} > 3C_\mu n(2\eps_n)^{\alpha+1}}\\ 
		&\phantom{:={}}\textit{``There are too many $X_i$ in $\mathcal{A}_n \cup \mathcal{B}_n$''}\\[1ex]		
		E^*_n &:= \set{\exists i \in \set{1, \ldots, \ell_n + 1} : X^{\da}_{(i)}\notin \mathcal{A}_n^*}\\
		&\phantom{:={}}\textit{``Some of the largest $\ell_n+1$ roots aren't in $\mathcal{A}_n^*$.''}\\[1ex]				
		F_n &:= \set{\exists\ i,j\in [n],\ i\neq j: X_i \in \mathcal{A}_n \cup \mathcal{B}_n,\ \abs{X_i-X_j} \leq \delta_n}\\
		&\phantom{:={}}\textit{``The $X_i$ in $\mathcal{A}_n \cup \mathcal{B}_n$ aren't sufficiently isolated.''}\\[1ex]
		F^\parallel_n &:= \set{\exists\ i,j\in [n],\ i\neq j,\ X_i \in \mathcal{A}^*_n: \abs{\abs{X_{i}}-\abs{X_{j}}} \leq \left(\frac{1}{n\ell_n^3}\right)^{1/(\alpha + 1)}}\\
		&\phantom{:={}}\textit{``The $X_i$ in $\mathcal{A}^*_n$ aren't radially separated.''}\\[1ex]
		G_n &:= \set{\sum_{i=1}^n\frac{1}{(1-\frac{3}{2}\eps_n -\abs{X_i})^2}\cdot \sind{\abs{X_i}< 1 -2\eps_n}\geq n\ell_n\eps_n^{\alpha-1}}\\
		&\phantom{:={}}\begin{minipage}{.85\linewidth}\textit{``The  $X_i \notin \mathcal{A}_n \cup \mathcal{B}_n$ are too big (too close to the $X_j \in \mathcal{A}_n$) on average.''}\end{minipage}\\[1ex]
		H_n^* &:= \set{\max_{z\in \mathcal{N}^*_n}\abs{\frac{1}{n}\sum_{i=1}^n\frac{1}{z-X_i}\sind{\abs{z-X_i}> \frac{\delta_n}{2}}-m_\mu(z)}\geq \frac{1}{\ell_n^{4/(\alpha +1)}}\left(\frac{1}{n}\right)^{\frac{-\alpha}{\alpha+1}}}\\
		&\phantom{:={}}\begin{minipage}{.85\linewidth}\textit{``$\overline{M}_\mu(z)$ is insufficiently close to $m_\mu(z)$ for points in the net $\mathcal{N}^*_n$.''}\end{minipage}
	\end{align*}
}

%---------------------------------%
%-  Discrete C-S Nice off of     -%
%-  ``bad'' events, neg. alpha   -%
%---------------------------------%

\subsection{Regularity of $\overline{M}_\mu(z)$ off of the ``bad'' events when $\alpha < 0$.}\label{sec:regCSNeg}

This section is devoted to the proof of the following two lemmas concerning the behavior of $\overline{M}_\mu(z)$. We note that the results in Lemmas \ref{lem:nearLipNeg} and \ref{lem:nearCSNeg} hold for $\alpha$ even more negative than $\alpha \in (-0.095, 0)$, which is the regime we ultimately consider in Subsection \ref{sec:manyPairNegProof} and beyond. The place in our argument that requires the strictest bound on $\alpha$, namely $\alpha > -0.095$, is in the proof of Lemma \ref{lem:badEventSmallNeg}.

\begin{lemma}\label{lem:nearLipNeg}
	Suppose $-\frac{1}{3} < \alpha < 0$. For large $n$, on the complement of $E_n \cup F_n \cup G_n$, for all $z, w \in \mathcal{A}_n \cup B(0,1)^c$, the differences $\abs{\overline{M}_\mu(z) - \overline{M}_\mu(w)}$ are finite, and
	\begin{equation}
	\abs{\overline{M}_\mu(z) - \overline{M}_\mu(w)} < \abs{z-w}\left(2 \ell_n n^{\frac{1-\alpha}{2(\alpha + 1)}}\right) + \frac{8}{n\delta_n},
	\label{eqn:nearLipNeg}
	\end{equation}
and when $z,w$ are in the open ball $B(X_j, \frac{\delta_n}{2})$ for some $X_j \in \mathcal{A}_n$, $j\in [n]$, the last term, $\frac{8}{n\delta_n}$, can be replaced with $0$.
\end{lemma}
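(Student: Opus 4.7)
The plan is to reduce directly to Lemma \ref{lem:nearLip} (with $c_n := \ell_n$) and then simplify the asymptotics using the specific values of $\eps_n$ and $\delta_n$ fixed in Subsection \ref{sec:paramsNeg}. The first task is verifying the hypotheses of Lemma \ref{lem:nearLip}. The condition $\ell_n > 0$ is automatic, and $\eps_n < 1/4$ for large $n$ since $\eps_n \to 0$. The ratio $\delta_n/\eps_n$ equals $(\log n)\cdot n^{-1/4}$, so $\delta_n \leq \eps_n/2$ for large $n$. The requirement $\delta_n > 1/n$ is precisely where the hypothesis $\alpha > -1/3$ enters: it is equivalent to $\log n > n^{(-3\alpha - 1)/(4(\alpha+1))}$, and the exponent is negative exactly when $\alpha > -1/3$ (noting $\alpha + 1 > 0$). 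I expect this to be the only delicate point in the verification step.

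Next, I would observe that because $\alpha < 0$ and $\eps_n < 1$, one has $\max\{\eps_n^{\alpha-1},\eps_n^{-1}\} = \eps_n^{\alpha-1}$, so the event $G_n$ of Subsection \ref{sec:paramsNeg} coincides with the event $G_n$ appearing in Lemma \ref{lem:nearLip} once $c_n$ is set to $\ell_n$. Applying Lemma \ref{lem:nearLip} therefore yields, on $E_n^c \cap F_n^c \cap G_n^c$ and for $z,w \in \mathcal{A}_n \cup B(0,1)^c$,
\begin{equation*}
\abs{\overline{M}_\mu(z) - \overline{M}_\mu(w)} < \abs{z-w}\!\left(\ell_n\,\eps_n^{\alpha-1} + \frac{172\log n}{n\delta_n^2}\right) + \frac{8}{n\delta_n},
\end{equation*}
with the additive $8/(n\delta_n)$ replaced by $0$ when $z,w \in B(X_j,\delta_n/2)$ for some $X_j \in \mathcal{A}_n$, by special case \ref{it:spCase1} of Lemma \ref{lem:nearLip}.

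Finally, a routine substitution of the parameters gives $\eps_n^{\alpha-1} = n^{(1-\alpha)/(2(\alpha+1))}$, and a short calculation yields $(172\log n)/(n\delta_n^2) = (172/\log n)\cdot n^{(1-\alpha)/(2(\alpha+1))}$. Since $\ell_n = \omega(1)$, for sufficiently large $n$ we have $172/\log n < \ell_n$, so the two bracketed contributions sum to at most $2\ell_n\, n^{(1-\alpha)/(2(\alpha+1))}$, which is exactly the Lipschitz factor in \eqref{eqn:nearLipNeg}. The special-case statement is inherited unchanged from \ref{it:spCase1}. The main obstacle in this argument is the verification $\delta_n > 1/n$; everything else is bookkeeping of exponents to confirm that the two main terms in the Lemma \ref{lem:nearLip} bound are of the same asymptotic order $n^{(1-\alpha)/(2(\alpha+1))}$, so that they can be consolidated into a single $\ell_n$-multiple.
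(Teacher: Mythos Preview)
Your proposal is correct and follows essentially the same route as the paper: invoke Lemma \ref{lem:nearLip} with $c_n := \ell_n$, verify the hypothesis $1/n < \delta_n \leq \eps_n/2 < 1/4$ (where $\alpha > -1/3$ is exactly what is needed for $\delta_n > 1/n$), note that $\max\{\eps_n^{\alpha-1},\eps_n^{-1}\} = \eps_n^{\alpha-1}$ so the events match, and then substitute the parameter values to see that both Lipschitz contributions equal $n^{(1-\alpha)/(2(\alpha+1))}$ up to the factors $\ell_n$ and $172/\log n$, which combine into $2\ell_n$ for large $n$. The special case is inherited verbatim from \ref{it:spCase1}, exactly as you say.
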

\begin{proof}
	This follows from Lemma \ref{lem:nearLip} with $c_n := \ell_n$, and with $\eps_n$, $\delta_n$, and $\mathcal{A}_n$ as defined in Subsection \ref{sec:paramsNeg}. (Note that after setting $c_n := \ell_n$,  the ``bad'' events $E_n$, $F_n$, and $G_n$ have the same definitions in terms of $\eps_n, \delta_n, \ell_n$ in Section \ref{sec:proofsPos} as they do in Subsection \ref{sec:paramsNeg}.) When $-1/3 < \alpha < 0$, the condition $1/n \leq \delta_n  \leq \frac{\eps_n}{2} \leq \frac{1}{4}$ from the hypothesis of Lemma \ref{lem:nearLip} is met for large $n$ because (comparing exponents on factors of $1/n$ in the definitions of $\delta_n$ and $\eps_n$)
	\[
	1 > \frac{\alpha + 3}{4(\alpha +1)} = \frac{\alpha + 3}{2} \cdot \frac{1}{2(\alpha + 1)} > \frac{1}{2(\alpha + 1)}>0.
	\]
	To achieve the simplified bounds in \eqref{eqn:nearLipNeg}, recall $\ell_n = o(\log{n})$, and observe that
	\begin{align}
	c_n\cdot \eps_n^{\alpha-1} &= \ell_n \left(\frac{1}{n}\right)^{\frac{\alpha-1}{2(\alpha+1)}} = \ell_n\cdot n^{\frac{1-\alpha}{2(\alpha+1)}}, \label{eqn:nearLipNegfirstBd}\\
	\frac{172\log{n}}{n\delta_n^2} &= \frac{172}{\log{n}}\cdot n^{\frac{\alpha+3}{2(\alpha+1)}-1} = \frac{172}{\log{n}}\cdot n^{\frac{1-\alpha}{2(\alpha+1)}}.\notag
	\end{align}
\end{proof}

\begin{lemma}\label{lem:nearCSNeg}
	Suppose $-1/7 < \alpha < 0$. For large $n$, on the complement of $E_n \cup F_n \cup G_n \cup H_n^*$, 
	\begin{equation}\label{eqn:nearCSNeg}
		\sup_{1-\frac{2\ell_n}{n} < \abs{z}\leq 1-\eps_n^*\ell_n^{-3/(\alpha + 1)}}\abs{\overline{M}_\mu(z) - m_\mu(z)} < \frac{2}{\ell_n^{4/(\alpha +1)}}\left(\frac{1}{n}\right)^{\frac{-\alpha}{\alpha+1}}.
	\end{equation}
\end{lemma}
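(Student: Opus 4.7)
The proof should follow the same three-term interpolation used in Lemma \ref{lem:nearCS}. Fix $z$ in the stated supremum region; the plan is to choose an anchor $\xi_z$ close to $z$, decompose
\[
\abs{\overline{M}_\mu(z) - m_\mu(z)} \leq \abs{\overline{M}_\mu(z) - \overline{M}_\mu(\xi_z)} + \abs{\overline{M}_\mu(\xi_z) - m_\mu(\xi_z)} + \abs{m_\mu(\xi_z) - m_\mu(z)},
\]
and control each piece. For $z \in \mathcal{A}_n^*$, take $\xi_z$ to be a nearest point in $\mathcal{N}_n^*$, so that \eqref{eqn:netParamsNeg} gives $\abs{z-\xi_z} \leq \ell_n^{-6} n^{-(1-3\alpha)/(2(\alpha+1))}$. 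For $z$ in the thin annular sliver $1 - 2\ell_n/n < \abs{z} < 1-\eps_n^*$ that lies outside $\mathcal{A}_n^*$, first radially project $z$ onto the inner boundary of $\mathcal{A}_n^*$ and then take a nearby net point; the resulting anchor satisfies $\abs{z-\xi_z} = O(\ell_n/n)$, which will turn out to be small enough for the Lipschitz transfer below.

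The middle term is handled directly by the complement of $H_n^*$: it is at most $\ell_n^{-4/(\alpha+1)} n^{\alpha/(\alpha+1)}$ after reconciling $\overline{M}_\mu$ with the $\delta_n/2$-truncated sum through the analogue of \eqref{eqn:onlyOneClose}, whose error $O(1/(n\delta_n))$ is of the same size as the residual from the Lipschitz transfer and will be dealt with below. The third term uses Corollary \ref{cor:CSnice}: the region $\mathcal{A}_n^*$ stays at distance at least $\eps_n^*\ell_n^{-3/(\alpha+1)} = \Theta((\ell_n n)^{-1/(\alpha+1)})$ from the unit circle, on which $m_\mu$ has Lipschitz constant $O((\ell_n n)^{-\alpha/(\alpha+1)})$; multiplying by $\abs{z-\xi_z}$ yields a quantity smaller than the target bound by an overall polynomial factor of order $n^{-1/2}$ (and an analogous calculation works for $z$ in the sliver).

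The heart of the argument, and where the hypothesis $\alpha > -1/7$ becomes sharp, is the first term. Lemma \ref{lem:nearLipNeg} gives
\[
\abs{\overline{M}_\mu(z) - \overline{M}_\mu(\xi_z)} \leq 2\ell_n n^{(1-\alpha)/(2(\alpha+1))}\cdot \abs{z-\xi_z} + \frac{8}{n\delta_n}.
\]
With the net spacing above for $z \in \mathcal{A}_n^*$, the product term collapses to $2\ell_n^{-5} n^{\alpha/(\alpha+1)}$, which is smaller than the target bound by a factor $2\ell_n^{4/(\alpha+1)-5} = o(1)$ whenever $5 > 4/(\alpha+1)$ (equivalently $\alpha > -1/5$, which is implied by the hypothesis). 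Plugging in $\delta_n = \log n \cdot n^{-(\alpha+3)/(4(\alpha+1))}$ shows the residual equals $8 n^{-(3\alpha+1)/(4(\alpha+1))}/\log n$, and the ratio to the target tolerance $\ell_n^{-4/(\alpha+1)} n^{\alpha/(\alpha+1)}$ reduces to a polylogarithmic factor times $n^{-(7\alpha+1)/(4(\alpha+1))}$. This decays to $0$ exactly when $7\alpha + 1 > 0$, i.e., $\alpha > -1/7$. This is the main obstacle: it pins down the weakest range of $\alpha$ for which the chosen scale $\delta_n$ balances the Lipschitz error against the required tolerance. Summing the three pieces yields $\abs{\overline{M}_\mu(z) - m_\mu(z)} < 2\ell_n^{-4/(\alpha+1)} n^{\alpha/(\alpha+1)}$ for all sufficiently large $n$, uniformly in $z$ throughout the stated region.
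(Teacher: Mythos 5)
Your proof follows the paper's argument essentially verbatim: the same three-term interpolation through a point of $\mathcal{N}_n^*$, the same use of the complement of $H_n^*$ (with the \eqref{eqn:onlyOneClose}-type truncation error) and of Corollary \ref{cor:CSnice} with the region kept at distance $(\ell_n n)^{-1/(\alpha+1)}$ from the circle, and you correctly locate both thresholds ($\alpha>-1/5$ from the Lipschitz-times-net-spacing product and $\alpha>-1/7$ from the $O(1/(n\delta_n))$ residual). The only nit is that for $z$ in the inner sliver the anchor distance is dominated by the net spacing $\ell_n^{-6}n^{-(1-3\alpha)/(2(\alpha+1))}$ rather than being $O(\ell_n/n)$ as you claim, but since the net-spacing bound is exactly what your $\mathcal{A}_n^*$ calculation already handles (the paper simply notes $2\ell_n/n$ is negligible against the net spacing), nothing changes.
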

\begin{proof}
	Suppose the complement of $E_n \cup F_n \cup G_n \cup H_n^*$ occurs and $n$ is large enough for the conclusions of Lemma \ref{lem:nearLipNeg} to hold and for the net $\mathcal{N}_n^*$ of $\mathcal{A}_n^*$ to satisfy \eqref{eqn:netParamsNeg}. When $-1/5 < \alpha <0$, we have $\frac{2\ell_n}{n} = o\left(\ell_n^{-6}\left(\frac{1}{n}\right)^{\frac{1-3\alpha}{2(\alpha+1)}}\right)$, so for any fixed $z$ with $1-2\ell_n/n < \abs{z} \leq 1-\eps_n^*\ell_n^{-3/(\alpha + 1)}$, there is a $\xi_z \in \mathcal{N}_n^* \subset \mathcal{A}_n$ for which 
	\begin{equation}\label{eqn:netAndHBds}
	\abs{z-\xi_z} \leq \frac{2}{\ell_n^6}\cdot \left(\frac{1}{n}\right)^{\frac{1-3\alpha}{2(\alpha+1)}} \text{ and }   \abs{\frac{1}{n}\sum_{i=1}^n\frac{\sind{\abs{\xi_z-X_i}> \frac{\delta_n}{2}}}{\xi_z-X_i}-m_\mu(\xi_z)} < \ell_n^{\frac{-4}{\alpha +1}}\left(\frac{1}{n}\right)^{\frac{-\alpha}{\alpha+1}}.
	\end{equation}
	(Note the second of these is true on the complement of $H_n^*$.) In view of the triangle inequality, we will overestimate $\abs{\overline{M}_\mu(z) - m_\mu(z)}$ by comparing $\overline{M}_\mu(z)$ to $\overline{M}_\mu(\xi_z)$, $\overline{M}_\mu(\xi_z)$ to $m_\mu(\xi_z)$, and $m_\mu(\xi_z)$ to $m_\mu(z)$. Applying \eqref{eqn:onlyOneClose} with $\xi_z$ in place of $z$ to the second inequality in \eqref{eqn:netAndHBds} shows that 
	\begin{equation*}
	\abs{\overline{M}_\mu(\xi_z) - m_\mu(\xi_z)} = \abs{\frac{1}{n}\sum_{j \neq i_{\xi_z}}\frac{1}{\xi_z - X_j} - m_\mu(\xi_z)} < \frac{1}{\ell_n^{4/(\alpha +1)}}\left(\frac{1}{n}\right)^{\frac{-\alpha}{\alpha+1}} + \frac{2}{n\delta_n},
	\end{equation*}
	and (for $-1 < \alpha < 0$) Corollary \ref{cor:CSnice} implies
	\begin{align*}
	\abs{m_\mu(\xi_z) - m_\mu(z)} &< \kappa_\mu(1-\max{\abs{z},\abs{\xi_z}})^\alpha\abs{\xi_z - z} <\kappa_\mu\cdot2\ell_n^{\frac{-\alpha}{\alpha+1}-6}\left(\frac{1}{n}\right)^{\frac{\alpha}{\alpha +1}+\frac{1-3\alpha}{2(\alpha+1)}}\\
	&=2\kappa_\mu\cdot\ell_n^{\frac{-\alpha}{\alpha+1}-6}\left(\frac{1}{n}\right)^{\frac{1-\alpha}{2(\alpha+1)}} = o\left(\frac{1}{\ell_n^{4/(\alpha +1)}}\left(\frac{1}{n}\right)^{\frac{-\alpha}{\alpha+1}}\right),
	\end{align*}
	where we used that $|z|, |\xi_z| \leq 1- \eps_n^*\ell_n^{-3/(\alpha + 1)}$, and the last asymptotic holds for $-1 < \alpha < 0$. Via \eqref{eqn:nearLipNeg} from Lemma \ref{lem:nearLipNeg}, we have 
	\begin{align*}
	\abs{\overline{M}_\mu(z) - \overline{M}_\mu(\xi_z)} &\leq \frac{2}{\ell_n^6}\cdot\left(\frac{1}{n}\right)^{\frac{1-3\alpha}{2(1+\alpha)}}\cdot\left(2 \ell_n n^{\frac{1-\alpha}{2(\alpha + 1)}}\right) + \frac{8}{n\delta_n}\\
	&= \frac{4}{\ell_n^5}\cdot \left(\frac{1}{n}\right)^{\frac{-\alpha}{1+\alpha}} + \frac{8}{n\delta_n} = o\left(\frac{1}{\ell_n^{4/(\alpha +1)}}\left(\frac{1}{n}\right)^{\frac{-\alpha}{\alpha+1}}\right) + \frac{8}{n\delta_n},
	\end{align*}
	where the last asymptotic holds for $-1/5 < \alpha < 0$. Combining the last three inequalities yields
	\begin{align*}
		\abs{\overline{M}_\mu(z) - m_\mu(z)} &\leq \abs{\overline{M}_\mu(z) - \overline{M}_\mu(\xi_z)} + \abs{\overline{M}_\mu(\xi_z) - m_\mu(\xi_z)} + \abs{m_\mu(\xi_z)-m_\mu(z)}\\
		 &< \frac{1}{\ell_n^{4/(\alpha +1)}}\left(\frac{1}{n}\right)^{\frac{-\alpha}{\alpha+1}} + \frac{10}{n\delta_n} + o\left(\frac{1}{\ell_n^{4/(\alpha +1)}}\left(\frac{1}{n}\right)^{\frac{-\alpha}{\alpha+1}}\right)
	\end{align*}
	where 
\[
\frac{10}{n\delta_n} = 10 \cdot \log{n} \cdot n^{\frac{\alpha +3}{4(\alpha + 1)}-1} = 10\cdot \log{n} \left(\frac{1}{n}\right)^{\frac{1+3\alpha}{4(\alpha+1)}} =  o\left(\frac{1}{\ell_n^{4/(\alpha +1)}}\left(\frac{1}{n}\right)^{\frac{-\alpha}{\alpha+1}} \right),
\]
and the asymptotic holds for $-1/7 < \alpha < 0$. Since these bounds are uniform over $z$ with $1-\frac{2\ell_n}{n} < \abs{z}\leq 1-\eps_n^*\ell_n^{-3/(\alpha + 1)}$, inequality \eqref{eqn:nearCSNeg} follows.
\end{proof}

%-----------------------------------%
%-  The Proof of thm:manyPairNeg   -%
%-----------------------------------%

\subsection{The proof of Theorem \ref{thm:manyPairNeg}}\label{sec:manyPairNegProof}

To prove Theorem \ref{thm:manyPairNeg}, we follow a similar strategy to the one used in Subsections \ref{sec:manyPairConcHold}, \ref{sec:badEventsSmall}, and \ref{sec:manyPairPf}. First, we will establish in Lemma \ref{lem:manyPairConcNeg} that for large $n$, the conclusions of Theorem \ref{thm:manyPairNeg} hold on the complement of
\[
\mathcal{E}^-_n := E_n \cup E_n^* \cup F_n \cup F^\parallel_n \cup G_n \cup H_n^*\cup \set{\abs{X^\da_{(n^\delta)}} \leq 1-\frac{\ell_n-3}{n}},
\]
where the constituent ``bad'' events are defined as in Subsection \ref{sec:paramsNeg} above. Then, we will show in Lemma \ref{lem:badEventSmallNeg} that $\P(\mathcal{E}^-_n) = O(\ell_n^{-1})$. Intuitively, off of the event $\mathcal{E}^-$:
\begin{itemize}
	\item the annuli $\mathcal{A}_n, \mathcal{B}_n$ each contain $O(n\eps_n^{\alpha+1})$ roots $X_i$;
	\item the largest $\ell_n+1$ roots are in $\mathcal{A}_n^*$ and at most $n^\delta-1$ roots are within a distance $(\ell_n-3)/n$ of the unit circle;
	\item each root in $\mathcal{A}_n\cup \mathcal{B}_n$ is $\delta_n$-far from all other roots, and each root in $\mathcal{A}_n^*$ is radially separated from all other roots by a distance $(n\ell_n^3)^{-1/(\alpha +1)}$;
	\item the sums $n^{-1}\sum_{i=1}^n (1-3/2\eps_n-|X_i|)^{-2}\sind{|X_i|<1-2\eps_n}$ are less than $\ell_n\eps_n^{\alpha-1}$;
	\item on the net $\mathcal{N}_n^*$, the Cauchy--Stieltjes transform $m_\mu(z)$ approximates the sums $n^{-1}\sum_{i=1}^n|z-X_i|^{-1}\sind{|z-X_i|> \delta_n/2}$ with error at most $n^{-4/(\alpha+1)}n^{\alpha/(\alpha+1)}$. 
\end{itemize} 

\begin{lemma}\label{lem:manyPairConcNeg}
		As in the hypothesis of Theorem \ref{thm:manyPairNeg}, suppose $X_1, X_2, \ldots$ are i.i.d.\ draws from a distribution $\mu$ that satisfies Assumption \ref{ass:ComplexMu} with $-0.095<\alpha < 0$, let $\ell_n = \omega(1)$ be a sequence of positive integers with $\ell_n \leq \sqrt{\log{n}}$, and fix $\delta \in (0, -\alpha)$. Additionally, in order of decreasing magnitude, denote the largest $\ell_n$ critical points of $p_n:= \prod_{j=1}^n(z-X_j)$ by $W^{\da}_{(1)}, \ldots, W^{\da}_{(\ell_n)}$. If we define
		\[
		\eps_n := \left(\frac{1}{n}\right)^{\frac{1}{2(\alpha + 1)}}, \quad \delta_n := \log{n}\cdot\left(\frac{1}{n}\right)^{\frac{\alpha+3}{4(\alpha+1)}}, \quad \text{and}\quad \eps^*_n := \left(\frac{\ell_n^2}{n}\right)^{\frac{1}{\alpha +1}},
		\]
		then there is a positive constant $C$ such that for large $n$, on the complement of $\mathcal{E}^-_n$, the conclusions of Theorem \ref{thm:manyPairNeg} hold.
\end{lemma}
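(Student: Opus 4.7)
The plan is to mirror the proof of Lemma~\ref{lem:manyPair}, using Lemmas~\ref{lem:nearLipNeg} and~\ref{lem:nearCSNeg} in place of their $\alpha\ge0$ counterparts and working throughout on the complement of $\mathcal{E}_n^-$. A short computation gives $\eps_n^*\ell_n^{-3/(\alpha+1)}=(n\ell_n)^{-1/(\alpha+1)}$, which for $-1<\alpha<0$ is asymptotically smaller than $\ell_n/n$; combined with $(E_n^*)^c$ this forces every $X_i$ (in particular every $X_i\in\mathcal{A}_n^-$) to satisfy $|X_i|\le 1-\eps_n^*\ell_n^{-3/(\alpha+1)}$, placing it inside the strip where Lemma~\ref{lem:nearCSNeg} is valid.

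To establish \ref{item:rtsToCpNeg} I will apply Theorem~\ref{thm:determ} once to each $X_i\in\mathcal{A}_n^-$, taking $\xi:=X_i$, $C_1:=1/2$, $C_2:=2$, and $k_{\rm Lip}:=2\ell_n n^{(1-\alpha)/(2(\alpha+1))}$. The identity $\overline M_\mu(X_i)=\tfrac1n\sum_{j\neq i}(X_i-X_j)^{-1}$ together with $|m_\mu(X_i)|=F_R(|X_i|)/|X_i|\to 1$ (from \eqref{eqn:RadStiel}, \eqref{eqn:RadCdfBds}) and the $o(1)$ estimate of Lemma~\ref{lem:nearCSNeg} verifies condition \ref{item:det1}; condition \ref{item:det2} comes from Lemma~\ref{lem:nearLipNeg} applied on $B(X_i,\delta_n/2)$, which contains $B(X_i,4/n)=B(X_i,2/(C_1n))$ because $\delta_n\gg 1/n$; and condition \ref{item:det3} holds on $F_n^c$ since $\delta_n\gg 6/n$. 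Theorem~\ref{thm:determ} will then produce a unique critical point $W_i^{(n)}$ within $3/n$ of $X_i$ and directly give \eqref{eqn:rtsToCptsBdNeg}. For \eqref{eqn:rtsToCptsBd2Neg} I will add and subtract $X_i/n$ and bound the resulting correction by $n^{-1}|X_i-1/S_i|$ with $S_i:=\tfrac{1}{n-1}\sum_{j\neq i}(X_i-X_j)^{-1}$; combining Lemma~\ref{lem:nearCSNeg} with $|m_\mu(X_i)-1/X_i|=(1-F_R(|X_i|))/|X_i|\ll(\ell_n/n)^{\alpha+1}$ will yield an error of order $\ell_n^{-4/(\alpha+1)}n^{-|\alpha|/(\alpha+1)}+(\ell_n/n)^{\alpha+1}$, and after dividing by $n$ this is dominated by $C(\ell_n^{-4}n^{-1})^{1/(\alpha+1)}$: the first piece matches the target rate exactly, while the second is absorbed because $\alpha+2>1/(\alpha+1)$ whenever $\alpha^2+3\alpha+1>0$, which holds on $-0.095<\alpha<0$.

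Conclusion \ref{item:iotaNeg} will follow by defining $\iota(W)$ to be the index of a nearest root to $W$ and recycling the algebra from Lemma~\ref{lem:manyPair}: equation~\eqref{eqn:GLPair} and Lemma~\ref{lem:nearCSNeg} force $|W-X_{\iota(W)}|<2/n$ for each critical point $W\in\mathcal{A}_n^-$, so $X_{\iota(W)}\in\mathcal{A}_n^-$, and the same algebraic rearrangement produces \eqref{eqn:cptsToRtsUniformBdNeg}; injectivity of $\iota$ comes from the uniqueness part of \ref{item:rtsToCpNeg}. For \ref{item:enoughCptsNeg}, the complement of $\{|X^\da_{(n^\delta)}|\le 1-(\ell_n-3)/n\}$ supplies $n^\delta$ roots with $|X_i|>1-(\ell_n-3)/n$, each of whose paired critical point lies in $\mathcal{A}_n^-$ because it sits within $3/n$ of $X_i$ and Gauss--Lucas keeps it inside the unit disk; injectivity of $\iota$ prevents overcounting. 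For \ref{item:maxPairNeg}, \eqref{eqn:maxPairRtsInNeg} is immediate from the definition of $\mathcal{A}_n^*$ on $(E_n^*)^c$, and on $(F_n^\parallel)^c$ the radial spacing $(n\ell_n^3)^{-1/(\alpha+1)}$ among the top $\ell_n$ roots comfortably exceeds the pairing error $(\ell_n^{-4}n^{-1})^{1/(\alpha+1)}$, so $\iota$ identifies $W^\da_{(i)}$ with $X^\da_{(i)}$ in order and \eqref{eqn:maxPairNeg}, \eqref{eqn:maxPairSharperNeg} reduce to \eqref{eqn:rtsToCptsBd2Neg}, \eqref{eqn:rtsToCptsBdNeg}.

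The main obstacle is the exponent bookkeeping for \eqref{eqn:rtsToCptsBd2Neg}: each error contribution must fit under the target rate $(\ell_n^{-4}n^{-1})^{1/(\alpha+1)}$, and this is tight enough to force the explicit restriction $-0.095<\alpha<0$ (via $\alpha^2+3\alpha+1>0$ in the dominance argument above and the analogous comparisons that already appear inside Lemmas~\ref{lem:nearLipNeg} and \ref{lem:nearCSNeg}).
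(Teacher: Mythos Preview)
Your approach is essentially the paper's, and almost all the pieces are in place. There is one genuine logical slip in your treatment of conclusion~\ref{item:iotaNeg}. From \eqref{eqn:GLPair} and Lemma~\ref{lem:nearCSNeg} you correctly get $|W-X_{\iota(W)}|<2/n$ for every critical point $W\in\mathcal{A}_n^-$, but this only yields $|X_{\iota(W)}|>1-(\ell_n+2)/n$, not $X_{\iota(W)}\in\mathcal{A}_n^-$. Since you established the Theorem~\ref{thm:determ} application (and hence the sharper bound needed for injectivity and for \eqref{eqn:cptsToRtsUniformBdNeg}) only for roots in $\mathcal{A}_n^-$, you cannot yet invoke it at $X_{\iota(W)}$. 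The paper's fix is simple and costs nothing: run your Stage~(\ref{item:rtsToCpNeg}) argument on the slightly wider strip $\{|X_i|>1-2\ell_n/n\}$ from the start. Lemmas~\ref{lem:nearLipNeg} and~\ref{lem:nearCSNeg} both apply there (indeed Lemma~\ref{lem:nearCSNeg} is stated on $1-2\ell_n/n<|z|$, and such $X_i$ are well inside $\mathcal{A}_n$), so all of your verification of \ref{item:det1}--\ref{item:det3} goes through unchanged. Once you have \eqref{eqn:rtsToCptsBd2Neg} at $X_{\iota(W)}$, it forces $|W|<|X_{\iota(W)}|$ and hence $X_{\iota(W)}\in\mathcal{A}_n^-$ a posteriori.

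One smaller point: in your derivation of \eqref{eqn:rtsToCptsBd2Neg} you check that the $|X_i-1/S_i|/n$ contribution is dominated by $(\ell_n^{-4}n^{-1})^{1/(\alpha+1)}$, but you should also verify that the \eqref{eqn:rtsToCptsBdNeg} term $\ell_n n^{-(5\alpha+3)/(2(\alpha+1))}$ is absorbed. Comparing exponents, this needs $(5\alpha+1)/(2(\alpha+1))>0$, i.e.\ $\alpha>-1/5$, which also holds on $(-0.095,0)$. Your $\alpha^2+3\alpha+1>0$ check is correct but actually gives the looser threshold $\alpha>-0.382$; the restriction to $\alpha>-0.095$ is enforced elsewhere (in the probability bound for $H_n^*$), not here.
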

\begin{proof}Fix $\alpha \in (-0.095, 0)$ and $\delta \in (0, -\alpha)$, and suppose the complement of $\mathcal{E}_n^-$ holds. We begin with the remark that due to the Gauss--Lucas theorem, on the complement of $E_n^*$, all critical points and roots of $p_n$ have modulus less than $|X^{\da}_{(1)}| \leq 1-\eps_n^*\ell_n^{-3/(\alpha+1)}$. For large $n$, on the complement of the ``bad'' event $\mathcal{E}^-_n$, we will establish the conclusions of Theorem \ref{thm:manyPairNeg} in several stages:
	\begin{enumerate}[(I)]
		\item \label{stage:rtsToCpts} First, we will show that each root $X_i$, $i \in [n]$, satisfying $\abs{X_i} > 1-\frac{2\ell_n}{n}$ has a unique critical point $W^{(n)}_i$ within a distance $\frac{3}{n}$, and we have
		 \[
		 \max_{i\in [n] :\abs{X_{i}} > 1-\frac{2\ell_n}{n} } \abs{W^{(n)}_i - X_i + \frac{1}{n}\frac{1}{\frac{1}{n-1}\sum_{j \neq i}\frac{1}{X_i-X_j}}} \ll  \ell_nn^{-\frac{5\alpha +3}{2(\alpha+1)}}.
		 \] In view of Lemma \ref{lem:nearCSNeg}, this will imply conclusions \ref{item:enoughCptsNeg} and \ref{item:rtsToCpNeg} of Theorem \ref{thm:manyPairNeg} .
		 
		\item \label{stage:iota} Second, we will show that each critical point $W$ of $p_n$ that satisfies $W \in \mathcal{A}_n^-$ is within a distance $2/n$ of a nearest root $X_{i_W}$, where $\abs{X_{i_W}} > 1-\frac{2\ell_n}{n}$. By the results in Stage \ref{stage:rtsToCpts} above, we will have established conclusion \ref{item:iotaNeg} of Theorem \ref{thm:manyPairNeg}.
		 
		\item \label{stage:LgCp} Denote by $W^{(n)}_{(\ell_n)}$ the critical point of $p_n$ that is paired with $X^{\da}_{(\ell_n)}$. We will use the results of Stages \ref{stage:rtsToCpts} and \ref{stage:iota} above to show that any critical point $W$ of $p_n$, that satisfies $\abs{W} > \abs{W^{(n)}_{(\ell_n)}}$ must be the unique critical point within a distance $\frac{3}{n}$ of $X^{\da}_{(i)}$ for some $i < \ell_n$. 
		
		\item \label{stage:LgOrder} Finally, we will use the radial separation of the largest $\ell_n + 1$ roots of $p_n$ (note: on the complement of $E_n^*$, these are in $\mathcal{A}_n^*$) to justify \eqref{eqn:maxPairNeg} and \eqref{eqn:maxPairSharperNeg}. Hence we will have verified conclusion \ref{item:maxPairNeg} of Theorem \ref{thm:manyPairNeg}.
	\end{enumerate}

	We begin Stage \ref{stage:rtsToCpts} by invoking Theorem \ref{thm:determ} many times at once, one for each $i \in [n]$ such that $\abs{X_i} > 1-\frac{2\ell_n}{n}$. In particular, for each such $i$, apply Theorem \ref{thm:determ} with $n-1$ in place of $n$, with
	\[
	\xi_i:= X_i,\ \vec{\zeta}_i:=(X_1, \ldots, X_{i-1},  X_{i+1}, \ldots,  X_{n})
	\]
	and with
	\[
	C_1: = \frac{1}{2},\ C_2:= 2,\ \text{and } k_{\rm Lip} := 2\ell_nn^{\frac{1-\alpha}{2(\alpha+1)}}.
	\]
	We argue that for large $n$, on the complement of $E_n \cup E_n^* \cup F_n \cup G_n \cup H_n^*$, the three main conditions in the hypothesis of Theorem \ref{thm:determ} hold with these parameter choices that are uniform in $i$.   In view of Lemma \ref{lem:nearCSNeg} and the triangle inequality, for large $n$, on the complement of $E_n \cup E_n^* \cup F_n \cup G_n \cup H_n^*$, condition \ref{item:det1} holds for each $i \in [n]$ with $\abs{X_i} > 1-\frac{2\ell_n}{n}$ because
	\[
	\abs{\frac{1}{n-1} \sum_{j \neq i}\frac{1}{X_i-X_j}} = \frac{n}{n-1}\abs{\overline{M}_\mu(X_i)}
	\]
	and (via Lemma \ref{lem:StielCalc})
	\begin{equation}\label{eqn:CSBdsLgX}
	\frac{F_R\left(1-\frac{2\ell_n}{n}\right)}{1} \leq \inf_{z: \abs{z} > 1-\frac{2\ell_n}{n}}\abs{m_\mu(z)} \leq \sup_{z: \abs{z} > 1-\frac{2\ell_n}{n}}\abs{m_\mu(z)} \leq \frac{F_R(1)}{1-\frac{2\ell_n}{n}},
	\end{equation}
	where $F_R(r)$ denotes the radial c.d.f.\ of $\mu$. (Note that $\frac{2\ell_n}{n}= o(1)$ and $F_R(r)$ is continuous at $r=1$ with $F_R(1) = 1$.) Lemma \ref{lem:nearLipNeg} implies condition \ref{item:det2} for large $n$ on the complement of $E_n \cup F_n \cup G_n$. In particular, we note that for $\alpha > -1/3$, $\frac{2}{C_1(n-1)} = o(\delta_n/2)$ and when $\abs{X_i} > 1-\frac{2\ell_n}{n}$, we have $B(X_i, \frac{\delta_n}{2}) \subset \mathcal{A}_n \cup B(0,1)^c$. Lastly, for large $n$, on the complement of $F_n$, condition \ref{item:det3} is true for each $i \in[n]$ with $\abs{X_i} > 1-\frac{2\ell_n}{n}$ because such $X_i$ are separated by a distance $\delta_n = \omega(3/(C_1(n-1)))$ from the other roots of $p_n$. (We note that the asymptotic holds for $\alpha > -1/3$.) Now, since $k_{\rm Lip} = 2\ell_nn^{\frac{1-\alpha}{2(\alpha+1)}} = o(n)$ for $\alpha > -1/3$, the conclusions of Theorem \ref{thm:determ} imply that for large $n$, on the complement of $E_n \cup E_n^* \cup F_n\cup G_n \cup H_n^*$, for each $i\in [n]$ with $\abs{X_i} > 1-\frac{2\ell_n}{n}$, the polynomial $p_n$ has exactly one critical point $W^{(n)}_i$ that is within a distance of $3/n$ of $X_i$, and 
	\begin{equation}\label{eqn:AminusRtsBd}
	\max_{i\in[n]: \abs{X_i} > 1-\frac{2\ell_n}{n}} \abs{W^{(n)}_i - X_i + \frac{1}{n}\frac{1}{\frac{1}{n-1}\sum_{j \neq i}\frac{1}{X_i-X_j}}} \ll \frac{\ell_nn^\frac{1-\alpha}{2(\alpha+1)}}{n^2} = \ell_nn^{-\frac{5\alpha +3}{2(\alpha+1)}},
	\end{equation}
	which implies \eqref{eqn:rtsToCptsBdNeg}. To see that conclusion \ref{item:enoughCptsNeg} and Inequality \eqref{eqn:rtsToCptsBd2Neg} from Theorem \ref{thm:manyPairNeg} hold, we appeal to a sequence of inequalities like \eqref{eqn:CSnearX} above. Indeed, for any $i$,
	%\begin{equation}
		\begin{align*}
			\abs{\frac{1}{\frac{1}{n-1}\sum_{j\neq i} \frac{1}{X_i - X_j}}-X_i} &= \frac{\abs{X_i}}{\abs{\frac{1}{n-1}\sum_{j\neq i} \frac{1}{X_i - X_j}}}\cdot \abs{\frac{1}{n-1}\sum_{j\neq i} \frac{1}{X_i - X_j}-\frac{1}{X_i}}\\
			&=\frac{\abs{X_i}}{\frac{n}{n-1}\abs{\overline{M}_\mu(X_i)}}\cdot\abs{\frac{n}{n-1}\overline{M}_\mu(X_i) - \frac{1}{X_i}}\\
			&\leq  \frac{1}{\abs{\overline{M}_\mu(X_i)}}\cdot\left(\abs{\frac{1}{n-1}\overline{M}_\mu(X_i)} + \abs{\overline{M}_\mu(X_i)-\frac{1}{X_i}}\right),
		\end{align*}
	and if we use Lemma \ref{lem:nearCSNeg} to interpolate between $\overline{M}_\mu(X_i)$ and $m_\mu(X_i)$, which is bounded as in \eqref{eqn:CSBdsLgX}, we obtain that for large $n$, on the complement of $E_n\cup E_n^* \cup F_n \cup G_n \cup H_n^*$, when $\abs{X_i} > 1 - \frac{2\ell_n}{n}$, 
		\begin{align*}
			\abs{\frac{1}{\frac{1}{n-1}\sum_{j\neq i} \frac{1}{X_i - X_j}}-X_i} &\leq \frac{1}{n-1} +  \frac{\abs{m_\mu(X_i)-\frac{1}{X_i}}+\frac{2}{\ell_n^{4/(\alpha +1)}}\left(\frac{1}{n}\right)^{\frac{-\alpha}{\alpha+1}}}{\abs{m_\mu(X_i)}-\frac{2}{\ell_n^{4/(\alpha +1)}}\left(\frac{1}{n}\right)^{\frac{-\alpha}{\alpha+1}}} \\
			&\leq \frac{1}{n-1} + \frac{\abs{\frac{F_R(\abs{X_i})}{X_i}-\frac{1}{X_i}} + \frac{2}{\ell_n^{4/(\alpha +1)}}\left(\frac{1}{n}\right)^{\frac{-\alpha}{\alpha+1}}}{F_R(1-\frac{2\ell_n}{n}) - \frac{2}{\ell_n^{4/(\alpha +1)}}\left(\frac{1}{n}\right)^{\frac{-\alpha}{\alpha+1}}}\\
			&<2 \left(1-F_R(\abs{X_i})\right) + O\left(\frac{1}{\ell_n^{4/(\alpha +1)}}\left(\frac{1}{n}\right)^{\frac{-\alpha}{\alpha+1}}\right)\\
			&\ll \frac{1}{\ell_n^{4/(\alpha +1)}}\left(\frac{1}{n}\right)^{\frac{-\alpha}{\alpha+1}},
		\end{align*}
	where we used \eqref{eqn:RadCdfBds} to bound $1-F_R(\abs{X_i})$ in the last line. Combining this inequality with \eqref{eqn:AminusRtsBd} yields \eqref{eqn:rtsToCptsBd2Neg}, so we have established conclusions \ref{item:enoughCptsNeg} and \ref{item:rtsToCpNeg} of Theorem \ref{thm:manyPairNeg}. (Note that first conclusion follows from \eqref{eqn:rtsToCptsBd2Neg} on the complement of $\{|X^\da_{(n^\delta)}| \leq 1-(\ell_n-3)/n\}$.)
	
	We proceed with Stage \ref{stage:iota} in which we will show that for large $n$, off of the ``bad'' event $\mathcal{E}^-_n$, each critical point of $p_n$ that lies in $\mathcal{A}_n^-$ is within a distance $\frac{2}{n}$ of a nearest root $X_i$ with $\abs{X_i} > 1 - \frac{2\ell_n}{n}$. In view of \eqref{eqn:GLPair} and Lemmas \ref{lem:nearCSNeg} and \ref{lem:StielCalc}, for large $n$, on the complement of $E_n \cup E_n^* \cup F_n \cup G_n \cup H_n^*$, any critical point $W$ of $p_n$ that lies in $\mathcal{A}_n^-$ satisfies
	\begin{align*}
		\abs{W - X_{i_W}} &= \frac{1}{n}\frac{1}{\abs{\frac{1}{n}\sum_{j\neq i_W}\frac{1}{W-X_j}}} = \frac{1}{n\abs{\overline{M}_\mu(W)}}\\
		&\leq \frac{1}{n\abs{m_\mu(W)} - n\abs{\overline{M}_\mu(W)-m_\mu(W)}}\\
		&\leq\frac{1}{n\frac{F_R(\abs{W})}{\abs{W}} + o(n)}\\
		&\leq \frac{1}{n\cdot F_R\left(1-\frac{\ell_n}{n}\right) + o(n)}\\
		&\leq \frac{2}{n},
	\end{align*}
	where $F_R(r)$ denotes the radial c.d.f.\ associated with $\mu$, and to achieve the last inequality, we have used that $F_R(r)$ is continuous at $r=1$, with  $F_R(1) = 1$. We have shown that for large $n$, on the complement of $\mathcal{E}^-_n$, each critical point $W$ of $p_n$ that lies in $\mathcal{A}_n^-$ is within a distance $2/n$ of the nearest root $X_{i_W}$, $1\leq i_W \leq n$. By the triangle inequality, such a root $X_{i_W}$ must satisfy $\abs{X_{i_W}}> 1-\frac{2\ell_n}{n}$, and so by the Stage \ref{stage:rtsToCpts} arguments, we obtain \eqref{eqn:cptsToRtsBdNeg} and \eqref{eqn:cptsToRtsUniformBdNeg}, and conclusion \ref{item:iotaNeg} of Theorem \ref{thm:manyPairNeg} is true for large $n$ on the complement of $\mathcal{E}^-_n$. (Note that \eqref{eqn:cptsToRtsBdNeg} and \eqref{eqn:cptsToRtsUniformBdNeg} imply $\abs{W} < \abs{X_{i_{W}}}$ so $X_{i_W} \in \mathcal{A}^-_n$ as is claimed in conclusion \ref{item:iotaNeg}.)
	
	We now tackle Stage \ref{stage:LgCp} in which we will argue that for large $n$, on the complement of $\mathcal{E}^-_n$, any critical point $W$ of $p_n$ that is larger in magnitude than the critical point paired with $X^\da_{(\ell_n)}$ must be the unique critical point of $p_n$ that is within a distance $3/n$ of $X^\da_{(i)}$ for some $i < \ell_n$. By the reasoning in Stages \ref{stage:rtsToCpts} and \ref{stage:iota}, for large $n$, on the complement of $E_n \cup E_n^* \cup F_n \cup G_n \cup H_n^*$, conclusions \ref{item:rtsToCpNeg} and \ref{item:iotaNeg} hold, and there is a unique critical point $W^{(n)}_{(\ell_n)}$ that is within a distance $3/n$ of $X^\da_{(\ell_n)}$. Consequently, for large $n$, off of this ``bad'' event, if $W$ is any critical point of $p_n$ with $|W| > \abs{W^{(n)}_{(\ell_n)}}$, then $W, X_{i_W} \in \mathcal{A}^-_n$ and we have
	\[
	\max\set{\abs{W^{(n)}_{(\ell_n)} - X^\da_{(\ell_n)}(1-n^{-1})}, \abs{W - X_{i_W}(1-n^{-1})}} < C\left(\frac{1}{\ell_n^4n}\right)^{1/(\alpha+1)}.
	\]
	By the triangle inequality, it follows that
	\begin{align*}
		\abs{X^\da_{(\ell_n)}}(1-n^{-1}) - C\left(\frac{1}{\ell_n^4n}\right)^{1/(\alpha+1)} &\leq \abs{W^{(n)}_{(\ell_n)}} < \abs{W}\\
		 &< \abs{X_{i_W}}(1-n^{-1}) + C\left(\frac{1}{\ell_n^4n}\right)^{1/(\alpha+1)},
	\end{align*}
	so 
	\[
	\abs{X^\da_{(\ell_n)}} - \abs{X_{i_W}} < \frac{2C(n-1)}{n}\left(\frac{1}{\ell_n^4n}\right)^{1/(\alpha+1)} = o\left( \left(\frac{1}{n\ell_n^3}\right)^{1/(\alpha + 1)}\right).
	\]
	On the complement of $E_n^* \cup F_n^\parallel$, this last inequality precludes $|X^\da_{(\ell_n)}| > |X_{i_W}|$ since the largest $\ell_n + 1$ roots of $p_n$ are radially separated by a distance $\left(n\ell_n^3\right)^{-1/(\alpha + 1)}$. We conclude that for large $n$, on the complement of $E_n \cup E_n^* \cup F_n \cup F_n^\parallel \cup G_n \cup H_n^*$, any critical point $W$ of $p_n$ that is larger in magnitude than $W^{(n)}_{(\ell_n)}$ must be paired within a distance $3/n$ of some $X^\da_{(i)}$ for $1 \leq i < \ell_n$. Stage \ref{stage:LgCp} is complete. 
	
	We conclude the proof of Lemma \ref{lem:manyPairConcNeg} with Stage \ref{stage:LgOrder} in which we will use the result of Stage \ref{stage:LgCp} and the already verified conclusions \ref{item:rtsToCpNeg} and \ref{item:iotaNeg} to establish conclusion \ref{item:maxPairNeg} of Theorem \ref{thm:manyPairNeg} for large $n$ off of the ``bad'' event $\mathcal{E}_n^-$. Suppose $n$ is large enough to guarantee these conclusions on the complement of $\mathcal{E}^-_n$, and suppose the complement of $\mathcal{E}^-_n$ holds. Inequality \eqref{eqn:maxPairRtsInNeg} is clearly true on the complement of $E_n^*$, and \eqref{eqn:maxPairNeg}, \eqref{eqn:maxPairSharperNeg} follow from \eqref{eqn:rtsToCptsBdNeg}, \eqref{eqn:cptsToRtsUniformBdNeg} provided we can show that the nearest critical point to $X^\da_{(i)}$ is $W^\da_{(i)}$ for $1 \leq i \leq \ell_n$. We will verify this now. By conclusion \ref{item:rtsToCpNeg}, its constituent inequality \eqref{eqn:rtsToCptsBd2Neg}, and the triangle inequality, if $W^{(n)}_{(i)}$ and $W^{(n)}_{(j)}$ are the nearest critical points to $X^\da_{(i)}$ and $X^\da_{(j)}$, respectively, where $i,j \in \set{1, \ldots, \ell_n}$ with $i \neq j$, and we have $\abs{W^{(n)}_{(i)}} \leq \abs{W^{(n)}_{(j)}}$, then
	\begin{align*}
		\abs{X^\da_{(i)}}(1-n^{-1}) - C\left(\frac{1}{\ell_n^4n}\right)^{1/(\alpha+1)} &\leq \abs{W^{(n)}_{(i)}} \leq \abs{W^{(n)}_{(j)}}\\
		&\leq \abs{X^\da_{(j)}}(1-n^{-1}) + C\left(\frac{1}{\ell_n^4n}\right)^{1/(\alpha+1)}.
	\end{align*}
	It follows that
	\[
	\abs{X^\da_{(i)}} - \abs{X^\da_{(j)}} \leq \frac{2C(n-1)}{n}\left(\frac{1}{\ell_n^4n}\right)^{1/(\alpha+1)} = o\left( \left(\frac{1}{n\ell_n^3}\right)^{1/(\alpha + 1)}\right),
	\]
	so on the complement of $E_n^* \cup F_n^\parallel$, $\abs{X^\da_{(i)}} \leq \abs{X^\da_{(j)}}$, or in other words, $1 \leq j < i \leq \ell_n$. By contrapositive logic, we have established that 
	\[
	\abs{W^{(n)}_{(\ell_n)}} < \abs{W^{(n)}_{(\ell_n -1)}} < \cdots < \abs{W^{(n)}_{(1)}}.
	\]
	In view of Stage \ref{stage:LgCp}, $W^{(n)}_{(i)}$, $1 \leq i < \ell_n$ are the only critical points of $p_n$ that are larger in magnitude than $W^{(n)}_{(\ell_n)}$, so $W^{(n)}_{(i)} = W^\da_{(i)}$ for $1 \leq i \leq \ell_n$. We have established conclusion \ref{item:maxPairNeg} for large $n$ on the complement of $\mathcal{E}^-_n$. 
\end{proof}

We will complete the proof of Theorem \ref{thm:manyPairNeg} by showing that $\P(\mathcal{E}^-_n) = O(\ell_n^{-1})$. This is the content of Lemma \ref{lem:badEventSmallNeg}. As an intermediate step, we prove Lemma \ref{lem:sepRtsNeg} that we will use to control the probabilities of $F_n$ and $F^\parallel_n$ in Lemma \ref{lem:badEventSmallNeg}.

\begin{lemma}[The largest roots are separated with high probability, $\alpha < 0$ case]\label{lem:sepRtsNeg}
	Fix $\alpha \in (-1, 0)$ and $\eps>0$ as in Assumption \ref{ass:ComplexMu}; suppose $\gamma_n, d_n \in (0,\eps/2)$ are sequences satisfying $n\gamma_n^{\alpha+1} = \omega(1)$ and $n^2\gamma_n^{\alpha + 1}d_n^{\alpha+2} = o(1)$; and define
	\[
	A_n := \set{z\in \C: 1-\gamma_n \leq \abs{z} \leq 1}\subset \mathbb{A}_\eps.
	\]
	There is a constant $c>0$ such that for large $n$, 
	\begin{equation}
		\P\left(\exists i,j \in [n], i\neq j: X_i\in A_n, \abs{X_i-X_j}\leq d_n\right)\ll\max\set{n^2\gamma_n^{\alpha+1}d_n^{\alpha+2}, e^{-cn\gamma_n^{\alpha+1}}}.
		\label{eqn:circSepNeg}
	\end{equation}
	To control the radial separation between the largest roots of $p_n$, let $\gamma_n, R_n \in (0,\eps/2)$ be sequences satisfying $n\gamma_n^{\alpha +1}=\omega(1)$ and $n^2\gamma_n^{\alpha+1}R_n^{\alpha+1} = o(1)$. There is a constant $c>0$ such that for large $n$,
	\begin{equation}
		\P\left(\exists i,j \in [n], i\neq j: X_i\in A_n, \abs{\abs{X_i}-\abs{X_j}}\leq R_n\right)\ll\max\set{n^2\gamma_n^{\alpha+1}R_n^{\alpha+1}, e^{-cn\gamma_n^{\alpha+1}}}.
		\label{eqn:radSepNeg}
	\end{equation}
\end{lemma}
\begin{proof}
	The proof is nearly identical to the proof of Lemma \ref{lem:sepRts} above. To establish \eqref{eqn:circSepNeg}, make the following change to that proof. When conditioning on $X_1, \ldots, X_n$ to be at least $d_n$-far from other $X_i \in A_n$ in \eqref{eqn:avoidRts}, replace all instances of $M_nd_n^2$ with $\frac{2^{\alpha+2}C_\mu}{\pi(\alpha+1)(1-\eps)}d_n^{\alpha + 2}$. This can be justified because $X_i$ are i.i.d., so Lemma \ref{lem:heavyTail} (with $t= d_n^{-1} = \omega(1)$) implies the following for large $n$ and $j < i \leq n$:
	\begin{align*}
		\P\left(\abs{X_i - X_j} \leq d_n \mid X_j \in A_n \right) &= \P\left(\abs{X_j - X_i}^{-1} \geq d_n^{-1} \mid  X_j \in A_n\right)\\
		&\leq \frac{2^{\alpha+2}C_\mu}{\pi(\alpha+1)(1-\eps)}d_n^{\alpha + 2}.
	\end{align*}	
	
	The asymptotic bound \eqref{eqn:radSepNeg} can be achieved by making the following change when conditioning on $X_1, \ldots, X_n$ to be radially separated by at least $R_n$ from other $X_i \in A_n$ in the style of \eqref{eqn:avoidRts}: replace all instances of $M_nd_n^2$ with $\frac{C_\mu2^{\alpha+1}}{\alpha+1}R_n^{\alpha + 1}$. This can be justified because
	\begin{align*}
		\P\left(\abs{\abs{X_i} - \abs{X_j}} \leq R_n \mid X_j \in A_n \right) &= \int_{\abs{X_j}-R_n}^{\min\set{1, \abs{X_j}+R_n}}f_R(r)\,dr\\
		&\leq C_\mu\int_{1-2R_n}^1 (1-r)^{\alpha} =\frac{C_\mu2^{\alpha+1}}{\alpha+1}R_n^{\alpha + 1}.
	\end{align*}
	(Note: that $(1-r)^\alpha$ is increasing in $r$ for $\alpha < 0$, so we can replace the original limits on the integral with $1-2R_n$ and $1$.)
\end{proof}

\begin{lemma}\label{lem:badEventSmallNeg}
As in the hypothesis of Theorem \ref{thm:manyPairNeg}, suppose $X_1, X_2, \ldots$ are i.i.d.\ draws from a distribution $\mu$ that satisfies Assumption \ref{ass:ComplexMu} with $-0.095<\alpha < 0$, let $\ell_n = \omega(1)$ be a sequence of positive integers with $\ell_n \leq \sqrt{\log{n}}$, and fix $\delta \in (0, -\alpha)$. If we define
\begin{align*}
\eps_n &:= \left(\frac{1}{n}\right)^{\frac{1}{2(\alpha + 1)}}, \quad \delta_n := \log{n}\cdot\left(\frac{1}{n}\right)^{\frac{\alpha+3}{4(\alpha+1)}}, \text{and}\quad \eps^*_n := \left(\frac{\ell_n^2}{n}\right)^{\frac{1}{\alpha +1}},
\intertext{and}
\mathcal{E}^-_n &:= E_n \cup E_n^* \cup F_n \cup F^\parallel_n \cup G_n \cup H_n^*\cup \set{\abs{X^\da_{(n^\delta)}} \leq 1-\frac{\ell_n-3}{n}}
\end{align*}
as in Lemma \ref{lem:manyPairConcNeg}, then $\P(\mathcal{E}^-_n) = O(\ell_n^{-1})$. 
\end{lemma}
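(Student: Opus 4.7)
The plan is to apply the union bound to $\mathcal{E}^-_n$ and show each of the seven constituent ``bad'' events has probability $O(\ell_n^{-1})$, each via one of the auxiliary lemmas already proved. The key choice throughout is $c_n \leftrightarrow \ell_n$ (so $\ell_n^{-1}$ plays the role of $c_n^{-1}$), together with the specific sequences $\eps_n$, $\delta_n$, $\eps_n^*$ defined in Subsection \ref{sec:paramsNeg}. A preliminary observation is that since $\alpha \in (-1,0)$, the quantity $\alpha^2 - \alpha > 0$, which will drive the polynomial decay in several of the separation estimates below.

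For $E_n$, I would apply Lemma \ref{lem:fewRts} with $\gamma_n := 2\eps_n$ and $C := 3C_\mu$, which gives an exponential bound $\exp(-c\cdot n^{1/2})$. For $G_n$, Lemma \ref{lem:GnSmall} directly yields $O(\ell_n^{-1})$ (using that $L_n = \eps_n^{\alpha-1}$ when $\alpha \leq 0$, matching the scaling in the definition of $G_n$). For $F_n$, I would apply Lemma \ref{lem:sepRtsNeg} with $\gamma_n := 2\eps_n$ and $d_n := \delta_n$; here the main asymptotic to verify is that the exponent of $n$ in $n^2\gamma_n^{\alpha+1}d_n^{\alpha+2}$, namely $3/2 - (\alpha+2)(\alpha+3)/(4(\alpha+1))$, is strictly negative, which reduces algebraically to $\alpha^2 - \alpha > 0$ (true for $\alpha < 0$), and the $(\log n)^{\alpha+2}$ factor is absorbed since $\ell_n \leq \sqrt{\log n}$. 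For $F_n^\parallel$, apply Lemma \ref{lem:sepRtsNeg} with $\gamma_n := \eps_n^*$ and $R_n := (n\ell_n^3)^{-1/(\alpha+1)}$; direct substitution yields $n^2\gamma_n^{\alpha+1}R_n^{\alpha+1} = \ell_n^{-1}$.

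For $E_n^*$, I would split into the two inclusions defining $\mathcal{A}_n^* = \{1-\eps_n^* \leq |z| \leq 1-\eps_n^*\ell_n^{-3/(\alpha+1)}\}$. The inner-radius failure $\{|X^\da_{(\ell_n+1)}| \leq 1-\eps_n^*\}$ is handled by Lemma \ref{lem:maxCDF} with $\gamma_n := \eps_n^*$ and $l_n := \ell_n+1$, noting $n\gamma_n^{\alpha+1} = \ell_n^2$ and $l_n\log(\ell_n^2) = o(\ell_n^2)$, so the bound is $\ll e^{-c\ell_n^2} = O(\ell_n^{-1})$. The outer-radius failure $\{|X^\da_{(1)}| > 1-\eps_n^*\ell_n^{-3/(\alpha+1)}\}$ is bounded by $n\cdot \P(|X_1| > 1-\eps_n^*\ell_n^{-3/(\alpha+1)})$ and \eqref{eqn:RadCdfBds}, giving $\leq C_\mu/((\alpha+1)\ell_n)$. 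For the event $\{|X^\da_{(n^\delta)}| \leq 1-(\ell_n-3)/n\}$, I would apply Lemma \ref{lem:maxCDF} with $\gamma_n := (\ell_n-3)/n$ and $l_n := n^\delta$; the hypothesis $\delta \in (0,-\alpha)$ ensures $n\gamma_n^{\alpha+1} = (\ell_n-3)^{\alpha+1}n^{-\alpha}$ dominates $l_n\log(n\gamma_n^{\alpha+1}) = O(n^\delta\log n)$ by a power of $n^{-\alpha-\delta}$, so the resulting exponential decay $e^{-cn^{-\alpha}}$ is tiny.

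The delicate remaining piece, which I expect to be the main obstacle, is bounding $\P(H_n^*) = O(\ell_n^{-1})$ via Lemma \ref{lem:netProb}. The applicable value of $c_n$ in that lemma is dictated by the threshold in the definition of $H_n^*$, namely $c_n = \ell_n^{4/(\alpha+1)}n^{-\alpha/(\alpha+1)}$. One must first verify the hypothesis $\delta_n^{1+\alpha} = o(c_n^{-1})$, which reduces to checking that the exponent $-(\alpha^2+8\alpha+3)/(4(\alpha+1))$ of $n$ is negative, true throughout $-1<\alpha<0$ since $\alpha^2+8\alpha+3>0$ for such $\alpha$ (the roots are $-4\pm\sqrt{13}$). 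Then, inserting $|\mathcal{N}_n^*| \ll \ell_n^6 n^{(1-3\alpha)/(2(\alpha+1))}$ from \eqref{eqn:netParamsNeg} and $\delta_n^{\min\{\alpha,0\}} = \delta_n^\alpha$ into the bound of Lemma \ref{lem:netProb} yields, after collecting all factors of $n$, an exponent
\[
-\frac{\alpha^2+21\alpha+2}{4(\alpha+1)},
\]
which is negative precisely when $\alpha^2+21\alpha+2 > 0$, i.e., when $\alpha > (-21+\sqrt{433})/2 \approx -0.0955\ldots$. This is exactly the source of the constraint $\alpha > -0.095$ in the hypothesis, and gives $\P(H_n^*) \ll n^{-c_\alpha}(\log n)^{\alpha+1}\cdot\ell_n^{6+8/(\alpha+1)}$ for some $c_\alpha > 0$; since $\ell_n \leq \sqrt{\log n}$, this is $o(\ell_n^{-1})$. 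Combining the seven bounds with the union bound completes the proof.
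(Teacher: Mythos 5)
Your proof is correct and follows essentially the same route as the paper's: the same union bound over the seven events, the same lemmas with the same parameter choices (including $c_n = \ell_n^{4/(\alpha+1)}n^{-\alpha/(\alpha+1)}$ for $H_n^*$), and the same exponent computation $-\tfrac{\alpha^2+21\alpha+2}{4(\alpha+1)}$ identifying the origin of the $-0.095$ threshold. One small misstatement: $\alpha^2+8\alpha+3>0$ fails for $\alpha\in(-1,-4+\sqrt{13})$, so your parenthetical claim that the net hypothesis $\delta_n^{1+\alpha}=o(c_n^{-1})$ holds ``throughout $-1<\alpha<0$'' is false, but it does hold on the relevant range $(-0.095,0)$, so the argument is unaffected.
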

\begin{proof}	
	This proof is similar in flavor to the proof of Lemma \ref{lem:manyPairProb} above, and accordingly, we will use many of the lemmas from Subsection \ref{sec:badEventsSmall}, which were written to cover both cases $\alpha \geq 0$ and $\alpha < 0$. In view of the union bound, we will show that each of the events $E_n$, $E_n^*$, $F_n$, $F^\parallel_n$, $G_n$, $H_n^*$ and $\{|X^\da_{(n^\delta)}| \leq 1-(\ell_n-3)/n\}$ has probability on the order $O(\ell_n^{-1})$. 
	
	To see that $\P(E_n) = O(\ell_n^{-1})$, apply Lemma \ref{lem:fewRts} with $\gamma_n:= 2\eps_n =o(1)$ and $C := 3C\mu > 2.5C_\mu > \frac{2C_\mu}{\alpha + 1}$ to obtain 
	\[
	\P(E_n) \leq \exp\left(-0.5\cdot C_\mu n (2\eps_n)^{\alpha + 1}\right) = \exp\left(-2^{\alpha}C_\mu \sqrt{n}\right)\ll \frac{1}{\sqrt{\log{n}}} \leq \frac{1}{\ell_n}.
	\]
	We will use Lemmas \ref{lem:fewRts} and \ref{lem:maxCDF} to show that $\P(E_n^*) = O(\ell_n^{-1})$. Observe that
	\begin{equation}\label{eqn:EStarSmall}
	\begin{aligned}
	E_n^* &= \set{\exists i \in \set{1, \ldots, \ell_n+1}: X^\da_{(i)} \notin \mathcal{A}_n^*}\\
	&= \set{\abs{X^\da_{(1)}} > 1- \eps_n^*\ell_n^{-3/(\alpha+1)}} \cup \set{\abs{X^\da_{(\ell_n+1)}} < 1- \eps_n^*}.
	\end{aligned}
	\end{equation}
	Using the independence of the $X_i$ and Inequality \eqref{eqn:massOnEdge} from Lemma \ref{lem:fewRts} with $\gamma_n:=\eps_n^*\ell_n^{-3/(\alpha+1)}$, we see that the first event on the right has probability
	\begin{align*}
	1-\P\left(\forall i \leq n, \abs{X_i} \leq 1- \gamma_n \right)
	&= 1- \big(1-\P\left(1-\gamma_n < \abs{X_1} \leq 1 \right)\big)^n\\
	&\leq 1-\left(1-\frac{C_\mu}{\alpha + 1}\gamma_n^{\alpha+1}\right)^n\\
	&\leq n\frac{C_\mu}{\alpha + 1}\gamma_n^{\alpha+1}\\
	&= \frac{C_\mu}{\alpha + 1}\frac{1}{\ell_n},
	\end{align*}
	where the penultimate line follows from the first order approximation
	\[
	1-(1-x)^n \leq nx \text{ for $x \in (0,1)$, $n \in \mathbb{Z}^+$.}
	\]
	An upper bound on the probability of the second event on the right of \eqref{eqn:EStarSmall} can be achieved using Lemma \ref{lem:maxCDF} with $\gamma_n:= \eps_n^*$ and $l_n := \ell_n+1$ (note $n(\eps_n^*)^{\alpha+1} = \ell_n^2$ so the hypotheses are satisfied). In particular, there is a positive constant $c$ so that 
	\[
	\P\left(\abs{X^\da_{(\ell_n+1)}} < 1- \eps_n^*\right) \ll e^{-cn\gamma_n^{\alpha +1}} = e^{-c\ell_n^2} \leq \frac{1}{1 + c\ell_n^2} \ll \ell_n^{-1}.
	\]
	We conclude that $\P(E_n^*) = O(\ell_n^{-1})$ as is desired.
	
	To find upper bounds on $\P(F_n)$ and $\P(F_n^\parallel)$, we appeal to Lemma \ref{lem:sepRtsNeg}. Setting
	\[
	\gamma_n := 2\eps_n = 2\left(\frac{1}{n}\right)^{\frac{1}{2(\alpha +1)}}\quad \text{and}\quad d_n:= \delta_n = \log{n} \cdot \left(\frac{1}{n}\right)^{\frac{\alpha +3}{4(\alpha +1)}},
	\]
	we see that for large $n$, $\gamma_n, d_n \in (0,\eps/2)$, 
	\[
	n\gamma_n^{\alpha+1} = 2^{\alpha+1}\sqrt{n} = \omega(1),
	\]
	and
	\[
		n^2\gamma_n^{\alpha+1}d_n^{\alpha+2} = 2^{\alpha+1}\left(\log{n}\right)^{\alpha+2}\cdot n^{\frac{3}{2} - \frac{(\alpha +3)(\alpha+2)}{4(\alpha +1)}} \ll \frac{1}{\ell_n} = o(1),
	\]
	so \eqref{eqn:circSepNeg} implies (for some fixed positive constant $c$),
	\[
	\P(F_n) \ll \max \set{\frac{1}{\ell_n}, e^{-c2^{\alpha+1}\sqrt{n}}} \ll \frac{1}{\ell_n}.
	\]
	By the second part of Lemma \ref{lem:sepRtsNeg} with 
	\[
	\gamma_n := \eps_n^* = \left(\frac{\ell_n^2}{n}\right)^{\frac{1}{\alpha+1}}\quad\text{and}\quad R_n:=\left(\frac{1}{n\ell_n^3}\right)^{\frac{1}{\alpha+1}},
	\]
	which are in $(0,\eps/2)$ for large $n$ and that satisfy
	\[
	n\gamma_n^{\alpha+1} = \ell_n^2 = \omega(1)\quad\text{and}\quad n^2\gamma_n^{\alpha+1}R^{\alpha+1} = \frac{1}{\ell_n} = o(1), 
	\]
	 we have (for some fixed positive constant $c$)
	\[
	\P(F_n^\parallel) \ll \max\set{\frac{1}{\ell_n}, e^{-c\ell_n^2}} \ll \frac{1}{\ell_n}.
	\]
	
	Next, consider that $\P(G_n) \ll \ell_n^{-1}$ by Lemma \ref{lem:GnSmall} with 
	\[
	\eps_n:=\left(\frac{1}{n}\right)^{\frac{1}{2(\alpha+1)}} = o(1)\quad\text{and}\quad c_n:=\ell_n.
	\]
	
	To contend with $\P(H_n^*)$, we apply Lemma \ref{lem:netProb} with the nets $\mathcal{N}_n := \mathcal{N}_n^*$ of $\mathcal{A}_n^*$ that satisfy \eqref{eqn:netParamsNeg} and with
	\[
	\delta_n:= \log{n}\cdot \left(\frac{1}{n}\right)^{\frac{\alpha+3}{4(\alpha+1)}}\quad \text{and} \quad c_n:= \ell_n^{4/(\alpha+1)}n^{\frac{-\alpha}{\alpha +1}}.
	\]
	We note that for $\alpha \in (-0.095, 0)$, 
	\[
	\delta_n^{\alpha+1} = \left(\log{n}\right)^{\alpha+1}\left(\frac{1}{n}\right)^{(\alpha+3)/4} = o\left(\ell_n^{-4/(\alpha+1)}\left(\frac{1}{n}\right)^{\frac{-\alpha}{\alpha +1}}\right), 
	\]
	so the hypotheses of Lemma \ref{lem:netProb} are satisfied, and via \eqref{eqn:netConv} (and the conditions \eqref{eqn:netParamsNeg}, namely $|\mathcal{N}_n^*| \ll \ell_n^6n^{\frac{1-3\alpha}{2(\alpha+1)}}$), 
	\begin{align*}
	\P(H_n^*) &\ll \abs{\mathcal{N}^*_n} \cdot \frac{c_n^2\log\left(\frac{1}{\delta_n}\right)\delta_n^\alpha}{n}\\
	& \ll \underbrace{\ell_n^6\cdot n^{\frac{1-3\alpha}{2(\alpha+1)}}}_{\abs{\mathcal{N}_n^*}\ \rm bound} \cdot \frac{1}{n} \cdot \underbrace{\ell_n^{\frac{8}{\alpha+1}}\cdot n^{\frac{-2\alpha}{\alpha +1}}}_{c_n^2\ \rm bound} \cdot \log\left(\frac{1}{\delta_n}\right) \cdot \underbrace{\left(\log{n}\right)^\alpha \cdot n^{\frac{-\alpha(\alpha+3)}{4(\alpha+1)}}}_{\delta_n^\alpha\ \rm bound}\\
	&\ll \left(\log{n}\right)^{C_1}\cdot n^{-\frac{\alpha^2 + 21\alpha +2}{4(\alpha+1)}},
	\end{align*}
	where $C_1> 0$ is a constant. As long as the exponent on $n$ is negative, $\P(H_n^*) = O(\ell_n^{-1})$ and this is true for $\alpha \in (-0.095, 0)$.\footnote{We note that $x=-0.095$ is slightly larger than the largest root of $x^2 + 21x + 2 = 0$.}
	
	We conclude the proof of Lemma \ref{lem:badEventSmallNeg} by using Lemma \ref{lem:maxCDF} to obtain 
	\begin{equation}\label{eqn:adHocProb}
	\P\left(\abs{X^\da_{(n^\delta)}} \leq 1-\frac{\ell_n-3}{n}\right) \ll\frac{1}{\ell_n},
	\end{equation}
	for a fixed $\delta \in (0, -\alpha)$. Indeed, set $\gamma_n := \frac{\ell_n-3}{n}$ and $l_n := n^\delta$ and observe that
	\[
	l_n \cdot \log\left(n\gamma_n^{\alpha+1}\right) \ll n^\delta \cdot \log{n} = o\big(\underbrace{n^{-\alpha}\cdot (\ell_n-3)^{\alpha+1}}_{n\gamma_n^{\alpha+1}}\big),
	\]
	so for large $n$, the hypotheses of Lemma \ref{lem:maxCDF} are satisfied, and \eqref{eqn:adHocProb} follows. Note that for $\alpha \in(-1,0)$ and any constant $c>0$,
	\[
	\exp\left(-cn\gamma_n^{\alpha+1}\right) = \exp\left(-c(\ell_n-3)^{\alpha+1}n^{-\alpha}\right) \leq \frac{1}{1+ c(\ell_n-3)^{\alpha+1}n^{-\alpha}} \ll \frac{1}{\ell_n}.
	\]
\end{proof}

Together, Lemmas \ref{lem:manyPairConcNeg} and \ref{lem:badEventSmallNeg} imply Theorem \ref{thm:manyPairNeg}.

%-----------------------------------%
%-   The Proof of thm:fluctNeg     -%
%-----------------------------------%

\subsection{The proof of Theorem \ref{thm:fluctNeg}}\label{sec:fluctNegProof}
We would like to use \eqref{eqn:maxPairSharperNeg} to establish an equation like \eqref{eqn:fluctEqn} and then analyze the asymptotic behavior of the sums $\sum_{j=2}^{n}\frac{X^{\da}_{(j)}}{X^{\da}_{(1)}-X^{\da}_{(j)}}$. Unfortunately, the 
appropriate scaling (after which these sums converge to a $(2+\alpha)$-stable distribution) requires a slightly sharper asymptotic than \eqref{eqn:maxPairSharperNeg}, namely that the left-hand side of  \eqref{eqn:maxPairSharperNeg} is $o\left(n^{-\frac{3+2\alpha}{2+\alpha}}\right)$. With some care, we can achieve such a bound with high probability via Theorem \ref{thm:determ} and special case \ref{it:spCase2} of Lemma \ref{lem:nearLip} using the same types of arguments we have employed earlier in the paper. In particular, fix $\alpha \in (-0.095, 0)$, let $c_n = \omega(1)$ be a sequence of positive integers with $c_n < \sqrt{\log{n}}$, define 
\[
\eps_n := \left(\frac{1}{n}\right)^{\frac{(\alpha +1)^2}{(1-\alpha)(\alpha +2)}},\quad \delta_n := \frac{c_n}{n},\quad \eps^*_n := \left(\frac{c_n^2}{n}\right)^{1/(\alpha +1)},\quad \delta^*_n :=\left(\frac{1}{nc_n^3}\right)^{1/(\alpha + 2)},
\]
and in terms of these parameters, define as in Subsection \ref{sec:param} the annuli $\mathcal{A}_n$, $\mathcal{B}_n$, $\mathcal{A}_n^*$, and the events $E_n$, $F_n$, $F_n^*$, $G_n$. (Recall that some of the events in Subsection \ref{sec:param} concern the case $\alpha < 0$ so that Lemma \ref{lem:nearLip} applies to cases when $\alpha \geq 0$ and $\alpha <0$. This has allowed us to avoid repeated arguments in places like the present.) Finally let $\Omega_n$ be an event having probability $O(c_n^{-1})$ on whose complement conclusion \ref{item:maxPairNeg} of Theorem \ref{thm:manyPairNeg} holds for the constant $C_{\ref{thm:manyPairNeg}}> 0$ and the sequence $\ell_n:=c_n$. We note that for large $n$, our definitions of $c_n$, $\eps_n$, $\delta_n$, $\delta_n^*$ satisfy
\begin{equation}\label{eqn:negFluctParamsProps}
c_n > 0,\quad \frac{1}{n} < \delta_n \leq \min\set{\frac{\eps_n}{2}, \delta_n^*}<\frac{1}{4},\quad \eps_n^* = o(\eps_n)
\end{equation}
so special case \ref{it:spCase2} of Lemma \ref{lem:nearLip} implies the following lemma.
\begin{lemma}\label{lem:nearLipMaxNeg}
	Fix $\alpha \in (-0.095, 0)$. For large $n$, on the complement of $E_n \cup F_n \cup F_n^* \cup G_n \cup \Omega_n$, when $z, w \in B(X^{\da}_{(1)}, \frac{c_n}{2n})$,
	\begin{equation}\label{eqn:nearLipMaxNeg}
	\abs{\overline{M}_\mu(z) - \overline{M}_\mu(w)} < \abs{z-w} \left(\left(c_n+24C_\mu\right)\eps_n^{\alpha-1} + \frac{4c_n^3\eps_n^{\alpha + 2}}{\left(\delta^*_n\right)^2}\right) < 2c_n\eps_n^{\alpha-1}\abs{z-w} .
	\end{equation}
\end{lemma}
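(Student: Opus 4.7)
The plan is to obtain the first inequality as a direct invocation of special case \ref{it:spCase2} of Lemma \ref{lem:nearLip} applied with $X_j := X^\da_{(1)}$, and then to deduce the second inequality by a short algebraic estimate using the explicit forms of $\eps_n$ and $\delta_n^*$. The work therefore splits into (a) checking that the hypotheses of Lemma \ref{lem:nearLip}\ref{it:spCase2} are in force on the complement of $E_n \cup F_n \cup F_n^* \cup G_n \cup \Omega_n$, and (b) carrying out the parameter comparison needed to pass from the first bound to the second.

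To check the hypotheses, note that $\alpha \leq 0$ is given, and \eqref{eqn:negFluctParamsProps} furnishes $1/n < \delta_n \leq \delta_n^* < 1$ for large $n$. On the complement of $\Omega_n$, conclusion \ref{item:maxPairNeg} of Theorem \ref{thm:manyPairNeg} applied with $\ell_n := c_n$ gives, via \eqref{eqn:maxPairRtsInNeg}, that $|X^\da_{(1)}| \geq 1 - (c_n^2/n)^{1/(\alpha+1)} = 1 - \eps_n^*$, so $X^\da_{(1)} \in \mathcal{A}_n^*$. Since $\delta_n = c_n/n$, the ball $B(X^\da_{(1)}, c_n/(2n))$ coincides with $B(X^\da_{(1)}, \delta_n/2)$, and any $z$ in it satisfies $|z| \geq |X^\da_{(1)}| - \delta_n/2 \geq 1 - \eps_n^* - \delta_n/2 \geq 1 - \eps_n$ for large $n$, using the estimates $\eps_n^* = o(\eps_n)$ and $\delta_n = o(\eps_n)$ from \eqref{eqn:negFluctParamsProps}. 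Thus $z, w \in \mathcal{A}_n \cup B(0,1)^c$, so Lemma \ref{lem:nearLip}\ref{it:spCase2} applies and delivers the first inequality.

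For the second inequality, after dividing by $\eps_n^{\alpha-1}$ and using $c_n = \omega(1)$ so that $c_n > 24 C_\mu$ for large $n$, the task reduces to showing $\frac{4 c_n^3 \eps_n^3}{(\delta_n^*)^2} < c_n - 24 C_\mu$. Substituting $\eps_n = n^{-(\alpha+1)^2/((1-\alpha)(\alpha+2))}$ and $\delta_n^* = (n c_n^3)^{-1/(\alpha+2)}$, the left-hand side becomes a polynomial in $c_n$ times $n^{-\gamma(\alpha)}$, where $\gamma(\alpha) = (3\alpha^2 + 8\alpha + 1)/((1-\alpha)(\alpha+2))$. The numerator $3\alpha^2 + 8\alpha + 1$ has roots near $-2.535$ and $-0.132$, so $\gamma(\alpha) > 0$ throughout $(-0.132, 0)$, which comfortably contains $(-0.095, 0)$. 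Because $c_n < \sqrt{\log n}$, any power of $c_n$ is dominated by the polynomial decay in $n$, yielding the required estimate for large $n$.

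The main (mild) obstacle is parameter bookkeeping: keeping track of which constraints on $\alpha$ are imposed by which sub-estimate. The stricter bound $\alpha > -0.095$ is not needed for the present estimate (only $\alpha > -0.132$ is), so it originates elsewhere in the proof of Theorem \ref{thm:fluctNeg}, most likely in the probability bound on the event $H_n^*$ handled earlier.
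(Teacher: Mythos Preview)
Your proof is correct and follows essentially the same approach as the paper: both invoke special case \ref{it:spCase2} of Lemma \ref{lem:nearLip} after verifying that $X^\da_{(1)} \in \mathcal{A}_n^*$ and $B(X^\da_{(1)}, \delta_n/2) \subset \mathcal{A}_n \cup B(0,1)^c$, and both reduce the second inequality to checking that the exponent of $n$ in $c_n^{3+6/(\alpha+2)}\,\eps_n^3/(\delta_n^*)^2$ is negative, which amounts to $3\alpha^2 + 8\alpha + 1 > 0$, i.e.\ $\alpha > (\sqrt{13}-4)/3 \approx -0.131$. Your closing remark that the stricter constraint $\alpha > -0.095$ originates elsewhere (specifically in the bound on $\P(H_n^*)$) is also accurate.
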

\begin{proof}
	In view of \eqref{eqn:negFluctParamsProps}, for large $n$, on the complement of $E_n \cup F_n \cup F_n^* \cup G_n$ special case \ref{it:spCase2} of Lemma \ref{lem:nearLip} applies. We note that for large $n$, on the complement of $\Omega_n$,
	\[
	\abs{X^\da_{(1)}} - \frac{2c_n}{n} \geq 1-\left(\frac{c_n^2}{n}\right)^{\frac{1}{\alpha+1}} - \frac{2c_n}{n} > 1-\eps_n,
	\]
	so $B(X^{\da}_{(1)}, \frac{c_n}{2n}) \subset \mathcal{A}_n \cup B(0,1)^c$. To achieve the rightmost inequality in \eqref{eqn:nearLipMaxNeg},  observe that
	\[
	\frac{4c_n^3\eps_n^{\alpha + 2}}{\left(\delta^*_n\right)^2} = \eps_n^{\alpha-1}\cdot 4c_n^{3 + \frac{6}{\alpha+2}}\cdot n^{\frac{3\alpha^2 + 8\alpha + 1}{(\alpha-1)(\alpha+2)}},
	\]
	and the exponent on $n$ in the last factor is negative for $\alpha > \frac{1}{3}(\sqrt{13}-4)\approx -0.13$.
\end{proof}

The next lemma gives a sharper version of \eqref{eqn:maxPairSharperNeg} for large $n$ on the complement of $E_n \cup F_n \cup F_n^* \cup G_n \cup \Omega_n$.

\begin{lemma}\label{lem:maxPairSharpestNeg}
Fix $\alpha \in (-0.095, 0)$. There is a constant $C'> 0$ so that for large $n$, on the complement of $E_n \cup F_n \cup F_n^* \cup G_n \cup \Omega_n$,
\begin{equation}\label{eqn:maxPairSharpestNeg}
	\abs{W^\da_{(1)} - X^\da_{(1)} + \frac{1}{n} \frac{1}{\frac{1}{n-1}\sum_{\substack{j=2}}^n\frac{1}{X^\da_{(1)} -X^\da_{(j)}}}} < C' c_n n^{\frac{\alpha^2-3}{\alpha+2}}.
\end{equation}
Furthermore, $\P(E_n \cup F_n \cup F_n^* \cup G_n \cup \Omega_n) = O(1/c_n)$.
\end{lemma}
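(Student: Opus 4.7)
The plan is to apply Theorem \ref{thm:determ} a single time with $\xi = X^\da_{(1)}$ and $\vec\zeta = (X^\da_{(2)}, \ldots, X^\da_{(n)})$ on the complement of the ``bad'' event $E_n \cup F_n \cup F_n^* \cup G_n \cup \Omega_n$, and then compute the resulting bound using the sharp Lipschitz estimate from Lemma \ref{lem:nearLipMaxNeg}. Writing $S_n := \frac{1}{n-1}\sum_{j=2}^n\frac{1}{X^\da_{(1)} - X^\da_{(j)}}$ and $\widetilde{S}_n := \frac{1}{n-1}\sum_{j=2}^n\frac{1}{W^\da_{(1)} - X^\da_{(j)}}$, the Gauss--Lucas identity \eqref{eqn:GLPair} gives $W^\da_{(1)} - X^\da_{(1)} = -1/((n-1)\widetilde{S}_n)$, so Lemma \ref{lem:nearLipMaxNeg} combined with $|W^\da_{(1)} - X^\da_{(1)}| \leq 3/n$ (from Theorem \ref{thm:manyPairNeg}, with $\ell_n = c_n$, on $\Omega_n^c$) yields
\[
|S_n - \widetilde{S}_n| \leq 2c_n\eps_n^{\alpha-1} \cdot \frac{3}{n} = O\!\left(c_n n^{(\alpha+1)^2/(\alpha+2)-1}\right) = o(1),
\]
since the exponent is negative for $\alpha \in (-0.095,0)$. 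This transfers bounds from $\widetilde{S}_n$ to $S_n$.

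With this transfer in hand, I verify the three hypotheses of Theorem \ref{thm:determ} with, say, $C_1 = 1/4$, $C_2 = 3$, and $k_{\rm Lip} = 2c_n\eps_n^{\alpha-1}$. For \ref{item:det1}, the lower bound $|\widetilde{S}_n| \geq n/(3(n-1)) \geq 1/3$ follows from the Gauss--Lucas identity and $|W^\da_{(1)} - X^\da_{(1)}| \leq 3/n$; the upper bound $|\widetilde{S}_n| \leq 2$ follows by combining the Gauss--Lucas identity with the lower bound $|W^\da_{(1)} - X^\da_{(1)}| \geq (1-o(1))/n$, which I will extract from \eqref{eqn:rtsToCptsBd2Neg} noting that $n^{-1/(\alpha+1)} = o(1/n)$ for $-1<\alpha<0$. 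Condition \ref{item:det2} is precisely Lemma \ref{lem:nearLipMaxNeg}, once we check that the ball $B(X^\da_{(1)}, 2/(C_1(n-1))) = B(X^\da_{(1)}, 8/(n-1))$ is contained in $B(X^\da_{(1)}, c_n/(2n))$ for large $n$ (true since $c_n\to\infty$). For \ref{item:det3}, I check first that $X^\da_{(1)} \in \mathcal{A}_n^*$ via \eqref{eqn:maxPairRtsInNeg} and a short computation comparing $(c_n^2/n)^{1/(\alpha+1)}$ with $\eps_n^*$; then on $(F_n^*)^c$, $X^\da_{(1)}$ is $\delta_n^*$-separated from all other $X^\da_{(j)}$, and $\delta_n^* = (nc_n^3)^{-1/(\alpha+2)} = \omega(1/n)$.

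Theorem \ref{thm:determ} then produces a unique critical point $w^{(n)}_{X^\da_{(1)}}$ within distance $3/(2C_1(n-1)) = 6/(n-1)$ of $X^\da_{(1)}$, satisfying
\[
\left|w^{(n)}_{X^\da_{(1)}} - X^\da_{(1)} + \frac{1}{n}\frac{1}{S_n}\right| \ll \frac{c_n\eps_n^{\alpha-1}}{n^2} = c_n n^{(\alpha+1)^2/(\alpha+2) - 2} = c_n n^{(\alpha^2-3)/(\alpha+2)},
\]
using the algebraic identity $(\alpha+1)^2/(\alpha+2) - 2 = (\alpha^2-3)/(\alpha+2)$. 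Since $W^\da_{(1)}$ is a critical point of $p_n$ within $3/n \leq 6/(n-1)$ of $X^\da_{(1)}$ on $\Omega_n^c$, the uniqueness clause of Theorem \ref{thm:determ} forces $w^{(n)}_{X^\da_{(1)}} = W^\da_{(1)}$, establishing \eqref{eqn:maxPairSharpestNeg} for an appropriate $C' > 0$.

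For the probability bound, the approach parallels Lemma \ref{lem:badEventSmallNeg}: I invoke Lemma \ref{lem:fewRts} (with $\gamma_n = 2\eps_n$) to obtain $\P(E_n) = O(e^{-cn\eps_n^{\alpha+1}})$; Lemma \ref{lem:sepRtsNeg} with $\gamma_n = 2\eps_n, d_n = \delta_n = c_n/n$ to control $\P(F_n)$, which requires checking that $c_n^{\alpha+2}n^{-\alpha}\eps_n^{\alpha+1} = o(1/c_n)$; Lemma \ref{lem:sepRtsNeg} with $\gamma_n = \eps_n^*, d_n = \delta_n^*$ giving $n^2(\eps_n^*)^{\alpha+1}(\delta_n^*)^{\alpha+2} = 1/c_n$ for the first clause of $F_n^*$, together with a Chernoff argument as in \eqref{eqn:alphaNegFstarbd} for the second clause; and Lemma \ref{lem:GnSmall} (with $c_n$ here playing the role of $c_n$ there) giving $\P(G_n) \ll 1/c_n$. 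The hypothesis $\P(\Omega_n) = O(1/c_n)$ is given. The main obstacle is really the clean verification of condition \ref{item:det1} without introducing an auxiliary ``net'' event of the kind $H_n^*$: this is circumvented by the Lipschitz transfer between $S_n$ and $\widetilde{S}_n$ combined with the Gauss--Lucas identity, which lets us import the bounds $3/n \geq |W^\da_{(1)} - X^\da_{(1)}| \geq (1-o(1))/n$ straight from Theorem \ref{thm:manyPairNeg}.
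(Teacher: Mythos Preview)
Your proposal is correct and follows essentially the same route as the paper's proof: apply Theorem~\ref{thm:determ} once with $\xi = X^\da_{(1)}$, use Lemma~\ref{lem:nearLipMaxNeg} for the Lipschitz bound, extract the separation condition from $(F_n^*)^c$ after checking $X^\da_{(1)} \in \mathcal{A}_n^*$ via \eqref{eqn:maxPairRtsInNeg}, and then bound each bad event individually. Your handling of hypothesis~\ref{item:det1} is in fact more careful than the paper's: the paper bounds $|W^\da_{(1)} - X^\da_{(1)}|$ and then cites the Gauss--Lucas identity \eqref{eqn:GLPair}, which strictly speaking gives control of $\widetilde{S}_n$ (the sum evaluated at $W^\da_{(1)}$) rather than $S_n$ (the sum at $X^\da_{(1)}$); you make the missing Lipschitz transfer $|S_n - \widetilde{S}_n| = o(1)$ explicit, which is the right way to close that step.
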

\begin{proof}
	We first use Theorem \ref{thm:determ} to prove \eqref{eqn:maxPairSharpestNeg} on the complement of $E_n \cup F_n \cup F_n^* \cup G_n \cup \Omega_n$, and then, we will show that the probability of this ``bad'' event tends to zero as $n\to \infty$. Toward the first of these ends, choose $C_1, C_2 > 0$ such that $C_1 < 1 < C_2$, define $k_{Lip} := 2c_n\eps_n^{\alpha-1}\frac{n}{n-1}$, and apply Theorem \ref{thm:determ} with $\xi := X^\da_{(1)}$ and $\vec{\zeta} := (X^\da_{(2)}, \ldots, X^\da_{(n)})$. We justify the hypotheses for large $n$ on the complement of $E_n \cup F_n \cup F_n^* \cup G_n \cup \Omega_n$ as follows:
	\begin{itemize}
		\item Hypothesis \ref{item:det1} holds for large $n$ on the complement of $\Omega_n$ by combining \eqref{eqn:maxPairNeg} and \eqref{eqn:GLPair}. In particular, via the triangle inequality, \eqref{eqn:maxPairNeg} implies for large $n$
		\[
		\abs{W^\da_{(1)} - X^\da_{(1)}} < \frac{1}{n}\abs{X^\da_{(1)}} + C_{\ref{thm:manyPairNeg}}\left(\frac{1}{c_n^4n}\right)^{1/(\alpha+1)} < \frac{1}{(n-1)C_1}
		\]
		and 
		\[
		\abs{W^\da_{(1)} - X^\da_{(1)}} > \frac{1}{n}\abs{X^\da_{(1)}} - C_{\ref{thm:manyPairNeg}}\left(\frac{1}{c_n^4n}\right)^{1/(\alpha+1)} > \frac{1}{(n-1)C_2},
		\]
		so applying \eqref{eqn:GLPair} with $W = W^\da_{(1)}$ and $X_i = X^\da_{(1)}$ establishes \ref{item:det1} for large $n$, on the complement of $\Omega_n$.
		
		\item Hypothesis \ref{item:det2} holds for large $n$ on the complement of $E_n \cup F_n \cup F_n^* \cup G_n \cup \Omega_n$ by Lemma \ref{lem:nearLipMaxNeg}. (Note: For large $n$, $\frac{c_n}{2n} < \frac{2}{C_1(n-1)}$).
		
		\item Hypothesis \ref{item:det3} holds for large $n$ on the complement of $\Omega_n \cup F_n^*$ since \eqref{eqn:maxPairRtsInNeg} implies $X^\da_{(1)}  \in \mathcal{A}^*_n$ and (for $\alpha > -1$), we have $\delta_n^*>\frac{1}{nc_n^3} > \frac{3}{C_1(n-1)}$.
	\end{itemize}	
	Now, for $\alpha \in (-1, 0)$, $k_{\rm Lip} = O\left(c_n\cdot n^{\frac{(\alpha+1)^2}{(\alpha+2)}}\right) = o(n)$, so if we choose $C' > 2\cdot\frac{8(1+2C_2^2)}{C_1^3}$, the conclusion of Theorem \ref{thm:determ} yields \eqref{eqn:maxPairSharpestNeg} for large $n$, on the complement of the ``bad'' event $E_n \cup F_n \cup F_n^* \cup G_n \cup \Omega_n$.
	
	It remains to show that the ``bad'' event $E_n \cup F_n \cup F_n^* \cup G_n \cup \Omega_n$ is asymptotically negligible, which we do using a similar strategy to what we have done several times already earlier in the paper. In view of the union bound and the fact that $\P(\Omega_n) = O(c_n^{-1})$ by definition, it suffices to show that each of $E_n$, $F_n$, $F_n^*$, and $G_n$ has probability $O(c_n^{-1})$.  	
	
	To contend with $E_n$, we apply Lemma \ref{lem:fewRts}  with $\gamma_n := 2\eps_n$ and $C := 3C_\mu > 2C_\mu/(\alpha+1)$ and observe that for large $n$,
	\begin{equation}\label{eqn:nGammaStuffSmall}
	n\gamma_n^{\alpha+1} = n(2\eps_n)^{\alpha+1} \geq n^{\frac{(\alpha+1)^3}{(\alpha-1)(\alpha+2)}+1} \geq \sqrt{n}
	\end{equation}
	for $\alpha \in (-1,0)$, so
	\[
	\P(E_n) \leq e^{-(3C_\mu - 2C_\mu/(\alpha+1)) \sqrt{n}} \leq \frac{1}{1+(3C_\mu - 2C_\mu/(\alpha+1)) \sqrt{n}} \ll \frac{1}{c_n}.
	\]
	
	To bound $\P(F_n)$ and $\P(F_n^*)$, we use nearly identical reasoning to that in the proof Lemma \ref{lem:BadEventsSmallFluct} above. In particular, we start by breaking $F_n^*$ into two parts:
	\begin{align*}
		F_{n,1}^* &:= \Big\{\exists\ i\in [n]: X_i \in \mathcal{A}^*_n,\ \#\set{j\in [n],\ j \neq i: \abs{X_i-X_j} < \eps_n} > c_n^3n\eps_n^{\alpha+2} \Big\}
			\\
		F_{n,2}^* &:= \set{\exists\ i,j\in [n],\ i\neq j: X_i \in \mathcal{A}^*_n,\ \abs{X_{i}-X_{j}} \leq \delta^*_n}.
	\end{align*}
	By an identical argument to the one in the proof of Lemma \ref{lem:BadEventsSmallFluct} above, \eqref{eqn:alphaNegFstarbd} implies that for large $n$ (and a fixed $\alpha \in (-1,0)$), 
	\[
		\P(F_{n,1}^*) \leq n\cdot \P(X_1 \in \mathcal{A}_n^*) \cdot \exp\left(-c_n^4\cdot n\cdot\eps_n^{\alpha+2}\right).
	\]
	Now, $n\eps_n^{\alpha+2} = n^{\frac{-x(x+3)}{1-x}} = \omega(1)$ for $\alpha \in (-1, 0)$, and \eqref{eqn:massOnEdge} from Lemma \ref{lem:fewRts} implies
	\[
	\P(X_1 \in \mathcal{A}_n^*) \leq \frac{C_\mu}{\alpha+1}(\eps_n^*)^{1/(\alpha+1)}  = \frac{C_\mu}{\alpha+1} \cdot \frac{c_n^2}{n},
	\]
	so, continuing from above,
	\[
	\P(F_{n,1}^*) \leq n\cdot \frac{C_\mu}{\alpha+1} \cdot \frac{c_n^2}{n}\cdot \frac{1}{1+ c_n^4} \ll \frac{1}{c_n}.
	\]
	We can control $\P(F_n)$ and $\P(F_{n,2}^*)$ by invoking Lemma \ref{lem:sepRtsNeg} twice:
	\begin{itemize}
		\item First, to bound $\P(F_n)$, set $\gamma_n:=2\eps_n$ and $d_n:= \delta_n$, and observe that $n\gamma_n^{\alpha+1}  = \omega(\sqrt{n})$ (see \eqref{eqn:nGammaStuffSmall}) and that (for $-0.22\approx (\sqrt{17}-5)/4 < \alpha < 0$ which guarantees that the exponent on $n$ be negative),
		\[
		n^2\gamma_n^{\alpha+1}d_n^{\alpha + 2} = 2^{\alpha+1}c_n^{\alpha+2}n^{\frac{2\alpha^2+5\alpha+1}{(\alpha-1)(\alpha+2)}} = O(c_n^{-1}).
		\]
		
		\item Second, set  $\gamma_n:= \eps_n^*$ and $d_n := \delta_n*$, and observe that $n\gamma_n^{\alpha + 1} = c_n^2 = \omega(1)$ and
		\[
		n^2\gamma_n^{\alpha+1}d_n^{\alpha + 2} = \frac{1}{c_n},
		\]
		so \eqref{eqn:circSepNeg} implies $\P(F_{n,2}^*) = O(c_n^{-1})$ as is desired.
	\end{itemize}
	We conclude the proof by appealing to Lemma \ref{lem:GnSmall} to establish that  $\P(G_n) = O(c_n^{-1})$.		
\end{proof}

With Lemma \ref{lem:maxPairSharpestNeg} in hand, we work to demonstrate the convergence in \eqref{eqn:maxConvNeg} by way of several intermediate approximations. Our argument closely follows the strategy we used in the proof of Lemma \ref{lem:CLT} above. We start by using Lemma \ref{lem:maxPairSharpestNeg} to argue (see \eqref{eqn:fluctEqn}) that with high probability,
\begin{align*}
	&\frac{n^{\frac{3+2\alpha}{2+\alpha}}}{e^{\sqrt{-1}\arg(X^{\da}_{(1)})}}\left(W^{\da}_{(1)} - X^{\da}_{(1)}(1-n^{-1})\right)\\
		&\qquad =  \frac{e^{-\sqrt{-1}\arg(X^{\da}_{(1)})}}{\frac{1}{n-1}\sum_{j=2}^n\frac{1}{X^{\da}_{(1)}-X^{\da}_{(j)}}}\cdot \frac{n}{n-1}\left(\frac{1}{n^{1/(2+\alpha)}}\sum_{j=2}^n \frac{X^{\da}_{(j)}}{X^{\da}_{(1)}-X^{\da}_{(j)}}\right) + {\rm err}_n,
\end{align*}
where $\abs{{\rm err}_n} \leq C' c_n n^{\alpha} = o(1)$. In view of Slutsky's theorem, the convergence in \eqref{eqn:maxConvNeg} follows from the next lemma that highlights a law of large numbers and central limit theorem for the scaled sums $\frac{1}{n}\sum_{j=2}^n \frac{1}{X^{\da}_{(1)}-X^{\da}_{(j)}}$.
\begin{lemma}\label{lem:LLNCLTNeg} Fix $\alpha \in (-0.095, 0)$. We have 
	\begin{equation}\label{eqn:LLNCLT}
	e^{\sqrt{-1}\arg(X^{\da}_{(1)})}\cdot \frac{1}{n}\sum_{j=2}^n \frac{1}{X^{\da}_{(1)}-X^{\da}_{(j)}} \to  1\ \text{and}\ \frac{1}{n^{1/(2+\alpha)}}\sum_{j=2}^n \frac{X^{\da}_{(j)}}{X^{\da}_{(1)}-X^{\da}_{(j)}} \to \mathcal{H}_{2+\alpha}
	\end{equation}
	in distribution as $n\to \infty$, where $\mathcal{H}_{2+\alpha}$ is the random variable mentioned in the conclusion of Theorem \ref{thm:fluctNeg}.
\end{lemma}
\begin{proof}
	We tackle the law of large numbers first. Consider that
	\begin{align*}
		&e^{\sqrt{-1}\arg(X^{\da}_{(1)})}\cdot \frac{1}{n}\sum_{j=2}^n \frac{1}{X^{\da}_{(1)}-X^{\da}_{(j)}}\\
		&\qquad =  \frac{1}{n}\sum_{j=2}^n \frac{e^{\sqrt{-1}\arg(X^{\da}_{(1)})}}{e^{\sqrt{-1}\arg(X^{\da}_{(1)})}-X^{\da}_{(j)}} + e^{\sqrt{-1}\arg(X^{\da}_{(1)})}\left(\overline{M}_\mu(X^\da_{(1)})-\overline{M}_\mu(e^{\sqrt{-1}\arg(X^{\da}_{(1)})})\right).
	\end{align*}
	For large $n$, on the complement of $\Omega_n$, $1-\abs{X^\da_{(1)}} \leq \eps_n^* < \frac{c_n}{2n}$, so $e^{\sqrt{-1}\arg(X^{\da}_{(1)})} \in B(X^\da_{(1)}, \frac{c_n}{2n})$, so by Lemma \ref{lem:nearLipMaxNeg}, it follows that for large $n$, on the complement of $E_n \cup F_n\cup F_n^* \cup G_n \cup \Omega_n$, 
	\[
	\abs{e^{\sqrt{-1}\arg(X^{\da}_{(1)})}\left(\overline{M}_\mu(X^\da_{(1)})-\overline{M}_\mu(e^{\sqrt{-1}\arg(X^{\da}_{(1)})})\right)} \leq 2c_n\eps_n^{\alpha-1}\eps_n^* = o(1),
	\]
	so continuing from above, we have that
	\[
	e^{\sqrt{-1}\arg(X^{\da}_{(1)})}\cdot \frac{1}{n}\sum_{j=2}^n \frac{1}{X^{\da}_{(1)}-X^{\da}_{(j)}} - \frac{1}{n}\sum_{j=2}^n \frac{1}{1-e^{\sqrt{-1}\arg(X^{\da}_{(1)})}\cdot X^{\da}_{(j)}} \to 0
	\]
	in distribution as $n\to \infty$ (note $\P(E_n \cup F_n\cup F_n^* \cup G_n \cup \Omega_n) = o(1)$ by Lemma \ref{lem:maxPairSharpestNeg}). By the radial symmetry of $\mu$, $\frac{1}{n}\sum_{j=2}^n \left(1-e^{\sqrt{-1}\arg(X^{\da}_{(1)})}\cdot X^{\da}_{(j)}\right)^{-1}$ has the same distribution as 
	\[
	\frac{1}{n}\sum_{j=2}^n \frac{1}{1- X^{\da}_{(j)}} = \frac{1}{n}\sum_{j=1}^n \frac{1}{1- X_j} - \frac{1}{n}\frac{1}{1- X^{\da}_{(1)}},
	\]
	which converges in distribution to $m_\mu(1) =1$ as $n\to \infty$ by the usual law of large numbers, Lemma \ref{lem:topCSTermsSmall}, and Slutsky's theorem. One more application of Slutsky's theorem establishes the first distributional limit in \eqref{eqn:LLNCLT}. 
	
	It remains to show that $n^{-1/(2+\alpha)}\sum_{j=2}^n \frac{X^{\da}_{(j)}}{X^{\da}_{(1)}-X^{\da}_{(j)}} \to \mathcal{H}_{2+\alpha}$ in distribution as $n \to \infty$. We have
	\begin{align*}
		&\abs{\frac{1}{n^{1/(2+\alpha)}}\sum_{j=2}^n \frac{X^{\da}_{(j)}}{X^{\da}_{(1)}-X^{\da}_{(j)}} - \frac{1}{n^{1/(2+\alpha)}}\sum_{j=2}^n \frac{X^{\da}_{(j)}}{e^{\sqrt{-1}\arg(X^{\da}_{(1)})}-X^{\da}_{(j)}}}\\
		&\qquad\leq \frac{1}{n^{1/(2+\alpha)}}\sum_{j=2}^n\abs{ \frac{X^{\da}_{(j)}\left(e^{\sqrt{-1}\arg(X^{\da}_{(1)})}-X^\da_{(1)}\right)}{\left(X^{\da}_{(1)}-X^{\da}_{(j)}\right)\left(e^{\sqrt{-1}\arg(X^{\da}_{(1)})}-X^{\da}_{(j)}\right)}}\\
		&\qquad\leq  \frac{1}{n^{1/(2+\alpha)}}\cdot \frac{\eps_n^*}{\delta_n^*} \cdot \sum_{j=2}^n \frac{1}{\abs{e^{\sqrt{-1}\arg(X^{\da}_{(1)})}-X^{\da}_{(j)}}},
	\end{align*}
where the last inequality holds for large $n$ on the complement of $\Omega_n \cup F_n^*$. (Off of $\Omega_n$, $X^\da_{(1)} \in \mathcal{A}_n^*$. Hence, off of $\Omega_n \cup F_n^*$, $X^\da_{(1)}$ is within $\eps_n^*$ of $e^{\sqrt{-1}\arg(X^{\da}_{(1)})}$ and $X^\da_{(1)}$ is $\delta_n^*$-far from all other $X^\da_{(j)}$, $j \in [n]$.) Now,
\[
\frac{n}{n^{1/(2+\alpha)}}\cdot \frac{\eps_n^*}{\delta_n^*} = c_n^{5\alpha+7} \cdot n^{\frac{\alpha}{\alpha+1}} = o(1),
\]
and $n^{-1}\sum_{j=2}^n \left(\abs{e^{\sqrt{-1}\arg(X^{\da}_{(1)})}-X^{\da}_{(j)}}\right)^{-1}$ has the same distribution as
\[
\frac{1}{n}\sum_{j=2}^n \frac{1}{\abs{1-X^{\da}_{(j)}}} = \frac{1}{n}\sum_{j=1}^n \frac{1}{\abs{1-X_{j}}} - \frac{1}{n}\frac{1}{\abs{1-X^{\da}_{(j)}}},
\]
which converges in probability to $\E\left[\abs{1-X_1}^{-1}\right] < \infty$ by the law of large numbers (see Lemma \eqref{lem:moments}), Lemma \ref{lem:topCSTermsSmall}, and Slutsky's Theorem. Continuing from above with another application of Slutsky's theorem shows that
\[
\frac{1}{n^{1/(2+\alpha)}}\sum_{j=2}^n \frac{X^{\da}_{(j)}}{X^{\da}_{(1)}-X^{\da}_{(j)}} - \frac{1}{n^{1/(2+\alpha)}}\sum_{j=2}^n \frac{X^{\da}_{(j)}}{e^{\sqrt{-1}\arg(X^{\da}_{(1)})}-X^{\da}_{(j)}} \to 0
\]
in distribution as $n\to \infty$. By the rotational symmetry of $\mu$, the scaled sum $n^{-1/(2+\alpha)}\sum_{j=2}^n \frac{X^{\da}_{(j)}}{e^{\sqrt{-1}\arg(X^{\da}_{(1)})}-X^{\da}_{(j)}}$ has the same distribution as
\[
\frac{1}{n^{1/(2+\alpha)}}\sum_{j=2}^n \frac{X^{\da}_{(j)}}{1-X^{\da}_{(j)}} = \frac{1}{n^{1/(2+\alpha)}}\sum_{j=1}^n \frac{X_j}{1-X_j} - \frac{1}{n}\frac{X^{\da}_{(1)}}{1-X^{\da}_{(1)}},
\]
which converges in distribution, as $n\to \infty$ to the desired variable $\mathcal{H}_{2+\alpha}$ by the heavy-tailed CLT Corollary \ref{cor:CLTNeg}, Lemma \ref{lem:topCSTermsSmall}, and Slutsky's Theorem. One final application of Slutsky's theorem yields the second distributional limit in \eqref{eqn:LLNCLT}.
\end{proof}
The proof of Theorem \ref{thm:fluctNeg} is complete.

%%%%%%%%%%%%%%%%%%%%%%%%%%%%%%%%%%
%%        Appendices            %%
%%%%%%%%%%%%%%%%%%%%%%%%%%%%%%%%%%

\appendix

\section{Additional tools and results} \label{sec:appendix}

\begin{lemma}[Computation of $m_\mu(z)$ for radially symmetric distributions] \label{lem:StielCalc}
	Suppose $\mu$ is radially symmetric about the origin (i.e., suppose $\mu(A) = \mu(e^{\sqrt{-1}\theta}A)$ for any measurable $A \subset \mathbb{C}$ and any $\theta \in [0, 2\pi)$), and let $F_R(r)$ denote the associated radial c.d.f. Then, for any nonzero $z\in \C$ such that $\abs{z}$ is a continuity point of $F_R$, 
	\[
	m_\mu(z) = \frac{\mu(\set{x\in \C: \abs{x} \leq \abs{z}})}{z} = \frac{F_R(\abs{z})}{z}.
	\]
	In the case where $0$ is a continuity point of $F_R$, $m_\mu(0)= 0$.
\end{lemma}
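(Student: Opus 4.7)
The plan is to disintegrate $\mu$ as a product of the radial distribution $F_R$ and the uniform measure on each circle $\{|x| = r\}$, then reduce the angular part to a standard contour integral.

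For $z \neq 0$ with $\abs{z}$ a continuity point of $F_R$, I would use the radial symmetry to write
\begin{equation*}
m_\mu(z) = \int_{[0,\infty)} \left(\frac{1}{2\pi}\int_0^{2\pi} \frac{d\theta}{z - re^{\sqrt{-1}\theta}}\right) dF_R(r).
\end{equation*}
The inner integral is evaluated by the substitution $w = e^{\sqrt{-1}\theta}$, which converts it into a line integral over the unit circle of $\frac{1}{\sqrt{-1}}\cdot\frac{dw}{w(z-rw)}$. The integrand has simple poles at $w=0$ (residue $1/z$) and at $w=z/r$ (residue $-1/z$); the second pole lies inside the unit disk exactly when $r > \abs{z}$. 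The residue theorem then gives $\frac{1}{2\pi}\int_0^{2\pi}(z-re^{\sqrt{-1}\theta})^{-1}\,d\theta = 1/z$ when $r < \abs{z}$ and $0$ when $r > \abs{z}$. Since $\abs{z}$ is a continuity point of $F_R$, the boundary circle $\{r = \abs{z}\}$ carries no $F_R$-mass, so integrating in $r$ produces $m_\mu(z) = F_R(\abs{z})/z$, as claimed.

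For $z = 0$, I would appeal directly to the rotational symmetry via the rotation $x \mapsto -x$, which preserves $\mu$. When $0$ is a continuity point of $F_R$ we have $\mu(\{0\}) = 0$, so the integrand $-1/x$ is defined $\mu$-almost everywhere, and the change of variables gives $m_\mu(0) = -m_\mu(0)$, forcing $m_\mu(0) = 0$ wherever the defining integral exists. The only technical point is justifying the Fubini exchange that produces the disintegrated formula; this is immediate on the $\lambda$-a.e.\ set where the integral is absolutely convergent (which, by the opening remark in the paper, is the full domain of $m_\mu$), and for the remaining $z$ one can remove a thin annulus around $\abs{x} = \abs{z}$, apply Fubini on the complement where $(z-x)^{-1}$ is bounded, and pass to the limit using the continuity of $F_R$ at $\abs{z}$. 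I do not anticipate any deeper obstacle; the lemma is essentially a restatement of Newton's shell theorem for logarithmic potentials.
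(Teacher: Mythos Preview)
Your proof is correct and follows essentially the same strategy as the paper: disintegrate $\mu$ into its radial law $F_R$ and the uniform angular measure, evaluate the inner angular integral to get $1/z$ or $0$ according as $r<|z|$ or $r>|z|$, then integrate against $dF_R$. The only differences are cosmetic---the paper evaluates the angular integral by expanding in geometric/Laurent series and integrating term by term rather than by the residue theorem, and for $z=0$ it computes the angular integral $\int_0^{2\pi}e^{-\sqrt{-1}\theta}\,d\theta=0$ directly rather than invoking the symmetry $x\mapsto -x$.
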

\begin{proof}
	 Let $R = \abs{X}$ and $\Theta = \arg(X) \in [0, 2\pi)$ denote the radial and angular parts, respectively, of $X \sim \mu$. (Note: In this proof, it will be convenient to choose the range of $\arg(\cdot)$ to be $[0,2\pi)$ rather than $(-\pi,\pi]$.) Since $\mu$ is radially symmetric, $\Theta \sim {\rm Unif}(0, 2\pi)$ is independent of $R$. In the case where $z \neq 0$ and $\abs{z}$ is a continuity point of $F_R$, we can use polar coordinates and Laurent series to obtain
	\begin{align*}
		m_\mu(z) &= \E\left[\frac{1}{2\pi}\int_0^{2\pi}\frac{1}{z - Re^{\sqrt{-1}\theta}} \,d\theta\right]\\
		&= \frac{1}{z}\E\left[\frac{\sind{R< \abs{z}}}{2\pi}\int_0^{2\pi}\frac{1}{1 - \frac{R}{z}e^{\sqrt{-1}\theta}}\,d\theta\right] -  \frac{1}{z}\E\left[\frac{\sind{R > \abs{z}}}{2\pi}\int_0^{2\pi}\frac{\frac{z}{R}e^{-\sqrt{-1}\theta}}{1-\frac{z}{R}e^{-\sqrt{-1}\theta}}\,d\theta\right]\\
		&= \frac{1}{z}\E\left[\sind{R< \abs{z}} \sum_{j=0}^\infty\frac{1}{2\pi}\int_0^{2\pi}\left(\frac{R}{z}e^{\sqrt{-1}\theta}\right)^j\,d\theta\right]\\
		&\qquad\qquad -  \frac{1}{z}\E\left[\sind{R> \abs{z}} \sum_{j=0}^\infty\frac{z}{R}\frac{1}{2\pi}\int_0^{2\pi}e^{-\sqrt{-1}\theta}\left(\frac{z}{R}e^{-\sqrt{-1}\theta}\right)^j\,d\theta\right].
	\end{align*}
	The only nonzero integral occurs when the power on the exponential is $0$, so we obtain
	\[
	m_\mu(z) = \frac{1}{z}\E\left[\sind{R< \abs{z}}\right] 
	\] 
	as desired. Finally, observe that if $0$ is a continuity point of $F_R$, 
	\[
	m_\mu(0) = \E\left[\frac{1}{2\pi}\int_0^{2\pi} \frac{-1}{Re^{\sqrt{-1}\theta}} \,d\theta\right] =\E\left[\frac{-1}{R} \frac{1}{2\pi}\int_0^{2\pi} e^{-\sqrt{-1}\theta} \,d\theta\right] = 0.
	\]
\end{proof}
The following result establishes that when $\mu$ satisfies Assumption \ref{ass:ComplexMu}, the Cauchy--Stieltjes transform is Lipschitz continuous when $\alpha \geq 0$ and nearly Lipschitz when $-1 < \alpha < 0$ on the region $\mathbb{A}_\eps$. We direct the reader to Lemma 5.7 in \cite{KS} and Lemma 3.5 in \cite{OW2} for more general statements about the Lipschitz continuity of $m_\mu(z)$.
\begin{corollary}[Lipschitz continuity of $m_\mu(z)$ on $\mathbb{A}_\eps$]
	\label{cor:CSnice}
	Suppose $\mu$ satisfies Assumption \ref{ass:ComplexMu}, and let $F_R(r)$ denote the associated radial c.d.f. If $\alpha \geq 0$, there is a positive constant $\kappa_\mu$ so that
	\begin{equation}
		\abs{m_\mu(x)-m_\mu(y)} \leq \kappa_\mu\abs{x-y},\ \text{for $x,y \in \mathbb{A}_\eps$}.
		\label{eqn:CSnice}
	\end{equation}
	If, on the other hand, $-1 < \alpha < 0$, there is a positive constant $\kappa_\mu$ so that
	\begin{equation}
		\abs{m_\mu(x)-m_\mu(y)} \leq \kappa_\mu(1-\max{\set{\abs{x}, \abs{y}}})^\alpha\abs{x-y},\ \text{for $x,y \in \mathbb{A}_\eps$}.
		\label{eqn:CSniceNeg}
	\end{equation}
\end{corollary}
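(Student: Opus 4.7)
Here is my plan for proving Corollary \ref{cor:CSnice}.

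The plan is to leverage the explicit formula from Lemma \ref{lem:StielCalc}, which gives $m_\mu(z) = F_R(|z|)/z$ for the radially symmetric measures considered here, and then reduce the Lipschitz question to estimating differences of the radial c.d.f.\ $F_R$. Specifically, for $x, y \in \mathbb{A}_\eps$, I would write the algebraic identity
\[
m_\mu(x) - m_\mu(y) = \frac{F_R(|x|)}{x} - \frac{F_R(|y|)}{y} = \frac{F_R(|x|)(y - x)}{xy} + \frac{F_R(|x|) - F_R(|y|)}{y}.
\]
Since $1 - \eps \le |x|, |y| \le 1$ on $\mathbb{A}_\eps$, the denominators are bounded below by $1 - \eps$ and $F_R(|x|) \le 1$, so it suffices to control the numerator $|F_R(|x|) - F_R(|y|)|$.

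For that piece, I would assume without loss of generality that $|y| \le |x|$ and write
\[
|F_R(|x|) - F_R(|y|)| = \int_{|y|}^{|x|} f_R(r)\, dr \le C_\mu \int_{|y|}^{|x|} (1 - r)^\alpha\, dr,
\]
using Assumption \ref{ass:ComplexMu}\ref{ass:radial}. The final step is to split into cases on the sign of $\alpha$. When $\alpha \ge 0$, the integrand is uniformly bounded by $\eps^\alpha$ on $\mathbb{A}_\eps$, so the integral is at most $C_\mu \eps^\alpha\bigl||x| - |y|\bigr| \le C_\mu \eps^\alpha |x - y|$, yielding \eqref{eqn:CSnice} with $\kappa_\mu$ depending on $C_\mu$ and $\eps$. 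When $-1 < \alpha < 0$, the function $r \mapsto (1-r)^\alpha$ is increasing on $[1-\eps, 1)$, so it is bounded on $[|y|, |x|]$ by $(1 - |x|)^\alpha = (1 - \max\{|x|, |y|\})^\alpha$, giving
\[
|F_R(|x|) - F_R(|y|)| \le C_\mu\, (1 - \max\{|x|, |y|\})^\alpha\, |x - y|.
\]
To combine terms with a common factor of $(1 - \max\{|x|, |y|\})^\alpha$, I would use the fact that $(1 - \max\{|x|,|y|\})^\alpha \ge \eps^\alpha$ (since $\alpha < 0$) to absorb the Lipschitz-type contribution $|x-y|/(1-\eps)^2$ into the same form, at the cost of a larger $\kappa_\mu$.

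No step here is genuinely difficult, since once Lemma \ref{lem:StielCalc} converts the integral defining $m_\mu$ into the closed form $F_R(|z|)/z$, everything reduces to elementary estimates on $F_R$ using the density bound from the assumption. The only mild subtlety is the $\alpha < 0$ case, where one must be careful that the singularity of $f_R$ at $r=1$ is captured by $(1 - \max\{|x|,|y|\})^\alpha$ rather than a uniform constant, and that the seemingly ``Lipschitz'' term $F_R(|x|)(y-x)/(xy)$ gets re-expressed in the weaker form by using the lower bound $(1 - \max\{|x|,|y|\})^\alpha \ge \eps^\alpha$ on $\mathbb{A}_\eps$.
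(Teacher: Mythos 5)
Your proposal is correct and follows essentially the same route as the paper's proof: both invoke Lemma \ref{lem:StielCalc} to write $m_\mu(z)=F_R(|z|)/z$, reduce via an add-and-subtract decomposition to bounding $|F_R(|x|)-F_R(|y|)|$ by $C_\mu\int_{|y|}^{|x|}(1-r)^\alpha\,dr$, and then split on the sign of $\alpha$, using monotonicity of $(1-r)^\alpha$ in the negative case and absorbing the bounded-difference term via $(1-\max\{|x|,|y|\})^\alpha\geq 1$. The only cosmetic difference is that the paper bounds the cross term with the inequality $|x_1y_1-x_2y_2|\leq|x_1-x_2|+|y_1-y_2|$ for unit-bounded quantities rather than your explicit telescoping identity; the resulting estimates are identical.
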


\begin{proof}
	Fix $x,y\in \mathbb{A}_\eps$. Since $\mu$ is radially symmetric, we have $m_\mu(z) = \frac{1}{z}F_R(\abs{z})$, so 
	\begin{equation}
		\begin{aligned}
			\abs{m_\mu(x)-m_\mu(y)} &\leq \frac{\abs{yF_R(\abs{x}) - xF_R(\abs{y})}}{\abs{xy}}\\
			&\leq \frac{1}{(1-\eps)^2}\left(\abs{x-y} + \abs{F_R(\abs{x}) - F_R(\abs{y})}\right),
		\end{aligned}
		\label{eqn:mLip}
	\end{equation}
	where we have used that \[\abs{x_1y_1 - x_2y_2} \leq \abs{x_1-x_2} + \abs{y_1-y_2}\ \text{for $\abs{x_i}, \abs{y_i} \leq 1$}.\] Without loss of generality, assume that $\abs{x} \geq \abs{y}$. Then, by Assumption \ref{ass:ComplexMu},
	\begin{equation*}
		\big|{F_R(\abs{x}) - F_R(\abs{y})\big|} = \int_{\abs{y}}^{\abs{x}}f_R(r)\,dr \leq C_\mu\int_{\abs{y}}^{\abs{x}}(1-r)^\alpha\,dr. %\leq C_\mu\cdot \big|\abs{x} - \abs{y}\big| \leq C_\mu\abs{x-y},
	\end{equation*}
	In the case where $\alpha \geq 0$, the last integral is bounded above by
	$\abs{\abs{x} - \abs{y}} \leq \abs{x-y}$, and in the case where $\alpha \in (-1, 0)$, this integral is bounded above by
	\[
	(1-\abs{x})^\alpha \left(\abs{\abs{x} - \abs{y}}\right) \leq (1-\abs{x})^\alpha\abs{x-y}.
	\]
	Setting $\kappa_\mu = (1-\eps)^{-2} + C_\mu$ therefore establishes \eqref{eqn:CSnice} and \eqref{eqn:CSniceNeg} by way of \eqref{eqn:mLip}.
\end{proof}

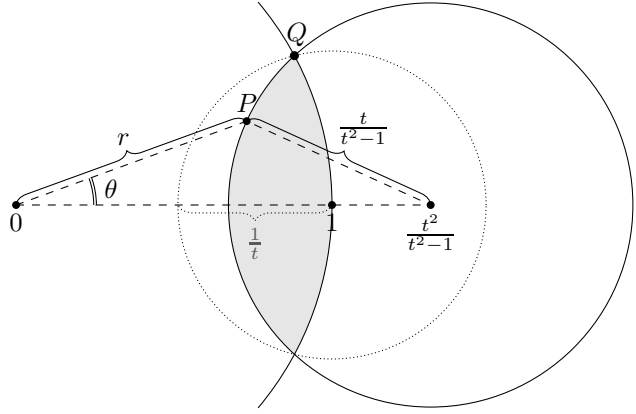
\begin{figure}
	\centering
	\begin{tikzpicture}[x=4.2cm, y=4.2cm]
		\pgfmathsetmacro{\myT}{2.05}
		\pgfmathsetmacro{\circC}{\myT*\myT/(\myT*\myT - 1)} % Center of circle we care about
		\pgfmathsetmacro{\circR}{\myT/(\myT*\myT - 1)} % Radius of circle we care about
		\pgfmathsetmacro{\maxTheta}{acos(1-1/(2*\myT*\myT))};
		\pgfmathsetmacro{\weirdTheta}{asin((1-1/(\myT*\myT))*sqrt(1-1/(4*\myT*\myT)))};
		
		% For shaded region
		\fill[color=gray!20] (0.1, 0) -- ({cos(-\maxTheta)},{sin(-\maxTheta)}) arc (-\maxTheta:\maxTheta:1) -- cycle;
		\fill[color=white] (0,0)-- ({cos(\maxTheta)},{sin(\maxTheta)}) arc ({180-\weirdTheta}:{180+\weirdTheta}:\circR) -- cycle;

		\draw ({cos(-40)},{sin(-40)}) arc (-40:40:1);
		%\draw (\circC,0) --({\myT/(\myT-1)+.2},0);
		\draw[dashed] (0,0) -- (\circC,0);
		\draw[densely dotted] (1,0) circle[radius = {1/\myT}];
		\draw (\circC,0) circle[radius = \circR];
		%\draw[densely dotted] (0,0) -- ({cos(\maxTheta)}, {sin(\maxTheta)});%--(\circC,0);

		%\draw [decorate,decoration={brace,amplitude=5pt}] ({cos(\maxTheta)}, {sin(\maxTheta)})--(\circC,0) node [midway, above right]{$\frac{t}{t^2-1}$};

		%for dashed intermediate angle and associated labels
		
		\pgfmathsetmacro{\intAngle}{20};
		\pgfmathsetmacro{\intR}{(\myT*\myT*cos(\intAngle)/(\myT*\myT-1)) - (\myT/(\myT*\myT-1)*sqrt(1-\myT*\myT*sin(\intAngle)*sin(\intAngle)))};		
		
		\draw[dashed] (0,0) -- ({\intR*cos(\intAngle)},{\intR*sin(\intAngle)})-- (\circC,0);
		\draw [decorate,decoration={brace,amplitude=5pt}] (0,0) -- ({\intR*cos(\intAngle)}, {\intR*sin(\intAngle)}) node [midway, xshift=-3pt, yshift=9pt]{$r$};
		\draw [decorate,decoration={brace,amplitude=5pt}] ({\intR*cos(\intAngle)}, {\intR*sin(\intAngle)})--(\circC,0) node [midway, above right, xshift=-4pt, yshift=3pt]{$\frac{t}{t^2-1}$};
		\draw [double] (.25,0) arc (0:\intAngle:.25);
		\draw (.25, .05) node[right] {$\theta$};
		%\draw[thick] ({\intR*cos(\intAngle)},{\intR*sin(\intAngle)}) -- ({cos(\intAngle)},{sin(\intAngle)});

		%for 
		%\draw [double] (.15,0) arc (0:\maxTheta:.15);
		%\fill[color=white] (.22, .05) circle (4pt);
		%	\draw (.27, .05) node {$\theta_{\rm max}$};

		\fill ({\intR*cos(\intAngle)},{\intR*sin(\intAngle)}) circle[radius =1.5pt] node[above]{$P$};
		%\fill ({cos(\intAngle)},{sin(\intAngle)}) circle[radius =1.5pt] node[above] {$B$};
		
		\fill (0,0) circle[radius = 1.5pt] node[below]{$0$};
		%\fill ({1-1/\myT},0) circle[radius = 1.5pt] node[below left]{$\frac{t-1}{t}$};
		\draw [decorate,decoration={brace,amplitude=6pt}, densely dotted] (1,0)--({1-1/\myT},0) node [midway, below, yshift=-5pt, color=black!65]{$\frac{1}{t}$};
		%\fill ({\myT/(\myT +1)},0) circle[radius = 1.5pt] node[below]{$\frac{t}{t+1}$};
		\fill (1,0) circle[radius = 1.5pt] node[below]{$1$};		
		\fill (\circC,0) circle[radius = 1.5pt] node[below]{$\frac{t^2}{t^2-1}$};
		%\fill ({1+1/\myT},0) circle[radius = 1.5pt] node[below]{$\frac{t+1}{t}$};
		%\fill ({\myT/(\myT -1)},0) circle[radius = 1.5pt] node[below]{$\frac{t}{t-1}$};
		\draw[fill=black] ({cos(\maxTheta)}, {sin(\maxTheta)}) circle[radius=1.5pt] node[above, xshift=1pt]{$Q$};
	\end{tikzpicture}
	\caption{For $t> 1$, this graphic depicts $\mathcal{R}_t(r,\theta)$, the region of integration involved our justification of \eqref{eqn:tailSharp}. When $X$ lies in the shaded subset of the unit disk, we have $|X^{-1} -1| \leq t^{-1}$. The solid circle with center ${t^2}/(t^2-1)$ and radius ${t}/(t^2-1)$ is the image under $z\mapsto z^{-1}$ of the dotted circle with center $1$ and radius $t^{-1}$. This means $X^{-1}$ lies interior to the dotted circle precisely when $X$ lies interior to the solid circle. One can apply the law of cosines to the dashed triangle to relate the distance $r$ to the angle $\theta$ for any point $P$ on the solid circle.\label{fig:tailRegion}}
\end{figure}

\begin{lemma}\label{lem:heavyTail}
	Suppose $\mu$ satisfies Assumption \ref{ass:ComplexMu} with $-1 < \alpha < 0$, let $X \sim \mu$, and define
	\[
	\mathbb{A}_{{\eps}/{2}} := \set{z\in \mathbb{C}: 1-\frac{\eps}{2} \leq \abs{z} \leq 1}.
	\]
	Then, if $t > \max\set{2/\eps, 1/(1-\eps)}$, we have
	\begin{equation}\label{eqn:tailUpper}
 \sup_{z \in \mathbb{A}_{{\eps}/{2}}}\P\left(\abs{\frac{1}{z-X}} \geq t\right) \leq \frac{2^{\alpha+2}C_\mu}{\pi(\alpha+1)(1-\eps)}t^{-(2 + \alpha)}.
	\end{equation}
	Furthermore, if  the radial density $f_R(r)$ that is guaranteed by Assumption \ref{ass:ComplexMu} satisfies $\lim_{r\to1^-}\frac{f_R(r)}{(1-r)^\alpha} = C$ for a constant $C>0$, then
	\begin{equation}\label{eqn:tailSharp}
		\begin{aligned}
	\lim_{t\to \infty}\left[t^{2 + \alpha}\cdot \P\left(\abs{\frac{X}{1-X}} \geq t\right)\right] &= \frac{C}{\pi(\alpha + 1)}\int_0^1(1-u^2)^{(1+\alpha)/2}\,du\\
	&= \frac{C}{2\pi(\alpha+2)}\int_{-\pi/2}^{\pi/2}\left(\cos\theta\right)^\alpha\,d\theta,
	\end{aligned}
	\end{equation}
	and for any Borel set $D \subset \mathbb{S}^1 := \set{z \in \mathbb{C}: \abs{z} = 1}$, 
	\begin{equation}\label{eqn:tailAngle}
	\lim_{t\to \infty}\P\left(\left.\frac{X}{1-X}\middle/\abs{\frac{X}{1-X}}\right.\in D\ \Bigg\vert\ \abs{\frac{X}{1-X}} \geq t\right) = \frac{\int_{\arg(D) \cap (-\pi/2, \pi/2)}  \left(\cos\theta\right)^\alpha d\theta}{\int_{-\pi/2}^{\pi/2}\left(\cos\theta\right)^\alpha d\theta}.
	\end{equation}
	(Note: Here, we use the convention that $\arg(D) := \set{\arg(z): z \in D}$ is the image of $D$ under $\arg(\cdot)$.)
\end{lemma}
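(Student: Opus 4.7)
My plan is to prove the three displayed assertions using a rescaling argument that zooms in on a neighborhood of $X = 1$.

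\textit{Step 1 (upper bound \eqref{eqn:tailUpper}).} Since $\abs{z-X}^{-1}\geq t$ iff $X \in B(z, 1/t)$, I must bound $\mu(B(z, 1/t))$ uniformly in $z \in \mathbb{A}_{\eps/2}$. The condition $t > \max\set{2/\eps, 1/(1-\eps)}$ forces $B(z, 1/t) \subset \mathbb{A}_\eps$, so $f_\mu(w) \leq C_\mu(1-\abs{w})^\alpha/(2\pi(1-\eps))$ throughout. Passing to polar coordinates and writing $u = 1-\abs{w}$, the arc length of $\{\abs{w} = 1-u\} \cap B(z, 1/t)$ is bounded by $\pi/t$ (via $2\arcsin(x) \leq \pi x$), while the $u$-integral ranges over a subinterval of $[0, 2/t]$ of length at most $2/t$. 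Since $u \mapsto u^{\alpha+1}$ is concave on $[0,\infty)$ (as $\alpha+1 \in (0,1)$), I bound $\int u^\alpha\,du \leq (2/t)^{\alpha+1}/(\alpha+1)$, giving the claimed $t^{-(2+\alpha)}$ decay with a constant of the stated form.

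\textit{Step 2 (sharp asymptotic \eqref{eqn:tailSharp}).} Figure \ref{fig:tailRegion} shows that $\abs{X/(1-X)} \geq t$ iff $X \in D_t$, the disk centered at $t^2/(t^2-1)$ with radius $t/(t^2-1)$ (the image under $z\mapsto 1/z$ of $B(1, 1/t)$). Substituting $w = 1 + \xi/t$, the rescaled region $R_t := \set{\xi : 1+\xi/t \in D_t \cap \overline{B(0,1)}}$ converges to the half-disk $H := \set{\abs{\xi}\leq 1,\ \Re\xi \leq 0}$, and $\abs{1+\xi/t}^2 = 1 + 2\Re(\xi)/t + O(t^{-2})$ yields $t(1-\abs{w})\to-\Re(\xi)$ on compact subsets of the interior of $H$. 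Combining with $f_R(r)/(1-r)^\alpha\to C$, I obtain $t^\alpha f_\mu(1+\xi/t)\to C(-\Re\xi)^\alpha/(2\pi)$ pointwise on the interior of $H$, and dominated convergence gives
\[
t^{\alpha+2}\P(X\in D_t) = \int_{R_t} t^\alpha f_\mu(1+\xi/t)\,dA(\xi) \longrightarrow \frac{C}{2\pi}\int_H (-\Re\xi)^\alpha\,dA(\xi).
\]
In polar coordinates $\xi = \rho e^{\sqrt{-1}\psi}$ with $\psi \in [\pi/2, 3\pi/2]$, the $\rho$-integral gives $1/(\alpha+2)$ and (via $\phi = \pi-\psi$) the $\psi$-integral becomes $\int_{-\pi/2}^{\pi/2}\cos^\alpha\phi\,d\phi$, producing the second form of the stated constant; the first form follows from the beta-function identity $(\alpha+2)\int_0^{\pi/2}\cos^{\alpha+2}\theta\,d\theta = (\alpha+1)\int_0^{\pi/2}\cos^\alpha\theta\,d\theta$.

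\textit{Step 3 (angular distribution \eqref{eqn:tailAngle}).} With the same rescaling, $Y = X/(1-X) = 1/(1-X) - 1 = -t/\xi - 1$, so $\arg(Y) = \pi - \arg(\xi) + O(1/t)$. Repeating Step 2 with the additional indicator $\ind{\arg(Y)\in\arg(D)}$ and normalizing by \eqref{eqn:tailSharp} yields the limiting conditional density. Specifically, in $\xi$-polar coordinates on $H$, the density $(-\Re\xi)^\alpha$ factors as $\rho^\alpha(-\cos\psi)^\alpha$; integrating $\rho$ out against the Jacobian leaves a marginal density on $\psi\in[\pi/2,3\pi/2]$ proportional to $(-\cos\psi)^\alpha$, and pushing forward under $\phi = \pi-\psi$ gives the density $\cos^\alpha\phi$ on $[-\pi/2,\pi/2]$, which matches \eqref{eqn:tailAngle}.

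\textit{Main obstacle.} The technical heart is producing an integrable majorant to justify dominated convergence in Step 2. On the bulk $\{-\Re\xi \geq 2/t\} \cap R_t$, the identity $1-\abs{w}^2 = -2\Re\xi/t - \abs{\xi}^2/t^2$ combined with $1+\abs{w} \leq 2$ gives $t(1-\abs{w}) \geq -\Re\xi/2$, yielding $t^\alpha f_\mu(1+\xi/t) \leq 2^{-\alpha}C_\mu(-\Re\xi)^\alpha/(2\pi(1-\eps))$, integrable on $H$ since $\alpha > -1$. On the shrinking strip $\{-\Re\xi < 2/t\} \cap R_t$, which in $w$-coordinates is a narrow sliver of $B(1, 1/t)$ adjacent to the unit circle, a direct calculation using the parametrization $w = 1 - \delta + \sqrt{-1}\, y$ followed by $y = \sqrt{2\delta}\sin\phi$ gives $\mu(\text{sliver}) \lesssim t^{-(2\alpha+3)}$ (the density singularity is integrable against the shrinking area), so the strip contribution to $t^{\alpha+2}\P(X\in D_t)$ is $O(t^{-(\alpha+1)}) = o(1)$ and can be absorbed into the error.
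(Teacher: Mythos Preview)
Your rescaling approach via $w = 1 + \xi/t$ for Steps 1 and 2 is correct and is genuinely different from the paper's method. The paper works throughout in polar coordinates $(r,\theta)$ centered at the origin: it parametrizes the region $\{|X^{-1}-1|\leq t^{-1}\}$ explicitly as $r_{\min}(\theta)\leq r\leq 1$, $|\theta|\leq\theta_{\max}$ (via the law of cosines on the dashed triangle in Figure~\ref{fig:tailRegion}), substitutes $u=t\theta$, and passes to the limit with DCT. Your blow-up identifies the limiting region as the half-disk $H=\{|\xi|\leq 1,\ \Re\xi\leq 0\}$ and the limiting integrand as $\frac{C}{2\pi}(-\Re\xi)^\alpha$ directly, which is cleaner and makes the appearance of $\int_{-\pi/2}^{\pi/2}\cos^\alpha\phi\,d\phi$ transparent. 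Your majorant argument is also sound (minor point: you need $-\Re\xi\geq |\xi|^2/t$, so the threshold should be something like $4/t$ rather than $2/t$, but this is cosmetic). For Step 1 your arc-length bound $\pi/t$ via $\arcsin x\leq \tfrac{\pi}{2}x$ in fact yields the constant $2^{\alpha}C_\mu/((1-\eps)(\alpha+1))$, which is smaller than the stated $2^{\alpha+2}C_\mu/(\pi(\alpha+1)(1-\eps))$, so the inequality follows a fortiori.

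Step 3, however, has a genuine gap. Your identification $\arg(Y)=\pi-\arg(\xi)+O(1/t)$ is correct, and for a set $D$ whose argument-boundary has Lebesgue measure zero your DCT argument goes through. But the lemma asserts the limit for \emph{every} Borel $D\subset\mathbb{S}^1$, and for such $D$ the indicator $\ind{\arg(-t/\xi-1)\in\arg(D)}$ need not converge pointwise a.e., so DCT does not apply directly. The paper spends considerable effort on this: it first proves the limit for $\arg(D)=[0,\beta]$ by an explicit parametrization of the region $\mathcal{R}_{t,\beta}$ (Figure~\ref{fig:tailAngleRegion}), then extends to finite unions of intervals, then to arbitrary open $\arg(D)\subset(-\pi/2,\pi/2)$ by producing a \emph{summable} bound $\lesssim(\gamma_n-\beta_n)$ on the contribution of each interval (so that DCT applies to the countable sum), and finally passes to all Borel sets by inner/outer approximation with closed/open sets, using H\"older to control $\int_{\mathcal{O}_D\setminus\mathcal{C}_D}\cos^\alpha\theta\,d\theta$. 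Your rescaling gives the right limit on continuity sets (hence weak convergence of the conditional angular laws), but upgrading to setwise convergence on all Borel sets still requires this regularity/approximation step, which your proposal omits.
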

\begin{proof}
	We first establish \eqref{eqn:tailUpper} in which $z$ is allowed to range over all of $\mathbb{A}_{\eps/2}$, and then we refine our approximations in the case $\lim_{r\to1^-}\frac{f_R(r)}{(1-r)^\alpha} = C$ to achieve \eqref{eqn:tailSharp} and \eqref{eqn:tailAngle} for $z = 1$. At present, fix $z \in \mathbb{A}_{\eps/2}$. Since $\mu$ is radially symmetric, we have $\P\left(\abs{z-X} \leq t^{-1}\right) = \P\left(\abs{\abs{z}-X} \leq t^{-1}\right)$, and in view of the estimate in Figure \ref{fig:avoid1}, for $t > (1-\eps)^{-1}$, 
	\[
	\P\left(\abs{\frac{1}{z-X}} \geq t\right) \leq \P\left(-\frac{2}{t\abs{z}} \leq \theta \leq \frac{2}{t\abs{z}}\ \text{and}\ \abs{z} - \frac{1}{t} \leq \abs{X} \leq \abs{z} + \frac{1}{t} \right), 
	\]
	where $\theta = \arg(X)$. 
	After passing to polar coordinates, and appealing to symmetry across the $x$-axis, we obtain for $t > \max\set{2/\eps,\ (1-\eps)^{-1}}$,
	\begin{align*}
		\P\left(\abs{\frac{1}{z-X}} \geq t\right) &\leq 2\int_0^{\frac{2}{t\abs{z}}}\int_{\abs{z}-1/t}^{\min\set{\abs{z}+1/t,1}}\frac{1}{2\pi}f_R(r)\,dr\,d\theta\\
		&\leq \frac{C_\mu}{\pi}\int_0^{\frac{2}{t\abs{z}}}\int_{\abs{z}-1/t}^{\min\set{\abs{z}+1/t,1}}(1-r)^\alpha\,dr\,d\theta.
	\end{align*}
	(Note that for $z \in \mathbb{A}_{\eps/2}$ and $t > 2/\eps$, $\abs{z-X}\leq 1/t$ implies $X \in \mathbb{A}_\eps$, so we can use the density $f_R(r)$ in the integrand.) Since $\alpha < 0$, the integrand is increasing in $r$, so for $t>\max\set{2/\eps, (1-\eps)^{-1}}$, we can set the bounds of integration on the inner integral to $1 - 2/t$ and $1$, which yields
	\[
		\P\left(\abs{\frac{1}{z-X}} \geq t\right) \leq \frac{C_\mu}{\pi}\int_0^{\frac{2}{t\abs{z}}}\int_{1-2/t}^{1}(1-r)^\alpha\,dr\,d\theta = \frac{2^{\alpha +2}C_\mu}{\pi(\alpha + 1)\abs{z}}\cdot t^{-(2+\alpha)}.
	\]
	Equation \eqref{eqn:tailUpper} follows since this bound holds for any $z \in \mathbb{A}_{\eps/2}$. 
	
	We now compute the limit in \ref{eqn:tailSharp}.  Observe that for $t > 1/\eps$, 
	\begin{equation}\label{eqn:tailInt}
	\P\left(\abs{\frac{X}{1-X}} \geq t\right) = \P\left(\abs{X^{-1} -1} \leq t^{-1}\right) = \iint_{\mathcal{R}_t(r,\theta)}\frac{1}{2\pi}f_R(r)\,dr\,d\theta,
	\end{equation}
	where $\mathcal{R}_t(r,\theta)$ is the shaded region in Figure \ref{fig:tailRegion}. One can find the polar representation of any point $P$ on the ``inner'' boundary of $\mathcal{R}_t(r,\theta)$ by applying the law of cosines to the dashed triangle in Figure \ref{fig:tailRegion} and using the quadratic formula to solve for $r$. In particular, after substituting $1-\cos^2\theta$ for $\sin^2\theta$ under the radical,  
	\[
		r = \frac{t^2\cos{\theta}}{t^2-1} - \frac{t}{t^2-1}\sqrt{1-t^2\sin^2{\theta}},
	\]
	so for $t>1/\eps$, the region $\mathcal{R}_t(r,\theta)$ can be described in polar coordinates by the inequalities
	\begin{equation}\label{eqn:Rbds}
	\begin{aligned}
		\abs{\theta} \leq \theta_{\rm max} &:=\arccos\left(1-\frac{1}{2t^2}\right)\ \text{and} \\
		r_{\rm min}(\theta)&:=\frac{t^2\cos{\theta}}{t^2-1} - \frac{t}{t^2-1}\sqrt{1-t^2\sin^2{\theta}} \leq r \leq 1.
	\end{aligned}
	\end{equation}
	(To easily obtain $\theta$ when $P=Q$ in Figure \ref{fig:tailRegion}, one can apply the law of cosines to the isosceles triangle formed with corners $0$, $Q$, and $1$.)
	
	We will approximate the integral in \eqref{eqn:tailInt} above and below using the hypothesis $\lim_{r\to1^-}\frac{f_R(r)}{(1-r)^\alpha} = C$. Let $\eta \in (0, C)$ be given, and define $T_\eta > 2/\eps$ such that $t > T_\eta$ implies $\abs{\frac{f_R(r)}{(1-r)^\alpha}-C} < \eta$ for $(r,\theta) \in \mathcal{R}_t(r,\theta)$. Finding such a $T_\eta$ is possible because $\mathcal{R}_t(r,\theta) \subset \set{(r,\theta): 1-t^{-1} \leq r \leq 1,\ \theta \in (-\pi,\pi]}$ for $t > 1$ (see also Figure \ref{fig:tailRegion}). Then, for $t > T_\eta$, \eqref{eqn:tailInt} yields
	\begin{equation}\label{eqn:tailUpperEqns}
	\begin{aligned}
		\P\left(\abs{\frac{X}{1-X}} \geq t\right) &=\int_{-\theta_{\rm max}}^{\theta_{\rm max}} \int_{r_{\rm min}(\theta)}^1\frac{1}{2\pi}f_R(r)\,dr\,d\theta\\
		& \leq \frac{(C + \eta)}{2\pi}\int_{-\theta_{\rm max}}^{\theta_{\rm max}} \int_{r_{\rm min}(\theta)}^1(1-r)^\alpha\,dr\,d\theta\\
		& = \frac{(C + \eta)}{\pi(\alpha+1 )}\int_{0}^{\theta_{\rm max}} (1-r_{\rm min}(\theta))^{\alpha+1}\,d\theta\\
		& = \frac{(C + \eta)}{\pi(\alpha+1 )}\frac{1}{t^{2+\alpha}}\int_{0}^{t\theta_{\rm max}} (t-t\cdot r_{\rm min}(u/t))^{\alpha+1}\,du
	\end{aligned}
	\end{equation}
	where the penultimate equality follows since $r_{\rm min}(\theta)$ is an even function of $\theta$ and to achieve the last line, we have employed the substitution $u = t\theta$. For $t > T_\eta$ and $u \in [0, t\theta_{\rm max}]$, we have\footnote{We have used that $1-\cos(x) \leq x^2/2$ and $\sin(x) \leq x$ for $x \geq 0$.}
	\begin{equation}\label{eqn:tailIntegrandBd}
	0\leq t(1-r_{\rm min}(u/t)) = \frac{t^3(1-\cos(u/t))-t}{t^2-1} + \frac{t^2}{t^2-1}\sqrt{1-t^2\sin^2(u/t)}\leq \frac{\frac{tu^2}{2} + t^2}{t^2-1},
	\end{equation}
	which is bounded by an absolute constant (e.g.\ $2$) for large $t$. In view of the fact that $t\theta_{\rm max} \to 1$ as $t\to \infty$ and $t(1-r_{\rm min}(u/t)) \to \sqrt{1-u^2}$ pointwise as as $t \to \infty$, Lebesgue's dominated convergence theorem implies
	\begin{equation}\label{eqn:limsupUpperTail}
		\limsup_{n\to \infty}\left[t_n^{2 + \alpha}\cdot \P\left(\abs{\frac{X}{1-X}} \geq t_n\right)\right] \leq  \frac{C + \eta}{\pi(\alpha + 1)}\int_0^1(1-u^2)^{(\alpha+1)/2}\,du,
	\end{equation}
	for any positive real sequence $t_n$ that tends to $\infty$ as $n\to \infty$.

	On the other hand, if we approximate $f_R(r)$ below by $(C-\eta)(1-r)^\alpha$ for $t > T_\eta$, and otherwise employ the same steps as we did in \eqref{eqn:tailUpperEqns}, Lebesgue's dominated convergence theorem implies 
	\[
	\liminf_{n\to \infty}\left[t_n^{2 + \alpha}\cdot \P\left(\abs{\frac{X}{1-X}} \geq t_n\right)\right] \geq  \frac{C - \eta}{\pi(\alpha + 1)}\int_0^1(1-u^2)^{(\alpha+1)/2}\,du,
	\]
	for any positive real sequence $t_n$ that grows to $\infty$ with $n$. Combining this inequality with \eqref{eqn:limsupUpperTail} establishes \eqref{eqn:tailSharp} since $\eta \in (0, C)$ and the sequence $t_n$ are arbitrary.
	
	It remains to prove \eqref{eqn:tailAngle}, which gives the limiting angular density of $\frac{X}{1-X}$ conditioned on $\abs{\frac{X}{1-X}} \geq t$ as $t\to \infty$. We first establish the result for Borel sets $D \subset \mathbb{S}^1$ such that $\arg(D) = [0, \beta]$, where $0 \leq \beta \leq \frac{\pi}{2}$. Then, we'll prove it for arbitrary open subsets of $\mathbb{S}^1$ by approximating these with countable disjoint unions $\cup_{i=1}^\infty\mathcal{O}_i \subset \mathbb{S}^1$, where $\arg\left(\mathcal{O}_i\right)$ are open intervals. Finally, we'll extend \eqref{eqn:tailAngle} to arbitrary Borel subsets of the unit circle by approximating them above (resp.\ below) by open (resp.\ closed) subsets of $\mathbb{S}^1$.
	
	To start, fix $\beta\in [0, \pi/2]$ and consider $D\subset\mathbb{S}^1$ for which $\arg(D) = [0, \beta]$. Observe that for $t > 1/\eps$,
	\begin{equation}\label{eqn:tailAngleInt}
		\P\left(\left.\frac{X}{1-X}\middle/\abs{\frac{X}{1-X}}\right. \in D\ \text{and}\ \abs{\frac{X}{1-X}} \geq t\right) = \iint_{\mathcal{R}_{t,\beta}(r,\theta)}\frac{1}{2\pi}f_R(r)\,dr\,d\theta,
	\end{equation}
	where $\mathcal{R}_{t,\beta}(r,\theta)$ is the dark-gray region in Figure \ref{fig:tailAngleRegion}.
	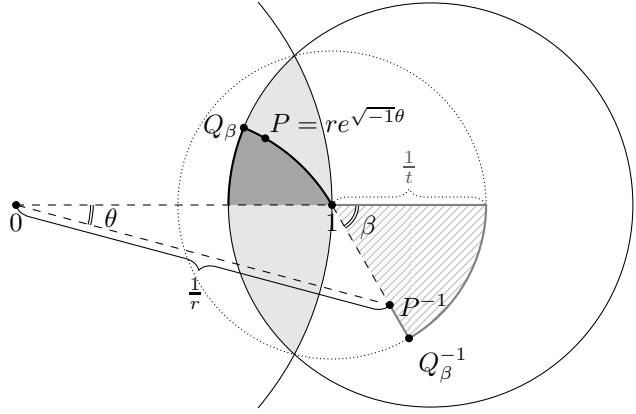
\begin{figure}
		\centering
		\begin{tikzpicture}[x=4.2cm, y=4.2cm]
			\pgfmathsetmacro{\myT}{2.05}
			\pgfmathsetmacro{\circC}{\myT*\myT/(\myT*\myT - 1)} % Center of larger circle
			\pgfmathsetmacro{\circR}{\myT/(\myT*\myT - 1)} % Radius of larger circle
			\pgfmathsetmacro{\maxTheta}{acos(1-1/(2*\myT*\myT))};
			\pgfmathsetmacro{\weirdTheta}{asin((1-1/(\myT*\myT))*sqrt(1-1/(4*\myT*\myT)))};
			
			% For lighter shaded region
			\fill[color=gray!20] (0.1, 0) -- ({cos(-\maxTheta)},{sin(-\maxTheta)}) arc (-\maxTheta:\maxTheta:1) -- cycle;
			\fill[color=white] (0,0)-- ({cos(\maxTheta)},{sin(\maxTheta)}) arc ({180-\weirdTheta}:{180+\weirdTheta}:\circR) -- cycle;

			\draw ({cos(-40)},{sin(-40)}) arc (-40:40:1);
			\draw[densely dotted] (1,0) circle[radius = {1/\myT}];
			\draw (\circC,0) circle[radius = \circR];

			% ---- Angular density regions --------------------
			
			\pgfmathsetmacro{\myBeta}{60} % Sector angle beta
			\pgfmathsetmacro{\IntPInv}{0.75} % How far $P^{-1}$ is from 1 as a percentage of the line segment.
			\pgfmathsetmacro{\IntTheta}{asin(\IntPInv/\myT*sin(\myBeta)/sqrt(1+\IntPInv*\IntPInv/(\myT*\myT) + 2*\IntPInv/\myT*cos(\myBeta)))} %arg of $P$
			\pgfmathsetmacro{\ThetaBeta}{asin(sin(\myBeta)/\myT/sqrt(1+1/(\myT*\myT)+ 2*cos(\myBeta)/\myT))}  %arg of $Q_\beta$

			% Parallel-hatched sector and its outline
			\fill[pattern = north east lines, pattern color =gray!50] (1,0) -- ({1+1/(\myT)},0) arc (0:-\myBeta:{1/(\myT)}) --  cycle;
			\draw [double] (1.08,0) arc (0:-\myBeta:0.08);
			\draw (1.06, -.07) node[right] {$\beta$};			
			\draw[thick, color=gray] (1,0) -- ({1+1/\myT},0) arc (0:-\myBeta:{1/\myT})--({1+\IntPInv/\myT*cos(-\myBeta)}, {\IntPInv/\myT*sin(-\myBeta)}) ;			
			\fill ({1+cos(\myBeta)/\myT}, {-sin(\myBeta)/\myT}) circle[radius = 1.5pt] node[below right]{$Q_\beta^{-1}$};	
					
			% Dark gray image under $z \mapsto 1/z$ of parallel-hatched sector
			\fill[domain=0:\ThetaBeta, color=gray!70] plot({\x}:{cos(\x) + cot(-\myBeta)*sin(\x)}) -- (\ThetaBeta:{cos(\ThetaBeta) + cot(-\myBeta)*sin(\ThetaBeta)}) node[xshift = 1.5pt, yshift= 0pt, left, color=black]{$Q_\beta$} arc ({180-asin( sin(\ThetaBeta)*(\myT*\myT-1)/\myT*(cos(\ThetaBeta) + cot(-\myBeta)*sin(\ThetaBeta)))}:180:{\myT/(\myT*\myT-1)})--(1,0);
			\fill (\ThetaBeta:{cos(\ThetaBeta) + cot(-\myBeta)*sin(\ThetaBeta)}) circle[radius = 1.5pt];
			\draw[domain=0:\ThetaBeta, thick, color=black] plot({\x}:{cos(\x) + cot(-\myBeta)*sin(\x)}) -- (\ThetaBeta:{cos(\ThetaBeta) + cot(-\myBeta)*sin(\ThetaBeta)}) arc  ({180-asin( sin(\ThetaBeta)*(\myT*\myT-1)/\myT*(cos(\ThetaBeta) + cot(-\myBeta)*sin(\ThetaBeta)))}:180:{\myT/(\myT*\myT-1)});			
			\fill ({\IntTheta}:{cos(\IntTheta) + cot(-\myBeta)*sin(\IntTheta)}) circle[radius = 1.5pt] node[above right, xshift=-2pt, yshift = -1pt]{$P = re^{\sqrt{-1}\theta}$};
			
			%---------------------------------------------------
			
			% 0, 1 and 1/t labels			
			\fill (0,0) circle[radius = 1.5pt] node[below]{$0$};
			\draw [decorate,decoration={brace,amplitude=6pt}, densely dotted] (1,0)--({1+1/\myT},0) node [midway, above, yshift=5pt, color=black!65]{$\frac{1}{t}$};
			\fill (1,0) circle[radius = 1.5pt] node[below]{$1$};		
		
			% Dashed triangle 
			\draw[dashed] (0,0) -- (1,0) -- ({1+\IntPInv*cos(\myBeta)/\myT}, {-\IntPInv*sin(\myBeta)/\myT}) -- cycle;
			\draw [double] (.24,0) arc (0:-\IntTheta:0.24);
			\draw (0.25, -.04) node[right] {$\theta$};			
		
			\draw [decorate,decoration={brace,amplitude=6pt}] ({1+\IntPInv*cos(\myBeta)/\myT}, {-\IntPInv*sin(\myBeta)/\myT})  --(0,0) node [midway, below, xshift=-3pt, yshift = -5pt]{$\frac{1}{r}$};
			
			\fill ({1+\IntPInv*cos(\myBeta)/\myT}, {-\IntPInv*sin(\myBeta)/\myT}) circle[radius = 1.5pt] node[right, xshift=0pt, yshift = 0pt]{$P^{-1}$};			
		\end{tikzpicture}
		\caption{For $t> 1$, this graphic depicts $\mathcal{R}_{t,\beta}(r,\theta)$, the region of integration involved our justification of \eqref{eqn:tailAngle}. The shaded intersection of the three circles is the same as in Figure \ref{fig:tailRegion} above, and the dark gray subset is the image under $z \mapsto 1/z$ of the sector with parallel hatching. For a fixed $\beta \in[0, \pi/2]$, when $X$ lies in the dark gray subset, we have $|X^{-1} -1| \leq t^{-1}$ and $\arg(X^{-1}-1) \in [-\beta,0]$, which means that $|X/(1-X)| \geq t$ and $\arg(X/(1-X)) \in [0,\beta]$. One can apply the law of sines to the dashed triangle to relate the modulus $r$ to the angle $\theta$ for any point $P=re^{\sqrt{-1}\theta}$ on the upper-right boundary of the dark gray subset. \label{fig:tailAngleRegion}}
	\end{figure}
	By applying the law of sines to the dashed triangle in Figure \ref{fig:tailAngleRegion}, we obtain the following polar equation for the upper-right boundary of $\mathcal{R}_{t,\beta}(r, \theta)$ when $0 < \beta \leq \pi/2$:
	\begin{equation}\label{eqn:rBetaDef}
	r_\beta(\theta) := \frac{\sin(\beta-\theta)}{\sin(\pi-\beta)} = \cos(\theta)-\cot(\beta)\sin(\theta),
	\end{equation}
	where
	\begin{equation}\label{eqn:thetaBetaDef}
	0 \leq \theta \leq \theta_ \beta := \arcsin\left(\frac{\sin(\beta)}{t\sqrt{1+t^{-2} + 2t^{-1}\cos(\beta)}}\right).
	\end{equation}
	The upper bound on $\theta$ follows after applying the laws of cosines and sines to the dashed triangle in Figure \ref{fig:tailAngleRegion} when $P^{-1} = Q_\beta^{-1}$ to obtain
	\[
		\frac{1}{r^2} = 1 + \frac{1}{t^2} - \frac{2}{t}\cos(\pi - \beta)\quad \text{and}\quad \frac{\sin(\theta)}{\frac{1}{t}} = \frac{\sin(\pi-\beta)}{\frac{1}{r}}
	\]
	and solving the rightmost equation for $\theta$.
	
	When $\beta = 0$, $\mathcal{R}_{t,\beta}(r,\theta)$ is a line segment that has Lebesgue measure zero, so \eqref{eqn:tailAngle} holds when $\arg(D)=[0,0] = \set{0}$. We now find upper and lower bounds on the integral in \eqref{eqn:tailAngleInt} when $\beta \in (0,\pi/2]$. Making use of $\lim_{r \to 1^-}\frac{f_R(r)}{(1-r)^\alpha} = C$, let $\eta \in (0, C)$ be given, and define $T_\eta > 2/\eps$ so that $t > T_\eta$ implies $\abs{\frac{f_R(r)}{(1-r)^\alpha} - C} < \eta$ for $(r,\theta) \in \mathcal{R}_{t,\beta}(r, \theta)$. Then, for $t > T_\eta$, we have the following upper bound on \eqref{eqn:tailAngleInt}:
	 \begin{align}
	 		&\P\left(\left.\frac{X}{1-X}\middle/\abs{\frac{X}{1-X}}\right. \in D\ \text{and}\ \abs{\frac{X}{1-X}} \geq t\right) \notag\\
	 		&\qquad = \int_0^{\theta_\beta}\int_{r_{\rm min}(\theta)}^{r_\beta(\theta)}\frac{1}{2\pi}f_R(r)\,dr\,d\theta\notag\\
	 		&\qquad \leq \frac{C + \eta}{2\pi(\alpha +1)}\int_0^{\theta_\beta}(1-r_{\rm min}(\theta))^{\alpha+1} - (1-r_\beta(\theta))^{\alpha+1}\,d\theta \notag\\
	 		&\qquad = \frac{C+\eta}{2\pi(\alpha +1) t^{2+\alpha}}\int_0^{t\theta_\beta}(t-t\cdot r_{\rm min}(u/t))^{\alpha+1} - (t-t\cdot r_\beta(u/t))^{\alpha + 1}\,du, \label{eqn:tailAngleUpInt1}
	 \end{align}
 where $r_{\rm min}(\theta)$ is defined as in \eqref{eqn:Rbds}, and the last equality follows after the substitution $u = t\theta$. For $u \in [0, t\theta_\beta]$ and $t > 2$, we have\footnote{We have used that $1-\cos(x) \leq \frac{x^2}{2}$ and $\sin(x) \leq x$ for $x \geq 0$.}
 \begin{equation*}
 0 \leq t(1-r_\beta(u/t)) = t(1-\cos(u/t)) + \cot(\beta)\cdot t\sin(u/t) \leq \frac{u^2}{t} + \cot{\beta}\cdot u
 \end{equation*}
 and (see \eqref{eqn:tailIntegrandBd})
 \[
 0\leq t(1-r_{\rm min}(u/t)) \leq \frac{t\cdot \frac{u^2}{2} + t^2}{t^2-1},
 \]
 and we also have $t\theta_\beta \to \sin(\beta)$ as $t\to \infty$.  It follows that for large $t$, the integrand of the integral in \eqref{eqn:tailAngleUpInt1} is bounded by an absolute constant, and Lebesgue's dominated convergence theorem implies
\begin{equation}\label{eqn:tailAngLimSup}
\begin{aligned}
&\limsup_{n\to \infty}\left(t_n^{2+\alpha}\cdot\P\left(\left.\frac{X}{1-X}\middle/\abs{\frac{X}{1-X}}\right. \in D\ \text{and}\ \abs{\frac{X}{1-X}} \geq t_n\right)\right)\\
&\quad\leq \frac{C+\eta}{2\pi (\alpha+1)}\int_0^{\sin(\beta)}\lim_{n\to\infty}\left((t_n-t_n r_{\rm min}(u/t_n))^{\alpha+1} - (t_n-t_n r_\beta(u/t_n))^{\alpha + 1}\right)du\\
&\quad= \frac{C+\eta}{2\pi (\alpha+1)}\int_0^{\sin(\beta)}(1-u^2)^{(1+\alpha)/2} - \left(\cot(\beta)u\right)^{1+\alpha}\,du\\
&\quad= \frac{C+\eta}{2\pi (\alpha+2)}\int_0^\beta (\cos{\theta})^\alpha\,d\theta,
\end{aligned}
\end{equation}
where $t_n$ is any positive real sequence that grows to $\infty$ with $n$. The last equality follows by substituting $u = \sin\theta$ and using integration by parts to establish
\[\int_0^\beta (\cos{\theta})^{\alpha + 2}\,d\theta = \frac{\alpha+1}{\alpha+2}\int_0^\beta (\cos{\theta})^\alpha\,d\theta + \frac{1}{\alpha +2} \sin{\beta}(\cos{\beta})^{\alpha+1}.
\]
We note that the rightmost term equals $\int_0^\beta (\cot(\beta)\sin{\theta})^{1+\alpha}\cos{\theta}\,d\theta$. 

Using the lower bound $f_R(r) \geq (C-\eta)(1-r)^\alpha$ for $(r,
\theta) \in \mathcal{R}_{t,\beta}(r,\theta)$, we can employ reasoning nearly identical to that just above to obtain
\begin{align*}
	\liminf_{n\to \infty}\left(t_n^{2+\alpha}\cdot\P\left(\left.\frac{X}{1-X}\middle/\abs{\frac{X}{1-X}}\right. \in D,\ \abs{\frac{X}{1-X}} \geq t_n\right)\right) \geq \frac{(C-\eta)\int_0^\beta (\cos{\theta})^\alpha\, d\theta}{2\pi (\alpha+2)},
\end{align*}
for any positive sequence $t_n$ growing to infinity with $n$. Combining this with \eqref{eqn:tailAngLimSup} establishes
\[
\lim_{t\to \infty}\left(t^{2+\alpha}\cdot\P\left(\left.\frac{X}{1-X}\middle/\abs{\frac{X}{1-X}}\right. \in D,\ \abs{\frac{X}{1-X}} \geq t\right)\right) = \frac{C\int_0^\beta (\cos{\theta})^\alpha\,d\theta}{2\pi (\alpha+2)}
\]
since $\eta \in(0, C)$ and the sequence $t_n$ are arbitrary. In view of \eqref{eqn:tailSharp} and the definition of conditional probability, \eqref{eqn:tailAngle} holds for sets $D \subset \mathbb{S}^1$ such that $\arg(D)=[0,\beta]$ for a fixed $\beta \in (0,\pi/2]$.

So far, we have established \eqref{eqn:tailAngle} when $\arg(D) = [0, \beta]$ for $\beta \in [0, \pi/2]$. We note that if $\arg(D) = \set{x}$ for some $x \in [0, \pi/2]$, then the limit in \eqref{eqn:tailAngle} is $0$ because $\mu$ has a density with respect to the Lebesgue measure on $\mathcal{R}_t(r,\theta)$ for large enough $t$. By the additivity of $\P(\cdot)$ and the limit sum law, this means \eqref{eqn:tailAngle} also holds for $D \subset \mathbb{S}^1$ such that $\arg(D) = (\beta, \gamma)$, where $0 \leq \beta < \gamma \leq \pi/2$. We now extend the result to the situation where $\arg(D) \subset (0, \pi/2)$ is an arbitrary open set in the standard topology on $\mathbb{R}$. If $\arg(D) \subset (0, \pi/2)$ is the disjoint union of finitely many open intervals, the result follows by the additivity of $\P(\cdot)$ and the limit sum law, so it suffices to consider the case where there is a sequence of disjoint open intervals $(\beta_n, \gamma_n) \subset (0, \pi/2)$, $n \geq 1$, such that $\arg(D) = \cup_{n=1}^\infty (\beta_n, \gamma_n)$. By countable additivity of $\P(\cdot)$, we have
\begin{equation}\label{eqn:tailAngleOpenApprox}
\begin{aligned}
&\lim_{t\to \infty}\left(t^{2+\alpha}\cdot\P\left(\left.\frac{X}{1-X}\middle/\abs{\frac{X}{1-X}}\right. \in D\ \text{and}\ \abs{\frac{X}{1-X}} \geq t\right)\right)\\
&\hspace*{1cm}=\lim_{t\to \infty}\sum_{n=1}^\infty t^{2+\alpha}\cdot\P\left(\arg\left(\frac{X}{1-X}\right) \in (\beta_n, \gamma_n),\ \text{and}\ \abs{\frac{X}{1-X}} \geq t\right),
\end{aligned}
\end{equation}
so the desired result follows if we can justify switching the sum and the limit as $t \to \infty$. To do so, we will show that independently of $t$, the summands are bounded above by a non-negative sequence $g_n$ satisfying $\sum_{n=1}^\infty g_n < \infty$ and employ the Lebesgue dominated convergence theorem. To that end, fix $t > 1/\eps$ and $n \geq 1$, and consider that
\begin{equation}\label{eqn:LebDomStart}
\begin{aligned}
	&t^{2+\alpha}\cdot\P\left(\arg\left(\frac{X}{1-X}\right) \in (\beta_n, \gamma_n),\ \text{and}\ \abs{\frac{X}{1-X}} \geq t\right)\\
	&\quad= t^{2+\alpha}\left(\iint_{\mathcal{R}_{t,\beta_n}(r,\theta)}\frac{1}{2\pi}f_R(r)\,dr\,d\theta-\iint_{\mathcal{R}_{t,\gamma_n}(r,\theta)}\frac{1}{2\pi}f_R(r)\,dr\,d\theta\right)\\
	&\quad\leq \int_{\theta_{\beta_n}}^{\theta_{\gamma_n}}\int_{r_{\rm min}(\theta)}^1\frac{C_\mu t^{2+\alpha}}{2\pi}(1-r)^\alpha\,dr\,d\theta + \int_{0}^{\theta_{\beta_n}}\int_{r_{\beta_n}(\theta)}^{r_{\gamma_n}(\theta)}\frac{C_\mu t^{2+\alpha}}{2\pi}(1-r)^\alpha\,dr\,d\theta,
\end{aligned}
\end{equation}
where $\mathcal{R}_{t, \beta_n}(r, \theta)$, $\mathcal{R}_{t, \gamma_n}(r, \theta)$, $\theta_{\beta_n}$, $\theta_{\gamma_n}$, $r_{\rm min}(\theta)$, $r_{\beta_n}(\theta)$, $r_{\gamma_n}(\theta)$ are defined in the obvious way as above (see Figures \ref{fig:tailRegion} and \ref{fig:tailAngleRegion} and Equations \eqref{eqn:Rbds},\eqref{eqn:rBetaDef}, and \eqref{eqn:thetaBetaDef}). We bound the two integrals on the right separately. For $t > 1/\eps$, and $n \geq 1$, we have 
\[
	\int_{\theta_{\beta_n}}^{\theta_{\gamma_n}}\int_{r_{\rm min}(\theta)}^1\frac{C_\mu t^{2+\alpha}}{2\pi}(1-r)^\alpha\,dr\,d\theta = \frac{C_\mu}{2\pi(1+\alpha)}	\int_{t\theta_{\beta_n}}^{t\theta_{\gamma_n}}(t-t r_{\rm min}(u/t))^{1+\alpha}\,du,
\]
where we have evaluated the inner integral with respect to $r$ and then employed the substitution $u = t\theta$. In view of \eqref{eqn:tailIntegrandBd}, there is an absolute constant $\widetilde{C} > 0$ and a $T>0$ so that if $t > T$, the integrand is bounded by $\widetilde{C}$, so for $t > \max\set{1/\eps, T}$ and $n \geq 1$, we have
\[
\int_{\theta_{\beta_n}}^{\theta_{\gamma_n}}\int_{r_{\rm min}(\theta)}^1\frac{C_\mu t^{2+\alpha}}{2\pi}(1-r)^\alpha\,dr\,d\theta \leq \frac{C_\mu\widetilde{C}}{2\pi(1+\alpha)}\cdot (t\theta_{\gamma_n}-t\theta_{\beta_n}).
\]
Since $x \mapsto \arcsin(x)$ is Lipschitz continuous on $[0, 1/2]$ and for $t \geq 2$, 
\[
\sup_{\beta \in [0,\pi/2]}\left(\frac{\sin(\beta)}{t\sqrt{1 + t^{-2} + 2t^{-1}\cos(\beta)}}\right) \leq \frac{1}{2}, 
\] 
there is a constant $C'_\mu > 0$ so that $t> \max\set{\eps^{-1}, T, 2}$ implies
\begin{align*}
&\int_{\theta_{\beta_n}}^{\theta_{\gamma_n}}\int_{r_{\rm min}(\theta)}^1\frac{C_\mu t^{2+\alpha}}{2\pi}(1-r)^\alpha\,dr\,d\theta\\
&\qquad\leq C'_\mu \left(\frac{\sin(\gamma_n)}{\sqrt{1 + t^{-2} + 2t^{-1}\cos(\gamma_n)}}- \frac{\sin(\beta_n)}{\sqrt{1 + t^{-2} + 2t^{-1}\cos(\beta_n)}}\right)\\
&\qquad \leq C'_\mu\abs{\sin(\gamma_n) - \sin(\beta_n)}\\
&\hspace*{2cm} + C'_\mu\abs{\frac{1}{\sqrt{1 + t^{-2} + 2t^{-1}\cos(\gamma_n)}}-\frac{1}{\sqrt{1 + t^{-2} + 2t^{-1}\cos(\beta_n)}}},
\end{align*}
where the last inequality follows because \[\abs{w_1z_1 - w_2z_2} \leq \abs{w_1-w_2} + \abs{z_1 -z_2}\] for complex numbers $w_1,w_2, z_1, z_2$ of modulus at most $1$. Now, $x \mapsto 1/\sqrt{x}$ is Lipschitz continuous with constant $1/2$ on $[1, \infty)$, so for  $t> \max\set{\eps^{-1}, T, 2}$, we have
\begin{equation}\label{eqn:DCTBound1}
\begin{aligned}
	&\int_{\theta_{\beta_n}}^{\theta_{\gamma_n}}\int_{r_{\rm min}(\theta)}^1\frac{C_\mu t^{2+\alpha}}{2\pi}(1-r)^\alpha\,dr\,d\theta\\
	&\qquad \leq C'_\mu\abs{\sin(\gamma_n) - \sin(\beta_n)} + \frac{C'_\mu}{2}\cdot \frac{2}{t}\abs{\cos(\gamma_n) - \cos(\beta_n)}\\
	&\qquad\leq 2C_\mu'\big(\gamma_n-\beta_n\big),
\end{aligned}
\end{equation}
where the last line follows because both of $x\mapsto \sin(x)$ and $x\mapsto \cos(x)$ are Lipschitz continuous with constant $1$.

We now turn our attention to the second integral in the last line of \eqref{eqn:LebDomStart}. For $t > 1/\eps$ and $n \geq 1$, we have
\begin{align*}
&\int_{0}^{\theta_{\beta_n}}\int_{r_{\beta_n}(\theta)}^{r_{\gamma_n}(\theta)}\frac{C_\mu t^{2+\alpha}}{2\pi}(1-r)^\alpha\,dr\,d\theta\\
&\quad= \frac{C_\mu}{2\pi(1+\alpha)} \int_{0}^{t\theta_{\beta_n}}u^{1+\alpha}\left(\left(\frac{t-t\cdot r_{\beta_n}(u/t)}{u}\right)^{1+\alpha} - \left(\frac{t-t\cdot r_{\gamma_n}(u/t)}{u}\right)^{1+\alpha}\right)du,
\end{align*}
where we have evaluated the inner integral with respect to $r$ and then employed the substitution $u= t\theta$. For $0 \leq u \leq t\theta_{\beta_n}$ (note $t\theta_{\beta_n} \leq t\theta_{\gamma_n} \leq t\arcsin(1/2) = t\pi/6$),
\[
\min\set{\frac{t-t\cdot r_{\beta_n}(u/t)}{u},\ \frac{t-t\cdot r_{\gamma_n}(u/t)}{u}} \geq \cot(\beta_n)\cdot\frac{\sin(u/t)}{u/t} \geq \frac{1}{2}\cot(\beta_n),
\]
so in view of the fact that $x \mapsto x^{1+\alpha}$ is Lipschitz continuous with constant $(\alpha + 1)\left(\frac{1}{2}\cot(\beta_n)\right)^\alpha$ on the interval $[\cot(\beta_n)/2, \infty)$, we have
\begin{align*}
	&\int_{0}^{\theta_{\beta_n}}\int_{r_{\beta_n}(\theta)}^{r_{\gamma_n}(\theta)}\frac{C_\mu t^{2+\alpha}}{2\pi}(1-r)^\alpha\,dr\,d\theta\\
	&\quad\leq \frac{C_\mu(\cot(\beta_n))^\alpha}{2^{1+\alpha}\pi} \int_{0}^{t\theta_{\beta_n}}u^{1+\alpha}\left(\frac{t-t\cdot r_{\beta_n}(u/t)}{u} - \frac{t-t\cdot r_{\gamma_n}(u/t)}{u}\right)du\\
	&\quad= \frac{C_\mu(\cot(\beta_n))^\alpha}{2^{1+\alpha}\pi} \int_{0}^{t\theta_{\beta_n}}u^{1+\alpha}\frac{\sin(u/t)}{u/t}\left(\cot(\beta_n) - \cot(\gamma_n)\right)du\\
	&\quad\leq \frac{C_\mu(\cot(\beta_n))^\alpha}{2^{1+\alpha}\pi(2+\alpha)}\left(\cot(\beta_n) - \cot(\gamma_n)\right)\left(t\theta_{\beta_n}\right)^{2+\alpha}.
\end{align*}
Based on the definition of $\theta_\beta$ (see \eqref{eqn:thetaBetaDef}), there is an $S > 0$ so that for any $\beta \in [0, \pi/2]$, $t > S$ implies $t\theta_\beta \leq 2\sin(\beta)$. It is also true that on $[\beta_n, \gamma_n]$, $x\mapsto \cot(x)$ is Lipschitz continuous with constant $\csc^2(\beta_n)$, so continuing from above, we have that for $t > \max\set{S, \eps^{-1}}$ and $n \geq 1$,
\begin{align*}
	\int_{0}^{\theta_{\beta_n}}\int_{r_{\beta_n}(\theta)}^{r_{\gamma_n}(\theta)}\frac{C_\mu t^{2+\alpha}}{2\pi}(1-r)^\alpha\,dr\,d\theta &\leq \frac{C_\mu(\cot(\beta_n))^\alpha}{2^{1+\alpha}\pi(2+\alpha)}\cdot\frac{(\gamma_n-\beta_n)}{\sin^2(\beta_n)}\cdot (2\sin(\beta_n))^{2+\alpha}\\
	&\leq C''_\mu(\gamma_n-\beta_n),
\end{align*}
where $C''_\mu$ is a constant depending only on $\mu$. Combining this inequality with \eqref{eqn:LebDomStart} and \eqref{eqn:DCTBound1} establishes that for $t > \max\set{T, S, \eps^{-1}, 2}$ and $n\geq 1$,
\[
t^{2+\alpha}\cdot\P\left(\arg\left(\frac{X}{1-X}\right) \in (\beta_n, \gamma_n),\ \text{and}\ \abs{\frac{X}{1-X}} \geq t\right) \leq (2C_\mu' + C_\mu'')(\gamma_n - \beta_n).
\]
Since $(\beta_n, \gamma_n) \subset (0, \pi/2)$, $n \geq 1$, are disjoint intervals, $\sum_{n=1}^\infty (2C_\mu' + C_\mu'')(\gamma_n - \beta_n) < \infty$, and via Lebesgue's dominated convergence theorem, we can interchange the limit and sum in \eqref{eqn:tailAngleOpenApprox} to obtain \eqref{eqn:tailAngle} for $D\subset \mathbb{S}^1$ where $\arg(D) \subset (0, \pi/2)$ is open. By symmetry, the  additivity of $\P(\cdot)$, and the limit sum law, it follows that \eqref{eqn:tailAngle} holds for $D \subset \mathbb{S}^1$ where $\arg(D) \subset (-\pi/2, \pi/2)$ is open in the standard topology on $\mathbb{R}$. (Recall that \eqref{eqn:tailAngle} also holds when $\arg(D) = \set{0}$.)

Now, if $D$ is an arbitrary open subset of the unit circle (here, we assume $\mathbb{S}^1$ inherits the subspace topology from $\mathbb{C}$), we can write $D$ as the disjoint union $D = \mathcal{O}_D \cup D_0$ where\footnote{Here, $\arg^{-1}(\cdot)$ denotes the preimage of $\cdot$ under $\arg$.}
\begin{align*}
	\mathcal{O}_D &:= D \cap \arg^{-1}\big((-\pi/2, \pi/2)\big)\\
	D_0 &:= D \cap \arg^{-1}\big((-\pi, -\pi/2] \cup [\pi/2, \pi]\big).
\end{align*}
Since $z \mapsto \arg(z)$ is a topological isomorphism\footnote{Here, we assume $\{e^{\sqrt{-1}\theta}: -\pi/2 < \theta < \pi/2\}$, respectively  $(-\pi/2, \pi/2)$, has the subspace topology inherited from $\mathbb{C}$, respectively $\mathbb{R}$.} from the open set $\{e^{\sqrt{-1}\theta}: -\pi/2 < \theta < \pi/2\} \subset \mathbb{S}^1$ to the open set $(-\pi/2, \pi/2) \subset \mathbb{R}$, it follows that $\mathcal{O}_D$ is open in $\mathbb{S}^1$ and its image, $\arg(\mathcal{O}_D) \subset (-\pi/2, \pi/2)$, is open in $\mathbb{R}$. Consequently, the reasoning in the preceding paragraphs implies that \eqref{eqn:tailAngle} holds with $\mathcal{O}_D$ in place of $D$. By the monotonicity and additivity of $\P(\cdot)$, we have for $t > \eps^{-1}$,
\begin{equation}\label{eqn:D0Bound}
\begin{split}
0 &\leq\P\left(\left.\frac{X}{1-X}\middle/\abs{\frac{X}{1-X}}\right. \in D_0,\ \abs{\frac{X}{1-X}} \geq t\right)\\
&\leq \P\left(\abs{\frac{X}{1-X}} \geq t\right) - \P\left(\arg\left(\frac{X}{1-X}\right) \in (-\pi/2, \pi/2),\ \abs{\frac{X}{1-X}} \geq t\right),
\end{split}
\end{equation}
so dividing by $\P\left(\abs{\frac{X}{1-X}} \geq t\right)$ and taking $t \to \infty$ establishes
\eqref{eqn:tailAngle} for $D_0$ in place of $D$. (Both sides of \eqref{eqn:tailAngle} are $0$ when evaluated at $D_0$; note that $\arg(D_0) \cap (-\pi/2, \pi/2) = \emptyset$.) It follows from the additivity of $\P(\cdot)$ and the limit sum law that \eqref{eqn:tailAngle} is true for arbitrary open (in the subspace topology on $\mathbb{S}^1$) subsets $D \subset \mathbb{S}^1$. 

We conclude the proof of Lemma \ref{lem:heavyTail} by establishing \eqref{eqn:tailAngle} for closed subsets of $\mathbb{S}^1$ and then by approximating arbitrary Borel subsets of $\mathbb{S}^1$ by open and closed subsets of $\mathbb{S}^1$. To that end, first observe that if $\mathcal{C}$ is closed in $\mathbb{S}^1$, then we can write $\mathcal{C} = \mathbb{S}^1\setminus\mathcal{C}^c$, where $\mathbb{S}^1$ and $\mathcal{C}^c$ are open in $\mathbb{S}^1$, and we can apply \eqref{eqn:tailAngle} with each of $\mathbb{S}^1$ and $\mathbb{C}^c$ in place of $D$. In view of the additivity of $\P(\cdot)$, the limit sum law, and the fact that $\arg(\mathcal{C}) = \arg(\mathbb{S}^1) \setminus \arg(\mathcal{C}^c)$, we conclude that \eqref{eqn:tailAngle} is also true with the arbitrary closed set $\mathcal{C}\subset \mathbb{S}^1$ in place of $D$. We are now ready to tackle the case where $D \subset \mathbb{S}^1$ is an arbitrary Borel set.

Suppose $D \subset \mathbb{S}^1$ is an arbitrary Borel set, and let $\set{t_n}_{n=1}^\infty$ be an arbitrary positive sequence tending to infinity. We can write $D$ as the disjoint union $D =  D_+ \cup D_0$, where 
\begin{align*}
	D_+ &:= D \cap \arg^{-1}\big((-\pi/2, \pi/2)\big)\\
	D_0 &:= D \cap \arg^{-1}\big((-\pi, -\pi/2] \cup [\pi/2, \pi]\big).
\end{align*}
Since $z \mapsto \arg(z)$ is a topological isomorphism from the open set $\{e^{\sqrt{-1}\theta}: -\pi/2 < \theta < \pi/2\} \subset \mathbb{S}^1$ to the open set $(-\pi/2, \pi/2) \subset \mathbb{R}$, it follows that $\arg(D_+) \subset (-\pi/2, \pi/2)$ is a Borel subset of $\mathbb{R}$.  Consequently, for any $\eta > 0$, we can find a closed set $\mathcal{C}_D$ and an open set $\mathcal{O}_D$ for which $\mathcal{C}_D \subset \arg(D_+) \subset \mathcal{O}_D \subset (-\pi/2, \pi/2)$ and the Lebesgue measure of $\mathcal{O}_D \setminus \mathcal{C}_D$ is less than $\eta$ (see e.g.\ Theorem 12.3 on page 184 of \cite{Bill}). We have 
\[
\arg^{-1}(\mathcal{C}_D) \subset D_+ \subset \arg^{-1}(\mathcal{O}_D),
\]
where $\arg^{-1}(\mathcal{C}_D)$ is closed in $\mathbb{S}^1$ and $\arg^{-1}(\mathcal{O}_D)$ is open in $\mathbb{S}^1$. Applying \eqref{eqn:tailAngle} with each of $\arg^{-1}(\mathcal{C}_D)$ and $\arg^{-1}(\mathcal{O}_D)$ in place of $D$, and appealing to the monotonicity of $\mathbb{P}(\cdot)$ establishes that
\begin{equation}\label{eqn:DPlusProb}
\begin{split}
0 &\leq \abs{\limsup_{n\to \infty}\P\left(\left.\frac{X}{1-X}\middle/\abs{\frac{X}{1-X}}\right.\in D_+\ \Bigg\vert\ \abs{\frac{X}{1-X}} \geq t_n\right) - \frac{\int_{\arg(D_+)}  \left(\cos\theta\right)^\alpha d\theta}{\int_{-\pi/2}^{\pi/2}\left(\cos\theta\right)^\alpha d\theta}}\\
&\leq \frac{\int_{\mathcal{O}_D \setminus \mathcal{C}_D}  \left(\cos\theta\right)^\alpha d\theta}{\int_{-\pi/2}^{\pi/2}\left(\cos\theta\right)^\alpha d\theta}.
\end{split}
\end{equation}
Now, $-1 < \alpha < 0$, so we can find $p_\alpha, q_\alpha \in (1, \infty)$ such that $-1 < \alpha\cdot p_\alpha < 0$ and $1/p_\alpha + 1/q_\alpha = 1$. It follows from H\"{o}lder's inequality that 
\[
\int_{\mathcal{O}_D \setminus \mathcal{C}_D}  \left(\cos\theta\right)^\alpha d\theta \leq  \left(\int_{-\pi/2}^{\pi/2}  \left(\cos\theta\right)^{\alpha p_\alpha} d\theta\right)^{1/p_\alpha} \cdot \left(\lambda_{\mathbb{R}}(\mathcal{O}_D \setminus \mathcal{C}_D)\right)^{1/q_\alpha}  < C_\alpha \cdot \eta^{1/q_\alpha},
\]
where $\lambda_{\mathbb{R}}$ denotes the Lebesque measure on $\mathbb{R}$, and $C_\alpha$ is a positive constant depending on $\alpha$. Since $\eta > 0$ and $t_n \to \infty$ are arbitrary and the $\limsup_{n\to \infty}$ may be replaced with $\liminf_{n\to \infty}$ in \eqref{eqn:DPlusProb}, we conclude that \eqref{eqn:tailAngle} holds with $D_+$ in place of $D$. By the monotonicity and additivity of $\P(\cdot)$, we have \eqref{eqn:D0Bound} for $t > 1/\eps$, so dividing by $\P\left(\abs{\frac{X}{1-X}} \geq t\right)$ and taking $t \to \infty$ establishes \eqref{eqn:tailAngle} for $D_0$ in place of $D$. By the additivity of $\P(\cdot)$ and the limit sum law, we conclude that \eqref{eqn:tailAngle} is true for arbitrary Borel sets $D \subset \mathbb{S}^1$.
\end{proof}

\begin{corollary}
	\label{cor:CLTNeg}
	Suppose $\mu$ satisfies Assumption \ref{ass:ComplexMu} with $-1 < \alpha < 0$ and let $X_1, X_2, \ldots$ be independent complex-valued random variables with common distribution $\mu$. Then,
	\[
	\frac{1}{n^{1/(2+\alpha)}}\sum_{j=1}^n \frac{X_j}{1-X_j} \to \mathcal{H}_{2+\alpha},
	\]
	in distribution as $n\to \infty$, where $\mathcal{H}_{2+\alpha}$ is a complex-valued random variable, which, by identifying $\mathbb{C}$ with $\mathbb{R}^2$, is a multivariate $(2+\alpha)$-stable random vector with spectral measure given by the right-hand side of \eqref{eqn:tailAngle}. 
\end{corollary}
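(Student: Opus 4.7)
The plan is to recognize $Y_j := X_j/(1-X_j)$, $j \geq 1$, as i.i.d.\ random vectors in $\mathbb{R}^2$ (after identifying $\mathbb{C} \cong \mathbb{R}^2$), verify that $Y_1$ lies in the domain of normal attraction of a $(2+\alpha)$-stable law in $\mathbb{R}^2$, and then invoke a multivariate generalized central limit theorem. Since $\alpha \in (-1,0)$, the stability index $\beta := 2+\alpha$ lies in $(1,2)$, so the mean of $Y_1$ exists (by Lemma \ref{lem:heavyTail}, $\P(|Y_1| \geq t) = O(t^{-\beta})$ with $\beta > 1$) but the variance is infinite; accordingly, the correct scaling is $n^{1/\beta} = n^{1/(2+\alpha)}$ and the proper recentering is by the finite mean $E[Y_1]$.

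The first step is to compute $E[Y_1] = 0$. Since $Y_1 = -1 + 1/(1-X_1)$ and Lemma \ref{lem:heavyTail} shows $E|1/(1-X_1)| < \infty$, we may apply Lemma \ref{lem:StielCalc} to write $E[Y_1] = -1 + m_\mu(1) = -1 + F_R(1)/1 = 0$. Thus no additional centering is required, and we only need to analyze $n^{-1/(2+\alpha)}\sum_{j=1}^n Y_j$ directly.

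The second step is the verification that $Y_1$ is regularly varying with index $-(2+\alpha)$ and possesses a limiting angular (spectral) measure on the unit circle $\mathbb{S}^1 \subset \mathbb{R}^2$. But this is exactly the content of \eqref{eqn:tailSharp} and \eqref{eqn:tailAngle} from Lemma \ref{lem:heavyTail}: the first asserts that $t^{2+\alpha}\P(|Y_1| \geq t)$ converges as $t \to \infty$ to a positive constant (using the hypothesis $\lim_{r\to 1^-} f_R(r)/(1-r)^\alpha$ exists, which is encoded in our standing assumption inherited from Theorem \ref{thm:fluctNeg}), and the second asserts that $\P(Y_1/|Y_1| \in \cdot \mid |Y_1| \geq t)$ converges weakly to the probability measure $\sigma$ on $\mathbb{S}^1$ described by the right-hand side of \eqref{eqn:tailAngle}. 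Together these are precisely the hypotheses placing $Y_1$ in the domain of normal attraction of a (necessarily unique) $(2+\alpha)$-stable law in $\mathbb{R}^2$ whose spectral measure is $\sigma$.

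The final step is to cite a standard multivariate heavy-tailed central limit theorem (see, e.g., the classical reference \cite{MR150795} or Chapter 4 of \cite{MR1280932}), which states that for i.i.d.\ centered $\mathbb{R}^2$-valued random vectors $Y_j$ satisfying the two regular-variation conditions above with index $\beta \in (1,2)$, the normalized partial sums $n^{-1/\beta}\sum_{j=1}^n Y_j$ converge in distribution to the $\beta$-stable random vector with spectral measure $\sigma$. Applied here with $\beta = 2+\alpha$, this yields the claimed limit $\mathcal{H}_{2+\alpha}$. The main (very minor) obstacle is merely matching the conventions of the cited CLT (normalization, centering, and spectral-measure parametrization) to the statement of the corollary; the analytic heavy-lifting has already been done in Lemma \ref{lem:heavyTail}, and radial symmetry of $\mu$ removes any subtlety involving the centering constant.
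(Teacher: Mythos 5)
Your proposal is correct and follows essentially the same route as the paper: both verify the domain-of-attraction hypotheses via \eqref{eqn:tailSharp} and \eqref{eqn:tailAngle} of Lemma \ref{lem:heavyTail} and then invoke the classical multivariate stable limit theorem (the paper cites Theorem 4.2 of \cite{MR150795}). The only difference is cosmetic: the paper takes the centering sequence $b_n$ supplied by that theorem and shows $b_n \to 0$ by passing to the real and imaginary marginals, whereas you observe $\E[X_1/(1-X_1)] = m_\mu(1) - 1 = 0$ up front and invoke the mean-centered form of the theorem valid for stability index in $(1,2)$.
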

\begin{proof}
	The claim follows from standard results on domains of attractions for stable distributions; we refer the reader to \cite{MR1280932,MR150795,MR270403,MR1652283,MR4230105,MR62975} and references therein for more details concerning stable distributions and convergence theorems.  Here, we use Theorem 4.2 from \cite{MR150795}.  The assumptions of Theorem 4.2 from \cite{MR150795} are verified in \eqref{eqn:tailSharp} and \eqref{eqn:tailAngle}, and it follows that there exists a deterministic sequence $b_n$ so that 
	\[ \frac{1}{n^{1/(2+\alpha)}}\sum_{j=1}^n \frac{X_j}{1-X_j} - b_n \to \mathcal{H}_{2+\alpha} \]
	in distribution as $n \to \infty$, where $\mathcal{H}_{2+\alpha}$ is defined in the statement of the corollary. In particular, by the continuous mapping theorem, this implies that the real and imaginary parts of 
	\[ \frac{1}{n^{1/(2+\alpha)}}\sum_{j=1}^n \frac{X_j}{1-X_j} - b_n \]
	converge in distribution to the real and imaginary parts of $\mathcal{H}_{2+\alpha}$, which are univariate $(2 + \alpha)$-stable random variables.  It follows from standard results on univariate domains of attraction (see, for instance, \cite{MR150795,MR270403,MR62975}) and from Remark 3 in \cite{MR624688} that $b_n \to 0$ as $n \to \infty$.\footnote{We note that $\E[X_j/(1-X_j)] = 0$ (see e.g.\ \eqref{eqn:centeredMeanZero}, which holds for $\alpha > -1$).} The result follows. 
\end{proof}

\begin{lemma}[Absolute moments of $(z-X)^{-1}$ for $\abs{z} \geq 1-\eps/2$]\label{lem:moments}
	Suppose $\mu$ is a radially symmetric measure supported on the unit disk that satisfies Assumption \ref{ass:ComplexMu}, and define $\mathbb{A}_{\eps/2} := \set{z\in \mathbb{C}: 1 -\frac{\eps}{2} \leq \abs{z} \leq 1}.$
\begin{enumerate}[(\thetheorem.i)]
\item When $\alpha \geq 0$, there is a constant $C >0$ (depending on $\mu, \alpha, \eps$) so that 
\begin{align}
	&\sup_{z \in \mathbb{A}_{\eps/2}}\E\abs{z-X}^{-\mathfrak{p}} \leq \frac{C}{2-\mathfrak{p}}\ \text{for $0 \leq \mathfrak{p} < 2$, and} \label{eqn:pMomentA}\\
	&\sup_{\abs{z} = 1}\E\abs{z-X}^{-\mathfrak{p}} \leq \frac{C}{(\alpha + 2)-\mathfrak{p}}\ \text{for $0\leq \mathfrak{p} < 2 + \alpha$}.\label{eqn:pMomentCirc}
\end{align}
\item  When $-1 < \alpha < 0$, there is a constant $C>0$ (depending on $\mu, \alpha, \eps$) so that
\begin{equation}\label{eqn:pMomentNeg}
\sup_{z \in \mathbb{A}_{\eps/2}}\E\abs{z-X}^{-\mathfrak{p}} \leq \frac{C}{(\alpha + 2)-\mathfrak{p}}\ \text{for $0\leq \mathfrak{p} < 2 + \alpha$.}
\end{equation}
\end{enumerate}
\end{lemma}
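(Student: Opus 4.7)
The plan is to express the moment via the layer-cake formula
\[
\E\abs{z-X}^{-\mathfrak{p}} = \int_0^\infty \mathfrak{p}\, s^{-\mathfrak{p}-1}\,\P\bigl(\abs{z-X}<s\bigr)\,ds
\]
and then supply a uniform-in-$z$ tail estimate of the form $\P(\abs{z-X}<s)\leq C\min\{1,s^{\beta}\}$, with $\beta=2$ for \eqref{eqn:pMomentA} and $\beta=\alpha+2$ for \eqref{eqn:pMomentCirc} and \eqref{eqn:pMomentNeg}. Inserting such a bound and splitting the integral at $s=1$ (using that $\abs{z-X}\leq 2$ forces $\P(\abs{z-X}<s)=1$ for large $s$) yields $\E\abs{z-X}^{-\mathfrak{p}}\leq C\mathfrak{p}/(\beta-\mathfrak{p})+C'$, which is precisely the announced bound.

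The only non-trivial task is therefore the tail estimate, which I would handle case by case. For \eqref{eqn:pMomentA}, I would note that for $z\in\mathbb{A}_{\eps/2}$ and $s<\eps/2$ the event $\abs{z-X}<s$ forces $X\in\mathbb{A}_\eps$, where by Assumption \ref{ass:ComplexMu}, $f_\mu(w)=f_R(\abs{w})/(2\pi\abs{w})\leq C_\mu/(2\pi(1-\eps))$ is uniformly bounded when $\alpha\geq 0$; comparing with the Lebesgue area $\pi s^2$ of $B(z,s)$ gives $\P(\abs{z-X}<s)\leq Cs^2$ with a constant that is uniform in $z\in\mathbb{A}_{\eps/2}$. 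For the sharper $\beta=\alpha+2$ estimate in \eqref{eqn:pMomentCirc}, I would exploit the geometric constraint $\abs{X}\leq 1$: taking $z=1$ (valid by radial symmetry of $\mu$) and writing $x=1-re^{\sqrt{-1}\psi}$ gives the elementary inequality $1-\abs{x}\leq 2r$, so that $(1-\abs{x})^\alpha\leq (2r)^\alpha$ when $\alpha\geq 0$. Integrating $f_\mu\, r\,dr\,d\psi$ over $r\in(0,s)$ and $\psi\in(-\pi/2,\pi/2)$ (the angular range forced by $\abs{x}\leq 1$ for small $r$) then bounds $\P(\abs{z-X}<s)$ by a constant multiple of $s^{\alpha+2}$. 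Finally, for \eqref{eqn:pMomentNeg} with $-1<\alpha<0$, the required uniform bound $\sup_{z\in\mathbb{A}_{\eps/2}}\P(\abs{z-X}<s)\leq Cs^{\alpha+2}$ for $s$ small is precisely Inequality \eqref{eqn:tailUpper} of Lemma \ref{lem:heavyTail}, so no new work is required in this case.

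The main point to check is that all implied constants are uniform in $\mathfrak{p}$ up to the critical exponent, so that $1/(\beta-\mathfrak{p})$ is the only divergent factor as $\mathfrak{p}\uparrow\beta$. For $\mathfrak{p}\in[0,1]$ the layer-cake integrand has no singularity near $s=0$ and the bound is automatic; for $\mathfrak{p}\in(1,\beta)$ the computation of $\int_0^1 s^{\beta-\mathfrak{p}-1}\,ds = 1/(\beta-\mathfrak{p})$ is explicit. If desired, the $\mathfrak{p}\leq 1$ case can alternatively be deduced from a $\mathfrak{p}'\in(1,\beta)$ case via Jensen's inequality, $\E\abs{z-X}^{-\mathfrak{p}}\leq\bigl(\E\abs{z-X}^{-\mathfrak{p}'}\bigr)^{\mathfrak{p}/\mathfrak{p}'}$.
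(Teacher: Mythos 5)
Your proposal is correct and rests on the same ingredients as the paper's proof: the uniform density bound on $\mathbb{A}_\eps$ for \eqref{eqn:pMomentA}, the inequality $1-\abs{x}\leq\abs{z-x}$ on the unit circle for \eqref{eqn:pMomentCirc}, and Inequality \eqref{eqn:tailUpper} of Lemma \ref{lem:heavyTail} for \eqref{eqn:pMomentNeg}. The only difference is cosmetic: for the $\alpha\geq 0$ cases the paper integrates the density directly in polar coordinates centered at $z$ rather than passing through the layer-cake formula (which it reserves for the $\alpha<0$ case), and the two computations are related by an integration by parts, yielding the same $1/(\beta-\mathfrak{p})$ blow-up.
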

\begin{proof}
	First, suppose $\alpha \geq 0$ and $0 \leq \mathfrak{p} < 2$, and fix $z\in \mathbb{A}_{\eps/2}$. After employing a truncation and using the fact that on $\mathbb{A}_\eps$, $\mu$ has a density $f_\mu$ satisfying $f_\mu(x) \leq \frac{C_\mu(1-\abs{x})^\alpha}{2\pi\abs{x}}$, we have
	\begin{equation}\label{eqn:pMomentFirstControl}
		\begin{aligned}
	\E\abs{\frac{1}{(z-X)^\mathfrak{p}}} &\leq \E\abs{\frac{1}{(z-X)^\mathfrak{p}}\cdot \ind{X \in \mathbb{A}_\eps}} + \left(\frac{\eps}{2}
	\right)^{-\mathfrak{p}}\\
	&\leq \frac{C_\mu}{2\pi(1-\eps)}\int_{\mathbb{A}_\eps}\frac{(1-\abs{x})^\alpha}{\abs{z-x}^\mathfrak{p}}\,d\lambda(x)+ \left(\frac{\eps}{2}
	\right)^{-\mathfrak{p}}.
		\end{aligned}
	\end{equation}
	Using that $1-\abs{x} < \eps$ for $x \in \mathbb{A}_\eps$ and switching to polar coordinates yields
	\begin{align*}
	\E\abs{z-X}^{-\mathfrak{p}} &\leq \frac{C_\mu\eps^\alpha}{2\pi(1-\eps)}\int_{\mathbb{A}_\eps}\abs{z-x}^{-\mathfrak{p}}\,d\lambda(x)+ \left(\frac{\eps}{2}
	\right)^{-\mathfrak{p}}\\
	&\leq \frac{C_\mu\eps^\alpha}{(1-\eps)}\int_0^2r^{-\mathfrak{p}}\cdot r\,dr+ \left(\frac{\eps}{2}
	\right)^{-\mathfrak{p}}\cdot\frac{2}{2-\mathfrak{p}},
	\end{align*}
	for $\mathfrak{p} < 2$. (Note: We can multiply $({\eps}/{2}
	)^{-\mathfrak{p}}$ by $2/(2-\mathfrak{p}) \geq 1$ since $\mathfrak{p} \geq 0$.) This bound is independent of $z \in \mathbb{A}_{\eps/2}$, so \eqref{eqn:pMomentA} follows. In the special case where $\abs{z} = 1$, the triangle inequality implies $1-\abs{x} =\abs{z}-\abs{x} \leq \abs{z-x}$ for $x\in\mathbb{A}_\eps$, so after switching to polar coordinates \eqref{eqn:pMomentFirstControl} establishes
	\begin{align*}
	\E\abs{z-X}^{-\mathfrak{p}} &\leq \frac{C_\mu}{2\pi(1-\eps)}\int_{\mathbb{A}_\eps}\abs{z-x}^{\alpha-\mathfrak{p}}\,d\lambda(x) +  \left(\frac{\eps}{2}
	\right)^{-\mathfrak{p}}\\
	&\leq \frac{C_\mu}{(1-\eps)}\int_{0}^2r^{\alpha-\mathfrak{p}}\cdot r\,dr + \left(\frac{\eps}{2}
	\right)^{-\mathfrak{p}}\cdot\frac{\alpha + 2}{(\alpha + 2) - \mathfrak{p}},
	\end{align*} from which \eqref{eqn:pMomentCirc} follows as is desired. 

	In the case where $-1 < \alpha < 0$, we appeal to \eqref{eqn:tailUpper} from Lemma \ref{lem:heavyTail}. In particular, for $0 \leq \mathfrak{p} < 2 + \alpha$ and a fixed $z\in \mathbb{A}_{\eps/2}$, we have
	\[
		\E\abs{z-X}^{-\mathfrak{p}} = \int_0^\infty \P\left(\abs{z-X}^{-\mathfrak{p}} \geq t\right)\,dt =  \int_0^\infty \P\left(\abs{z-X}^{-1} \geq t^{1/\mathfrak{p}}\right)\,dt,
	\]
	so via \eqref{eqn:tailUpper}, 
	\begin{align*}
		\E\abs{z-X}^{-\mathfrak{p}} &\leq \int_0^{\left(\frac{2}{\eps(1-\eps)}\right)^{\mathfrak{p}}}1\,dt + \int_{\left(\frac{2}{\eps(1-\eps)}\right)^{\mathfrak{p}}}^\infty\frac{2^{\alpha+2}C_\mu}{\pi(\alpha+1)(1-\eps)}t^{-(2 + \alpha)/\mathfrak{p}}\,dt\\
		&= \left(\frac{2}{\eps(1-\eps)}\right)^{\mathfrak{p}} + \frac{2^{\alpha+2}C_\mu}{\pi(\alpha+1)(1-\eps)} \cdot \frac{\mathfrak{p}}{2+\alpha - \mathfrak{p}}\left(\frac{2}{\eps(1-\eps)}\right)^{\mathfrak{p}-(2+\alpha)}.
	\end{align*}
	This bound is uniform over all $z\in \mathbb{A}_{\eps/2}$, so we have established \eqref{eqn:pMomentNeg}. (Note that the leftmost term achieves its maximum when $\mathfrak{p} = 2+\alpha$.)
\end{proof}

\begin{lemma}[$\eps$-nets for annuli]
	Given an annulus
	\[
	\mathcal{A}:=\set{z\in \C : r_1 \leq \abs{z} \leq r_2},\ 0<r_1<r_2
	\]
	and $\eps \in (0,\frac{r_2}{2})$, there is an $\frac{\eps r_1}{2r_2}$-separated $\eps$-net of $\mathcal{A}$ of size at most \[\begin{cases}22\eps^{-2}r_2(r_2-r_1) &\text{if $\eps \leq r_2-r_1$,}\\18\eps^{-1}r_2&\text{if $r_2 -r_1 < \eps < \frac{r_2}{2}$}.\end{cases}\] More concretely, there exist $z_1,\ldots, z_m \in \mathcal{A}$ which satisfy
	\begin{enumerate}[(\thetheorem.i)]
		\item $\mathcal{A} \subset \cup_{i=1}^mB(z_i, \eps)$,
		\item if $1 \leq i < j \leq m$, then $\abs{z_i-z_j} > \frac{\eps r_1}{2r_2}$,
		\item $m \leq 22\eps^{-2}r_2(r_2-r_1)$ if $\eps \leq r_2-r_1$, and $m \leq 18\eps^{-1}r_2$ otherwise.
	\end{enumerate}
\label{lem:epsNet}
\end{lemma}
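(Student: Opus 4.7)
My plan is to construct the $\eps$-net explicitly via a polar-coordinate grid, choosing radial and angular spacings large enough to keep the point count manageable but small enough to cover $\mathcal{A}$. The separation bound $\eps r_1/(2 r_2)$ is mild compared to $\eps$, so it will essentially come for free; the real task is the $\eps$-covering with the stated cardinalities. I would handle the two cases $\eps \leq r_2 - r_1$ and $r_2 - r_1 < \eps < r_2/2$ separately, since the thin case can be collapsed onto a single circle.

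In the thick case, fix $\eta := \eps/\sqrt{2}$, pick $K := \lceil (r_2 - r_1)/\eta\rceil$, and place $K+1$ concentric circles at $\rho_j := r_1 + j(r_2 - r_1)/K$, $0 \leq j \leq K$. On the circle of radius $\rho_j$, place $N_j := \lceil 2\pi \rho_j/\eta\rceil$ equally spaced points $z_{j,k} := \rho_j e^{2\pi\sqrt{-1} k/N_j}$. Covering follows from the estimate
\[
|re^{\sqrt{-1}\theta} - \rho_j e^{\sqrt{-1}\theta_{j,k}}| \;\leq\; |r - \rho_j| + \rho_j\bigl|e^{\sqrt{-1}(\theta - \theta_{j,k})} - 1\bigr| \;\leq\; \tfrac{\eta}{2} + \rho_j\cdot\tfrac{\pi}{N_j} \;\leq\; \eta \;\leq\; \eps,
\]
upon choosing the closest $\rho_j$ and $\theta_{j,k}$. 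Separation on a fixed circle follows from Jordan's inequality $\sin x \geq 2x/\pi$: the minimum chord length is $2\rho_j\sin(\pi/N_j)\geq 4\rho_j/N_j$, which is bounded below by a constant multiple of $\eta$ and hence exceeds $\eps r_1/(2 r_2)$; separation across different circles follows from the radial gap $(r_2 - r_1)/K$, which is also comparable to $\eta$. In the thin case, collapse to a single circle at the mid-radius $\bar\rho := (r_1+r_2)/2$ with $\lceil 2\pi\bar\rho/\eta\rceil$ equally spaced points: the radial offset is at most $(r_2 - r_1)/2 < \eps/2$, the angular offset contributes at most $\eps/2$, and the same Jordan's-inequality argument gives the separation.

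Finally I would tally the totals. In the thick case, $\sum_{j=0}^K N_j \leq (K+1)\bigl(2\pi\sqrt{2}\,r_2/\eps + 1\bigr)$, and bounding $K+1 \leq \sqrt{2}(r_2-r_1)/\eps + 1$ together with $\eps \leq r_2 - r_1 \leq r_2$ yields the $22\eps^{-2}r_2(r_2-r_1)$ bound after expanding. In the thin case, the single-circle count is $\lceil 2\pi\sqrt{2}\bar\rho/\eps\rceil$, bounded via $\bar\rho\leq r_2$ and $\eps < r_2/2$ to obtain $18\eps^{-1}r_2$. The main obstacle is arithmetic rather than conceptual: squeezing the universal constants $22$ and $18$ out of the ceilings in $K$ and $N_j$ requires a careful trade-off in the choice of $\eta$ (smaller $\eta$ improves covering but inflates the count), together with a verification of the separation at the innermost circle where the chord length is smallest.
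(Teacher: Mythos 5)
Your construction is the same polar-grid idea as the paper's (radial/angular spacing $\eps/\sqrt{2}$, covering by the nearest grid point, chord-length separation, a degenerate one-circle net in the thin case), and your covering argument and your thin case both check out; indeed your single mid-radius circle is slightly more economical than the paper's two circles. The gaps are in the two quantitative claims of the thick case, and they are not just loose constants. For the separation: Jordan's inequality gives only $4\rho_j/N_j \geq \eta/\pi \approx 0.225\,\eps$, while the required separation $\eps r_1/(2r_2)$ can be as large as just under $0.5\,\eps$, so the chain ``constant multiple of $\eta$, hence exceeds $\eps r_1/(2r_2)$'' fails whenever $r_1/r_2 > \sqrt{2}/\pi$. (Your grid is actually fine on a fixed circle --- when $N_j$ is small enough for Jordan's inequality to be lossy, the chord is at least $\rho_j \geq r_1 > \eps r_1/(2r_2)$ outright --- but you need that case split.) Worse, the cross-circle separation can genuinely fail: the radial gap $(r_2-r_1)/K$ with $K=\lceil (r_2-r_1)/\eta\rceil$ drops to about $2\eta/3 \approx 0.471\,\eps$ when $(r_2-r_1)/\eta$ is just above $2$, and the two points at angle $0$ on adjacent circles are then closer than $\eps r_1/(2r_2)$ once $r_1/r_2 > 0.943$. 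The paper sidesteps this by never rescaling the radial spacing: it places circles at $r_1+j\delta$ with spacing exactly $\delta=\eps/\sqrt{2} > \eps/2 \geq \eps r_1/(2r_2)$, plus the outer circle, and handles the innermost circle (where its closest pairs live, at chord $\approx r_1\delta/r_2$) with the law of cosines.

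The count also does not close. With $K+1 \leq (r_2-r_1)/\eta + 2$ circles and $N_j \leq 2\pi\rho_j/\eta + 1$ points each, your tally in the worst corner ($r_1$ near $r_2$ and $\eps$ near $r_2-r_1$) is about $(\sqrt{2}+2)(2\sqrt{2}\pi + \tfrac12)\,\eps^{-2}r_2(r_2-r_1) \approx 32\,\eps^{-2}r_2(r_2-r_1)$, overshooting $22$; no choice of $\eta$ alone fixes this, because the two additive $+1$'s (one from the ceiling in $K$ plus the extra circle, one from each $N_j$) multiply. The paper's tally works out to about $21.5$ precisely because it uses one fewer radial circle and the same angular count $\lfloor 2\pi r_2/\delta\rfloor$ on every circle, so only a single additive correction enters the product. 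You correctly identified the innermost circle and the ceilings as the delicate points, but closing them requires switching to fixed radial spacing and redoing the tally --- at which point you essentially recover the paper's proof.
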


\begin{proof}
	First, fix $\delta \in (0, {r_2}/{2})$, and consider the $\sqrt{2}\delta$-net of points
	\begin{align*}
	\mathcal{N}_\delta&:=\set{(r_1 + j\cdot\delta)e^{\sqrt{-1}\cdot k\cdot\frac{\delta}{r_2}} :\
	\begin{aligned}
		&0 \leq j \leq \max\set{0,\left\lfloor (r_2-r_1)/\delta\right\rfloor -1},\\
		&0 \leq k \leq \left\lfloor 2\pi r_2/\delta\right\rfloor -1
	\end{aligned}}\\
	&\quad\cup\Bigg\{r_2e^{\sqrt{-1}\cdot k\cdot\frac{\delta}{r_2}}: 0 \leq k \leq \left\lfloor 2\pi r_2/\delta\right\rfloor -1\Bigg\},
	\end{align*}
	which are at the intersections of the radial grid formed by the circles
	\[
	\set{z\in \C: \abs{z} = r_2},\ \set{z\in \C: \abs{z} = r_1 + j\cdot\delta},\ 0 \leq j \leq \max\set{0,\left\lfloor (r_2-r_1)/\delta\right\rfloor -1}
	\]
	and rays\footnote{Here, we define $\arg(z)$ to have range $[0, 2\pi)$.}
	\[
	\set{z\in \C\setminus\set{0}: \arg(z)= k\cdot\frac{\delta}{r_2}},\ 0 \leq k \leq \left\lfloor 2\pi r_2/\delta\right\rfloor -1.
	\]
	Using Figure \ref{fig:epsNet} as a guide, it is easy to see that any point in $\mathcal{A}$ is at most $\sqrt{2}\delta$-far from some $z_i \in \mathcal{N}_\delta$. (For example, the maximum distance between a point of $\mathcal{A}$ and $\mathcal{N}_\delta$ is less than the distance between the center and corner of a square of side length $2\delta$.) Furthermore, by construction, the pairs of points in $\mathcal{N}_\delta$ that lie closest to one another are on the circle $\set{z\in \C: \abs{z} = r_1}$ at endpoints of arcs whose angle measures are $\delta/r_2$. By the law of cosines and a fourth-degree Taylor approximation to the cosine function, one obtains that for any $z,w \in \mathcal{N}_\delta$
	\[
	\abs{z-w} > \sqrt{2}r_1\sqrt{1-\cos(\delta/r_2)} \geq \sqrt{2}r_1\sqrt{\frac{\delta^2}{2r_2^2} - \frac{\delta^4}{24r_2^4}} > \frac{r_1\delta}{r_2}\sqrt{\frac{11}{12}}
	\]
	(recall that we assumed $\delta < {r_2}/{2}$).
	
	If we set $\delta = \eps/\sqrt{2}$, the collection $\mathcal{N}_{\eps/\sqrt{2}}$ is an $\eps$-net satisfying the conclusion of the lemma. Note that the number of points in $\mathcal{N}_{\eps/\sqrt{2}}$ is at most
	\[
	\left((r_2-r_1)\sqrt{2}/\eps +1\right)\left( 2\pi r_2\sqrt{2}/\eps\right) \leq \frac{\left((r_2-r_1)\sqrt{2} + \eps\right)\left(2\sqrt{2}\pi r_2\right)}{\eps^2}\leq \frac{22r_2(r_2-r_1)}{\eps^2}
	\]
	in the case where $\eps \leq r_2-r_1$. If $\eps > r_2 - r_1$, then \[
	\mathcal{N}_{\eps/\sqrt{2}} = \Bigg\{re^{\sqrt{-1}\cdot k\cdot\frac{\eps}{r_2\sqrt{2}}}: 0 \leq k \leq \left\lfloor 2\pi r_2\sqrt{2}/\eps\right\rfloor -1,\ r = r_1, r_2\Bigg\},
	\] so $\abs{\mathcal{N}_{\eps/\sqrt{2}}}$ is at most $2\cdot \left( 2\pi r_2\sqrt{2}/\eps\right) < 18 r_2 /\eps$.
	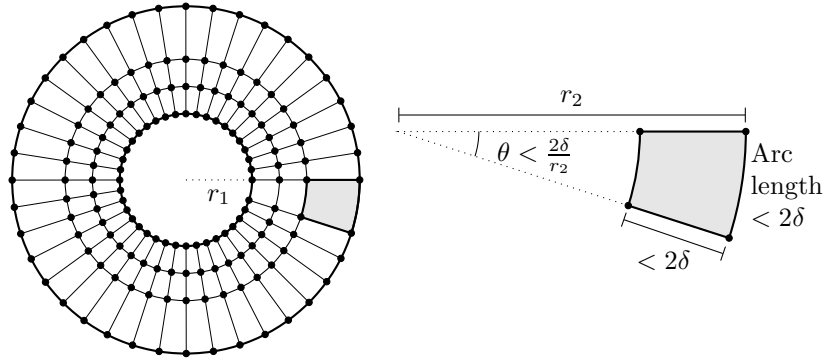
\begin{figure}
		%\missingfigure{Diagram of the grid used to make the $\eps$-net of the annulus}
		\centering
		\pgfmathsetmacro{\myRa}{.8} % r_1
		\pgfmathsetmacro{\myRb}{2.1} % r_2
		\pgfmathsetmacro{\myD}{0.33} % delta
		\pgfmathsetmacro{\upbdj}{floor((\myRb-\myRa)/\myD)-1} %largest j
		\pgfmathsetmacro{\upbdk}{floor(6.2832*\myRb/\myD)-1} %largest k
		\pgfmathsetmacro{\bigTheta}{360-deg(\upbdk*\myD/\myRb)} %largest angle
		\begin{minipage}{.4\linewidth}
		\begin{tikzpicture}[x=1.1cm,y=1.1cm]
		%\footnotesize
		%
		%draw gray sector (draw first so it's in background)
		\draw[thick,fill=gray!20] (\myRa+\upbdj*\myD,0) -- (\myRb,0) arc(0:-\bigTheta:\myRb) -- ({(\myRa+\upbdj*\myD)*cos(\bigTheta)},{-(\myRa+\upbdj*\myD)*sin(\bigTheta)}) arc(-\bigTheta:0:{\myRa+\upbdj*\myD});
		\draw[thick] (0,0) circle (\myRb);
		%draws rays
		\foreach \k in {0,...,\upbdk}{
			\draw[line width = 0.2pt] (0,0)--({\myRb*cos(deg(\k*\myD/\myRb))},{\myRb*sin(deg(\k*\myD/\myRb))});
			}
		\draw[fill=white, thick] (0,0) circle (\myRa);
		%
		%draws dots and circles
		\foreach \j in {0,...,\upbdj}{
			\draw (0,0) circle (\myRa+\j*\myD);
			\foreach \k in {0,...,\upbdk}{
				\draw[fill=black, line width = 0.2pt] ({(\myRa+\j*\myD)*cos(deg(\k*\myD/\myRb))},{(\myRa+\j*\myD)*sin(deg(\k*\myD/\myRb))}) circle(1.2pt);
			}
		}
		%dots on largest circle
		\foreach \k in {0,...,\upbdk}{
			\draw[fill=black] ({\myRb*cos(deg(\k*\myD/\myRb))},{\myRb*sin(deg(\k*\myD/\myRb))}) circle(1.2pt);
		}
		\draw[dotted](0,0)--(\myRa/2,0) node[below]{$r_1$} -- (\myRa,0);
		\end{tikzpicture}
	\end{minipage}
	\begin{minipage}{.5\linewidth}
		\begin{tikzpicture}[x=2.2cm, y=2.2cm]
			%\footnotesize
			% large sector 			
			\draw[thick,fill=gray!20] (\myRa+\upbdj*\myD,0) --(\myRb,0) arc(0:-\bigTheta:\myRb) -- ({(\myRa+\upbdj*\myD)*cos(\bigTheta)},{-(\myRa+\upbdj*\myD)*sin(\bigTheta)}) arc(-\bigTheta:0:{\myRa+\upbdj*\myD});
			% dashed stuff
			\draw[dotted](\myRa+\upbdj*\myD,0)--(0,0)--({\myRb*cos(\bigTheta)},{-\myRb*sin(\bigTheta)});
			\draw ({(\myRa+\upbdj*\myD)/3},0) arc (0:-\bigTheta:{(\myRa+\upbdj*\myD)/3});
			\draw[|-|] (\myRb,0.1)--({\myRb/2},0.1) node[above]{$r_2$}--(0,0.1);
			\draw[|-|]({(\myRa+\upbdj*\myD)*cos(\bigTheta+3.5)},{-(\myRa+\upbdj*\myD)*sin(\bigTheta+3.5)})--({\myRb*cos(\bigTheta+2.75)},{-\myRb*sin(\bigTheta+2.75)});
			\draw ({(\myRa+\upbdj*\myD+\myRb)/2*cos(\bigTheta+8)},{-(\myRa+\upbdj*\myD+\myRb)/2*sin(\bigTheta+8)})node{$<2\delta$};
			%theta text
			\draw ({((\myRa+\upbdj*\myD)/3+0.35)*cos(\bigTheta/2)},{-((\myRa+\upbdj*\myD)/3 + 0.45)*sin(\bigTheta/2)}) node{$\theta < \frac{2\delta}{r_2}$};
			
			\draw ({\myRb*cos(\bigTheta/2)},{-\myRb*sin(\bigTheta/2)}) node[right]{\begin{minipage}{0.6in}\flushleft Arc length $<2\delta$\end{minipage}};
				
			\draw[fill=black] (\myRb,0) circle(1.2pt);
			\draw[fill=black] ({\myRa+\upbdj*\myD},0) circle(1.2pt);
			\draw[fill=black] ({\myRb*cos(\bigTheta)},{-\myRb*sin(\bigTheta)}) circle(1.2pt);
			\draw[fill=black] ({(\myRa+\upbdj*\myD)*cos(\bigTheta)},{-(\myRa+\upbdj*\myD)*sin(\bigTheta)}) circle(1.2pt);
			%\draw(1.2,-1) node{\begin{minipage}{2in}All $z$ in largest possible region are within $\sqrt{2}\delta$ of at least one corner.\end{minipage}};
		\end{tikzpicture}
		\end{minipage}
		\caption{All $z$ interior to largest possible ``wedge'' are within $\sqrt{2}\delta$ of at least one corner.\label{fig:epsNet}}
	\end{figure}
\end{proof}

\begin{theorem}[Heavy-tailed CLT, $\alpha = 0$ case]\label{thm:CLT}
	Let $X_1, X_2, \ldots$ be i.i.d. complex-valued random variables with common distribution $\mu$, fix $s \in \mathbb{N}$, and suppose $\xi_1, \ldots, \xi_s, t_1, \ldots, t_s \in \mathbb{C}$ are deterministic values with $\xi_1, \ldots, \xi_s$ distinct and having magnitude one.  In addition assume $\mu$ satisfies Assumption \ref{ass:ComplexMu} with $\alpha = 0$ and that the density $f_\mu: \mathbb{A}_\eps \to [0, \infty)$ is continuous at each $\xi_l$, $1 \leq l \leq s$.  Then
\[ \frac{1}{\sqrt{n \log n}} \sum_{j=1}^n \sum_{k=1}^s t_k \left[ \frac{1}{\xi_k - X_j} - m_{\mu}(\xi_k) \right] \longrightarrow N \]
in distribution as $n \to \infty$, where $N$ is a complex-valued random variable with mean zero whose real and imaginary parts have a joint Gaussian distribution with covariance matrix 
\begin{equation} \label{def:Sigma}
	\Sigma := \sum_{k=1}^s \frac{\pi |t_k|^2 f_\mu(\xi_k)}{4} I, 
\end{equation}
where $I$ is the $2 \times 2$ identity matrix.  
\end{theorem}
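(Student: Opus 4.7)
The plan is to reduce via the Cram\'er--Wold device to a one-dimensional central limit theorem for i.i.d.\ mean-zero real random variables with tails of order $t^{-2}$, and then invoke the classical heavy-tailed CLT with normalization $\sqrt{n\log n}$. Write $Z_j := \sum_{k=1}^s t_k [(\xi_k - X_j)^{-1} - m_\mu(\xi_k)]$, so the $Z_j$ are i.i.d.\ complex mean-zero with finite absolute moments of every order strictly below $2+\alpha = 2$ (by Lemma \ref{lem:moments}). Since the target covariance $\Sigma$ in \eqref{def:Sigma} is a scalar multiple of the identity, by Cram\'er--Wold it suffices to show, for each $\lambda = \lambda_1 + \sqrt{-1}\lambda_2 \in \C$, that
\[
\frac{1}{\sqrt{n\log n}}\sum_{j=1}^n Y_j \longrightarrow N\bigl(0,\, |\lambda|^2 \sigma^2\bigr) \text{ in distribution,}
\]
where $Y_j := \Re(\bar\lambda Z_j)$ and $\sigma^2 := \sum_{k=1}^s \pi |t_k|^2 f_\mu(\xi_k)/4$.

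The crucial input is the tail asymptotic $\lim_{t\to\infty} t^2\, \P(|Y_1|>t) = |\lambda|^2 \sigma^2$. Since the $\xi_k$ are distinct, for $t$ large only one of the events $\{|(\xi_k - X_1)^{-1}| > \sqrt{t}\}$ can occur, so
\[
\P(|Y_1|>t) = \sum_{k=1}^s \P\bigl(|\Re(\bar\lambda\, t_k\, (\xi_k - X_1)^{-1})| > t\bigr) + o(t^{-2}).
\]
For each $k$, writing $(\xi_k - X_1)^{-1} = \bar\xi_k (1-X_1/\xi_k)^{-1}$ and using the radial symmetry of $\mu$ (so $X_1/\xi_k$ has the same distribution as $X_1$) reduces the analysis to $W := (1-X_1)^{-1}$. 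The proof of Lemma \ref{lem:heavyTail} at $\alpha = 0$ yields not merely the modulus tail $\P(|W|>r) \sim \pi f_\mu(1)/(2r^2)$ but also the spectral measure \eqref{eqn:tailAngle}, from which the joint tail density of $(|W|, \arg W)$ is $f_\mu(1)\, r^{-3}$ on $\{r \gg 1,\, \phi \in (-\pi/2, \pi/2)\}$. Integrating the event $|\Re(cW)|>t$ against this density, using $\int_{-\pi/2}^{\pi/2} \cos^2(\phi + \psi)\,d\phi = \pi/2$ for every $\psi$, gives $\lim_{t\to\infty} t^2 \P(|\Re(cW)|>t) = \pi|c|^2 f_\mu(1)/4$. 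Applying this with $c = \bar\lambda\, t_k\, \bar\xi_k$ (so $|c|^2 = |\lambda|^2 |t_k|^2$), summing over $k$, and using $f_\mu(\xi_k) = f_\mu(1)$ by radial symmetry yields the claimed tail asymptotic.

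With this tail estimate in hand, the classical heavy-tailed CLT applies. Truncating at level $s_n := \sqrt{n\log n}$, the standard computations give $n\,\P(|Y_1|>s_n) = O(1/\log n) \to 0$ (so truncation is negligible), $\E[Y_1^2\, \ind{|Y_1|\leq s_n}] = \int_0^{s_n} 2t\, \P(|Y_1|>t,\, |Y_1|\leq s_n)\,dt \sim |\lambda|^2 \sigma^2 \log n$ (yielding the correct limiting variance $|\lambda|^2 \sigma^2$ after dividing by $s_n^2$), negligible centering correction $n\,\E[Y_1\,\ind{|Y_1|\leq s_n}]/s_n = O(1/\log n)$ from $\E[Y_1] = 0$, and the Lindeberg condition via $\E[Y_1^2\, \ind{\eta s_n < |Y_1| \leq s_n}] = O(\log(1/\eta))$ for $\eta \in (0,1)$. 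The Lindeberg CLT combined with Slutsky's theorem then delivers the stated Gaussian limit. The main obstacle will be the tail computation itself---specifically, extracting the full joint (modulus, argument) tail density from the proof of Lemma \ref{lem:heavyTail} rather than just the modulus tail, and carrying out the angular integration correctly, since the $\pi/4$ coefficient and the diagonal structure of $\Sigma$ both emerge from the uniform spectral measure on a half-circle together with the cancellation $\int_{-\pi/2}^{\pi/2} \sin\phi\cos\phi\,d\phi = 0$.
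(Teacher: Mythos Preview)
Your approach is correct and genuinely different from the paper's. The paper does not give a self-contained argument: it simply notes that the proof of Theorem B.1 in \cite{OW2} carries over with two small changes (the target covariance, and the bounds of the angular integral in the truncated second-moment computation, since here the $\xi_k$ sit on the boundary of the disk rather than in the interior). That argument appears to proceed by Lindeberg replacement---one introduces i.i.d.\ Gaussians $N_1, N_2, \ldots$ with covariance $\Sigma$ and swaps summands---and the key input is a direct polar-coordinate computation of the truncated second moments $\E[\Re^2(\widetilde\zeta_1)]$, $\E[\Im^2(\widetilde\zeta_1)]$. You instead go through Cram\'er--Wold, compute the one-dimensional tail asymptotic $t^2\P(|Y_1|>t)\to|\lambda|^2\sigma^2$ using the spectral-measure machinery of Lemma~\ref{lem:heavyTail}, and then derive the truncated second moment from the tail rather than by direct integration. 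Both routes land on the same Lindeberg verification; yours has the advantage of being self-contained and of making transparent why the limiting covariance is a scalar multiple of the identity (the uniform spectral measure on a half-circle together with $\int_{-\pi/2}^{\pi/2}\sin\phi\cos\phi\,d\phi=0$). One small caveat: Lemma~\ref{lem:heavyTail} is stated only for $-1<\alpha<0$, so when you invoke its proof at $\alpha=0$ you should remark that the argument there (the polar computation over the region $\mathcal{R}_t$ in Figure~\ref{fig:tailRegion} and the dominated-convergence step) goes through unchanged when $\alpha=0$, which it does.
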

\begin{proof}
The proof follows the proof of Theorem B.1 given in \cite{OW2} nearly exactly.  Only the following changes need to be made:
\begin{itemize}
\item The random variables $N_1, N_2, \ldots$ need to be defined as i.i.d. complex-valued random variables, independent of $\{X_j\}_{j \geq 1}$, whose real and imaginary parts are jointly Gaussian random variables with mean zero and covariance matrix $\Sigma$, defined in \eqref{def:Sigma} above.  
\item In the computation of the second moments $\E \left[ \Re^2 (\widetilde\zeta_1) \right]$ and $\E \left[ \Im^2 (\widetilde\zeta_1) \right]$, one needs to take into account that the points $\xi_l$, $1 \leq l \leq s$ are on the boundary of the unit disk (rather than in the interior of the disk as they are in Theorem B.1 from \cite{OW2}).  This difference only affects two integrals in the proof, where we only need to adjust the bounds of integration to take into account that the centering term $\xi_l$ is on the boundary.  For example, if $\xi_1 = 1$, then following the proof in \cite{OW2}, one ends up with an integral of the form
\[ \int_{\pi / 2 + \eps_n}^{3\pi/2 - \eps_n} \int_{1/ (|t_k| \eps \sqrt{n \log n})}^{\delta/|t_k|} \frac{\cos^2 \theta}{r} \ dr \ d\theta \]
for some sequence $\eps_n$ that tends to zero as $n$ tends to infinity.  Here, only the bounds for $\theta$ have been adjusted (no changes to the $r$ bounds are required);  this adjustment to the bounds of integration is the reason why the definition of $\Sigma$ given in \eqref{def:Sigma} is off by a factor of $2$ from the definition given in \cite{OW2}.  
\end{itemize}
\end{proof}

\begin{corollary}\label{cor:CLT}
	Let $X_1, X_2, \ldots$ be i.i.d.\ random variables having a common distribution $\mu$ that satisfies Assumption \ref{ass:ComplexMu} with $\alpha = 0$ and a radial density $f_R(r)$ that is continuous from the left at $r=1$. In addition, fix $L \in \mathbb{N}$, and suppose $t_1, \ldots, t_L \in \mathbb{C}$ are deterministic values and that $U_1, \ldots, U_L$ are independent draws from the uniform distribution on the unit circle. Then
	\[ \frac{1}{\sqrt{n \log n}} \sum_{j=1}^n \sum_{k=1}^L\frac{t_k\cdot X_j}{U_k - X_j} \longrightarrow N \]
	in distribution as $n \to \infty$, where $N$ is a complex-valued random variable with mean zero whose real and imaginary parts have a joint Gaussian distribution with covariance matrix 
	\begin{equation} \label{def:SigmaCor}
		\Sigma := \sum_{k=1}^L \frac{\pi |t_k|^2 f_\mu(1)}{4} I ,
	\end{equation}
	where $I$ is the $2 \times 2$ identity matrix. 
\end{corollary}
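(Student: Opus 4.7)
The plan is to reduce the statement to Theorem~\ref{thm:CLT} by conditioning on $U_1, \ldots, U_L$, after a brief algebraic identity that relates $t_k X_j/(U_k-X_j)$ to $1/(U_k - X_j) - m_\mu(U_k)$. Since the limiting Gaussian covariance in Theorem~\ref{thm:CLT} only depends on $|\xi_k|$ and $f_\mu(\xi_k)$, and the radial symmetry of $\mu$ forces both of these to take the same value for every choice of $\xi_k$ on the unit circle, the resulting conditional limit will not depend on $U_1, \ldots, U_L$, and the unconditional convergence will follow from bounded convergence.

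I first note that for any $z \in \mathbb{C}$ on the unit circle, Lemma~\ref{lem:StielCalc} together with $F_R(1) = 1$ gives $m_\mu(z) = 1/z$. Using the trivial identity $\frac{X_j}{U_k - X_j} = \frac{U_k}{U_k - X_j} - 1$, I obtain, for each $k \in \{1,\ldots,L\}$,
\[
\sum_{j=1}^n \frac{t_k X_j}{U_k - X_j} \;=\; t_k U_k \sum_{j=1}^n \left( \frac{1}{U_k - X_j} - \frac{1}{U_k} \right) \;=\; t_k U_k \sum_{j=1}^n \left( \frac{1}{U_k - X_j} - m_\mu(U_k) \right).
\]
Consequently, with $S_n := \frac{1}{\sqrt{n\log n}} \sum_{j=1}^n \sum_{k=1}^L \frac{t_k X_j}{U_k - X_j}$, I have
\[
S_n \;=\; \frac{1}{\sqrt{n \log n}} \sum_{j=1}^n \sum_{k=1}^L (t_k U_k)\left( \frac{1}{U_k - X_j} - m_\mu(U_k) \right),
\]
which is exactly the kind of sum treated by Theorem~\ref{thm:CLT}, but with random coefficients $t_k U_k$ and random centers $U_k$.

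Next, I condition on $(U_1, \ldots, U_L) = (\xi_1, \ldots, \xi_L)$. Since $U_1, \ldots, U_L$ are independent and uniformly distributed on the unit circle, they are almost surely distinct. By the radial symmetry of $\mu$ we have $f_\mu(z) = f_R(|z|)/(2\pi|z|)$ on $\mathbb{A}_\eps$, and the assumed left-continuity of $f_R$ at $r=1$ allows me to extend $f_\mu$ continuously to the unit circle with common value $f_\mu(1) = f_R(1)/(2\pi)$. Thus for almost every realization of $U_1, \ldots, U_L$, the values $\xi_k$ are distinct, lie on the unit circle, and $f_\mu$ is continuous at each of them. Applying Theorem~\ref{thm:CLT} with these $\xi_k$ and coefficients $t_k \xi_k$, the conditional distribution of $S_n$ given $(U_1,\ldots,U_L) = (\xi_1,\ldots,\xi_L)$ converges to a complex Gaussian with mean zero and covariance matrix
\[
\sum_{k=1}^L \frac{\pi\,|t_k \xi_k|^2 f_\mu(\xi_k)}{4}\,I \;=\; \sum_{k=1}^L \frac{\pi\,|t_k|^2 f_\mu(1)}{4}\,I \;=\; \Sigma,
\]
which does not depend on $(\xi_1, \ldots, \xi_L)$.

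Finally, I pass from conditional to unconditional convergence. For any bounded continuous $g \colon \mathbb{C} \to \mathbb{R}$, the conditional expectation $\mathbb{E}[g(S_n) \mid U_1, \ldots, U_L]$ is bounded (by $\sup|g|$) and, by the previous paragraph, converges almost surely to the constant $\mathbb{E}[g(N)]$. The bounded convergence theorem then gives $\mathbb{E}[g(S_n)] \to \mathbb{E}[g(N)]$, which establishes $S_n \to N$ in distribution. There is no real obstacle here: the only mild technicality is verifying that the hypotheses of Theorem~\ref{thm:CLT} hold on an almost-sure set of $(U_1,\ldots,U_L)$ (distinctness, together with continuity of $f_\mu$ at each $\xi_k$), both of which are immediate from the joint independence and uniform distribution of the $U_k$ and from the left-continuity of $f_R$ at $r=1$ combined with radial symmetry.
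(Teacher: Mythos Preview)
Your proposal is correct and follows essentially the same route as the paper: rewrite $\frac{t_k X_j}{U_k-X_j}$ as $t_kU_k\bigl(\tfrac{1}{U_k-X_j}-m_\mu(U_k)\bigr)$ via Lemma~\ref{lem:StielCalc}, condition on $(U_1,\dots,U_L)$, apply Theorem~\ref{thm:CLT}, and then pass to the unconditional limit by bounded/dominated convergence. The only cosmetic difference is that you work with bounded continuous test functions while the paper works with indicator functions of Borel sets; your formulation is arguably a touch cleaner since it sidesteps any continuity-set issue.
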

\begin{proof}
	Fix $t_1, \ldots, t_L \in \mathbb{C}$, and suppose $X_1, X_2,\ldots \sim \mu$ and $U_1, \ldots, U_L$ satisfy the hypotheses. Assumption \ref{ass:ComplexMu} holds and $\abs{U_k} = 1$, $1 \leq k \leq L$, so we can apply Lemma \ref{lem:StielCalc} to obtain
	\[
	\frac{1}{\sqrt{n \log n}} \sum_{j=1}^n \sum_{k=1}^L\frac{t_k\cdot X_j}{U_k - X_j} = \frac{1}{\sqrt{n \log n}} \sum_{j=1}^n \sum_{k=1}^L t_kU_k \left[ \frac{1}{U_k - X_j} - m_{\mu}(U_k) \right],
	\]
	where the sums at right resemble those under consideration in Theorem \ref{thm:CLT} above. For $\xi_1, \dots, \xi_L \in \mathbb{C}$ of unit length, define
	\[
	S_n(\xi_1, \ldots, \xi_L) = \frac{1}{\sqrt{n \log n}} \sum_{j=1}^n \sum_{k=1}^L t_k\xi_k \left[ \frac{1}{\xi_k - X_j} - m_{\mu}(\xi_k) \right].
	\]
	We seek to understand the limiting distribution of $S_n(U_1, \ldots, U_L)$. Toward that end, let $B \subset \mathbb{C}$ be an arbitrary borel set and apply the total law of probability to obtain
	\begin{align*}
	&\P(S_n(U_1, \ldots, U_L) \in B)\\
	&\quad= \int\cdots\int\P\left(S_n(U_1, \ldots, U_L) \in B \mid U_1 = \xi_1, \ldots, U_L=\xi_L\right)\,d\nu(\xi_1)\cdots d\nu(\xi_L),
	\end{align*}
	where $\nu$ is the uniform probability distribution on the unit circle in the complex plane. By Theorem \ref{thm:CLT} with $t_k := t_k\xi_k$ and $\xi_k := \xi_k$, if $\xi_1, \ldots, \xi_L$ are distinct, we have that 
	\begin{align*}
	\P\left(S_n(U_1, \ldots, U_L) \in B \mid U_1 = \xi_1, \ldots, U_L=\xi_L\right)
	&= \P(S_n(\xi_1, \ldots, \xi_L) \in B)\\
	& \rightarrow \P(N\in B),
	\end{align*}
	pointwise as $n \to \infty$, where $N$ is a complex-valued Gaussian with mean zero and covariance matrix given in \eqref{def:SigmaCor}. (Note that since $\abs{\xi_k} =1$ and $\mu$ is radially symmetric, we have $\abs{t_k\xi_k}^2 = \abs{t_k}^2$ and $f_\mu(\xi_k) = f_\mu(1)$ for $1 \leq k \leq L$.) Thus, by the dominated convergence theorem (since $\xi_1, \ldots, \xi_L$ are distinct almost everywhere), 
	\begin{align*}
		&\P(S_n(U_1, \ldots, U_L) \in B)\\
		&\quad= \int\cdots\int\P\left(S_n(U_1, \ldots, U_L) \in B \mid U_1 = \xi_1, \ldots, U_L=\xi_L\right)\,d\nu(\xi_1)\cdots d\nu(\xi_L)\\
		&\quad\longrightarrow \int\cdots\int\P(N \in B)\,d\nu(\xi_1)\cdots d\nu(\xi_L) = \P(N\in B).
	\end{align*}
	The dominated convergence theorem is applicable here since $\P(S_n(U_1, \ldots, U_L) \in B \mid U_1 = \xi_1, \ldots, U_L=\xi_L)$ is non-negative and uniformly upper-bounded by 1. We conclude that $S_n(U_1, \ldots, U_L) \to N$ in distribution as $n\to \infty$, as desired.
\end{proof}

\begin{lemma}\label{lem:CovStructurePos}
Suppose $X$ is a complex-valued random variable whose distribution $\mu$ satisfies Assumption \ref{ass:ComplexMu} with $\alpha > 0$, and let $U_1, U_2$ be uniform draws from the unit circle that are jointly independent from each other and from $X$. Then,
\begin{align}
	\E\left[\Re\left(\frac{X}{U_1-X}\right)\Re\left(\frac{X}{U_2-X}\right)\right] &=0,\label{eqn:covReRe}\\
	\E\left[\Im\left(\frac{X}{U_1-X}\right)\Re\left(\frac{X}{U_2-X}\right)\right] &=0,\label{eqn:covImRe}\\
	\E\left[\Im\left(\frac{X}{U_1-X}\right)\Im\left(\frac{X}{U_2-X}\right)\right] &=0,\label{eqn:covImIm}\\
	\E\left[\Re\left(\frac{X}{U_1-X}\right)\Im\left(\frac{X}{U_1-X}\right)\right] &=0.\label{eqn:covSameU}
\end{align}
\end{lemma}
\begin{proof}
	To establish \eqref{eqn:covReRe} and \eqref{eqn:covImRe}, it suffices to verify  
	\begin{equation}\label{eqn:covReInt}
	\E\left[\left(\frac{X}{U_1-X}\right)\Re\left(\frac{X}{U_2-X}\right)\right] =0.
	\end{equation}
	For a fixed $x \in \C$ with $\abs{x}<1$, we have
	\[
	\E\left[\frac{x}{U_1-x}\right] = \frac{1}{2\pi}\int_0^{2\pi}\frac{x}{e^{\sqrt{-1}\theta}- x}\,d\theta = \frac{1}{2\pi\sqrt{-1}}\oint_\gamma \frac{x}{(z-x)z}\,dz,
	\]
	where the last integral is a complex line integral over the unit circle parameterized by $\gamma(\theta) = e^{\sqrt{-1}\theta}.$ By Cauchy's integral formula, we obtain
	\[
	\E\left[\frac{x}{U_1-x}\right] = \frac{1}{2\pi\sqrt{-1}}\oint_\gamma \frac{1}{z-x} - \frac{1}{z}\,dz = 1-1 = 0.
	\]
	Equation \eqref{eqn:covReInt} follows by conditioning on $X$ and $U_2$. 
	
	We can use a nearly identical argument to the one just made to show that \eqref{eqn:covImIm} is true, so it remains to establish \eqref{eqn:covSameU}. To that end, first consider that $X/U_1$ has the same distribution as $X$, and then pass to polar coordinates to obtain:
	\begin{equation}\label{eqn:covSameUInt}
	\begin{aligned}
	&\E\left[\Re\left(\frac{X/U_1}{1-X/U_1}\right)\Im\left(\frac{X/U_1}{1-X/U_1}\right)\right]\\
	&\qquad= \frac{1}{2\pi}\int_0^1f_R(r)\int_{-\pi}^\pi\Re\left(\frac{re^{\sqrt{-1}\theta}}{1-re^{\sqrt{-1}\theta}}\right)\Im\left(\frac{re^{\sqrt{-1}\theta}}{1-re^{\sqrt{-1}\theta}}\right)\,d\theta\,dr.
	\end{aligned}
	\end{equation}
	Now
	\[
	\Re\left(\frac{re^{\sqrt{-1}\theta}}{1-re^{\sqrt{-1}\theta}}\right) = \frac{r\cos{\theta}(1-r\cos\theta) - r^2\sin^2\theta}{(1-r\cos\theta)^2 + r^2\sin^2\theta}
	\]
	is an even function in the argument $\theta$ and 
	\[
	\Im\left(\frac{re^{\sqrt{-1}\theta}}{1-re^{\sqrt{-1}\theta}}\right) = \frac{r\sin{\theta}}{(1-r\cos\theta)^2 + r^2\sin^2\theta}
	\]
	is an odd function in the argument $\theta$, so their product is an odd function in the argument $\theta$, from which it follows that the inner integral with respect to $\theta$ on the right side of \eqref{eqn:covSameUInt} equals $0$. The proof is complete.
\end{proof}

%%%%%%%%%%%%%%%%%%%%%%%%%%%%%%%%%%
%%        References            %%
%%%%%%%%%%%%%%%%%%%%%%%%%%%%%%%%%%

\bibliography{maxCpFluct}{}
\bibliographystyle{abbrv}

\end{document}